\newtheorem{theorem}{Theorem}[section]
\newtheorem{corollary}[theorem]{Corollary}
\newtheorem{proposition}[theorem]{Proposition}
\newtheorem{remark}[theorem]{Remark}
\newtheorem{RHP}[theorem]{RH problem}
\newtheorem{Dbar}[theorem]{$\bar{\partial}$-Problem}
\renewcommand{\theequation}{\arabic{section}.\arabic{equation}}
\begin{document}
\title{On the Cauchy problem of defocusing mKdV equation with finite density initial data: long time asymptotics in soliton-less regions}
\author{Taiyang Xu$^1$\thanks{\ tyxu19@fudan.edu.cn} \ \ \ Zechuan Zhang$^2$\thanks{\ 17110180013@fudan.edu.cn} \ \ \ Engui Fan$^1$\thanks{\ faneg@fudan.edu.cn }}
\footnotetext[1]{\ School of Mathematical Sciences, Fudan University, Shanghai 200433, P.R. China.}

\date{ }
\baselineskip=16pt
\maketitle
\begin{abstract}
\baselineskip=16pt
We investigate the long time asymptotics  for the solutions to  the  Cauchy problem of defocusing modified Kortweg-de Vries (mKdV) equation
with finite density initial data. The present paper is the subsequent work of our previous paper [arXiv:2108.03650], which
gives the soliton resolution for  the defocusing mKdV equation in the central asymptotic sector $\{(x,t): \vert \xi \vert<6\}$ with $\xi:=x/t$.
In the present paper, via  the Riemann-Hilbert (RH) problem associated to the Cauchy problem, the long-time asymptotics in
 the soliton-less regions  $\{(x,t): \vert \xi \vert>6, |\xi|=\mathcal{O}(1)\}$ for  the defocusing mKdV equation are
further obtained. It is shown that the leading term of the asymptotics are in compatible with the ``background solution'' and the error terms are
derived via rigorous analysis.

{\bf Keywords:} Defocusing mKdV equation, Riemann-Hilbert problem, $\bar{\partial}$ steepest descent method,
Long time asymptotics, Soliton-less regions.

{\bf Mathematics Subject Classification:} 35Q51; 35Q15; 35C20; 37K15; 37K40.
\end{abstract}
\baselineskip=16pt

\begin{spacing}{1}
\setcounter{tocdepth}{2} \tableofcontents
\end{spacing}

\section{Introduction}
In the present work, we investigate the long time asymptotics in soliton-less regions for the defocusing modified Kortweg-de Vries (mKdV)
equation with finite density initial data:
\begin{align}
    &q_t(x,t)-6q^2(x,t)q_{x}(x,t)+q_{xxx}(x,t)=0, \quad (x,t)\in \mathbb{R}\times \mathbb{R}_{+},\label{dmkdv}\\
    &q(x,0)=q_{0}(x),\quad \lim_{x\rightarrow\pm\infty}q_{0}(x)=\pm1.\label{bdries}
\end{align}

\begin{remark}\rm 
    Generally, the finite type initial data is presented by the nonzero boundary condition 
    $\lim_{x\rightarrow\pm\infty}q_{0}(x)=q_{\pm}$, $\vert q_{\pm}\vert=q_{0}$. Taking the following transformation 
    \begin{equation*}
        u=q/q_{0}, \quad \tilde{x}=q_{0}x, \quad \tilde{t}=q_{0}^{3}t, 
    \end{equation*}
    we have 
    \begin{equation*}
        u_{\tilde{t}}(\tilde{x}, \tilde{t})-6u^2(\tilde{x}, \tilde{t})u_{\tilde{x}}(\tilde{x}, \tilde{t})+u_{\tilde{x}\tilde{x}\tilde{x}}(\tilde{x}, \tilde{t})=0,
    \end{equation*}
    with the normalized boundary conditions $\lim_{\tilde{x}\rightarrow\pm\infty}u=q_{\pm}/q_{0}$, $\vert q_{\pm}/q_{0} \vert=1$.
    Based on the analysis, we directly choose the boundary condition of initial data as \eqref{bdries} for convenience.
\end{remark}

\begin{remark}\rm
    The ``soliton-less regions'' doesn't represent that there exist no solitons in our present work. Indeed, we use a interpolation transformation to convert residue conditions for poles
    to jump conditions such that the jump matrices vanish as $t\rightarrow\infty$. In our result, the solitons make few
    contributions (exponential decay) for the obtained asymptotics.  
\end{remark}

The mKdV equation arises in various of physical fields, such as acoustic wave and phonons in a certain anharmonic
lattice \cite{Zab1967,Ono1992}, Alfv\'en wave in a cold collision-free plasma \cite{kaku1969, Kha1998}.
A considerable amount of work has been carried out around the long time asymptotics for defocusing mKdV equation \eqref{dmkdv}.
The earliest work can be traced back to Segur and Ablowitz \cite{AblowitzSegur}, who  extend  the  method
developed by Zakharov and Manakov \cite{Zak-Mana1976} to derive the leading asymptotics for the solution of the mKdV equation, including full
information on the phase. The most influential work to investigate the long time behavior of integrable PDEs
is the nonlinear steepest descent method which was firstly proposed by Deift and Zhou (Deift-Zhou method)
to study the defocusing mKdV equation \cite{DZAnn}. Lenells proves a nonlinear steepest descent theorem for RH problems
with Carleson jump contours, where jump matrices admit low regularity and slow decay \cite{lenellsmkdv}.
Recently, Chen and Liu extend the asymptotics to the solution for defocusing mKdV equation with initial data in lower regularity
spaces \cite{CandLdmkdv}. The works mentioned above refer to that initial data $q_0(x)$
admits zero boundary conditions (ZBCs, i.e., $q_0(x)\rightarrow 0$ as $x\rightarrow\pm\infty$).

Studies for the long time asymptotic behavior of the integrable systems with nonzero boundary conditions (NBCs) have been investigated in
a number previous articles. Specifically, the nonzero boundary conditions could be divided into the
asymmetric NBCs (i.e., $q_0(x)\rightarrow q_{\pm}$ with $|q_+|\neq |q_{-}|$, also be called ``step-like'' initial data) and
symmetric NBCs (i.e., $q_0(x)\rightarrow q_{\pm}$ with $|q_+|=|q_{-}|\neq 0$).
For the long time asymptotic behavior for integrable PDEs with asymmetric NBCs, refer \cite{MonLenSheCMP,MonLenSheCMP3,MonKotSheIMRN,KotMinakovJMP,KotMinakovJMPAG1,KotMinakovJMPAG2,MinakovJPAMaTheor}.
For the symmetric NBCs, a lot of works for long time asymptotics have been investigated around nonlinear Schr\"odinger (NLS) equation, see \cite{Vartan1,Vartan2,BiondiniManNLSNBCs1,BiondiniManNLSNBCs2}.
S. Cuccagna and R. Jenkins \cite{Cuc} develop the $\bar{\partial}$ generalization which was firstly proposed by McLaughlin and his collaborators \cite{M&M2006, M&M2008, Dieng, Bor} to verify
the soliton resolution for defocusing NLS equation with finite initial data in an asymptotic soliton regime $|x/2t|<1$. The method used in \cite{Cuc} is applied
to investigate the asymptotics for $|x/2t|>1$ by Wang and Fan \cite{WFDNLS2022}.

For the defocusing mKdV equation with finite density initial data defined by \eqref{dmkdv}-\eqref{bdries}, Zhang and Yan \cite{Z&Y} use the inverse scattering transform (IST) to express the solution
in terms of the associated RH problem  and prove that the discrete spectrum all locate on the unit circle in the complex plane.
For comparison, only focusing mKdV equation posses discrete spectrum under ZBCs.
In the presence of discrete spectrum for defocusing mKdV equation with finite density initial data, we exhibit
the soliton resolution and asymptotic stability in the previous article \cite{Zhang&Xu mkdv} for $|\xi|<6$, and the asymptotics for $\vert \xi\vert>6$, $\vert\xi\vert=\mathcal{O}(1)$ in the present work.



\subsection{Main results}
The main result of this work is exhibited in the following theorem that reveals the long time asymptotic behavior of the solution $q(x,t)$
of defocusing mKdV equation \eqref{dmkdv} in different asymptotic sectors (see Figure \ref{cone}), where
\begin{align}
  & \mathcal{R}_{L}=\left\{(x,t): \xi<-6, |\xi|=\mathcal{O}(1) \right\}, \quad \mathcal{R}_{M}=\left\{(x,t): -6<\xi<6 \right\}, \nonumber\\
  & \mathcal{R}_{R}=\left\{(x,t): \xi>6, |\xi|=\mathcal{O}(1) \right\}, \ \ \xi:=x/t.\nonumber
\end{align}

\begin{figure}[htbp]
    \begin{center}
    \begin{tikzpicture}[node distance=2cm]
    \draw[->](-5.5,0)--(5.5,0)node[right]{$x$};
    \draw[->](0,0)--(0,4)node[above]{$t$};
    \draw[red, dashed](0,0)--(-5,1.5)node[above,black]{$\xi=-6$};
    \draw[red, dashed](0,0)--( 5,1.5)node[above,black]{$\xi=6$};
    \node[below]{$0$};
    \coordinate (A) at (-5, 0.5);
	\fill (A) node[right] {$\mathcal{R}_{L}$};
    \coordinate (B) at (0,1.5);
    \fill (B) node[right] {$\mathcal{R}_{M}$};
    \coordinate (D) at (5, 0.5);
	\fill (D) node[left] {$\mathcal{R}_{R}$};
    \end{tikzpicture}
    \caption{\small Asymptotic sectors for the solution $q(x,t)$ for mKdV equation} \label{cone}
    \end{center}
    \end{figure}
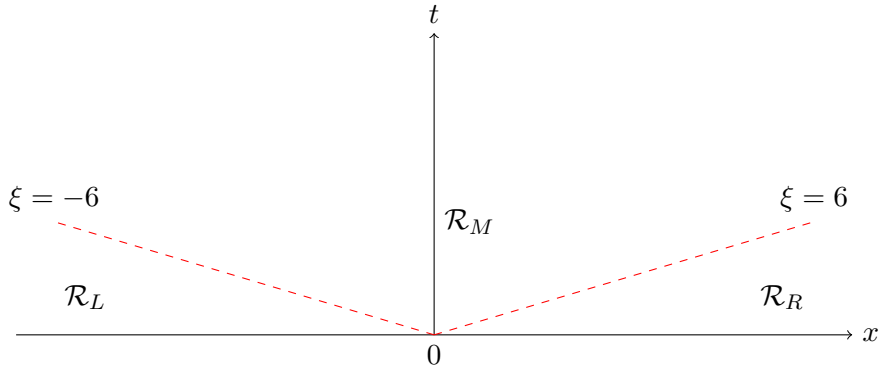

\begin{theorem}\label{mainthm}
    Let $q(x,t)$ be the solution for the Cauchy problem \eqref{dmkdv} with generic data $q_{0}(x)- \tanh(x)\in H^{4,4}(\mathbb{R})$ associated to
    scattering data $\Big{\{}r(z), \{\eta_n, c_n\}_{n=1}^{2N}\Big{\}}$. As $t\rightarrow+\infty$, the following three asymptotics are shown.
    \begin{itemize}
    \item[(a)] For $(x,t)\in\mathcal{R}_{L}$ (left field),
    \begin{equation}\label{mainthma}
    q(x,t)=-1+t^{-\frac{1}{2}}f(\xi)+\mathcal{O}(t^{-\frac{3}{4}}).
    \end{equation}
    where
    \begin{equation*}
        f(\xi):=\sum_{j=1}^{4}\epsilon_j\left(2\epsilon_j\theta''(\xi_j) \right)^{-\frac{1}{2}}\left(1-\xi_j^{-2}\right)^{-1}
    \cdot \left(\beta^{(\xi_j)}_{12}-\frac{1}{\xi_j^2}\beta^{(\xi_j)}_{21}\right),
    \end{equation*}
    with $\epsilon_j=(-1)^{j+1}$ for $j=1,2,3,4$, $\xi_j$ defined by \eqref{saddlexi1,4}-\eqref{saddlexi2,3} for $j=1,2,3,4$, $\beta_{12}^{(\xi_j)}$ and $\beta_{21}^{(\xi_j)}$
    defined by \eqref{betaxi1-1}-\eqref{betaxi1-3} for $j=1,3$, while by \eqref{betaxi2-1}-\eqref{betaxi2-3} for $j=2,4$.
    \item[(b)] For $(x,t)\in\mathcal{R}_{M}$
    \begin{equation}
    q(x,t)=- 1+\sum_{j=0}^{N}[sol(z_{j},x-x_j,t)+1] +\mathcal{O}(t^{-1}).
    \end{equation}
.
    \item[(c)] For $(x,t)\in\mathcal{R}_{R}$ (right field),
    \begin{equation}
        q(x,t)=1+\mathcal{O}(t^{-1}),
    \end{equation}

    \end{itemize}
\end{theorem}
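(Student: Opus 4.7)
The overall strategy is to apply the $\bar\partial$-generalization of the Deift--Zhou nonlinear steepest descent method to the Riemann--Hilbert problem for $M(z;x,t)$ on $\Sigma = \mathbb{R}\cup\{|z|=1\}$ (obtained after uniformization of the Riemann surface associated with \eqref{dmkdv}--\eqref{bdries}), supplemented by residue conditions at the discrete spectrum $\{\eta_n\}\subset\{|z|=1\}$. Part (b) is already established in our previous paper \cite{Zhang&Xu mkdv}, so the task here is to prove (a) and (c). In each asymptotic sector the deformation is dictated by the signature table of $\mathrm{Re}(2it\theta(z;\xi))$ on $\Sigma$; whether real stationary points of $\theta'(z;\xi)=0$ lie on $\Sigma$ (which happens for $\xi<-6$ but not for $\xi>6$) determines whether one needs local saddle parametrices or merely a small-norm estimate.

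For part (a), I would implement the standard sequence of transformations. First, an interpolation transformation converting the residue data at the poles $\eta_n$ on $|z|=1$ into jumps on small loops, chosen so that the favorable sign of $\mathrm{Re}(2it\theta)$ on those loops makes their contribution exponentially small in $t$. Second, conjugation by a scalar RH function $\delta(z;\xi)$ to prepare the jump matrix on $\Sigma$ for upper/lower triangular factorization compatible with the signature table. Third, $\bar\partial$-extensions of the reflection coefficient off $\Sigma$ followed by lens opening, so that away from the four real saddles $\xi_1<\xi_2<\xi_3<\xi_4$ (paired by $z\mapsto 1/z$) the jump deviations and $\bar\partial$-data are exponentially small. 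Fourth, at each saddle $\xi_j$ I would construct a local parametrix from parabolic cylinder functions, contributing a term of size $t^{-1/2}$ whose coefficient matches $\epsilon_j(2\epsilon_j\theta''(\xi_j))^{-1/2}(1-\xi_j^{-2})^{-1}(\beta_{12}^{(\xi_j)}-\xi_j^{-2}\beta_{21}^{(\xi_j)})$. Fifth, an outer parametrix producing the ``background'' solution $q\equiv -1$ consistent with the left boundary data. Finally, a small-norm RH analysis of the error matrix combined with standard $L^p$ estimates on the $\bar\partial$ Cauchy operator yields the remainder $\mathcal{O}(t^{-3/4})$; summing the four localized contributions gives $f(\xi)$.

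For part (c), the saddle equation $\theta'(z;\xi)=0$ has no real roots on $\Sigma$ when $\xi>6$. After the analogous $\delta$-conjugation and $\bar\partial$ lens opening, all jump matrices reduce to the identity up to $\mathcal{O}(e^{-ct})$ terms uniformly on $\Sigma$, and the discrete spectrum loops again produce exponentially small corrections. The model problem is then solved by the trivial outer parametrix corresponding to the right-field background $q\equiv +1$, and a direct small-norm plus $\bar\partial$-remainder estimate delivers the error $\mathcal{O}(t^{-1})$, completing (c).

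The main obstacle is part (a). The four parabolic-cylinder parametrices are coupled through the involution $z\mapsto 1/z$ on the circular component of $\Sigma$ and interact near the intersection points $z=\pm 1$ where the real and circular components meet, so the local-to-global matching has to be organized carefully to avoid spurious cross-terms. Achieving the sharp $\mathcal{O}(t^{-3/4})$ rate requires controlling the next-to-leading correction in each parabolic-cylinder expansion simultaneously with the $L^2\to L^\infty$ bound on the $\bar\partial$ integral operator restricted to each sectorial wedge. A further delicacy is that the discrete spectrum $\{\eta_n\}$ sits on $|z|=1$, where some saddles $\xi_j$ may also reside; the interpolation loops must therefore be sized to remain disjoint from the saddle neighborhoods, which is guaranteed by the genericity assumption on $q_0$.
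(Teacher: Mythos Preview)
Your high-level strategy is correct and matches the paper's approach: interpolation of poles, $\delta$-conjugation, $\bar\partial$ lens opening, parabolic-cylinder local models at the four real saddles for (a), small-norm plus $\bar\partial$ estimates for the errors, and a trivial outer parametrix with pure $\bar\partial$ analysis for (c).

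However, your description of the contour and of the ``main obstacle'' is inaccurate in a way that matters for the execution. After uniformization the jump contour is $\Sigma=\mathbb{R}\setminus\{0\}$ only; the unit circle carries no jump, only residue conditions at the discrete spectrum $\{\eta_n,\bar\eta_n\}$, which the interpolation step trades for jumps on small disks. There is therefore no ``circular component'' of $\Sigma$ and no contour intersection at $z=\pm 1$. For $\xi<-6$ the saddles satisfy $\xi_4<-1<\xi_3<0<\xi_2<1<\xi_1$ and never lie on $|z|=1$, so the loop-versus-saddle-disk disjointness is automatic and does not rely on genericity. The genuine analytic obstruction at $z=\pm 1$ is instead that $|r(\pm 1)|=1$, so the factor $(1-|r|^2)^{-1}$ in the triangular factorization blows up, and simultaneously $\det M=(1-z^{-2})$ vanishes; the paper handles this by arranging the $\bar\partial$ extensions so that $|\bar\partial f_{jk}(z)|\lesssim |z\mp 1|$ near $z=\pm 1$, which balances the $1/|s\mp 1|$ coming from $(M^{(PR)})^{-1}$ in the Cauchy--Green estimates. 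A second delicacy you do not mention is the singularity of $M$ at $z=0$ (from the uniformization), which requires separate $\bar\partial$ lenses at the origin with the bound $|\bar\partial f_{0^\pm k}|\lesssim |z|^{-1/2}$. These two points, not saddle coupling or loop sizing, are what drive the $\mathcal{O}(t^{-3/4})$ rate in (a).
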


\begin{remark}\rm
    Comparing to the results in \cite{Cuc}, \cite{WFDNLS2022} for defocusing NLS equation, the asymptotics in Theorem \ref{mainthm} is real-valued, which owes to the mKdV equation is a real-valued integrable PDE.
    It's find that the asymptotics \eqref{mainthma} is formally similar to the asymptotics in \cite{WFDNLS2022}, and the sub-leading term stems the contribution from four saddle points (in the case of mKdV equation) and
    two saddle points (in the case of NLS equation) respectively. The other difference between the present work and \cite{WFDNLS2022} is the asymptotics in right field, where the error bounds of the former
    mainly stems from the $\bar{\partial}$ estimation by $\mathcal{O}(t^{-1})$.  
\end{remark}

\begin{remark}\rm
    $q^{sol}(x,t)=\tanh(x)$ is the stationary solution of  \eqref{dmkdv}-\eqref{bdries}, which is called the dark  soliton.
\end{remark}

\begin{remark}\label{whyweneedM}\rm
$|\xi|=\mathcal{O}(1)$ is needed to ensure the following issues:\\
- The saddle points $\xi_1$, $\xi_4$ defined by \eqref{saddlexi1,4} are bounded;\\
- The  estimates for $\Im\theta(z)$, jump matrix and $\bar{\partial}$ derivatives are reasonable, see Proposition \ref{lemz=0},
Proposition \ref{lemz=xi1}, Proposition \ref{estopenlenssaddle},  Proposition \ref{outxi1}, Proposition \ref{outxi2}, Proposition \ref{estS} and Proposition \ref{estopenlesz=0xi>6};\\
- The higher power term of the expansion for $\theta(z)$ near saddle points could decay as $t\rightarrow\infty$, see Remark \ref{rmk:phasereduction}.\\
If removing the condition $\vert x/t \vert=\mathcal{O}(1)$, we can turn to study the large $x$ asymptotic behavior in a similar way to large $t$ asymptotics.
\end{remark}

\begin{remark}\rm
    In an early version of this paper, $\mathcal{R}_{R}$ is set by $\xi\in(-2,+\infty), |\xi|=\mathcal{O}(1)$ instead of $\xi\in(6,+\infty), |\xi|=\mathcal{O}(1)$.
    The reason why we modify this condition is that we find when $\xi\in(-2,-6)$, the asymptotics can be reduced to a specific case in the Theorem \ref{mainthm}(b), 
    which verifies the soliton resolution in \cite{Zhang&Xu mkdv}, by setting the index $\Lambda=\emptyset$ in \cite{Zhang&Xu mkdv}.
\end{remark}

\begin{remark}\rm
    The smoothness and decay properties of the reflection coefficient $r(z)$ are needed in our analysis.\\
    - Proposition \ref{rinH1} shows that: $q_{0}- \tanh(x)\in H^{3,3}(\mathbb{R})\Rightarrow q_{0}- \tanh(x)\in L^{1,2}(\mathbb{R})\Rightarrow r(z)\in H^{1}(\mathbb{R})$.\\
    - Eq.\eqref{asymptoticsofr} shows $r(z)=\mathcal{O}(z^{-2})$ as $z\rightarrow\infty$, and we can obtain that $r(z)$ also belongs to $L^{2,1}(\mathbb{R})$.
    Moreover, $r(z)\in H^{1,1}(\mathbb{R})=L^{2,1}(\mathbb{R})\cap H^{1}(\mathbb{R})$. It's Corollary \ref{rinH11}.\\
    The condition $q_{0}-\tanh(x)\in H^{4,4}(\mathbb{R})$ in Theorem \ref{mainthm} is needed to include all conditions to show that $r(z)\in H^{1}(\mathbb{R})$, which can help us bound the $\bar{\partial}$ derivatives of our extensions in
    Proposition \ref{estopenlenssaddle}, Proposition \ref{estS} and Proposition \ref{estopenlesz=0xi>6}, etc.
\end{remark}

\subsection{Outline of this paper}
The structure of this work is as follows.

Section \ref{RHconstruct} and Section \ref{disphasepoint} are the preliminary parts. In Section \ref{RHconstruct}, we review the elementary results on the associated RH problem formulation
of the Cauchy problem for the defocusing mKdV equation \eqref{dmkdv}, which is the basis to analyze the asymptotic behavior of the defocusing mKdV equation in our work.
In Section \ref{disphasepoint}, we present the distribution of phase points of $\theta(z)$ and depict the signature tables of $e^{2it\theta}$ by some numerical figures.

In Section \ref{sec:proofof1a}, we mainly deal with the asymptotics for $\xi\in\mathcal{R}_{L}$. In Subsection \ref{1stdeform}, jump matrix factorizations corresponding to this case are
given. In Subsection \ref{subsec:RLmtom1}, a scalar function $\delta(z)$ which could use the factorizations of the jump matrix along the real axis to deform the contours onto those which
the oscillatory jump on the real axis for exponential decay, and a interpolation function $G(z)$ which interpolates the poles by trading them for jumps along small closed circles around each poles
are introduced to make the first transformation $M(z)\rightarrow M^{(1)}(z)$. In Subsection \ref{subsec:RLopenlens}, we open $\bar{\partial}$ lenses to set up
a mixed $\bar{\partial}$-RH problem $M^{(2)}(z)$, which consists of a pure RH problem $M^{(PR)}(z)$ and a pure $\bar{\partial}$-problem $M^{(3)}(z)$. In Subsection \ref{subsec:RLpureRH},
analysis on pure RH problem $M^{(PR)}(z)$ is exhibited, which refer to two standard parts: global parametrix $M^{(\infty)}(z)$ and local parametrix $M^{(LC)}(z)$. Error analysis via using
small-norm RH theory is also given. In Subsection \ref{subsec: RLpuredbar}, we give the rigorous analysis for the pure $\bar{\partial}$-problem $M^{(3)}(z)$. In Subsection \ref{subsec:pfofa},
the asymptotics in Theorem \ref{mainthm}(a) is given by reviewing a series of transformations we use in this section.

Similar techniques to Section \ref{sec:proofof1a} are used to analyze the asymptotics for $\xi\in\mathcal{R}_{R}$ in Section \ref{sec:pfof1c}.

\subsection{Notations}
We conclude this section with some notations used throughout this paper.\\
- Japanese bracket $\langle x\rangle:=\sqrt{1+|x|^2}$ is widely used in some normed space.\\
- A weighted $L^{p,s}(\mathbb{R})$ is defined by $L^{p,s}(\mathbb{R})=\{u\in L^{p}(\mathbb{R}):\langle x\rangle^s u(x)\in L^{p}(\mathbb{R})\}$,
    with $\Vert u \Vert_{L^{p,s}(\mathbb{R})}:=\Vert\langle x \rangle^{s}u \Vert_{L^{p}(\mathbb{R})}$.\\
- A Sobolev space is defined by $W^{m,p}(\mathbb{R})=\{u\in L^{p}(\mathbb{R}): \partial ^{j}u(x)\in L^{p}(\mathbb{R}) \quad{\rm for}\quad j=0,1,2,\dots,m\}$,
    with $\Vert u \Vert_{W^{m,p}(\mathbb{R})}:=\sum_{j=0}^{m}\Vert \partial^{j}u \Vert_{L^{p}(\mathbb{R})}$. Usually, we are used to expressing $H^{m}(\mathbb{R}):=W^{m,2}(\mathbb{R})$.\\
- A weighted Sobolev space is defined by $H^{m,s}(\mathbb{R}):=L^{2,s}(\mathbb{R})\cap H^{m}(\mathbb{R})$.\\
- $\sigma_1, \sigma_2,\sigma_3$ are classical Pauli matrices as follows
    \begin{equation*}
        \sigma_1=\begin{pmatrix} 0 & 1 \\ 1 & 0 \end{pmatrix},\quad
        \sigma_2=\begin{pmatrix} 0 & -i \\ i & 0 \end{pmatrix}, \quad
        \sigma_3=\begin{pmatrix} 1 & 0 \\ 0 & -1 \end{pmatrix}.
    \end{equation*}
- $a\lesssim b$ i.e., $\exists c$, s.t. $a\leqslant cb$; \ \ \
  $\epsilon_j:=(-1)^{j+1}$. \\
- $\Re$ and $\Im$ represent real part and imaginary part of a complex variable respectively.

\section{Direct and inverse scattering transform}\label{RHconstruct}
\subsection{Lax pair and spectral analysis}
The defocusing mKdV equation \eqref{dmkdv}
admits the following Lax pair \cite{AKNS}
\begin{equation}\label{lax pair}
    \Phi_x=X\Phi, \quad \Phi_t=T\Phi,
\end{equation}
where
\begin{align*}
    &X=ik\sigma_3+Q,\ \ T=4k^2X-2ik\sigma_3(Q_x-Q^2)+2Q^3-Q_{xx}, \ \ Q=\begin{pmatrix} 0 & q(x,t) \\  q(x,t) & 0 \end{pmatrix},
\end{align*}
and $k \in \mathbb{C}$ is a spectral parameter.\\

By using the boundary condition of \eqref{bdries}, the Lax pair \eqref{lax pair} admits the following approximation
\begin{equation}\label{asy spec prob}
    \Phi_{\pm, x}\sim X_{\pm}\Phi_{\pm}, \quad \Phi_{\pm, t}\sim T_{\pm}\Phi_{\pm}, \quad  x\rightarrow\pm\infty,
\end{equation}
where
\begin{equation*}
    X_{\pm}=ik\sigma_3+Q_{\pm}, \quad T_{\pm}=(4k^2+2)X_{\pm},
\end{equation*}
with $Q_{\pm}=\pm\sigma_1$.

The eigenvalues of $X_{\pm}$ are $\pm i\lambda$, which satisfy the equality
\begin{equation}
    \lambda^2=k^2-1.
\end{equation}
Since $\lambda$ is multi-valued, we introduce the following uniformization variable to ensure that our discussion is
based on a complex plane rather than a Riemann surface
\begin{equation}
    z=k+\lambda,
\end{equation}
and obtain two single-valued functions
\begin{equation}
    \lambda(z)=\frac{1}{2}(z-\frac{1}{z}),\quad k(z)=\frac{1}{2}(z+\frac{1}{z}).
\end{equation}
\begin{remark}\rm
    In the present work, we use the uniformization technique. Indeed, the other techniques, such as \cite{BiondiniManNLSNBCs1, BiondiniManNLSNBCs2} can be taken, which
    should deal with a branch cut. However, we have to pay more attention to singular points which appears via uniformization method.
\end{remark}

Define two domains $D_{+}$, $D_{-}$ and their boundary $\Sigma$ on $z$-plane by
\begin{align*}
    D_{+}:=\{z\in \mathbb{C}, \Im\lambda(1+\frac{1}{|z|^2})>0\},\ D_{-}:=\{z\in \mathbb{C}, \Im\lambda(1+\frac{1}{|z|^2})<0\},\  \Sigma:=\mathbb{R}\backslash \{0\}.
\end{align*}
the ``background solution'' of the asymptotic spectral problem \eqref{asy spec prob} is given by
\begin{equation}
    \Phi_{\pm}\sim E_{\pm}(z)e^{i\lambda(z) x\sigma_3},
\end{equation}
where
\begin{equation*}
    E_{\pm}=\begin{pmatrix} 1 & \pm \frac{ i}{z} \\  \mp \frac{ i}{z} & 1 \end{pmatrix}.
\end{equation*}
Introducing the modified Jost solution
\begin{equation}\label{muPhirelation}
    \mu_{\pm}=\Phi_{\pm}e^{-i\lambda(z)x\sigma_3},
\end{equation}
then we have
\begin{align*}
    &\mu_{\pm}\sim E_{\pm}, \quad {\rm as} \quad x\rightarrow\pm\infty, \\
    &{\rm det}(\Phi_{\pm})={\rm det}(\mu_{\pm})={\rm det}(E_{\pm})=1-\frac{1}{z^2}.
\end{align*}
$\mu_{\pm}$ are defined by the Volterra type integral equations
\begin{align}
    &\mu_{\pm}(x;z)=E_{\pm}(z)+\int_{\pm\infty}^{x} E_{\pm}(z)e^{i\lambda(z)(x-y)\hat{\sigma}_3}\left[(E^{-1}_{\pm}(z)\Delta Q_{\pm}\left(y\right)\mu_{\pm}\left(y; z\right)\right]dy, \quad z\neq \pm 1,\label{Vo1}\\
    &\mu_{\pm}(x;z)=E_{\pm}(z)+\int_{\pm\infty}^{x} \left[I+\left(x-y\right)\left(Q_{\pm}\pm i\sigma_3\right)\right]\Delta Q_{\pm}\left(y\right)\mu_{\pm}\left(y; z\right)dy, \quad z=\pm 1,\label{Vo2}
\end{align}
where $\Delta Q=Q-Q_{\pm}$.

The properties of $\mu_{\pm}$ are conclude in the following proposition, of which proof is similar to \cite[Lemma 3.3]{Cuc} owing to defocusing mKdV equation and NLS equation admit
the same spatial spectrum problem ($t=0$).
\begin{proposition}\label{analydiff}
    Given $n\in\mathbb{N}_0$, let $q-\tanh(x)\in$ $L^{1,n+1}(\mathbb{R})$, $q'\in$ $W^{1,1}(\mathbb{R})$.
    Denote $\mu_{\pm,j}$ the $j$-th column of $\mu_{\pm}$.\\
    - For $z\in\mathbb{C}\setminus\{0\}$, $\mu_{+,1}(x,t;z)$ and $\mu_{-,2}(x,t;z)$ can be analytically extended to $\mathbb{C}_{+}$ and continuously extended to $\mathbb{C}_{+}\cup \Sigma$;
    $\mu_{-,1}(x,t;z)$ and $\mu_{+,2}(x,t;z)$ can be analytically extended to $\mathbb{C}_{-}$ and continuously extended to $\mathbb{C}_{-}\cup \Sigma$. \\
    - (Symmetry for $\mu_{\pm}$) \ $\mu_{\pm}(z)=\sigma_1\overline{\mu_{\pm}(\bar{z})}\sigma_1=\overline{\mu_{\pm}(-\bar{z})}$, $\mu_{\pm}(z)=\frac{\mp 1}{z}\mu_{\pm}(z^{-1})\sigma_2$.\\
    - (Asymptotic behavior of $\mu_{\pm}$ as $z\rightarrow\infty$) For $\Im z\geq 0$ as $z\rightarrow\infty$,
    \begin{align}
    &\mu_{+,1}(z)=e_1+\frac{1}{z}\left(\begin{array}{c}
                                    -i\int_{x}^{\infty}(q^2-1)dx\\
                                    -iq
                                    \end{array}\right)+\mathcal{O}(z^{-2}),\\
    &\mu_{-,2}(z)=e_2+\frac{1}{z}\left(\begin{array}{c}
                                    iq\\
                                    i\int_{x}^{\infty}(q^2-1)dx
                                    \end{array}\right)+\mathcal{O}(z^{-2}),
    \end{align}
    for $\Im z\leq0$, as $z\rightarrow\infty$
    \begin{align}
    &\mu_{-,1}(z)=e_1+\frac{1}{z}\left(\begin{array}{c}
                                    -i\int_{x}^{\infty}(q^2-1)dx\\
                                    -iq
                                    \end{array}\right)+\mathcal{O}(z^{-2}),\\
    &\mu_{+,2}(z)=e_2+\frac{1}{z}\left(\begin{array}{c}
                                    iq\\
                                    i\int_{x}^{\infty}(q^2-1)dx
                                    \end{array}\right)+\mathcal{O}(z^{-2}).
    \end{align}
    - (Asymptotic behavior of $\mu_{\pm}$ as $z\rightarrow 0$) For $z\in\mathbb{C}_{+}$, as $z\rightarrow0$,
    \begin{equation}
    \mu_{+,1}(z)=-\frac{i}{z}e_2+\mathcal{O}(1),\hspace{0.5cm}\mu_{-,2}(z)=-\frac{i}{z}e_1+\mathcal{O}(1);
    \end{equation}
    for $z\in\mathbb{C}_{-}$, as $z\rightarrow0$,
    \begin{equation}
    \mu_{-,1}(z)=\frac{i}{z}e_2+\mathcal{O}(1),\hspace{0.5cm}\mu_{+,2}(z)=\frac{i}{z}e_1+\mathcal{O}(1);
    \end{equation}
    where $e_1=(1,0)^{\rm T}$, $e_2=(0,1)^{\rm T}$.
\end{proposition}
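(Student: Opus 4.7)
The entire proposition reads off from the Volterra integral equations \eqref{Vo1}--\eqref{Vo2}. My plan is to run the standard Neumann iteration $\mu_\pm^{(n+1)}=E_\pm+\int_{\pm\infty}^{x}(\cdots)\mu_\pm^{(n)}\,dy$, majorise each iterate using the hypothesis $q-\tanh(x)\in L^{1,n+1}$, and extract each of the four bullets from properties of the limit. For the analyticity claim I would track where the oscillatory factor in the kernel is bounded: for the first column of $\mu_+$ the integrand contains $e^{-2i\lambda(z)(x-y)}$ with $y\ge x$, which remains bounded precisely on $D_+=\{z:\Im\lambda(z)(1+|z|^{-2})>0\}$; since every iterate is holomorphic in $z\in D_+\setminus\{0,\pm 1\}$ and the Neumann series converges uniformly on compact subsets, the limit is holomorphic there and extends continuously onto $\Sigma$. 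The other three columns are handled identically after swapping the roles of $D_\pm$.

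For the symmetries I would exploit three discrete involutions of the $x$-part of the Lax pair. Since $q$ is real and $Q_\pm=\pm\sigma_1$, one has $\sigma_1\overline{X(\bar z)}\sigma_1=X(z)$ and $\sigma_1\overline{E_\pm(\bar z)}\sigma_1=E_\pm(z)$, so $\sigma_1\overline{\mu_\pm(\bar z)}\sigma_1$ solves the same ODE with the same normalisation, giving the first identity; the involution $z\mapsto-\bar z$ yields the second. Under the uniformization involution $z\mapsto z^{-1}$, $\lambda\mapsto-\lambda$ while $k$ is preserved, and a short direct check (using $\sigma_3\sigma_2=-\sigma_2\sigma_3$) shows that $\mu_\pm(x;z^{-1})\sigma_2$ again solves the Lax equation at parameter $z$; the prefactor $\mp 1/z$ in $\mu_\pm(z)=\mp z^{-1}\mu_\pm(z^{-1})\sigma_2$ is then fixed by comparing the asymptotic normalisations $E_\pm(z^{-1})\sigma_2=\mp z\,E_\pm(z)$.

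For the $z\to\infty$ expansion I would substitute the formal series $\mu_\pm=I+M_1(x)/z+M_2(x)/z^2+\cdots$ into the equation for $\mu$, which after using $k-\lambda=1/z$ takes the form
\begin{equation*}
\mu_x=i\lambda[\sigma_3,\mu]+\tfrac{i}{z}\sigma_3\mu+Q\mu,\qquad \lambda=\tfrac{z}{2}-\tfrac{1}{2z},
\end{equation*}
and match powers of $z$. The $z^0$ relation algebraically forces $(M_1)_{12},(M_1)_{21}=\pm iq$; the $z^{-1}$ relation gives a first-order ODE for the diagonal entries whose integration from $\pm\infty$ against the column-specific boundary data delivers the claimed $\int_x^\infty(q^2-1)\,dy$ terms. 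Rigorous control of the $\mathcal O(z^{-2})$ remainder is obtained by one integration by parts in the Volterra kernel (which manufactures the extra $z^{-1}$) together with $q-\tanh\in L^{1,n+1}$ and $q'\in W^{1,1}$, exactly as in \cite[Lemma~3.3]{Cuc}. The $z\to 0$ asymptotics then follow mechanically from the inversion symmetry: for instance $\mu_{+,1}(z)=-iz^{-1}\mu_{+,2}(z^{-1})$, and $\mu_{+,2}(w)\to e_2$ as $w\to\infty$ from the expansion just established, producing the stated $-ie_2/z$ blow-up; the other three small-$z$ formulas are obtained the same way.

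The only delicate point, in my view, is the degeneracy structure at $z=0$ and $z=\pm 1$: $E_\pm$ has a pole at $z=0$ and $\det E_\pm=1-z^{-2}$ vanishes at $z=\pm 1$, so \eqref{Vo1} breaks down at all three places. One must run the Neumann argument in two coordinate patches --- the global one via \eqref{Vo1} and a local one near $z=\pm 1$ via \eqref{Vo2} --- verify continuous gluing on the overlap, and translate the large-$z$ expansion into the $z^{-1}$ blow-up at the origin via the inversion symmetry. Once these compatibility checks are in place the remainder of the argument reduces to routine Volterra estimates.
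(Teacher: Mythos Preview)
Your proposal is correct and follows precisely the route the paper intends: the paper does not give its own proof but simply observes that the spatial spectral problem coincides with that of defocusing NLS and refers to \cite[Lemma~3.3]{Cuc}, which is exactly the Volterra--Neumann iteration, symmetry, and formal-series argument you outline. Your sketch is in fact a faithful expansion of that reference, including the correct handling of the degeneracies at $z=0,\pm1$ and the use of the inversion symmetry to transfer the large-$z$ expansion to the small-$z$ behaviour.
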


Since $\Phi_{+}(z)$ and $\Phi_{-}(z)$ are two fundamental matrix-valued solutions of \eqref{asy spec prob} for $z\in \Sigma\backslash \{\pm 1\} $,
thus there exists a scattering $S(z)$ such that
\begin{equation}\label{linear relation}
    \Phi_{+}(x,t;z)=\Phi_{-}(x,t;z)S(z).
\end{equation}
Owing to \eqref{muPhirelation} and the symmetries of $\mu_{\pm}(z)$, $\Phi_{\pm}$ admit the following symmetry.
\begin{equation}
    \Phi_{\pm}(z)=\sigma_1\overline{\Phi_{\pm}(\bar{z})}\sigma_1=\overline{\Phi_{\pm}(-\bar{z})}, \quad \Phi_{\pm}(z)=\frac{\mp 1}{z}\Phi_{\pm}(z^{-1})\sigma_2.
\end{equation}
Then the symmetry $S(z)=\sigma_1\overline{S(\bar{z})}\sigma_1=\overline{S(-\bar{z})}=-\sigma_2S(z^{-1})\sigma_2$ follows immediately.
And $S(z)$ is given by
\begin{equation}
    S(z)=\begin{pmatrix} a(z) & \overline{b(z)} \\ b(z) & \overline{a(z)} \end{pmatrix},\quad z\in \Sigma\backslash\{\pm 1\},
\end{equation}
where $a(z)$, $b(z)$ are called scattering data.

Define
\begin{equation}\label{rz}
    r(z):=\frac{b(z)}{a(z)}, \quad \tilde{r}(z):=\frac{b(\bar{z})}{a(\bar{z})}.
\end{equation}
Several properties of $a(z)$, $b(z)$ and $r(z)$ are given as follows.
\begin{proposition}\label{spec distri} Let $z\in\Sigma \backslash \{\pm 1\}$, $a(z)$, $b(z)$ and $r(z)$ be the data as mentioned above. \\
    - The scattering coefficients can be expressed in terms of the Jost functions by
        \begin{equation}\label{azbz}
            a(z)=\frac{{\rm det}(\Phi_{+,1}, \Phi_{-,2})}{1-z^{-2}}, \quad b(z)=\frac{{\rm det}(\Phi_{-,1}, \Phi_{+,1})}{1-z^{-2}}.
        \end{equation}
    - For each $z\in \Sigma\backslash\{\pm 1\}$, we have
        \begin{equation}\label{relazbz}
            {\rm det}S(z)=|a(z)|^2-|b(z)|^2=1, \quad |r(z)|^2=1-|a(z)|^{-2}<1.
        \end{equation}
    - $a(z)$, $b(z)$ and the reflection coefficient $r(z)$ satisfy the symmetries
        \begin{align}
            &a(z)=\overline{a(-\bar{z})}=-\overline{a(\bar{z}^{-1})},\\
            &b(z)=\overline{b(-\bar{z})}=\overline{b(\bar{z}^{-1})},\\
            &r(z)=\overline{r(-\bar{z})}=-\overline{r(\bar{z}^{-1})}.
        \end{align}
    - The scattering data admit the asymptotics
        \begin{align}
        &\lim_{z\rightarrow\infty}(a(z)-1)z=i\int_{\mathbb{R}}(q^2-1)dx, \ z\in\overline{\mathbb{C}_{+}}, \label{asymptoticsfora1}\\
        &\lim_{z\rightarrow0}(a(z)+1)z^{-1}=i\int_{\mathbb{R}}(q^2-1)dx, \ z\in\overline{\mathbb{C}_{+}}, \label{asymptoticsfora2}
        \end{align}
        and
        \begin{align}
        &|b(z)|=\mathcal{O}(|z|^{-2}),\hspace{0.5cm} \text{as }|z|\rightarrow\infty,\ \ z\in\mathbb{R},\label{asymptoticsforb1}\\
        &|b(z)|=\mathcal{O}(|z|^{2}),\hspace{0.5cm} \text{as }|z|\rightarrow0, \ z\in\mathbb{R}.\label{asymptoticsforb2}
        \end{align}
        So that
    \begin{align}\label{asymptoticsofr}
    r(z)\sim z^{-2}, \hspace{0.2cm}|z|\rightarrow\infty;\hspace{0.5cm}r(z)\sim z^2,\hspace{0.2cm}|z|\rightarrow0.
    \end{align}
\end{proposition}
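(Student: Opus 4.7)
The plan is to derive each of the four bullet points directly from Proposition \ref{analydiff} together with the scattering relation \eqref{linear relation}, treating this as a bookkeeping exercise on the Jost solutions.

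First I would establish the determinantal formulas in \eqref{azbz}. Since $\Phi_{+}=\Phi_{-}S$ with $S=(a,\bar b;b,\bar a)$, the first column of $\Phi_{+}$ decomposes as $\Phi_{+,1}=a(z)\Phi_{-,1}+b(z)\Phi_{-,2}$. Pairing this with $\Phi_{-,2}$ in a $2\times 2$ determinant kills the $\Phi_{-,2}$ contribution and yields $\det(\Phi_{+,1},\Phi_{-,2})=a(z)\det(\Phi_{-,1},\Phi_{-,2})=a(z)(1-z^{-2})$ by the computation of $\det\Phi_{\pm}$ recorded just before \eqref{Vo1}. Similarly pairing with $\Phi_{-,1}$ gives the formula for $b(z)$. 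For \eqref{relazbz}, taking the determinant of \eqref{linear relation} gives $\det S\cdot(1-z^{-2})=(1-z^{-2})$, hence $\det S=|a|^2-|b|^2=1$ (the $|\cdot|^2$ form following from the symmetry $S=\sigma_1\overline{S}\sigma_1$ on $\Sigma$). Dividing by $|a|^2$ gives $|r|^2=1-|a|^{-2}$, which is strictly less than $1$ for generic data (requiring that $a$ has no zero on $\Sigma$, a standard genericity assumption that is implicit here).

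The symmetries in the third bullet follow from propagating the Jost symmetries $\Phi_{\pm}(z)=\sigma_1\overline{\Phi_{\pm}(\bar z)}\sigma_1=\overline{\Phi_{\pm}(-\bar z)}=\mp z^{-1}\Phi_{\pm}(z^{-1})\sigma_2$ through \eqref{linear relation}: substituting each symmetry into $\Phi_{+}=\Phi_{-}S$ produces an expression of the form $\Phi_{-}(\cdot)S' = \Phi_{-}(\cdot)S(\cdot)$ where $S'$ is the conjugated/reflected $S$, and comparing entries in the resulting identity gives the three symmetry relations for $a$, $b$. The symmetries for $r(z)=b/a$ then follow directly. The only subtle one is $a(z)=-\overline{a(\bar z^{-1})}$, which I would get by applying the $\sigma_2$ symmetry, taking into account the sign $\mp$ appearing differently in $\Phi_{+}$ and $\Phi_{-}$.

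For the asymptotics in the fourth bullet, I would substitute the expansions from Proposition \ref{analydiff} into \eqref{azbz}. As $z\to\infty$ in $\overline{\mathbb{C}_+}$, we have $\Phi_{+,1}=(\mu_{+,1})e^{i\lambda x}$ with $\mu_{+,1}=e_1+z^{-1}(-i\int_x^\infty(q^2-1),-iq)^T+\mathcal{O}(z^{-2})$ and similarly for $\mu_{-,2}$; the exponentials cancel in the determinant since $\Phi_{+,1}$ carries $e^{i\lambda x}$ and $\Phi_{-,2}$ carries $e^{-i\lambda x}$. Computing $\det(\mu_{+,1},\mu_{-,2})$ to order $z^{-1}$ and dividing by $1-z^{-2}=1+\mathcal{O}(z^{-2})$ produces \eqref{asymptoticsfora1}; the computation at $z\to 0$ is analogous after pulling out the $-i/z$ leading singularities of $\mu_{+,1}$ and $\mu_{-,2}$ (note that these singular pieces are parallel, $e_2$ and $e_1$ respectively, which is precisely what makes $a$ finite at $z=0$; the next order terms contribute $\int_{\mathbb{R}}(q^2-1)dx$). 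The bounds for $|b(z)|$ come from the analogous expansion in \eqref{azbz} for $b$, using that $\mu_{-,1}$ and $\mu_{+,1}$ have the same leading behavior as $z\to\infty$ so their determinant is of higher order; a careful expansion shows the cancellation to order $z^{-2}$, and the $z\to 0$ behavior comes from using the symmetry $b(z)=\overline{b(z^{-1})}$ already established.

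The main obstacle is purely the cancellation bookkeeping for \eqref{asymptoticsforb1}--\eqref{asymptoticsforb2}: one must expand $\mu_{-,1}$ and $\mu_{+,1}$ to sufficient order to see that their leading parts coincide in the determinant, so that only the subleading contribution survives. All other items are direct linear-algebra manipulations on the relation \eqref{linear relation} combined with the detailed properties already compiled in Proposition \ref{analydiff}.
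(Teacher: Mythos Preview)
Your proposal is correct and follows essentially the same approach as the paper: Cramer's rule on \eqref{linear relation} for the first item, direct calculation for the second, the symmetries of $S(z)$ (inherited from $\Phi_{\pm}$) for the third, and substitution of the asymptotics of $\mu_{\pm}$ from Proposition~\ref{analydiff} into \eqref{azbz} for the fourth. The paper's proof is in fact a one-paragraph sketch that cites exactly these ingredients, so your more detailed write-up simply fills in what the paper leaves to the reader.
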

\begin{proof} The first item follows by applying Cramer's rule to \eqref{linear relation}. The second item can be obtained
    by direct calculation. The third item follows from the symmetry of $S(z)$. The fourth item follows from the first item and the asymptotics of $\mu_{\pm}$ in
    Proposition \ref{analydiff}.
\end{proof}
Though $a(z)$ and $b(z)$ have singularities at $\pm 1$, the reflection coefficient $r(z)$ remains bounded at $z=\pm 1$ with $|r(\pm 1)|=1$. Indeed, as $z\rightarrow \pm 1$
\begin{align}
    &a(z)=\frac{\pm S_{\pm}}{z\mp 1}+\mathcal{O}(1),\quad b(z)=\frac{\mp S_{\pm}}{z\mp 1}+\mathcal{O}(1)
\end{align}
where $S_{\pm}=\frac{1}{2}{\rm det}[\mu_{+.1}(\pm 1, x), \mu_{-,2}(\pm 1, x)]$. Then $\lim_{z\rightarrow\pm 1}r(z)=\mp i$ follows.

\begin{remark}\rm
    The above discussions suggest that scattering data exhibit singular behavior for $z$ at $\pm 1, 0$.
The singularities of these functions at $z=\pm 1$ can be removable, however, the singular behavior at $z=0$ plays a non-trivial and unavoidable role in our analysis.
\end{remark}

The next proposition shows that, given data $q_{0}$ with sufficient smoothness and decay properties,
the reflection coefficients will also be smooth and decaying.
\begin{proposition}\label{rinH1}
    For  given $q-\tanh(x)\in L^{1,2}(\mathbb{R})$, $q'\in W^{1,1}(\mathbb{R})$,  then $r(z)\in H^{1}(\mathbb{R})$.
\end{proposition}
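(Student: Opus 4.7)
The target splits as $\Vert r\Vert_{L^2(\mathbb{R})}<\infty$ plus $\Vert r'\Vert_{L^2(\mathbb{R})}<\infty$, and the first half is essentially free from Proposition~\ref{spec distri}. Indeed, the bound $|r(z)|<1$ on $\Sigma\setminus\{\pm1\}$ together with the continuity statement $\lim_{z\to\pm 1}r(z)=\mp i$ gives boundedness of $r$ on all of $\mathbb{R}$, while the endpoint asymptotics $r(z)=\mathcal{O}(z^{-2})$ as $z\to\infty$ and $r(z)=\mathcal{O}(z^{2})$ as $z\to 0$ from \eqref{asymptoticsofr} yield integrability of $|r|^{2}$ at both ends; hence $r\in L^{\infty}(\mathbb{R})\cap L^{2}(\mathbb{R})$.

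The real work lies in $r'\in L^{2}$. Writing $r=b/a$ gives $r'=b'/a-b\,a'/a^{2}$, and the genericity of the data secures $|a(z)|$ bounded below on $\mathbb{R}\setminus\{0,\pm1\}$, so the problem reduces to locating $a'$ and $b'$ in $L^{2}(\mathbb{R})$ on the regular part of the axis. Via \eqref{azbz}, $a$ and $b$ are $2\times 2$ determinants built from the boundary values of the Jost columns $\mu_\pm$, so one really needs control of $\partial_{z}\mu_\pm(x;z)$ uniform in $x$ and with good $z$-behavior. I would obtain this by differentiating the Volterra equations \eqref{Vo1}: the derivative of the kernel $e^{i\lambda(z)(x-y)\hat\sigma_{3}}$ produces a factor $i\lambda'(z)(x-y)=\tfrac{i}{2}(1+z^{-2})(x-y)$, so $\partial_{z}\mu_\pm$ satisfies a Volterra equation with the same integral operator as $\mu_\pm$ but with an inhomogeneous term carrying an extra weight $\langle y\rangle$. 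This is precisely where $q-\tanh(x)\in L^{1,2}(\mathbb{R})$ enters: the second moment absorbs the $y$-factor produced by $\partial_{z}$, so Neumann iteration delivers $\partial_{z}\mu_\pm$ continuous in $(x,z)$ with controlled tails in $z$, and \eqref{azbz} transports these bounds to $a',b'$ on the regular part of $\mathbb{R}$.

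The main obstacle is to confirm $r'\in L^{2}$ near the three special points $z=0,\pm1$ and to extract enough decay of $r'$ at infinity. At $z=\pm1$ the representation \eqref{Vo1} degenerates and one must pass to \eqref{Vo2}; the simple poles of $a$ and $b$ cancel in the ratio $b/a$, and a careful Laurent expansion about $\pm1$ shows $r'$ is bounded in a neighborhood, so the local $L^{2}$ contribution is harmless. At $z=0$, the $z^{-1}$ blow-ups of $\mu_\pm$ from Proposition~\ref{analydiff} must be tracked through the Volterra analysis, but the vanishing $r(z)=\mathcal{O}(z^{2})$ already forces $r'(z)=\mathcal{O}(z)$, which is comfortably $L^{2}$ at the origin. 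Finally, the hypothesis $q'\in W^{1,1}(\mathbb{R})$ is the tool that upgrades the decay of $r'$ at infinity: one integration by parts in the Volterra integrand trades a derivative in $y$ falling on $q-\tanh$ for a factor of $z^{-1}$, yielding $r'(z)=\mathcal{O}(z^{-3})$ as $z\to\infty$ and closing the $L^{2}$ estimate at the far end.
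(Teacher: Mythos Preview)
Your sketch is correct and follows the standard route—differentiate the Volterra equations \eqref{Vo1}--\eqref{Vo2}, let the weight in $L^{1,2}$ absorb the $(x-y)$ factor from $\partial_z e^{i\lambda(z)(x-y)\hat\sigma_3}$, handle the degenerate points $z=0,\pm1$ separately, and use $q'\in W^{1,1}$ via integration by parts to gain decay of $r'$ at infinity—which is precisely the argument the paper defers to \cite[Proposition~3.2]{Zhang&Xu mkdv} (itself modeled on the NLS analysis in \cite{Cuc}). One small remark: you do not need to invoke genericity to bound $|a|$ from below on the regular part of the axis, since $|a|^{2}=1+|b|^{2}\geqslant 1$ there by \eqref{relazbz}; and the inference $r(z)=\mathcal{O}(z^{2})\Rightarrow r'(z)=\mathcal{O}(z)$ near the origin is not automatic from the asymptotic alone and should be justified by the same Volterra-differentiation that controls $\partial_z\mu_\pm$, but this is exactly what your middle paragraph already sets up.
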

\begin{proof}
    The proof is the same with \cite[Proposition 3.2]{Zhang&Xu mkdv}.
\end{proof}
\begin{remark}\rm
    $\Vert r\Vert_{H^{1}(\mathbb{R})}$ is widely used in the estimation below, such as Proposition \ref{deltapro},
    Proposition \ref{estopenlenssaddle}, Proposition \ref{estS}, Proposition \ref{estopenlesz=0xi>6}, etc.
    In fact, we can claim that $r(z)\in H^{1,1}(\mathbb{R})$, see the following corollary.
\end{remark}
\begin{corollary}\label{rinH11}
    For  given $q-\tanh(x)\in L^{1,2}(\mathbb{R})$, $q'\in W^{1,1}(\mathbb{R})$,  we have $r(z)\in H^{1,1}(\mathbb{R})$.
\end{corollary}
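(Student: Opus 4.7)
The plan is to leverage Proposition \ref{rinH1}, which already gives $r \in H^{1}(\mathbb{R})$, so that by the definition $H^{1,1}(\mathbb{R}) = L^{2,1}(\mathbb{R}) \cap H^{1}(\mathbb{R})$ only the weighted bound $\langle z \rangle r(z) \in L^{2}(\mathbb{R})$ remains to be established. All of the necessary pointwise decay information is packaged in \eqref{asymptoticsofr}, together with the smoothness of $r$ on $\mathbb{R}$ away from $0$ that comes from Proposition \ref{spec distri} and the removable nature of the singularities at $z = \pm 1$ noted there.

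The first step is to fix $0 < \varepsilon < 1 < R$ and split
\begin{equation*}
\int_{\mathbb{R}} \langle z\rangle^{2} |r(z)|^{2}\, dz = \Bigl(\int_{|z| \leq \varepsilon} + \int_{\varepsilon < |z| < R} + \int_{|z| \geq R}\Bigr)\langle z\rangle^{2} |r(z)|^{2}\, dz .
\end{equation*}
Then I would handle the three regions separately. For $|z| \geq R$, the asymptotic $r(z) = \mathcal{O}(z^{-2})$ from \eqref{asymptoticsofr} yields $\langle z\rangle^{2}|r(z)|^{2} \lesssim |z|^{-2}$, which is integrable at infinity. For $|z| \leq \varepsilon$, the asymptotic $r(z) = \mathcal{O}(z^{2})$ gives $\langle z\rangle^{2}|r(z)|^{2} \lesssim |z|^{4}$, which is integrable on a bounded interval. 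For the middle region $\varepsilon < |z| < R$, I invoke the Sobolev embedding $H^{1}(\mathbb{R}) \hookrightarrow L^{\infty}(\mathbb{R})$ to conclude $\|r\|_{L^{\infty}} \lesssim \|r\|_{H^{1}}$, so that $\int_{\varepsilon < |z| < R} \langle z\rangle^{2}|r(z)|^{2}\, dz \lesssim (R^{2}+1)(R-\varepsilon)\|r\|_{H^{1}}^{2}$ is finite.

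Summing these three contributions shows $\langle z \rangle r \in L^{2}(\mathbb{R})$, hence $r \in L^{2,1}(\mathbb{R})$; combined with $r \in H^{1}(\mathbb{R})$, this gives $r \in H^{1,1}(\mathbb{R})$. I do not expect any serious obstacle: the proof is essentially a bookkeeping exercise that marries the two-sided pointwise estimates \eqref{asymptoticsofr} with the global Sobolev regularity provided by Proposition \ref{rinH1}. The only subtle point is making sure that the asymptotic expansions are used in open neighborhoods where they are uniformly valid, which is precisely the content of \eqref{asymptoticsfora1}--\eqref{asymptoticsofr} in Proposition \ref{spec distri}.
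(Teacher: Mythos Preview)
Your proposal is correct and follows essentially the same route as the paper: reduce to showing $r\in L^{2,1}(\mathbb{R})$ via $H^{1,1}=L^{2,1}\cap H^{1}$, then use the decay $r(z)=\mathcal{O}(z^{-2})$ from \eqref{asymptoticsofr} to control $\langle z\rangle^{2}|r(z)|^{2}$ at infinity. The paper's version is simply terser---it only writes out the large-$|z|$ estimate, since on any compact set the weight $\langle z\rangle$ is bounded and $r\in H^{1}\subset L^{2}$ already handles the integral there; your three-region split makes this implicit step explicit but adds nothing new.
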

\begin{proof}
    Since $H^{1,1}(\mathbb{R})=L^{2,1}(\mathbb{R})\cap H^{1}(\mathbb{R})$, what we need to prove is that $r\in L^{2,1}(\mathbb{R})$.
    With \eqref{asymptoticsofr}, we can see that
    \begin{equation}
        \vert z\vert^2 r^{2}(z) \sim  \vert z\vert^{-2}, \quad |z|\rightarrow \infty
    \end{equation}
    Thus
    \begin{equation}
        \int_{\mathbb{R}}\vert \langle z\rangle r(z)\vert^2 <\infty,
    \end{equation}
    which implies the result.
\end{proof}

In a similar way \cite{ISTDNLS}, we can show that zeros of $a(z)$  are  finite and  simple,  all of which are placed on the unit circle $\{z: |z|=1\}$ (see Figure \ref{spectrumsdis}).
Suppose that $a(z)$ has finite $N$ simple zeros $z_1, z_2,\dots, z_N$ on $D_{+}\cap \{z:|z|=1, \Im z>0, \Re z>0\}$.

The symmetries of $S(z)$ imply that
\begin{equation*}
a(z_n)=0 \Leftrightarrow a(\bar{z}_n)=0 \Leftrightarrow a(-z_n)=0 \Leftrightarrow a(-\bar{z}_n)=0, \quad n=1, \dots, N.
\end{equation*}
Therefore we give the discrete spectrum by
\begin{equation}
    \mathcal{Z}=\{z_n, \bar{z}_n -\bar{z}_n, -z_n\}_{n=1}^{N},
\end{equation}
where $z_n$ satisfies that $|z_n|=1$, $\Re z_n>0$, $\Im z_n>0$. Moreover, it is convenient to define that
\begin{equation}
    \eta_n=\left\{
        \begin{aligned}
        &z_n,\quad &n=1, \dots, N, \\
        &-\bar{z}_{n-N},\quad &n=N+1 ,\dots, 2N,
        \end{aligned}
        \right.
\end{equation}
from which we express the set $\mathcal{Z}$ in terms of
\begin{equation}
\mathcal{Z}=\{\eta_n, \bar{\eta}_n\}_{n=1}^{2N}.
\end{equation}
Using trace formulae, $a(z)$ is given by
\begin{equation}
    a(z)=\prod_{n=1}^{2N}\left(\frac{z-\eta_n}{z-\bar{\eta}_n}\right){\rm exp}\left[-\frac{1}{2\pi i}\int_{\Sigma}\frac{{\rm log}(1-r(s)\tilde{r}(s))}{s-z}ds\right], \quad z\in\mathbb{C}_{+}.
\end{equation}

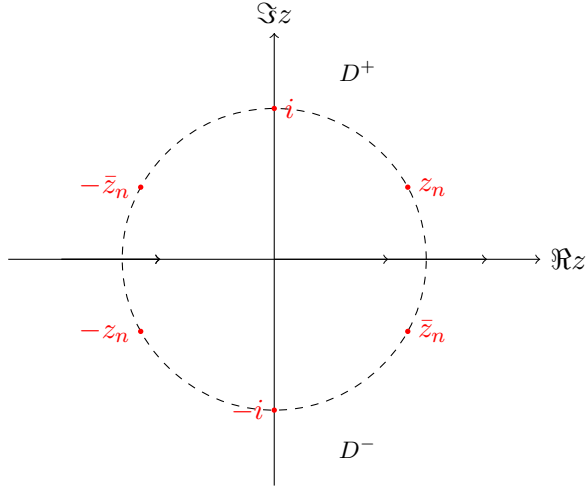
\begin{figure}[htbp]
	\centering
	\begin{tikzpicture}[node distance=2cm]
	\draw[->](-3.5,0)--(3.5,0)node[right]{$\Re z$};
	\draw[->](0,-3)--(0,3)node[above]{$\Im z$};
    \node   at (1.1,2.5) {\footnotesize $D^+$};
    \node   at (1.1,-2.5) {\footnotesize $D^-$};
	\draw[dashed] (2,0) arc (0:360:2);
	\draw[->](-2.8,0)--(-1.5,0);
	\draw[->](-2.8,0)--(-1.5,0);
	\draw[->](0,0)--(1.5,0);
	\draw[->](1.5,0)--(2.8,0);
	\coordinate (A) at (2,2.985);
    \coordinate (B) at (2,-2.985);
    \coordinate (C) at (-0.616996232,0.9120505887);
    \coordinate (D) at (-0.616996232,-0.9120505887);
    \coordinate (E) at (0.616996232,0.9120505887);
    \coordinate (F) at (0.616996232,-0.9120505887);
    \coordinate (G) at (-2,2.985);
    \coordinate (H) at (-2,-2.985);
	\coordinate (J) at (1.7570508075688774,0.956);
	\coordinate (K) at (1.7570508075688774,-0.956);
	\coordinate (L) at (-1.7570508075688774,0.956);
	\coordinate (M) at (-1.7570508075688774,-0.956);
    \coordinate (N) at (0, 2);
    \coordinate (O) at (0, -2);
	\fill[red] (J) circle (1pt) node[right] {$z_n$};
	\fill[red] (K) circle (1pt) node[right] {$\bar{z}_n$};
	\fill[red] (L) circle (1pt) node[left] {$-\bar{z}_n$};
	\fill[red] (M) circle (1pt) node[left] {$-z_n$};
    \fill[red] (N) circle (1pt) node[right]{$i$};
    \fill[red] (O) circle (1pt) node[left]{$-i$};
	\end{tikzpicture}
	\caption{\footnotesize The discrete spectrums distribute  on the unite circle $\{z:|z|=1\}$  on  the $z$-plane.
     }
	\label{spectrumsdis}
\end{figure}

Denoting norming constant $c_n=b_n/a'(\eta_n)$, the residue condition follows immediately
\begin{align}
    &\underset{z=\eta_n}{\rm Res} \left[\frac{\mu_{+,1}(z)}{a(z)}\right]=c_n e^{-2i\lambda(\eta_n)x}\mu_{-,2}(\eta_n),\\
    &\underset{z=\bar{\eta}_n}{\rm Res} \left[\frac{\mu_{+,2}(z)}{\overline{a(\bar{z})}}\right]=\bar{c}_n e^{2i\lambda(\bar{\eta}_n)x}\mu_{-,1}(\bar{\eta}_n).
\end{align}
Collect $\sigma_d=\{\eta_n, c_n\}_{n=1}^{2N}$ the scattering data. Now we try to carry out the time evolution of the scattering data. If $q$ also depends on time variable $t$,
we can obtain the functions $a(z)$ and $b(z)$ mentioned above for all times $t\in\mathbb{R}$. Applying $\partial_t$ to \eqref{lax pair} and taking some
standard arguments, such as \cite{DZtok, GPequ}, we know that time dependence of scattering data could be expressed in terms of the following replacement
\begin{align}
    &c(\eta_n)\rightarrow c(t,\eta_n)=c(0, \eta_n)e^{\lambda(\eta_n)(4k^2(\eta_n)+2)t},\\
    &r(z)\rightarrow r(t,z)=r(0,z)e^{\lambda (4k^2+2)t}.
\end{align}
\begin{remark}\rm
    At time $t=0$, the initial function $q(x,0)$ produces $4N$ simple zeros of $a(z,0)$. If $q$ evolves in terms of \eqref{dmkdv},
    then $q(x,t)$ will produce exactly the same $4N$ simple zeros at time $0\neq t\in\mathbb{R}$ for $a(z,t)$. And the scattering data with
    time variable $t$ can be given by
    \begin{equation*}
    \Big{\{}r(z)e^{\lambda (4k^2+2)t}, \{\eta_n, c(\eta_n)e^{\lambda(\eta_n)(4k^2(\eta_n)+2)t}\}_{n=1}^{2N}\Big{\}},
    \end{equation*}
    where $\Big{\{}r(z), \{\eta_n, c_n\}_{n=1}^{2N}\Big{\}}$ are corresponded to initial data $q_0(x)$.
\end{remark}

\subsection{Set up of the Riemann-Hilbert problem}
Define a sectionally meromorphic matrix as follows
\begin{equation}
    M(x,t;z):=\left\{
        \begin{aligned}
        \left(\frac{\mu_{+,1}(x,t;z)}{a(z)}, \mu_{-,2}(x,t;z)\right), \quad z\in \mathbb{C}_{+} \\
        \left(\mu_{-,1}(x,t;z), \frac{\mu_{+,2}(x,t;z)}{\overline{a(\bar{z})}}\right), \quad z\in \mathbb{C}_{-},
        \end{aligned}
        \right.
\end{equation}
which solves the following RH problem.
\begin{RHP}\label{originrhp}
Find a $2\times 2$ matrix-valued function $M(x,t;z)$ such that\\
- $M(z)$ is analytical in $\mathbb{C}\backslash (\Sigma\cup\mathcal{Z})$ and has simple poles in $\mathcal{Z}=\{\eta_n, \bar{\eta}_n\}_{n=1}^{2N}$. \\
- $M(z)=\sigma_1\overline{M(\bar{z})}\sigma_1=\overline{M(-\bar{z})}=\mp z^{-1}M(z^{-1})\sigma_2$. \\
- The non-tangential limits $M_{\pm}(z)=\lim_{s\rightarrow z}M(s), s\in\mathbb{C}_{\pm}$ exist for any $z\in\Sigma$ and
satisfy the jump relation $M_+(z)=M_-(z)V(z)$ where
\begin{equation}\label{rhp0jump}
    V(z)=\begin{pmatrix} 1-|r(z)|^2 & -\overline{r(z)}e^{2it\theta} \\ r(z)e^{-2it\theta} & 1 \end{pmatrix}, \quad z\in\Sigma,
\end{equation}
with $\theta(z)=\lambda(z)\left[\frac{x}{t}+4k^2(z)+2\right]$. \\
- Asymptotic behavior
\begin{align}
    &M(x,t;z)=I+\mathcal{O}(z^{-1}), \quad  z\rightarrow\infty,\\
    &M(x,t;z)=\frac{\sigma_2}{z}+\mathcal{O}(1), \quad  z\rightarrow 0.
\end{align}
- Residue conditions
\begin{align}
&\underset{z=\eta_n}{\rm Res}M(z)=\lim_{z\rightarrow\eta_{n}}M(z)\begin{pmatrix} 0 & 0 \\ c_n e^{-2it\theta(\eta_n)} & 0 \end{pmatrix},\label{rhp0resa}\\
&\underset{z=\bar{\eta}_n}{\rm Res}M(z)=\lim_{z\rightarrow\bar{\eta}_{n}}M(z)\begin{pmatrix} 0 & \bar{c}_ne^{2it\theta(\bar{\eta}_n)} \\ 0 & 0 \end{pmatrix},\label{rhp0resb}
\end{align}
\end{RHP}
The solution $q(x,t)$ of \eqref{dmkdv} can be expressed in terms of the solution $M$ of RH problem \ref{originrhp} via the following proposition.
\begin{proposition}\label{helprecover}Assuming $q-\tanh(x)\in L^{1,2}(\mathbb{R})$ and $q'(x)\in W^{1,1}(\mathbb{R})$,
    we have the following asymptotics of $M(z)$ as $z\rightarrow\infty$ and $z\rightarrow0$:
    \begin{align}
    &\lim_{z\rightarrow\infty}z(M(z)-I)=\left(\begin{array}{cc}
                                               -i\int_x^{\infty}(q^2-1)dx & iq\\
                                               -iq & i\int_x^{\infty}(q^2-1)dx
                                               \end{array}\right),\\
    &\lim_{z\rightarrow0}(M(z)-\frac{\sigma_2}{z})=\left(\begin{array}{cc}
                                               iq &-i\int_x^{\infty}(q^2-1)dx \\
                                               i\int_x^{\infty}(q^2-1)dx&-iq
                                               \end{array}\right).
    \end{align}
    And the solution $q(x,t)$ of \eqref{dmkdv}-\eqref{bdries} is given by
    \begin{equation}\label{recover}
        q(x,t)=-i(M_1)_{12}=-i\lim_{z\rightarrow\infty}(zM)_{12},
    \end{equation}
    where $M_1$ appears in the expansion of $M=I+z^{-1}M_1+\mathcal{O}(z^{-2})$ as $z\rightarrow\infty$.
\end{proposition}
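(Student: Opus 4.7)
The statement packages three facts: the expansions of $M(z)$ as $z\to\infty$ and as $z\to 0$, and the reconstruction formula \eqref{recover}. The plan is to substitute the Jost-function asymptotics of Proposition \ref{analydiff} together with the scattering-data asymptotics \eqref{asymptoticsfora1}--\eqref{asymptoticsfora2} into the sectional definition of $M$, and then to read off \eqref{recover} from the $(1,2)$-entry of the $z^{-1}$ coefficient.

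For $z\to\infty$ I fix $z\in\mathbb{C}_{+}$ and treat the two columns separately. The second column is $\mu_{-,2}(z)$, whose expansion is given directly by Proposition \ref{analydiff}. For the first column $\mu_{+,1}(z)/a(z)$ I expand the reciprocal via the Neumann-type identity $1/a(z)=1-(a(z)-1)+\mathcal{O}((a-1)^{2})$, insert \eqref{asymptoticsfora1}, multiply with the expansion of $\mu_{+,1}$, and retain terms through order $z^{-1}$. The various $1/z$ contributions combine---via the determinantal identity $a(z)=\det(\mu_{+,1},\mu_{-,2})/(1-z^{-2})$ from \eqref{azbz}---into the single expressions asserted in the statement. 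The expansion in $\mathbb{C}_{-}$ then follows either by repeating the argument on the second sectional piece of $M$, or, more economically, by invoking the Schwarz-type symmetry $M(z)=\sigma_{1}\overline{M(\bar z)}\sigma_{1}$, which forces both one-sided limits to share the same matrix $M_{1}$.

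For $z\to 0$ the mechanism is identical, but now the Jost functions $\mu_{+,1}$ and $\mu_{-,2}$ carry simple poles at the origin with residues $-ie_{2}$ and $-ie_{1}$ respectively (Proposition \ref{analydiff}), while \eqref{asymptoticsfora2} gives $a(z)=-1+iz\int_{\mathbb{R}}(q^{2}-1)dx+\mathcal{O}(z^{2})$ and hence $1/a(z)=-1-iz\int_{\mathbb{R}}(q^{2}-1)dx+\mathcal{O}(z^{2})$. Multiplying produces the singular term $\sigma_{2}/z$ from the leading pole together with the stated constant term at the next order. A cleaner alternative, and a useful consistency check, is to apply the inversion symmetry $M(z)=\mp z^{-1}M(z^{-1})\sigma_{2}$ from RH problem \ref{originrhp}, which transports the large-$z$ expansion directly to $z=0$. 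Matching the large-$z$ expansion against $M=I+z^{-1}M_{1}+\mathcal{O}(z^{-2})$ then identifies $(M_{1})_{12}=iq(x,t)$, which is precisely \eqref{recover}. The main obstacle is the bookkeeping required to combine the several competing $z^{-1}$ contributions correctly, and this is controlled by the integrability hypotheses $q-\tanh(x)\in L^{1,2}(\mathbb{R})$ and $q'\in W^{1,1}(\mathbb{R})$ inherited from Propositions \ref{analydiff} and \ref{spec distri}.
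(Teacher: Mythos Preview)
Your approach is correct and matches the paper's own proof, which is simply the one-line remark ``This proposition follows from the third item of Proposition~\ref{analydiff}.'' You have merely made explicit what the paper leaves implicit---namely the role of the $a(z)$ asymptotics \eqref{asymptoticsfora1}--\eqref{asymptoticsfora2} and the use of the symmetries to pass between half-planes and between $z\to\infty$ and $z\to 0$---so your write-up is in fact more detailed than the paper's.
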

\begin{proof}
    This proposition follows from the third item of Proposition \ref{analydiff}.
\end{proof}

\section{Distribution of saddle points and signature table }\label{disphasepoint}
The exponential term appeared in the jump matrix of RH problem \ref{originrhp} plays a key role in our analysis.
\begin{equation}\label{phasefunc}
e^{\pm 2it\theta}, \quad \theta(z)=\frac{1}{2}\left(z-\frac{1}{z}\right)\left[\frac{x}{t}+2+\left(z+\frac{1}{z}\right)^2\right].
\end{equation}
In this section, we present the analysis on the phase function $\theta(z)$, which include the saddle points (see Figure \ref{figsaddle})
and the signature tables for $e^{2it\theta(z)}$ (see Figure \ref{figtheta}).
Direct calculation shows that:
\begin{align}
    &\Im\theta(z)=\frac{\xi+3}{2}\Im z-\frac{\xi+3}{2|z|^2}\Im\bar{z}+\frac{1}{2}\Im(z^3)-\frac{1}{2|z|^6}\Im(\bar{z}^3),\label{imatheta}\\
    &\Re(2it\theta(z))=-t\left[(\xi+3)\Im z-(\xi+3)|z|^{-2}\Im\bar{z}+\Im(z^3)-|z|^{-6}\Im(\bar{z}^3)\right].
\end{align}
To find the stationary phase points (or saddle points), we need $\theta'(z)$
\begin{equation}
    \theta'(z)=\frac{3}{2}z^2+\frac{\xi+3}{2z^2}+\frac{3}{2z^4}+\frac{\xi+3}{2}.
\end{equation}
\begin{proposition}[Distribution of saddle points]\label{relation of xi}
    Besides two fixed saddle points $i, -i$, there exist four saddle points which satisfy the following properties for
    different $\xi$ (see Figure \ref{figsaddle}):\\
    - For $\xi<-6$, the four saddle points $\xi_j:=\xi_j(\xi)$, $j=1,2,3,4$ are located on the jump contour $\Sigma=\mathbb{R}\backslash\{0\}$.
        Moreover, we have $\xi_4<-1<\xi_3<0<\xi_2<1<\xi_1$ and $\xi_1=\frac{1}{\xi_2}=-\frac{1}{\xi_3}=-\xi_{4}$;\\
    - For $-6<\xi<6$, the four saddle points are away from the coordinate axis (both real and imaginary axis);\\
    - For $\xi>6$, the four saddle points are all located on the imaginary axis.
    Moreover, we have $\Im\xi_1>1>\Im\xi_2>0>\Im\xi_3>-1>\Im\xi_4$ and $\xi_1\xi_2=\xi_3\xi_4=-1$.
\end{proposition}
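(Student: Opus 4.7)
The strategy is to reduce the saddle equation $\theta'(z)=0$ to a polynomial problem by the substitution $w=z^2$. Clearing denominators in
\[
\theta'(z)=\tfrac{3}{2}z^2+\tfrac{\xi+3}{2}+\tfrac{\xi+3}{2z^2}+\tfrac{3}{2z^4}
\]
yields $3z^6+(\xi+3)z^4+(\xi+3)z^2+3=0$, which in $w=z^2$ becomes the palindromic cubic $3w^3+(\xi+3)w^2+(\xi+3)w+3=0$. Such palindromic cubics always have $w=-1$ as a root; explicitly,
\[
3w^3+(\xi+3)w^2+(\xi+3)w+3=(w+1)(3w^2+\xi w+3).
\]
The factor $w+1$ gives $z=\pm i$, matching the two fixed saddle points mentioned in the statement. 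The remaining four saddles are the two square roots of each of the roots of $3w^2+\xi w+3=0$.

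I would then run a case analysis on the discriminant $\Delta=\xi^2-36$ of this quadratic, using Vieta to note that the two $w$-roots $w_\pm$ always satisfy $w_+w_-=1$ and $w_++w_-=-\xi/3$. When $|\xi|>6$ both roots are real, and since their product is positive, they share a common sign opposite to that of $\xi$: for $\xi<-6$ they are both positive (yielding four real $z$-saddles on $\Sigma$), and for $\xi>6$ both negative (yielding four purely imaginary $z$-saddles). When $-6<\xi<6$ the roots are complex conjugate with $|w_\pm|=1$ and nonzero imaginary part; since the square root of any complex number with nonzero imaginary part has both real and imaginary parts nonzero, the four associated $z$-saddles lie strictly off both coordinate axes, which is the claim for the middle sector.

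The symmetry relations then fall out of $w_+w_-=1$ essentially for free. For $\xi<-6$, set $\xi_1=\sqrt{w_+}>1$, $\xi_2=\sqrt{w_-}\in(0,1)$, $\xi_3=-\xi_2$, $\xi_4=-\xi_1$; then $\xi_1\xi_2=\sqrt{w_+w_-}=1$ yields $\xi_1=1/\xi_2=-1/\xi_3=-\xi_4$, and the chain $\xi_4<-1<\xi_3<0<\xi_2<1<\xi_1$ follows because $w_+>1>w_->0$. For $\xi>6$, set $\xi_1=i\sqrt{-w_+}$, $\xi_2=i\sqrt{-w_-}$, $\xi_3=-\xi_2$, $\xi_4=-\xi_1$; now $\xi_1\xi_2=i^2\sqrt{w_+w_-}=-1$ and similarly $\xi_3\xi_4=-1$, while $-w_+>1>-w_->0$ gives $\Im\xi_1>1>\Im\xi_2>0>\Im\xi_3>-1>\Im\xi_4$.

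There is no serious obstacle: the whole argument rests on the palindromic factorization, after which everything is encoded in the reciprocal Vieta relation $w_+w_-=1$. The only mild bookkeeping is choosing branch signs for the square roots $\sqrt{w_\pm}$ in each regime so that the labels $\xi_1,\ldots,\xi_4$ match the ordering prescribed in the statement.
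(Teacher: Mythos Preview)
Your proof is correct and follows essentially the same route as the paper: factor out $(z^2+1)$ from the sextic to isolate the fixed saddles $\pm i$, then analyze the remaining biquadratic $3z^4+\xi z^2+3=0$ via its discriminant $\xi^2-36$ and the Vieta relation $w_+w_-=1$ for $w=z^2$. Your explicit appeal to the palindromic structure and Vieta's formulas is a slightly cleaner way to read off the reciprocal symmetries $\xi_1\xi_2=1$ (resp.\ $\xi_1\xi_2=-1$), but the substance is identical to the paper's computation of the explicit roots.
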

\begin{proof}
    From $\theta'(z)=0$, we have
    \begin{equation}
       3z^6+(\xi+3)z^4+(\xi+3)z^2+3=0.
    \end{equation}
    Using factorization technique, we obtain
    \begin{equation}
       (1+z^2)\left(3z^4+\xi z^2+3\right)=0.
    \end{equation}
    From the equality, we have two fixed saddle points $i, -i$. And we can solve that
    \begin{equation}
        z^2=-\frac{\xi+\sqrt{\xi^2-36}}{6}, \quad {\rm or}\quad  z^2=-\frac{\xi-\sqrt{\xi^2-36}}{6}.
    \end{equation}
    For $\xi<-6$, both $-\frac{\xi+\sqrt{\xi^2-36}}{6}$ and $-\frac{\xi-\sqrt{\xi^2-36}}{6}$ are greater than zero,
    and four roots are as follows.
    \begin{align}
    \xi_1=\sqrt{-\frac{\xi-\sqrt{\xi^2-36}}{6}}, \quad \xi_4=-\sqrt{-\frac{\xi-\sqrt{\xi^2-36}}{6}}, \label{saddlexi1,4}\\
    \xi_2=\sqrt{-\frac{\xi+\sqrt{\xi^2-36}}{6}}, \quad \xi_3=-\sqrt{-\frac{\xi+\sqrt{\xi^2-36}}{6}}.\label{saddlexi2,3}
    \end{align}
    with relation $\xi_4<-1<\xi_3<0<\xi_2<1<\xi_1$ and $\xi_1=\frac{1}{\xi_2}=-\frac{1}{\xi_3}=-\xi_{4}$.

    For $-6<\xi<6$, the discriminant $\xi^2-36$ is less than zero. We can know that there exist four saddle points
    $\xi_j=\Re(\xi_j)+i\Im(\xi_j)$, where $\Re(\xi_j), \Im(\xi_j)\neq 0$, $j=1,2,3,4$.

    For $\xi>6$, both $-\frac{\xi+\sqrt{\xi^2-36}}{6}$ and $-\frac{\xi-\sqrt{\xi^2-36}}{6}$ are less than zero. And
    four pure imaginary saddle points are as follows
    \begin{align}
    \xi_1=i\sqrt{\frac{\xi+\sqrt{\xi^2-36}}{6}}, \quad \xi_4=-i\sqrt{\frac{\xi+\sqrt{\xi^2-36}}{6}}, \\
    \xi_2=i\sqrt{\frac{\xi-\sqrt{\xi^2-36}}{6}}, \quad \xi_3=-i\sqrt{\frac{\xi-\sqrt{\xi^2-36}}{6}},
    \end{align}
    with $\Im\xi_1>1>\Im\xi_2>0>\Im\xi_3>-1>\Im\xi_4$ and $\xi_1\xi_2=\xi_3\xi_4=-1$.
\end{proof}
\begin{remark}\rm
    We can see that, for example, $\xi_1$, $\xi_4$ defined by \eqref{saddlexi1,4} can't be $\infty$ because of the finite $\xi=\mathcal{O}(1)$.
\end{remark}
According to the Figure \ref{figsaddle} and Figure \ref{figtheta}. We can find that:
for $\xi<-6$, there exist four stationary phase points besides $i,-i$, which are all located on the jump contour $\Sigma$ as shown in Figure \ref{xiaoyu} with signature table shown in Figure \ref{figxi4sp}.
For $-6<\xi<-2$, The distribution of phase points is shown in Figure \ref{jieyu} and signature table is shown in Figure \ref{figxi0sp}.
For $\xi>-2$, there exist four stationary phase points besides $i,-i$.
When $-2<\xi<6$, the four saddle points are away from the coordinate axis (both real and imaginary axis), which is corresponded to Figure \ref{jieyu} and the signature table is shown in Figure \ref{figxi00sp}.
The asymptotics for $-2<\xi<6$ could be seen as a specific case of the asymptotics for $-6<\xi<-2$.
For $\xi>6$, the four saddle points are all distributed on the imaginary axis as shown in Figure \ref{dayu} and the signature table is still shown in Figure \ref{figxi00sp}.
\begin{figure}[h]
	\centering
	\subfigure[]{\includegraphics[width=0.3\linewidth]{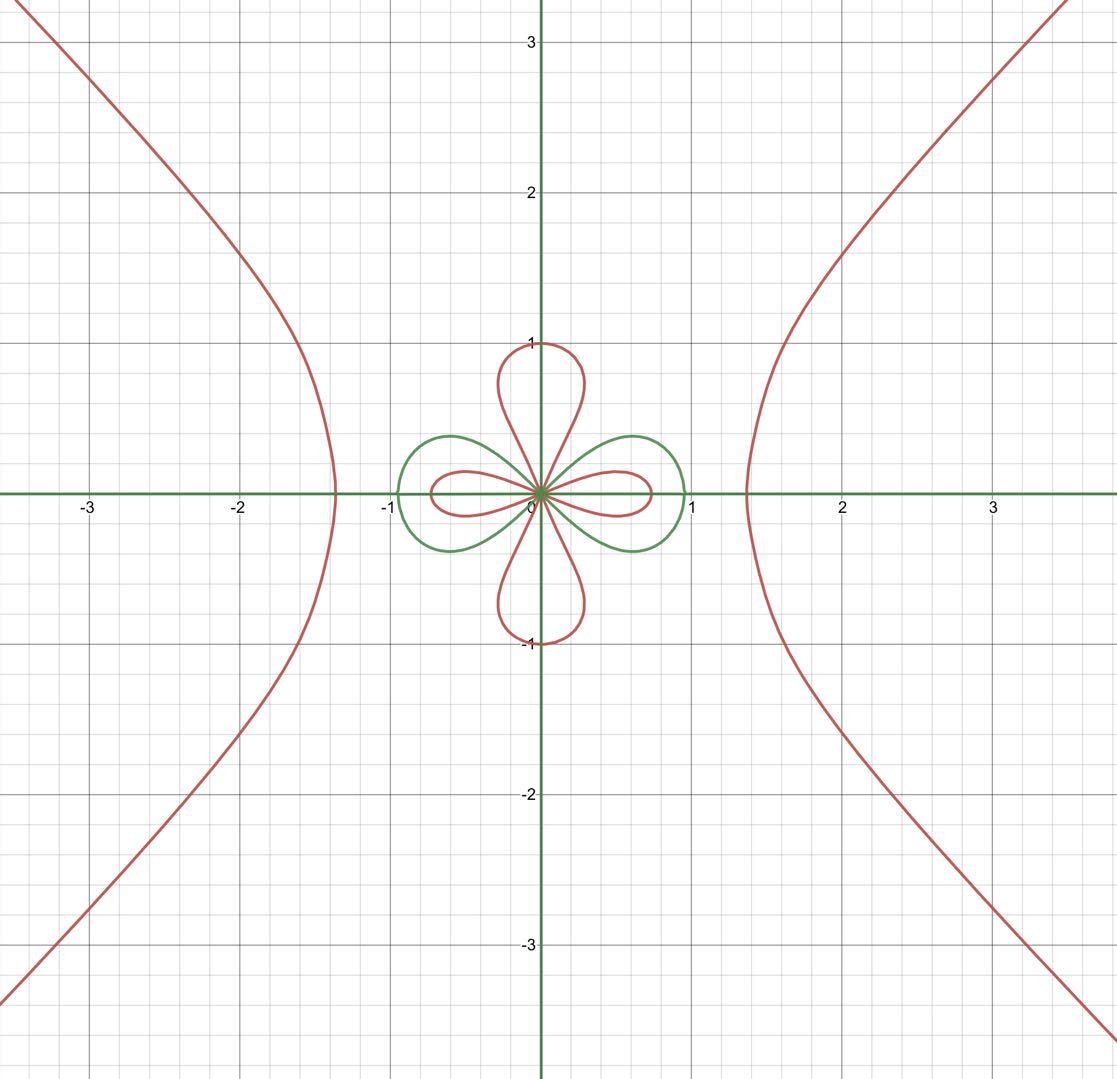}\hspace{0.5cm}\
	\label{xiaoyu}}
	\subfigure[]{\includegraphics[width=0.3\linewidth]{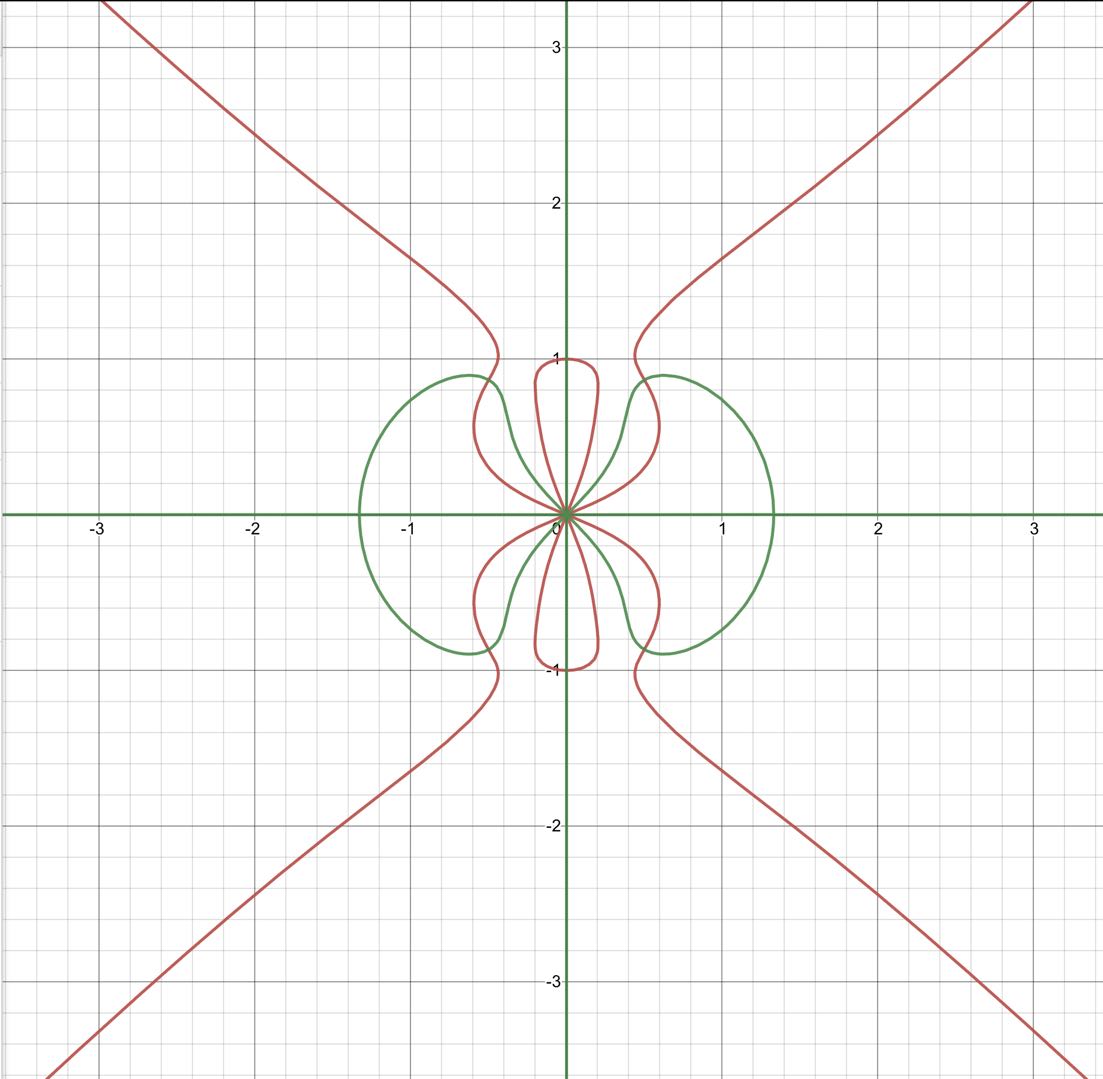}
	\label{jieyu}}	
	\subfigure[]{\includegraphics[width=0.3\linewidth]{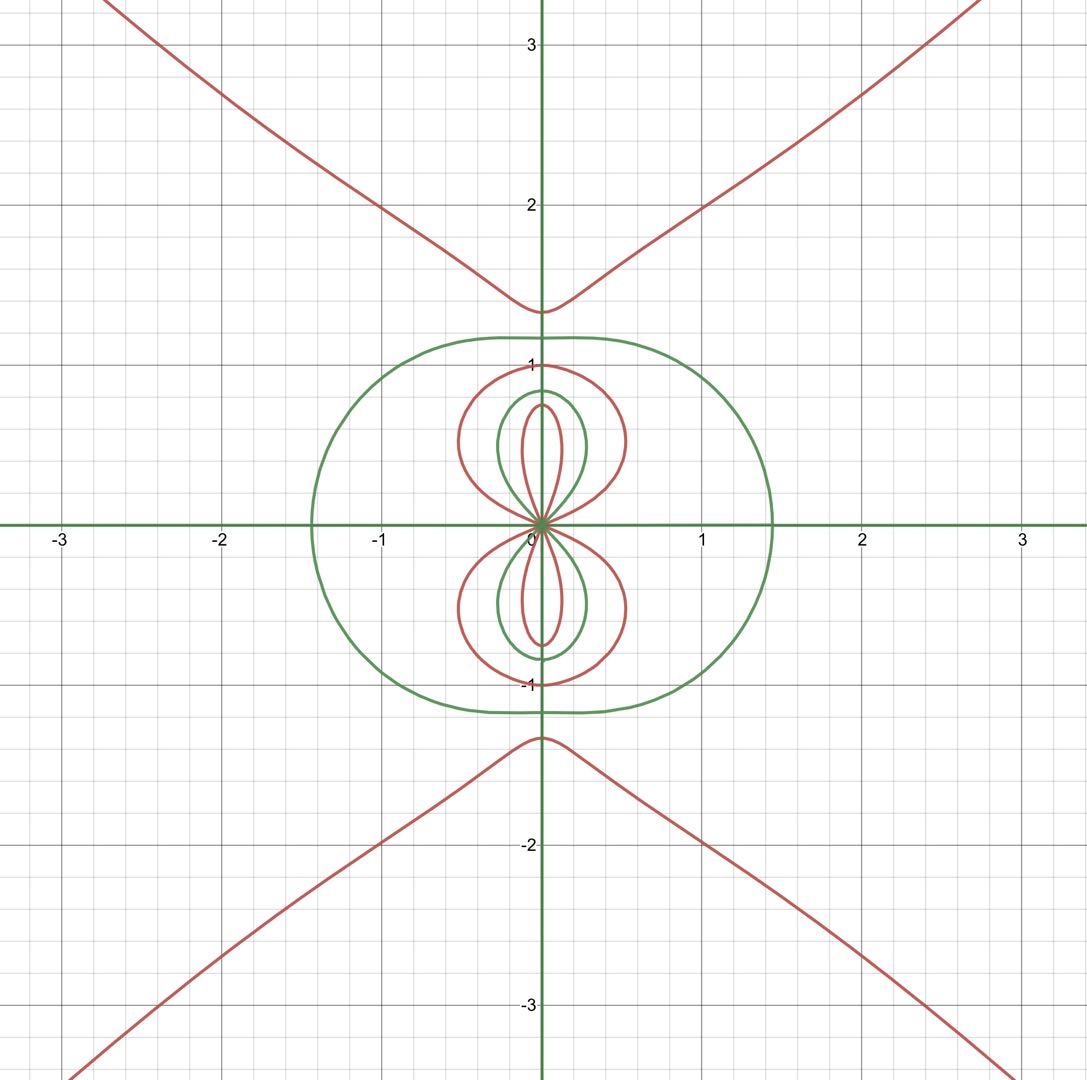}
	\label{dayu}}
	\caption{\footnotesize Plots of the distributions for saddle points:
    $\textbf{(a)}$ $\xi<-6$,
    $\textbf{(b)}$ $-6<\xi<6$,
    $\textbf{(c)}$ $\xi>6$. The red curve represents $\Re \theta'(z)=0$, and the green curve represents $\Im \theta'(z)=0$. The intersection
    points are the saddle points which represent the zeros of $\theta'(z)=0$.}
	\label{figsaddle}
\end{figure}

\begin{figure}[h]
	\centering
	\subfigure[]{\includegraphics[width=0.3\linewidth]{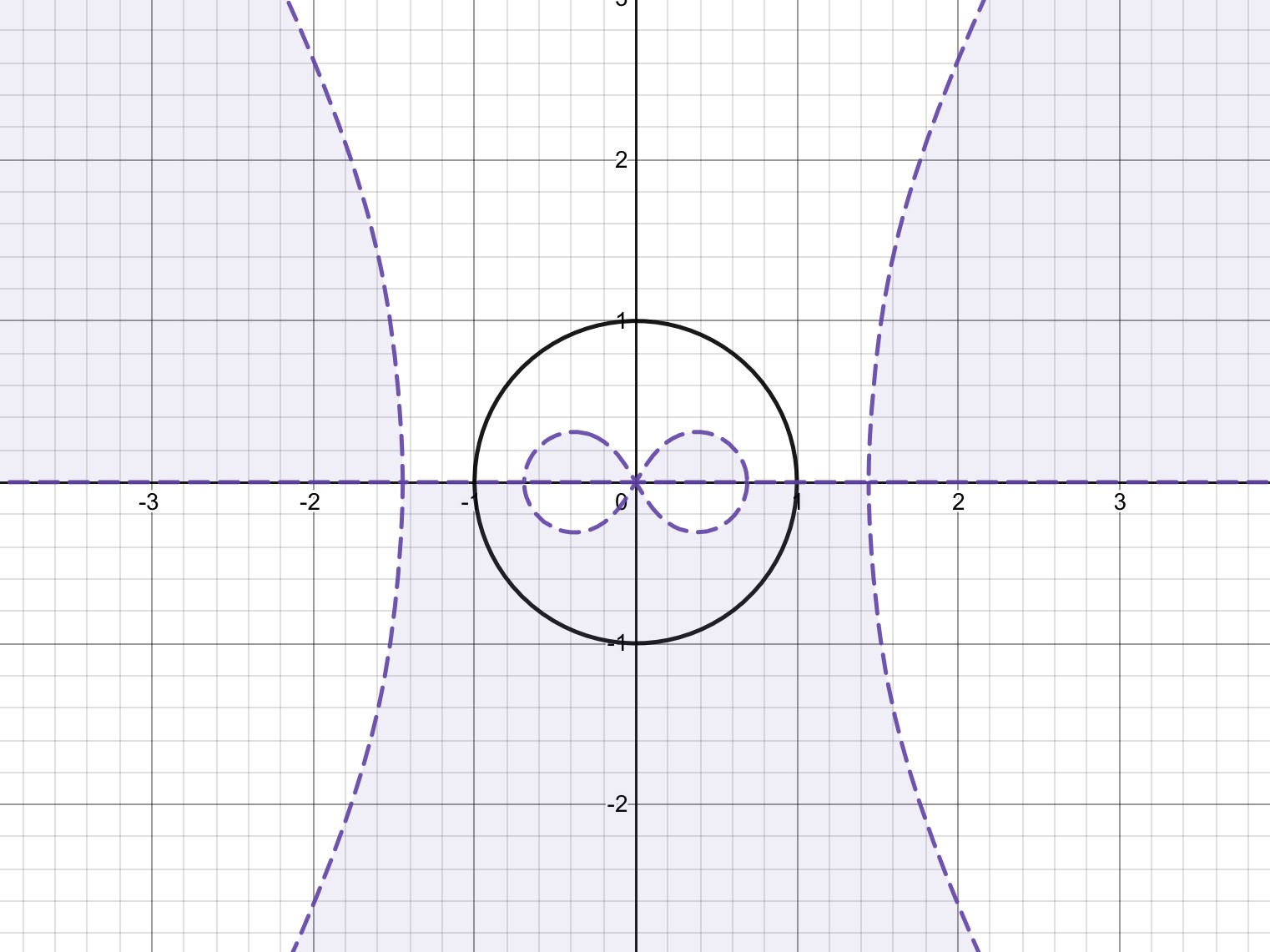}\hspace{0.5cm}
	\label{figxi4sp}}
	\subfigure[]{\includegraphics[width=0.3\linewidth]{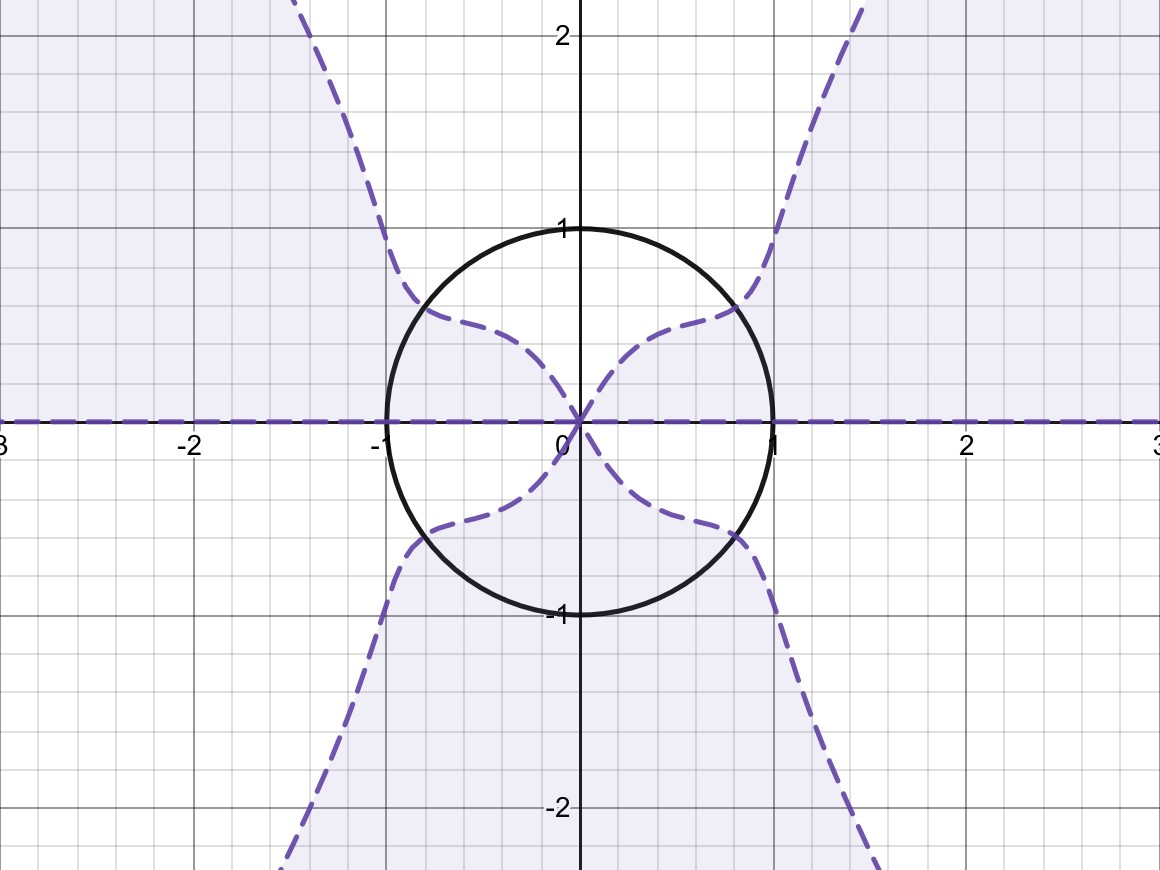}
	\label{figxi0sp}}	
	\subfigure[]{\includegraphics[width=0.3\linewidth]{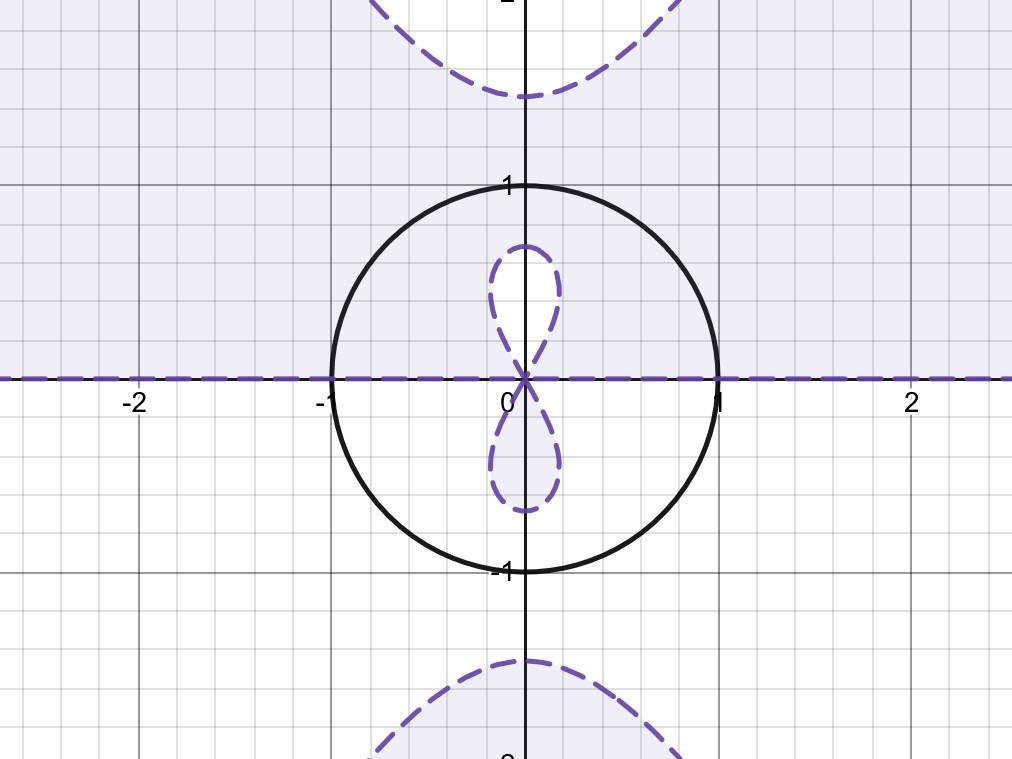}
	\label{figxi00sp}}
	\caption{\footnotesize Plots of the $\Im\theta$ with different $\xi=x/t$:
    $\textbf{(a)}$ $\xi<-6$,
    $\textbf{(b)}$ $-6<\xi<-2$,
    $\textbf{(c)}$ $\xi>6$. The black curve is unit circle. In the purple region, $\Im\theta>0$ ($|e^{2it\theta}|\rightarrow 0$ as $t\rightarrow\infty$),
    and $\Im\theta<0$ ($|e^{-2it\theta}|\rightarrow 0$ as $t\rightarrow\infty$) in the white region. The purple dotted curve represents $\Im\theta=0$. }
	\label{figtheta}
\end{figure}

\section{Asymptotics for $\xi\in\mathcal{R}_L$: left field}\label{sec:proofof1a}
\subsection{Jump matrix  factorizations}\label{1stdeform}
Now we use factorizations of the jump matrix along the real axis to deform the contours onto those on which the oscillatory jump on
the real axis is traded for exponential decay. This step is aided by two well known factorizations of the jump matrix $V(z)$ in \eqref{rhp0jump}:
\begin{align}
    V(z)&=\begin{pmatrix} 1 & -\overline{r(z)}e^{2it\theta} \\ 0 & 1 \end{pmatrix}\begin{pmatrix} 1 & 0 \\ r(z)e^{-2it\theta} & 1 \end{pmatrix}, \quad z\in\tilde{\Gamma},\\
        &=\begin{pmatrix} 1 & 0 \\ \frac{r(z)e^{-2it\theta}}{1-|r(z)|^2} & 1 \end{pmatrix}\left(1-|r(z)|^2\right)^{\sigma_3}\begin{pmatrix} 1 & -\frac{\overline{r(z)}e^{2it\theta}}{1-|r(z)|^2}\\ 0 & 1\end{pmatrix}, \quad z\in\Gamma,
\end{align}
where
\begin{align*}
&\Gamma:=(-\infty, \xi_4)\cup(\xi_3,0)\cup(0,\xi_2)\cup(\xi_1, +\infty),\\
&\tilde{\Gamma}:=(\xi_4,\xi_3)\cup(\xi_2, \xi_1).
\end{align*}
The leftmost term of the factorization can be deformed into $\mathbb{C}_-$, the rightmost term can be deformed into $\mathbb{C}_+$, while
any central terms remain on the real axis. These deformations are useful when they deformed the factors into regions in which the corresponding
off-diagonal exponential terms $e^{\pm 2it\theta}$ are decaying as $t\rightarrow\infty$.

\subsection{First transformation: $M\rightarrow M^{(1)}$}\label{subsec:RLmtom1}
Define the function
\begin{align}
    \delta(z):=\delta(z,\xi)={\rm exp}\left[-\frac{1}{2\pi i}\int_{\Gamma}{\rm log}\left(1-|r(s)|^2\right)\left(\frac{1}{s-z}-\frac{1}{2s}\right)ds\right].
\end{align}
Taking $\nu(z)=-\frac{1}{2\pi}{\rm log}(1-|r(z)|^2)$, then we can express
\begin{equation}\label{delta}
    \delta(z)={\rm exp}\left(-i\int_{\Gamma}\nu(s)\left(\frac{1}{s-z}-\frac{1}{2s}\right)ds\right).
\end{equation}
In the above formulaes, we choose the principal branch of power and logarithm functions.
\begin{proposition}\label{deltapro}The function defined by \eqref{delta} admits following properties:
    \begin{itemize}
        \item[(i)] $\delta(z)$ is analytical for $z\in\mathbb{C}\backslash\Gamma$;
        \item[(ii)] $\delta_{-}(z,\xi)=\delta_{+}(z,\xi)\left(1-|r(z)|^2\right)$, $z\in\Gamma$;
        \item[(iii)] $\delta(z)=\overline{\delta^{-1}(\bar{z})}=\overline{\delta(-\bar{z})}=\delta(z^{-1})^{-1}$;
        \item[(iv)] $\delta(\infty):=\lim_{z\rightarrow\infty}\delta(z)=1$. And $\delta(z)$ is continuous at $z=0$ with
        $\delta(0)=\delta(\infty)=1$;
        \item[(v)] $\delta(z)$ is uniformly bounded in $\mathbb{C}\setminus\mathbb{R}$
        \begin{equation}
            (1-\rho^2)^{1/2}\leqslant|\delta(z)|\leqslant(1-\rho^2)^{-1/2},
        \end{equation}
        where $||r||_{L^{\infty}}\leqslant\rho<1$.
        \item[(vi)] As $z\rightarrow\xi_j$ along any ray $\xi_j+e^{i\phi}\mathbb{R}_{+}$ with $|\phi|<\pi$, we have
        \begin{align}
            &\vert \delta(z)-(z-\xi_j)^{i\epsilon_j\nu(\xi_j)}e^{i\beta(z,\xi_j)} \vert\leqslant c\Vert r\Vert_{H^1}\vert z-\xi_j\vert^{\frac{1}{2}},\\
            &||\beta||_{L^{\infty}}\leqslant\frac{c||r||_{H^{1,0}}}{1-\rho},\label{betaest1}\\
            &|\beta(z,\xi)-\beta(\xi_j,\xi)|<\frac{c||r||_{H^{1,0}}}{1-\rho}|z-\xi_j|^{\frac{1}{2}},\label{betaest2}
        \end{align}
        where
        \begin{equation}
            \beta(z,\xi_j)=\int_{\Gamma}\frac{\nu(s)}{s-z}ds+\epsilon_j\nu(\xi_j){\rm log}(z-\xi_j),
        \end{equation}
    \end{itemize}
\end{proposition}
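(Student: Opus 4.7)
The plan is to treat $\delta(z)$ as the solution of a scalar Riemann-Hilbert problem built from a Cauchy-type integral on $\Gamma$, and to deduce (i)-(v) from classical Cauchy-integral theory together with the symmetries of $\nu$; the only technical step is (vi), which requires a quantitative extraction of the logarithmic singularity at each saddle point $\xi_j$. The starting observation is that, since $r\in H^{1,1}(\mathbb{R})$ by Corollary \ref{rinH11} with $\Vert r\Vert_{L^\infty}\leqslant\rho<1$, the density $\nu$ lies in $L^1\cap L^\infty(\Gamma)$ with extra decay at $0$ and $\infty$ inherited from Proposition \ref{spec distri}, so the integrand defining $\delta$ is holomorphic in $z\in\mathbb{C}\setminus\Gamma$. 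A preliminary identity I would record at the outset is $\int_\Gamma \nu(s)/(2s)\,ds = 0$, which holds because the integrand is odd in $s$ and $\Gamma$ is invariant under $s\mapsto -s$.

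Item (i) then follows by differentiation under the integral, and (ii) from the Plemelj-Sokhotski formula applied to the exponent $\log\delta$, giving $\log\delta_-(z)-\log\delta_+(z) = \log(1-|r(z)|^2)$. The symmetries (iii) reduce to the invariances $\nu(\bar s) = \nu(s)$, $\nu(-s) = \nu(s)$ and $\nu(1/s) = \nu(s)$ (all immediate from the symmetries of $r$ in Proposition \ref{spec distri}) together with the invariance of the oriented contour $\Gamma$ under $s\mapsto -s$ and $s\mapsto 1/s$; in the reciprocal case one must track the Jacobian $ds\mapsto -du/u^2$ carefully. The preliminary identity immediately gives (iv): as $z\to\infty$ the kernel $1/(s-z)-1/(2s)$ tends to $-1/(2s)$, and as $z\to 0$ it tends to $1/(2s)$, so both integrals vanish. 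For (v), a direct computation using the fact that $s$ is real yields $\log|\delta(z)| = \int_\Gamma \nu(s)\,\Im z/|s-z|^2\,ds$, whose absolute value is bounded by $\Vert\nu\Vert_{L^\infty}\int_{\mathbb{R}} |\Im z|/|s-z|^2\,ds = \pi\Vert\nu\Vert_{L^\infty} = \tfrac{1}{2}|\log(1-\rho^2)|$.

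The main step is (vi). Using the preliminary identity I would rewrite $\log\delta(z) = -i\int_\Gamma \nu(s)/(s-z)\,ds$, which is the cleanest form for the local analysis at $\xi_j$, and then decompose
\[
\int_\Gamma \frac{\nu(s)}{s-z}\,ds = \int_\Gamma \frac{\nu(s) - \nu(\xi_j)\chi_j(s)}{s-z}\,ds + \nu(\xi_j)\int_{\Gamma\cap\mathrm{supp}\,\chi_j} \frac{ds}{s-z},
\]
where $\chi_j$ is the indicator of a small neighbourhood of $\xi_j$ inside $\Gamma$. The second integral evaluates in closed form to $-\epsilon_j\log(z-\xi_j)+O(1)$, the sign $\epsilon_j=(-1)^{j+1}$ encoding whether $\Gamma$ approaches $\xi_j$ from the right or from the left according to Proposition \ref{relation of xi}; exponentiating this contribution produces the prefactor $(z-\xi_j)^{i\epsilon_j\nu(\xi_j)}$, while the first integral together with the residual $O(1)$ constant is precisely $i\beta(z,\xi_j)$.

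For the approximation error and the bounds \eqref{betaest1}-\eqref{betaest2}, the key input is Hölder continuity of $\nu$: the Sobolev embedding $H^1(\mathbb{R})\hookrightarrow C^{1/2}(\mathbb{R})$ combined with the Lipschitz character of $y\mapsto -\tfrac{1}{2\pi}\log(1-y)$ on $[0,\rho^2]$ yields $|\nu(s)-\nu(\xi_j)|\lesssim \Vert r\Vert_{H^1}(1-\rho)^{-1}|s-\xi_j|^{1/2}$. Inserting this into the first integral above gives the advertised $O(\Vert r\Vert_{H^1}|z-\xi_j|^{1/2})$ bound; the uniform estimate \eqref{betaest1} follows from $L^2$-boundedness of the Cauchy projection applied to $\nu\in L^2(\Gamma)$, with $\Vert\nu\Vert_{L^2}\lesssim \Vert r\Vert_{H^{1,0}}(1-\rho)^{-1}$, and \eqref{betaest2} is the same Hölder-$1/2$ bound applied to $\beta(z,\xi_j)-\beta(\xi_j,\xi_j)$. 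The step I expect to demand the most care is the orientation-dependent sign $\epsilon_j$ and the correct branch of $\log(z-\xi_j)$ in the closed-form evaluation of the second integral, since each $\xi_j$ is approached by $\Gamma$ from a different side; everything else is a routine combination of Plemelj-Sokhotski and Sobolev-embedding estimates.
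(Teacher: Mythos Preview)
Your proposal is correct and follows essentially the same approach as the paper, which proves (i)--(iv) by direct calculation plus Plemelj, and defers (v)--(vi) to the argument of \cite[Lemma 3.1]{Dieng}; your preliminary identity $\int_\Gamma \nu(s)/(2s)\,ds=0$ is exactly the observation the paper invokes in its proof of (iii), and your treatment of (vi) via the $H^1\hookrightarrow C^{1/2}$ embedding is the standard Dieng--McLaughlin argument the paper cites. One minor remark: your decomposition for (vi) introduces a cutoff $\chi_j$, whereas the $\beta(z,\xi_j)$ in the statement is defined without any cutoff---you should check that the $O(1)$ discrepancy between your $\chi_j$-localized logarithm and the global $\epsilon_j\nu(\xi_j)\log(z-\xi_j)$ term is absorbed consistently, but this is a bookkeeping point rather than a gap.
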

\begin{proof}
Properties of (i), (iv) can be obtained by simple calculation from the definition of $\delta(z)$.
The jump relation (ii) follows from the Plemelj formulae. As for the property (iii), the symmetry comes from the
symmetry of $r(z)$. We specially point out that the third symmetry follows from the symmetry of $r(z)$ as well as the following equality
\begin{equation}
    {\rm exp}\left(i\int_{\Gamma}\frac{\nu(s)}{s-z}ds\right)={\rm exp}\left[i\int_{\Gamma}\nu(s)\left(\frac{1}{s-z}-\frac{1}{2s}\right)ds\right], \quad {\rm for} \quad z\in\Gamma.
\end{equation}
For the item (v) and item (vi), the analysis is similar to \cite[Lemma 3.1]{Dieng}.
\end{proof}
Furthermore, we can rewrite
\begin{equation}
    \delta(z,\xi)=(z-\xi_j)^{i\nu(\xi_j)}{\rm exp}(i\beta(z,\xi)).
\end{equation}

\begin{remark} \rm
    We notice that all discrete spectrums $\eta_n\in\mathbb{C}_{+}\cap\{z:|z|=1\}$ satisfy $\Im\theta(\eta_n)<0$, all discrete
    spectrums $\bar{\eta}_n\in\mathbb{C}_{-}\cap\{z:|z|=1\}$ satisfy $\Im\theta(\bar{\eta}_n)>0$. Owe to this good property,
    that's why we do not classify the discrete spectrum by $\delta(z,\xi)$, which is different from the $T(z)$ we use in \cite{Zhang&Xu mkdv}.
\end{remark}

Introduce the interpolation functions which can convert the residue conditions \eqref{rhp0resa} and \eqref{rhp0resb} into the jump condition. For all poles
$\eta_j\in\mathcal{Z}$, we define a constant $h$ as follows
\begin{equation}
    h:=\frac{1}{2}\min\left\{\min_{i\neq j}\vert\eta_i-\eta_j\vert, \ \min_{j\in\mathcal{Z}}\vert \Im\eta_j \vert\right\}.
\end{equation}
We can see that the disk $D(\eta_i,h)\cap D(\eta_j,h)=\emptyset$ for $i\neq j$, and  $D(\eta_i,h)\cap\mathbb{R}=\emptyset$. To be brief, we
define a new path
\begin{align}
    \Sigma^{pole}=\bigcup_{n=1}^N\left\{z\in\mathbb{C}: z\in\partial D(\eta_n,h) \ {\rm or} \  z\in\partial D(\bar{\eta}_n,h)   \right\}.
\end{align}
The interpolation function $G(z)$ is introduced by
\begin{align}
    G(z)=\left\{
    \begin{aligned}
    &\begin{pmatrix} 1 & 0 \\ -\frac{c_ne^{-2it\theta(\eta_n)}}{z-\eta_n} & 1\end{pmatrix}, & z\in D(\eta_n,h),\\
    &\begin{pmatrix} 1 & -\frac{\bar{c}_ne^{2it\theta(\bar{\eta}_n)}}{z-\bar{\eta}_n} \\ 0  & 1\end{pmatrix}, & z\in D(\bar{\eta}_n,h),\\
    &I  &elsewhere.
    \end{aligned}
    \right.
\end{align}

By using $\delta(z, \xi)$ and the interpolation function $G(z)$, the new matrix-valued function $M^{(1)}(z)$ is defined by
\begin{equation}\label{trans1}
    M^{(1)}(x,t;z):=M^{(1)}(z)=M(z)G(z)\delta(z)^{\sigma_3},
\end{equation}
which satisfies the following regular RH problem.
\begin{RHP}\label{m1rhp}
Find a $2\times 2$ matrix-valued function $M^{(1)}(x,t;z)$ such that \\
    - $M^{(1)}(z)$ is analytic for $\mathbb{C}\backslash \Sigma^{(1)}$, where $\Sigma^{(1)}=\Sigma\cup\Sigma^{pole}$. \\
    - $M^{(1)}(z)=\sigma_1\overline{M^{(1)}(\bar{z})}\sigma_1=\overline{M^{(1)}(-\bar{z})}=\mp z^{-1}M^{(1)}(z^{-1})\sigma_2$. \\
    - The non-tangential limits $M^{(1)}_{\pm}(z)$ exist for any $z\in\Sigma^{(1)}$ and
    satisfy the jump relation $M_+^{(1)}(z)=M_-^{(1)}(z)V^{(1)}(z)$ where
    \begin{align}\label{rhp1jump}
    V^{(1)}(z)=\left\{
        \begin{aligned}
        &\begin{pmatrix} 1 & -\overline{r(z)}\delta(z)^{-2}e^{2it\theta} \\ 0 & 1 \end{pmatrix}\begin{pmatrix} 1 & 0 \\ r(z)\delta^2(z)e^{-2it\theta} & 1 \end{pmatrix}, & z\in\tilde{\Gamma},\\
        &\begin{pmatrix} 1 & 0 \\ \frac{r(z)\delta_{-}^{2}(z)}{1-|r(z)|^2}e^{-2it\theta} & 1 \end{pmatrix}\begin{pmatrix} 1 & -\frac{\overline{r(z)}\delta_{+}^{-2}(z)}{1-|r(z)|^2}e^{2it\theta}\\ 0 & 1\end{pmatrix}, & z\in\Gamma,\\
        &\begin{pmatrix} 1 & 0 \\ -\frac{c_ne^{-2it\theta(\eta_n)}\delta^{-2}}{z-\eta_n} & 1 \end{pmatrix}, & z\in \partial D(\eta_n,h) \ {\rm oriented \ counterclockwise},\\
        &\begin{pmatrix} 1 & \frac{\bar{c}_ne^{2it\theta(\bar{\eta}_n)}\delta^2(z)}{z-\bar{\eta}_n} \\ 0 & 1 \end{pmatrix}, & z\in \partial D(\bar{\eta}_n,h) \ {\rm oriented \ clockwise}.\\
        \end{aligned}
        \right.
    \end{align}
    - Asymptotic behavior
    \begin{align}
        &M^{(1)}(x,t;z)=I+\mathcal{O}(z^{-1}), \quad  z\rightarrow\infty,\\
        &M^{(1)}(x,t;z)=\frac{\sigma_2}{z}+\mathcal{O}(1), \quad  z\rightarrow 0.
    \end{align}
\end{RHP}

\subsection{Second transformation by opening $\bar{\partial}$ lenses: $M^{(1)}\rightarrow M^{(2)}$}\label{subsec:RLopenlens}
In this subsection, we make continuous extension for the jump matrix $V^{(1)}(z)$ to remove the jump from the real axis in such a
way that the new problem takes advantage of the decay of exp$(\pm 2it\theta)$ for $z\notin\mathbb{R}$.

\subsubsection{Characteristic lines}
The aim of this subsubsection is to denote some characteristic lines which are the jump contours of the RH problem $M^{(2)}$ defined below. To avoid these characteristic lines
intersect with discrete spectrum located on the unit circle, we fix a small enough angle $\theta_0$ which satisfies that the following three conditions.\\
- Let the set
\begin{equation}\label{angle0}
\left\{z\in\mathbb{C}: {\rm tan}\theta_0<\left\lvert\frac{\Im z}{\Re z-\xi_j} \right\rvert \right\}
\end{equation}
do not intersect the set $\mathcal{Z}$,  $j=2,3$ for $\xi\in\mathcal{R}_L$;\\
- The following regions $\Omega_{jk}, j=2,3, k=3,4$ do not intersect discrete spectrums, which implies that
\begin{equation}\label{angle1}
    \Upsilon(\xi)={\rm min}\left\{\theta_0, \frac{\pi}{4} \right\},
\end{equation}
- Recalling Proposition \ref{relation of xi}, we make
\begin{equation}\label{angle2}
d\in\left(0, \frac{\xi_2}{2{\rm cos}\Upsilon}\right), \quad \tilde{d}\in\left(0, \frac{\xi_1-\xi_2}{2{\rm cos}\Upsilon}\right).
\end{equation}
With these conditions, some characteristic lines are given as following items (For convenience, see Figure \ref{fig:regionomega}).
\begin{itemize}
\item [(i)]For an angle $\phi$ satisfies the above conditions \eqref{angle0}, \eqref{angle1} and  \eqref{angle2}, we denote the characteristic lines near saddle points presented by
\begin{align}
    &\Sigma_{j1}=\left\{
        \begin{aligned}
        &\xi_j+e^{i\left[\left(j-1\right)\pi+(-1)^{j-1}\phi\right]}\mathbb{R}_{+}, &j=1, 4,\\
        &\xi_j+e^{i\left[\left(j-1\right)\pi+(-1)^{j-1}\phi\right]}d,  &j=2, 3,
        \end{aligned}
        \right.,
    &\Sigma_{j2}=\left\{
        \begin{aligned}
        &\xi_j+e^{-i\left[\left(j-1\right)\pi+(-1)^{j-1}\phi\right]}\mathbb{R}_{+}, & j=1, 4,\\
        &\xi_j+e^{-i\left[\left(j-1\right)\pi+(-1)^{j-1}\phi\right]}d, & j=2, 3,
        \end{aligned}
        \right.\\
    &\Sigma_{j3}=\left\{
        \begin{aligned}
        &\xi_j+e^{-i\left[j\pi+(-1)^{j}\phi\right]}\tilde{d}, & j=1, 4,\\
        &\xi_j+e^{-i\left[j\pi+(-1)^{j}\phi\right]}\tilde{d}, & j=2, 3,
        \end{aligned}
        \right.
    &\Sigma_{j4}=\left\{
        \begin{aligned}
        &\xi_j+e^{i\left[j\pi+(-1)^{j}\phi\right]}\tilde{d}, &j=1, 4,\\
        &\xi_j+e^{i\left[j\pi+(-1)^{j}\phi\right]}\tilde{d}, &j=2, 3.
        \end{aligned}
        \right.
\end{align}
\item[(ii)]Characteristic lines near $z=0$ are defined as follows
\begin{align}
    &\Sigma_{j1}=\left\{
        \begin{aligned}
        &e^{i\phi}d,\quad j=0^{+},\\
        &e^{i(\pi-\phi)}d,\quad j=0^{-},
        \end{aligned}
        \right.
    &\Sigma_{j2}=\left\{
        \begin{aligned}
        &e^{-i\phi}d,\quad j=0^{+},\\
        &e^{-i(\pi-\phi)}d,\quad j=0^{-},
        \end{aligned}
        \right.
\end{align}
\item[(iii)]Meanwhile, there exist vertical jumps $\Sigma_{j\pm}^{(1/2)}$, $j=1,2,3,4$.
\end{itemize}

The complex plane $\mathbb{C}$ is separated by these contours, which is shown in Figure \ref{phase}, Figure \ref{fig:regionomega}. Denote
\begin{align}
    &\Omega=\left(\bigcup_{j,k=1,2,3,4}\Omega_{jk}\right)\bigcup\left(\bigcup_{k=1,2}\Omega_{0^{\pm}k}\right),\quad \Omega_{\pm}=\mathbb{C}\backslash\Omega,\\
    &\Sigma^{(2)}:=\left(\bigcup_{j,k=1,2,3,4}\Sigma_{jk}\right)\bigcup\left(\bigcup_{k=1,2}\Sigma_{0^{\pm}k}\right)\bigcup\left(\bigcup_{j=1,2,3,4}\Sigma_{j\pm}^{(1/2)}\right)\bigcup\Sigma^{pole}. \label{Sigma^(2)con}
\end{align}
\begin{figure}[htbp]
    \begin{center}
        \begin{tikzpicture}
               \draw[dotted] (2.05,0) arc (0:360:2.05);
               \draw[dotted] (1,0) arc (0.5:360:0.5);
               \draw[dotted] (-1,0) arc (0.5:360:-0.5);
               \draw[->,dotted](-5.5,0)--(5.5,0)node[right]{ \textcolor{black}{$\Re z$}};
               \draw[->](0,-3)--(0,3)node[right]{\textcolor{black}{$\Im z$}};
               \draw(2.5,0)--(2.5,0.1)node[below]{\scriptsize$\frac{\xi_1+\xi_2}{2}$};
               \draw(2.5,0)--(2.5,-0.1);
               \draw(0.5,0)--(0.5,0.1)node[below]{\scriptsize$\frac{\xi_2}{2}$};
               \draw(0.5,0)--(0.5,-0.1);
               \draw(-2.5,0)--(-2.5,0.1)node[below]{\scriptsize$\frac{\xi_3+\xi_4}{2}$};
               \draw(-2.5,0)--(-2.5,-0.1);
               \draw(-0.5,0)--(-0.5,0.1)node[below]{\scriptsize$\frac{\xi_3}{2}$};
               \draw(-0.5,0)--(-0.5,-0.1);
               \coordinate (I) at (0,0);
               \coordinate (A) at (-3.8,0);
               \fill (A) circle (0pt) node[blue,below] {\footnotesize $\xi_4$};
               \coordinate (b) at (-1.2,0);
               \fill (b) circle (0pt) node[blue,below] {\footnotesize $\xi_3$};
               \coordinate (e) at (4.2,0);
               \fill (e) circle (0pt) node[blue,below] {\footnotesize $\xi_1$};
               \coordinate (f) at (1.2,0);
               \fill (f) circle (0pt) node[blue,below] {\footnotesize $\xi_2$};
               \draw [dotted ] (4.5,2.6) to [out=-105,in=90] (4,0);
               \draw [dotted ]  (4.5,-2.6) to [out=105,in=-90] (4,0);
               \draw [dotted ] (-4.5,2.6) to [out=-70,in=90] (-4,0);
               \draw [dotted ] (-4.5,-2.6) to [out=70,in=-95] (-4,0);
               \coordinate (J) at (1.78,0.98);
	           \coordinate (K) at (1.78,-0.98);
	           \coordinate (L) at (-1.78,0.98);
	           \coordinate (M) at (-1.78,-0.98);
               \coordinate (N) at (0, 2.05);
               \coordinate (O) at (0, -2.05);
	           \fill[red] (J) circle (1.2pt) node[left] {\textcolor{black}{$z_n$}};
	           \fill[red] (K) circle (1.2pt) node[left] {\textcolor{black}{$\bar{z}_n$}};
	           \fill[red] (L) circle (1.2pt) node[right] {\textcolor{black}{$-\bar{z}_n$}};
	           \fill[red] (M) circle (1.2pt) node[right] {\textcolor{black}{$-z_n$}};
               \fill[red] (N) circle (1.2pt);
               \node at (0.25,2.3)  {\textcolor{black}{$i$}};
               \fill[red] (O) circle (1.2pt);
                \node at (0.3,-2.2) {\textcolor{black}{$-i$}};
               \coordinate (ke) at (4.8,1.5);
               \fill (ke) circle (0pt) node[below] {\color{blue}$+$};
               \coordinate (k1e) at (4.8,-1.5);
               \fill (k1e) circle (0pt) node[above] {\color{red}$-$};
               \coordinate (le) at (3,1.5);
               \fill (le) circle (0pt) node[below] {\color{red}$-$};
               \coordinate (l1e) at (3,-1.5);
               \fill (l1e) circle (0pt) node[above] {\color{blue}$+$};
               \coordinate (n2) at (0.4,1.5);
               \fill (n2) circle (0pt) node[below] {\color{red}$-$};
               \coordinate (n12) at (0.4,-1.5);
               \fill (n12) circle (0pt) node[above] {\color{blue}$+$};
               \coordinate (m2) at (0.5,0.6 );
               \fill (m2) circle (0pt) node[below] {\footnotesize\color{blue}$+$};
               \coordinate (m12) at (0.5,-0.68);
               \fill (m12) circle (0pt) node[above] {\footnotesize\color{red}$-$};
               \coordinate (k) at (-4.8,1.5);
               \fill (k) circle (0pt) node[below] {\color{blue}$+$};
               \coordinate (k1) at (-4.8,-1.5);
               \fill (k1) circle (0pt) node[above] {\color{red}$-$};
               \coordinate (l) at (-3,1.5);
               \fill (l) circle (0pt) node[below] {\color{red}$-$};
               \coordinate (l1) at (-3,-1.5);
               \fill (l1) circle (0pt) node[above] {\color{blue}$+$};
               \coordinate (n) at (-0.5,0.6);
               \fill (n) circle (0pt) node[below] {\footnotesize\color{blue}$+$};
               \coordinate (n1) at (-0.5,-0.68);
               \fill (n1) circle (0pt) node[above] {\footnotesize\color{red}$-$};
               \coordinate (c) at (-2.05,0);
               \fill[red] (c) circle (1pt) node[below] {\scriptsize$-1$};
               \coordinate (d) at (2.05,0);
               \fill[red] (d) circle (1pt) node[below] {\scriptsize$1$};
               \draw[blue](0,0)--(0.5,0.25);
               \draw[->,blue](0,0)--(0.25,0.125);
               \draw[blue](0.5,0.25)--(1,0);
               \draw[->,blue](0.5,0.25)--(0.75,0.125);
               \draw[red](1,0)--(2.5,0.5);
               \draw[->,red](1,0)--(1.75,0.25);
               \draw[red](2.5,0.5)--(4,0);
               \draw[->,red](2.5,0.5)--(3.25,0.25);
               \draw[blue](4,0)--(5.5, 0.5);
               \draw[->,blue](4,0)--(4.75,0.25);
               \draw[red](0,0)--(0.5,-0.25);
               \draw[->,red](0,0)--(0.25,-0.125);
               \draw[red](0.5,-0.25)--(1,0);
               \draw[->,red](0.5,-0.25)--(0.75,-0.125);
               \draw[blue](1,0)--(2.5,-0.5);
               \draw[->,blue](1,0)--(1.75,-0.25);
               \draw[blue](2.5,-0.5)--(4,0);
               \draw[->,blue](2.5,-0.5)--(3.25,-0.25);
               \draw[red](4,0)--(5.5, -0.5);
               \draw[->,red](4,0)--(4.75,-0.25);
               \draw[blue](0,0)--(-0.5,0.25);
               \draw[->,blue] (-0.5,0.25)--(-0.25,0.125);
               \draw[blue](-0.5,0.25)--(-1,0);
               \draw[<-,blue](-0.75,0.125)--(-1,0);
               \draw[red](-1,0)--(-2.5,0.5);
               \draw[<-,red](-1.75,0.25)--(-2.5,0.5);
               \draw[red](-2.5,0.5)--(-4,0);
               \draw[<-,red](-3.25,0.25)--(-4,0);
               \draw[blue](-4,0)--(-5.5, 0.5);
               \draw[<-,blue](-4.75,0.25)--(-5.5, 0.5);
               \draw[red](0,0)--(-0.5,-0.25);
               \draw[->,red](-0.5,-0.25)--(-0.25,-0.125);
               \draw[red](-0.5,-0.25)--(-1,0);
               \draw[<-,red](-0.75,-0.125)--(-1,0);
               \draw[blue](-1,0)--(-2.5,-0.5);
               \draw[<-,blue](-1.75,-0.25)--(-2.5,-0.5);
               \draw[blue](-2.5,-0.5)--(-4,0);
               \draw[<-,blue](-3.25,-0.25)--(-4,0);
               \draw[red](-4,0)--(-5.5, -0.5);
               \draw[<-,red](-4.75,-0.25)--(-5.5, -0.5);
               \draw[->,red] (0.2,2.05) arc(0:360:0.2);
               \draw[->,red] (1.98,0.98) arc(0:360:0.2);
               \draw[->,red] (-1.58,0.98) arc(0:360:0.2);
               \draw[->,blue] (0.2,-2.05) arc(360:0:0.2);
               \draw[->,blue] (-1.58,-0.98) arc(360:0:0.2);
               \draw[->,blue] (1.98,-0.98) arc(360:0:0.2);
               \end{tikzpicture}
       \caption{\footnotesize There are four stationary phase points $\xi_1,...,\xi_4$ with $\xi_1=-\xi_4=1/\xi_2=-1/\xi_3$ for $\xi\in\mathcal{R}_L$.
       Open jump contour $\mathbb{R}\backslash\{0\}$ such that red and blue lines don't intersect the discrete spectrum on the unite circle $|z|=1$.
       Additionally, the blue ``$+$'' implies that $e^{2it\theta}\rightarrow 0$ as $t\rightarrow+\infty$, on the other side,
       the red ``$-$'' implies that $e^{-2it\theta}\rightarrow 0$ as $t\rightarrow+\infty$.}
       \label{phase}
    \end{center}
   \end{figure}
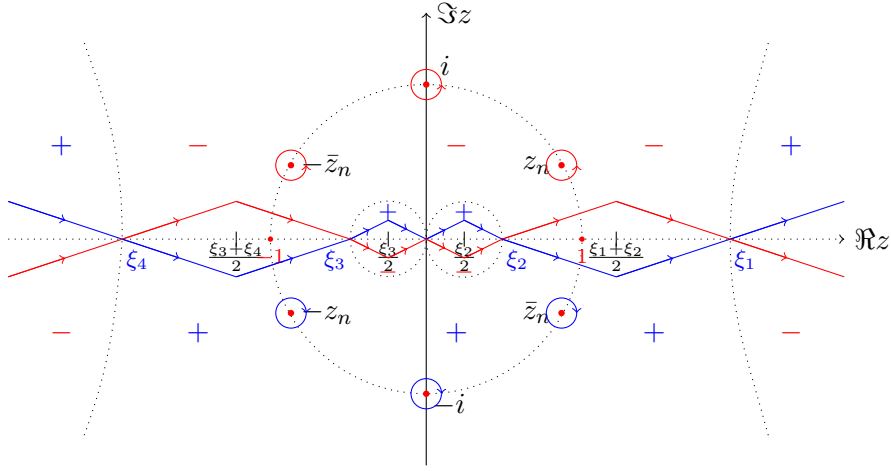

   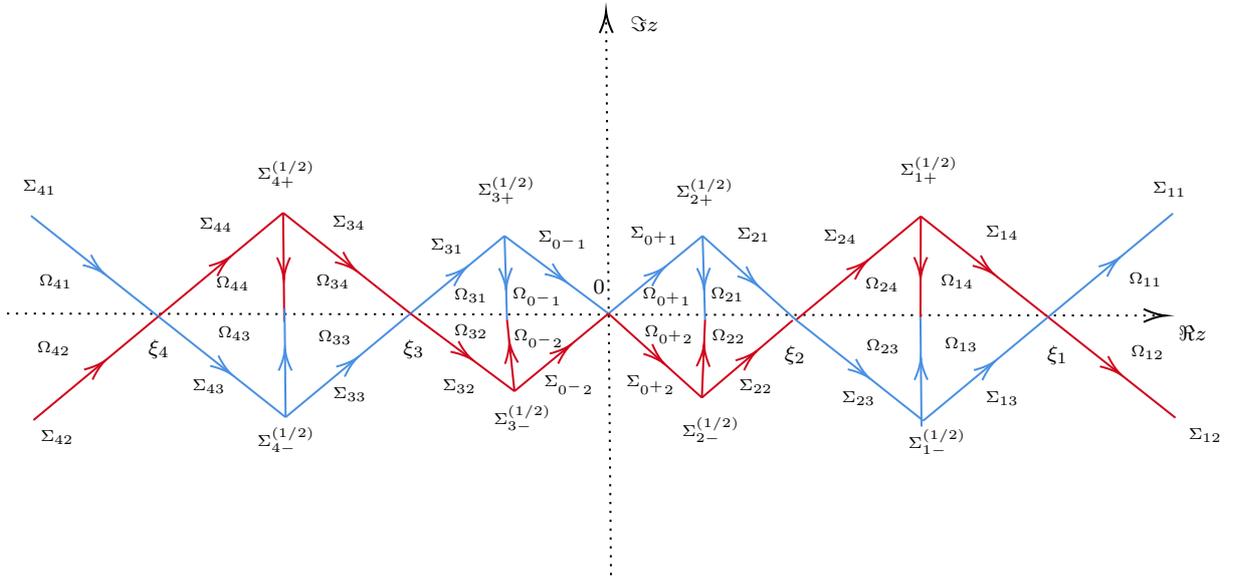
\begin{figure}[htbp]
    \begin{center}
        \tikzset{every picture/.style={line width=0.75pt}} 
        \begin{tikzpicture}[x=0.75pt,y=0.75pt,yscale=-1,xscale=1]
        \draw  [dash pattern={on 0.84pt off 2.51pt}]  (14.43,155.72) -- (590.74,156.75) ;
        \draw [shift={(592.74,156.75)}, rotate = 180.1] [color={rgb, 255:red, 0; green, 0; blue, 0 }  ][line width=0.75]    (10.93,-3.29) .. controls (6.95,-1.4) and (3.31,-0.3) .. (0,0) .. controls (3.31,0.3) and (6.95,1.4) .. (10.93,3.29)   ;
        \draw  [dash pattern={on 0.84pt off 2.51pt}]  (315.94,287.29) -- (313.49,5.61) ;
        \draw [shift={(313.47,3.61)}, rotate = 89.5] [color={rgb, 255:red, 0; green, 0; blue, 0 }  ][line width=0.75]    (10.93,-3.29) .. controls (6.95,-1.4) and (3.31,-0.3) .. (0,0) .. controls (3.31,0.3) and (6.95,1.4) .. (10.93,3.29)   ;
        \draw [color={rgb, 255:red, 74; green, 144; blue, 226 }  ,draw opacity=1 ][fill={rgb, 255:red, 255; green, 255; blue, 255 }  ,fill opacity=1 ]   (534.13,157.49) -- (596.45,105.36) ;
        \draw [shift={(569.89,127.57)}, rotate = 140.09] [color={rgb, 255:red, 74; green, 144; blue, 226 }  ,draw opacity=1 ][line width=0.75]    (10.93,-3.29) .. controls (6.95,-1.4) and (3.31,-0.3) .. (0,0) .. controls (3.31,0.3) and (6.95,1.4) .. (10.93,3.29)   ;
        \draw [color={rgb, 255:red, 208; green, 2; blue, 27 }  ,draw opacity=1 ]   (534.13,157.49) -- (597.68,208.14) ;
        \draw [shift={(570.6,186.55)}, rotate = 218.56] [color={rgb, 255:red, 208; green, 2; blue, 27 }  ,draw opacity=1 ][line width=0.75]    (10.93,-3.29) .. controls (6.95,-1.4) and (3.31,-0.3) .. (0,0) .. controls (3.31,0.3) and (6.95,1.4) .. (10.93,3.29)   ;
        \draw [color={rgb, 255:red, 208; green, 2; blue, 27 }  ,draw opacity=1 ]   (470.58,106.83) -- (534.13,157.49) ;
        \draw [shift={(507.05,135.9)}, rotate = 218.56] [color={rgb, 255:red, 208; green, 2; blue, 27 }  ,draw opacity=1 ][line width=0.75]    (10.93,-3.29) .. controls (6.95,-1.4) and (3.31,-0.3) .. (0,0) .. controls (3.31,0.3) and (6.95,1.4) .. (10.93,3.29)   ;
        \draw [color={rgb, 255:red, 74; green, 144; blue, 226 }  ,draw opacity=1 ]   (471.82,209.61) -- (534.13,157.49) ;
        \draw [shift={(507.58,179.7)}, rotate = 140.09] [color={rgb, 255:red, 74; green, 144; blue, 226 }  ,draw opacity=1 ][line width=0.75]    (10.93,-3.29) .. controls (6.95,-1.4) and (3.31,-0.3) .. (0,0) .. controls (3.31,0.3) and (6.95,1.4) .. (10.93,3.29)   ;
        \draw [color={rgb, 255:red, 208; green, 2; blue, 27 }  ,draw opacity=1 ]   (408.27,158.95) -- (470.58,106.83) ;
        \draw [shift={(444.03,129.04)}, rotate = 140.09] [color={rgb, 255:red, 208; green, 2; blue, 27 }  ,draw opacity=1 ][line width=0.75]    (10.93,-3.29) .. controls (6.95,-1.4) and (3.31,-0.3) .. (0,0) .. controls (3.31,0.3) and (6.95,1.4) .. (10.93,3.29)   ;
        \draw [color={rgb, 255:red, 74; green, 144; blue, 226 }  ,draw opacity=1 ]   (408.27,158.95) -- (471.82,209.61) ;
        \draw [shift={(444.74,188.02)}, rotate = 218.56] [color={rgb, 255:red, 74; green, 144; blue, 226 }  ,draw opacity=1 ][line width=0.75]    (10.93,-3.29) .. controls (6.95,-1.4) and (3.31,-0.3) .. (0,0) .. controls (3.31,0.3) and (6.95,1.4) .. (10.93,3.29)   ;
        \draw [color={rgb, 255:red, 208; green, 2; blue, 27 }  ,draw opacity=1 ]   (361.31,198.01) -- (406.68,159.1) ;
        \draw [shift={(388.55,174.65)}, rotate = 139.38] [color={rgb, 255:red, 208; green, 2; blue, 27 }  ,draw opacity=1 ][line width=0.75]    (10.93,-3.29) .. controls (6.95,-1.4) and (3.31,-0.3) .. (0,0) .. controls (3.31,0.3) and (6.95,1.4) .. (10.93,3.29)   ;
        \draw [color={rgb, 255:red, 74; green, 144; blue, 226 }  ,draw opacity=1 ]   (361.66,116.67) -- (408.27,158.95) ;
        \draw [shift={(389.41,141.84)}, rotate = 222.22] [color={rgb, 255:red, 74; green, 144; blue, 226 }  ,draw opacity=1 ][line width=0.75]    (10.93,-3.29) .. controls (6.95,-1.4) and (3.31,-0.3) .. (0,0) .. controls (3.31,0.3) and (6.95,1.4) .. (10.93,3.29)   ;
        \draw [color={rgb, 255:red, 74; green, 144; blue, 226 }  ,draw opacity=1 ]   (314.71,155.72) -- (361.66,116.67) ;
        \draw [shift={(342.8,132.36)}, rotate = 140.25] [color={rgb, 255:red, 74; green, 144; blue, 226 }  ,draw opacity=1 ][line width=0.75]    (10.93,-3.29) .. controls (6.95,-1.4) and (3.31,-0.3) .. (0,0) .. controls (3.31,0.3) and (6.95,1.4) .. (10.93,3.29)   ;
        \draw [color={rgb, 255:red, 208; green, 2; blue, 27 }  ,draw opacity=1 ]   (314.71,155.72) -- (361.31,198.01) ;
        \draw [shift={(342.45,180.9)}, rotate = 222.22] [color={rgb, 255:red, 208; green, 2; blue, 27 }  ,draw opacity=1 ][line width=0.75]    (10.93,-3.29) .. controls (6.95,-1.4) and (3.31,-0.3) .. (0,0) .. controls (3.31,0.3) and (6.95,1.4) .. (10.93,3.29)   ;
        \draw [color={rgb, 255:red, 74; green, 144; blue, 226 }  ,draw opacity=1 ]   (262.81,116.67) -- (314.71,155.72) ;
        \draw [shift={(293.55,139.8)}, rotate = 216.96] [color={rgb, 255:red, 74; green, 144; blue, 226 }  ,draw opacity=1 ][line width=0.75]    (10.93,-3.29) .. controls (6.95,-1.4) and (3.31,-0.3) .. (0,0) .. controls (3.31,0.3) and (6.95,1.4) .. (10.93,3.29)   ;
        \draw [color={rgb, 255:red, 208; green, 2; blue, 27 }  ,draw opacity=1 ]   (267.75,194.78) -- (314.71,155.72) ;
        \draw [shift={(295.84,171.42)}, rotate = 140.25] [color={rgb, 255:red, 208; green, 2; blue, 27 }  ,draw opacity=1 ][line width=0.75]    (10.93,-3.29) .. controls (6.95,-1.4) and (3.31,-0.3) .. (0,0) .. controls (3.31,0.3) and (6.95,1.4) .. (10.93,3.29)   ;
        \draw [color={rgb, 255:red, 74; green, 144; blue, 226 }  ,draw opacity=1 ]   (215.85,155.72) -- (262.81,116.67) ;
        \draw [shift={(243.94,132.36)}, rotate = 140.25] [color={rgb, 255:red, 74; green, 144; blue, 226 }  ,draw opacity=1 ][line width=0.75]    (10.93,-3.29) .. controls (6.95,-1.4) and (3.31,-0.3) .. (0,0) .. controls (3.31,0.3) and (6.95,1.4) .. (10.93,3.29)   ;
        \draw [color={rgb, 255:red, 208; green, 2; blue, 27 }  ,draw opacity=1 ]   (215.85,155.72) -- (267.75,194.78) ;
        \draw [shift={(246.59,178.86)}, rotate = 216.96] [color={rgb, 255:red, 208; green, 2; blue, 27 }  ,draw opacity=1 ][line width=0.75]    (10.93,-3.29) .. controls (6.95,-1.4) and (3.31,-0.3) .. (0,0) .. controls (3.31,0.3) and (6.95,1.4) .. (10.93,3.29)   ;
        \draw [color={rgb, 255:red, 208; green, 2; blue, 27 }  ,draw opacity=1 ]   (152.3,105.07) -- (215.85,155.72) ;
        \draw [shift={(188.77,134.14)}, rotate = 218.56] [color={rgb, 255:red, 208; green, 2; blue, 27 }  ,draw opacity=1 ][line width=0.75]    (10.93,-3.29) .. controls (6.95,-1.4) and (3.31,-0.3) .. (0,0) .. controls (3.31,0.3) and (6.95,1.4) .. (10.93,3.29)   ;
        \draw [color={rgb, 255:red, 74; green, 144; blue, 226 }  ,draw opacity=1 ]   (153.53,207.85) -- (215.85,155.72) ;
        \draw [shift={(189.29,177.94)}, rotate = 140.09] [color={rgb, 255:red, 74; green, 144; blue, 226 }  ,draw opacity=1 ][line width=0.75]    (10.93,-3.29) .. controls (6.95,-1.4) and (3.31,-0.3) .. (0,0) .. controls (3.31,0.3) and (6.95,1.4) .. (10.93,3.29)   ;
        \draw [color={rgb, 255:red, 208; green, 2; blue, 27 }  ,draw opacity=1 ]   (89.98,157.19) -- (152.3,105.07) ;
        \draw [shift={(125.74,127.28)}, rotate = 140.09] [color={rgb, 255:red, 208; green, 2; blue, 27 }  ,draw opacity=1 ][line width=0.75]    (10.93,-3.29) .. controls (6.95,-1.4) and (3.31,-0.3) .. (0,0) .. controls (3.31,0.3) and (6.95,1.4) .. (10.93,3.29)   ;
        \draw [color={rgb, 255:red, 74; green, 144; blue, 226 }  ,draw opacity=1 ]   (89.98,157.19) -- (153.53,207.85) ;
        \draw [shift={(126.45,186.26)}, rotate = 218.56] [color={rgb, 255:red, 74; green, 144; blue, 226 }  ,draw opacity=1 ][line width=0.75]    (10.93,-3.29) .. controls (6.95,-1.4) and (3.31,-0.3) .. (0,0) .. controls (3.31,0.3) and (6.95,1.4) .. (10.93,3.29)   ;
        \draw [color={rgb, 255:red, 74; green, 144; blue, 226 }  ,draw opacity=1 ]   (26.43,106.54) -- (89.98,157.19) ;
        \draw [shift={(62.9,135.6)}, rotate = 218.56] [color={rgb, 255:red, 74; green, 144; blue, 226 }  ,draw opacity=1 ][line width=0.75]    (10.93,-3.29) .. controls (6.95,-1.4) and (3.31,-0.3) .. (0,0) .. controls (3.31,0.3) and (6.95,1.4) .. (10.93,3.29)   ;
        \draw [color={rgb, 255:red, 208; green, 2; blue, 27 }  ,draw opacity=1 ]   (27.67,209.32) -- (89.98,157.19) ;
        \draw [shift={(63.43,179.41)}, rotate = 140.09] [color={rgb, 255:red, 208; green, 2; blue, 27 }  ,draw opacity=1 ][line width=0.75]    (10.93,-3.29) .. controls (6.95,-1.4) and (3.31,-0.3) .. (0,0) .. controls (3.31,0.3) and (6.95,1.4) .. (10.93,3.29)   ;
        \draw [color={rgb, 255:red, 208; green, 2; blue, 27 }  ,draw opacity=1 ]   (152.3,105.07) -- (152.83,159.84) ;
        \draw [shift={(152.62,138.45)}, rotate = 269.45] [color={rgb, 255:red, 208; green, 2; blue, 27 }  ,draw opacity=1 ][line width=0.75]    (10.93,-3.29) .. controls (6.95,-1.4) and (3.31,-0.3) .. (0,0) .. controls (3.31,0.3) and (6.95,1.4) .. (10.93,3.29)   ;
        \draw [color={rgb, 255:red, 74; green, 144; blue, 226 }  ,draw opacity=1 ]   (153,153.08) -- (153.53,207.85) ;
        \draw [shift={(153.2,173.47)}, rotate = 89.45] [color={rgb, 255:red, 74; green, 144; blue, 226 }  ,draw opacity=1 ][line width=0.75]    (10.93,-3.29) .. controls (6.95,-1.4) and (3.31,-0.3) .. (0,0) .. controls (3.31,0.3) and (6.95,1.4) .. (10.93,3.29)   ;
        \draw [color={rgb, 255:red, 74; green, 144; blue, 226 }  ,draw opacity=1 ]   (262.81,116.67) -- (264.04,158.81) ;
        \draw [shift={(263.6,143.74)}, rotate = 268.32] [color={rgb, 255:red, 74; green, 144; blue, 226 }  ,draw opacity=1 ][line width=0.75]    (10.93,-3.29) .. controls (6.95,-1.4) and (3.31,-0.3) .. (0,0) .. controls (3.31,0.3) and (6.95,1.4) .. (10.93,3.29)   ;
        \draw [color={rgb, 255:red, 208; green, 2; blue, 27 }  ,draw opacity=1 ]   (264.04,158.81) -- (267.75,194.78) ;
        \draw [shift={(265.18,169.83)}, rotate = 84.12] [color={rgb, 255:red, 208; green, 2; blue, 27 }  ,draw opacity=1 ][line width=0.75]    (10.93,-3.29) .. controls (6.95,-1.4) and (3.31,-0.3) .. (0,0) .. controls (3.31,0.3) and (6.95,1.4) .. (10.93,3.29)   ;
        \draw [color={rgb, 255:red, 74; green, 144; blue, 226 }  ,draw opacity=1 ]   (361.66,116.67) -- (362.9,158.81) ;
        \draw [shift={(362.46,143.74)}, rotate = 268.32] [color={rgb, 255:red, 74; green, 144; blue, 226 }  ,draw opacity=1 ][line width=0.75]    (10.93,-3.29) .. controls (6.95,-1.4) and (3.31,-0.3) .. (0,0) .. controls (3.31,0.3) and (6.95,1.4) .. (10.93,3.29)   ;
        \draw [color={rgb, 255:red, 208; green, 2; blue, 27 }  ,draw opacity=1 ]   (362.9,158.81) -- (361.31,198.01) ;
        \draw [shift={(362.39,171.42)}, rotate = 92.32] [color={rgb, 255:red, 208; green, 2; blue, 27 }  ,draw opacity=1 ][line width=0.75]    (10.93,-3.29) .. controls (6.95,-1.4) and (3.31,-0.3) .. (0,0) .. controls (3.31,0.3) and (6.95,1.4) .. (10.93,3.29)   ;
        \draw [color={rgb, 255:red, 208; green, 2; blue, 27 }  ,draw opacity=1 ]   (470.58,106.83) -- (470.41,157.78) ;
        \draw [shift={(470.47,138.3)}, rotate = 270.2] [color={rgb, 255:red, 208; green, 2; blue, 27 }  ,draw opacity=1 ][line width=0.75]    (10.93,-3.29) .. controls (6.95,-1.4) and (3.31,-0.3) .. (0,0) .. controls (3.31,0.3) and (6.95,1.4) .. (10.93,3.29)   ;
        \draw [color={rgb, 255:red, 74; green, 144; blue, 226 }  ,draw opacity=1 ]   (470.41,157.78) -- (470.94,212.55) ;
        \draw [shift={(470.6,178.16)}, rotate = 89.45] [color={rgb, 255:red, 74; green, 144; blue, 226 }  ,draw opacity=1 ][line width=0.75]    (10.93,-3.29) .. controls (6.95,-1.4) and (3.31,-0.3) .. (0,0) .. controls (3.31,0.3) and (6.95,1.4) .. (10.93,3.29)   ;
        \draw (324.3,4.88) node [anchor=north west][inner sep=0.75pt]  [font=\scriptsize]  {$\Im z$};
        \draw (597.81,160.38) node [anchor=north west][inner sep=0.75pt]  [font=\scriptsize]  {$\Re z$};
        \draw (532.19,170.43) node [anchor=north west][inner sep=0.75pt]  [font=\scriptsize]  {$\xi _{1}$};
        \draw (401.21,170.43) node [anchor=north west][inner sep=0.75pt]  [font=\scriptsize]  {$\xi _{2}$};
        \draw (210.91,167.35) node [anchor=north west][inner sep=0.75pt]  [font=\scriptsize]  {$\xi _{3}$};
        \draw (83.63,167.35) node [anchor=north west][inner sep=0.75pt]  [font=\scriptsize]  {$\xi _{4}$};
        \draw (305.71,137.47) node [anchor=north west][inner sep=0.75pt]  [font=\scriptsize]  {$0$};
        \draw (584.92,88.19) node [anchor=north west][inner sep=0.75pt]  [font=\tiny]  {$\Sigma _{11}$};
        \draw (602.75,211.82) node [anchor=north west][inner sep=0.75pt]  [font=\tiny]  {$\Sigma _{12}$};
        \draw (501.42,193.32) node [anchor=north west][inner sep=0.75pt]  [font=\tiny]  {$\Sigma _{13}$};
        \draw (501.42,110.07) node [anchor=north west][inner sep=0.75pt]  [font=\tiny]  {$\Sigma _{14}$};
        \draw (377.32,110.8) node [anchor=north west][inner sep=0.75pt]  [font=\tiny]  {$\Sigma _{21}$};
        \draw (378.38,187.29) node [anchor=north west][inner sep=0.75pt]  [font=\tiny]  {$\Sigma _{22}$};
        \draw (429.75,193.32) node [anchor=north west][inner sep=0.75pt]  [font=\tiny]  {$\Sigma _{23}$};
        \draw (420.51,111.9) node [anchor=north west][inner sep=0.75pt]  [font=\tiny]  {$\Sigma _{24}$};
        \draw (324.01,110.44) node [anchor=north west][inner sep=0.75pt]  [font=\tiny]  {$\Sigma _{0^{+}1}$};
        \draw (322.01,185.44) node [anchor=north west][inner sep=0.75pt]  [font=\tiny]  {$\Sigma _{0^+2}$};
        \draw (278.01,112.44) node [anchor=north west][inner sep=0.75pt]  [font=\tiny]  {$\Sigma _{0^-1}$};
        \draw (281.01,185.44) node [anchor=north west][inner sep=0.75pt]  [font=\tiny]  {$\Sigma _{0^-2}$};
        \draw (224.32,116.8) node [anchor=north west][inner sep=0.75pt]  [font=\tiny]  {$\Sigma _{31}$};
        \draw (230.38,187.29) node [anchor=north west][inner sep=0.75pt]  [font=\tiny]  {$\Sigma _{32}$};
        \draw (175.75,191.32) node [anchor=north west][inner sep=0.75pt]  [font=\tiny]  {$\Sigma _{33}$};
        \draw (175.32,104.8) node [anchor=north west][inner sep=0.75pt]  [font=\tiny]  {$\Sigma _{34}$};
        \draw (108.92,106.19) node [anchor=north west][inner sep=0.75pt]  [font=\tiny]  {$\Sigma _{44}$};
        \draw (20.92,87.19) node [anchor=north west][inner sep=0.75pt]  [font=\tiny]  {$\Sigma _{41}$};
        \draw (29.67,212.72) node [anchor=north west][inner sep=0.75pt]  [font=\tiny]  {$\Sigma _{42}$};
        \draw (105.42,187.32) node [anchor=north west][inner sep=0.75pt]  [font=\tiny]  {$\Sigma _{43}$};
        \draw (573,133.4) node [anchor=north west][inner sep=0.75pt]  [font=\tiny]  {$\Omega _{11}$};
        \draw (574,170.4) node [anchor=north west][inner sep=0.75pt]  [font=\tiny]  {$\Omega _{12}$};
        \draw (481,166.4) node [anchor=north west][inner sep=0.75pt]  [font=\tiny]  {$\Omega _{13}$};
        \draw (479,134.4) node [anchor=north west][inner sep=0.75pt]  [font=\tiny]  {$\Omega _{14}$};
        \draw (441.43,136.29) node [anchor=north west][inner sep=0.75pt]  [font=\tiny]  {$\Omega _{24}$};
        \draw (442,167.4) node [anchor=north west][inner sep=0.75pt]  [font=\tiny]  {$\Omega _{23}$};
        \draw (364.28,141.14) node [anchor=north west][inner sep=0.75pt]  [font=\tiny]  {$\Omega _{21}$};
        \draw (364.9,162.21) node [anchor=north west][inner sep=0.75pt]  [font=\tiny]  {$\Omega _{22}$};
        \draw (330.28,141.14) node [anchor=north west][inner sep=0.75pt]  [font=\tiny]  {$\Omega _{0^+1}$};
        \draw (331.28,160.14) node [anchor=north west][inner sep=0.75pt]  [font=\tiny]  {$\Omega _{0^+2}$};
        \draw (265.42,141.14) node [anchor=north west][inner sep=0.75pt]  [font=\tiny]  {$\Omega _{0^-1}$};
        \draw (266.04,162.21) node [anchor=north west][inner sep=0.75pt]  [font=\tiny]  {$\Omega _{0^-2}$};
        \draw (236.33,141.6) node [anchor=north west][inner sep=0.75pt]  [font=\tiny]  {$\Omega _{31}$};
        \draw (236.33,159.6) node [anchor=north west][inner sep=0.75pt]  [font=\tiny]  {$\Omega _{32}$};
        \draw (167.33,134.6) node [anchor=north west][inner sep=0.75pt]  [font=\tiny]  {$\Omega _{34}$};
        \draw (168.33,162.6) node [anchor=north west][inner sep=0.75pt]  [font=\tiny]  {$\Omega _{33}$};
        \draw (117.33,135.6) node [anchor=north west][inner sep=0.75pt]  [font=\tiny]  {$\Omega _{44}$};
        \draw (118.33,160.6) node [anchor=north west][inner sep=0.75pt]  [font=\tiny]  {$\Omega _{43}$};
        \draw (29,134.4) node [anchor=north west][inner sep=0.75pt]  [font=\tiny]  {$\Omega _{41}$};
        \draw (28,168.4) node [anchor=north west][inner sep=0.75pt]  [font=\tiny]  {$\Omega _{42}$};
        \draw (458.92,75.19) node [anchor=north west][inner sep=0.75pt]  [font=\tiny]  {$\Sigma _{1+}^{( 1/2)}$};
        \draw (462.92,212.19) node [anchor=north west][inner sep=0.75pt]  [font=\tiny]  {$\Sigma _{1-}^{( 1/2)}$};
        \draw (346.92,86.19) node [anchor=north west][inner sep=0.75pt]  [font=\tiny]  {$\Sigma _{2+}^{( 1/2)}$};
        \draw (349.92,206.19) node [anchor=north west][inner sep=0.75pt]  [font=\tiny]  {$\Sigma _{2-}^{( 1/2)}$};
        \draw (247.92,85.19) node [anchor=north west][inner sep=0.75pt]  [font=\tiny]  {$\Sigma _{3+}^{( 1/2)}$};
        \draw (255.92,200.19) node [anchor=north west][inner sep=0.75pt]  [font=\tiny]  {$\Sigma _{3-}^{( 1/2)}$};
        \draw (137.92,77.19) node [anchor=north west][inner sep=0.75pt]  [font=\tiny]  {$\Sigma _{4+}^{( 1/2)}$};
        \draw (137.92,211.19) node [anchor=north west][inner sep=0.75pt]  [font=\tiny]  {$\Sigma _{4-}^{( 1/2)}$};
        \end{tikzpicture}
	\caption{\footnotesize $\Sigma_{jk}$ separate complex $\mathbb{C}$ into some regions denoted by $\Omega_{jk}$.}
    \label{fig:regionomega}
\end{center}
\end{figure}

\subsubsection{Some estimations for $\Im\theta(z)$}
In this subsubsection, we give some estimations for $\Im\theta(z)$ in different regions.
\begin{proposition}[near $z=0$]\label{lemz=0}For a fixed small angle $\phi$ which satisfies \eqref{angle0}, \eqref{angle1} and \eqref{angle2}, the imaginary part
of phase function $\theta(z)$ defined by \eqref{phasefunc} has following estimations:
\begin{align}
    &\Im\theta(z)\geqslant c|{\rm sin}\phi|\sqrt{\alpha}, \quad {\rm as} \quad z\in\Omega_{0^{\pm}1},\\
    &\Im\theta(z)\leqslant -c|{\rm sin}\phi|\sqrt{\alpha}, \quad {\rm as} \quad z\in\Omega_{0^{\pm}2},
\end{align}
where
\begin{equation}
    c=c(\xi)>0, \quad \alpha=3-\frac{\xi+3}{1+2{\rm cos}(2\phi)}.
\end{equation}
\end{proposition}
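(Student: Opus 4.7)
The plan is to pass to polar coordinates and exploit the symmetry $\theta(z) = -\theta(z^{-1})$ visible in \eqref{phasefunc} to extract a clean algebraic factorization of $\Im\theta$ whose sign is transparent. Writing $z = re^{i\psi}$ with $r>0$ and substituting into \eqref{imatheta}, then using $\Im z^{3} = r^{3}\sin 3\psi$, the trigonometric identity $\sin 3\psi = \sin\psi(1+2\cos 2\psi)$, and the algebraic identity $r^{3} + r^{-3} = s^{3} - 3s$ with $s := r + r^{-1} \geq 2$, I would reduce the expression to the single compact product
\begin{equation*}
\Im\theta(z) = \frac{\sin\psi \cdot s \cdot (1+2\cos 2\psi)}{2}\bigl(s^{2} - \alpha(\psi)\bigr),\qquad \alpha(\psi) := 3 - \frac{\xi+3}{1+2\cos 2\psi}.
\end{equation*}
This factorization is the whole point of the argument, since every factor has a sign that can be read off from either the angular or the radial geometry of $\Omega_{0^{\pm}k}$.

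Next I would verify the signs factor by factor. The smallness of $\phi$ imposed by \eqref{angle1} ensures that $1+2\cos 2\psi > 0$ throughout all four wedges; the angular location of $z$ on an upper wedge $\Omega_{0^{\pm}1}$ (resp.\ lower wedge $\Omega_{0^{\pm}2}$) gives $\sin\psi > 0$ (resp.\ $<0$); and since $\xi+3<0$ on $\mathcal{R}_L$, a direct differentiation in $\psi$ shows that $\alpha(\psi)$ is strictly increasing on $(0,\pi/2)$, so $\alpha(\psi) \leq \alpha := \alpha(\phi)$ throughout each wedge. The restriction \eqref{angle2} forces $d$ small enough that for $r\in(0,d)$ one has $s(r) = r + r^{-1} \geq \sqrt{2\alpha}$ (possibly after further shrinking $d$ depending on $\xi$), which in turn forces $s^{2} - \alpha(\psi) \geq \alpha > 0$.

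To extract the stated quantitative bound I would combine these observations with the angular estimate $|\sin\psi| \geq c'|\sin\phi|$ that is uniform on each wedge (since $\Omega_{0^{\pm}k}$ is separated from the real axis by a ray of angle $\phi$ or $\pi-\phi$), yielding
\begin{equation*}
|\Im\theta(z)| \geq \frac{(1+2\cos 2\phi)\sqrt{2\alpha}\cdot\alpha}{2}\,|\sin\psi| \geq c(\xi)|\sin\phi|\sqrt{\alpha},
\end{equation*}
with the sign of $\Im\theta$ matching that of $\sin\psi$, which is exactly the two claimed inequalities. The main anticipated obstacle is the bookkeeping to confirm that the choice of $d$ in \eqref{angle2} is indeed small enough for $s(r)^{2}\geq 2\alpha$ to hold throughout each wedge and that $|\sin\psi|\geq c'|\sin\phi|$ on the relevant wedges; both are routine one-variable calculus arguments once the factorization above is secured.
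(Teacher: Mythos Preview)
Your factorization
\[
\Im\theta(z)=\tfrac{1}{2}\,s\,\sin\psi\,(1+2\cos2\psi)\,\bigl(s^{2}-\alpha(\psi)\bigr),\qquad s=r+r^{-1},
\]
is exactly the paper's (factor $1+2\cos2\phi$ out of the bracket in their displayed identity \eqref{imthetaxingtai1}), and the sign analysis of $1+2\cos2\psi$, $\sin\psi$, and the monotonicity of $\alpha(\psi)$ are all correct.

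The gap is your angular estimate $|\sin\psi|\geq c'|\sin\phi|$, which you justify by asserting that $\Omega_{0^{\pm}k}$ is ``separated from the real axis by a ray of angle $\phi$ or $\pi-\phi$''. This is backward: the ray $\Sigma_{0^{\pm}k}$ is the \emph{outer} boundary of the lens, and $\Omega_{0^{\pm}k}$ is the triangular region \emph{between} that ray and the real axis. You can read this off from the boundary data \eqref{bdryz=01}--\eqref{bdryz=02}, which prescribe $f_{0^{\pm}k}$ both on a real interval and on $\Sigma_{0^{\pm}k}$. Consequently $\psi$ ranges over $(0,\phi)$, $\sin\psi$ is not bounded below by any positive constant, and the uniform lower bound $c|\sin\phi|\sqrt{\alpha}$ cannot hold on the full region since $\Im\theta\to0$ as $z$ approaches the real axis.

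The paper's own proof in fact parametrizes only the boundary ray $z=le^{i\phi}$ with the fixed opening angle, and its final displayed line reduces to $\Im\theta>0$ (the bracket vanishes by the definition of $\alpha$). What is actually used downstream is Corollary~\ref{corz=0}, namely $\Im\theta\geq cv$ with $v=\Im z$. Your factorization yields this directly once you drop the mistaken angular bound and instead use the identity $s\sin\psi=(r+r^{-1})\sin\psi=v(1+r^{-2})\geq v$, combined with your lower bound $s^{2}-\alpha(\psi)\geq\text{const}>0$ (after possibly shrinking $d$). So the core idea is right; only the geometry of the region needs correcting.
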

\begin{proof}
    We present the details for $z\in\Omega_{0^{+}1}$, the others are similar.
    Taking $z=le^{i\phi}$,  we can rewrite the \eqref{imatheta} as
    \begin{equation}\label{imthetaxingtai1}
        \Im\theta(z)=\frac{1}{2}F(l){\rm sin}\phi\left[\xi-6{\rm cos}(2\phi)+(2{\rm cos}(2\phi)+1)F^2(l)\right],
    \end{equation}
    where $F(l)=l+l^{-1}\geqslant 2$. Firstly we calculate the critical situation $\Im\theta(z)=0$. Taking \eqref{imthetaxingtai1},
    $F(l)\geqslant2$ as well as sin$\phi>0$, we have
    \begin{equation}
        \xi-6{\rm cos}\left(2\phi\right)+\left[2{\rm cos}\left(2\phi\right)+1\right]F^2(l)=0.
    \end{equation}
    Thus
    \begin{equation}
        F^2(l)=3-\frac{\xi+3}{1+2{\rm cos}(2\phi)}=:\alpha>4.
    \end{equation}
    Moreover, by $F(l)=\sqrt{\alpha}$, we have $l^2-\sqrt{\alpha}l+1=0$. Solving this quadratic equation, we obtain two roots
    \begin{equation}\label{todopara}
        l_1=\frac{\sqrt{\alpha}-\sqrt{\alpha-4}}{2}<l_2=\frac{\sqrt{\alpha}+\sqrt{\alpha-4}}{2}.
    \end{equation}
    We claim that: $\Im\theta(z)>0$ as $l<l_1$ (corresponding to $z\in\Omega_{0^{+}1}$). It's easy to check
    that $h(l):=l^2-\sqrt{\alpha}l+1$ is monotonically increasing on the $(\sqrt{\alpha}/2, +\infty)$, while
    monotonically decreasing  on the $(-\infty, \sqrt{\alpha}/2)$. Since $l_1<\sqrt{\alpha}/2$, $h(l)$ is monotonically decreasing
    on the $(0, l_1)$.
    Thus we have $h(l)>h(l_1)=0$ and $F(l)>\sqrt\alpha$, which implies that
    \begin{equation}
        \Im\theta(z)>\frac{1}{2}\sqrt\alpha{\rm sin}\phi\left[\xi-6{\rm cos}(2\phi)+(2{\rm cos}(2\phi)+1)\alpha\right]=0.
    \end{equation}
    Thus we bring this proof to an end.
\end{proof}
\begin{corollary}\label{corz=0}
    $\Im\theta(z)$ has following evaluation for $z=le^{i\phi}:=u_0+iv$
    \begin{align}
        &\Im\theta(z)\geqslant cv, \quad {\rm for} \quad z\in\Omega_{0^{\pm}1},\\
        &\Im\theta(z)\leqslant -cv, \quad {\rm for} \quad z\in\Omega_{0^{\pm}2}.
    \end{align}
    where $c=c(\xi)>0$.
\end{corollary}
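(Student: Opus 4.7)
The plan is to deduce the corollary directly from Proposition \ref{lemz=0}, using the fact that the region in question lies in a bounded neighborhood of the origin, so the constant lower bound $c|\sin\phi|\sqrt{\alpha}$ can be upgraded to a bound linear in $v$.

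First I would fix $z\in\Omega_{0^{+}1}$ and parametrize it by $z=le^{i\phi'}$ with $l=|z|$ and $\phi'=\arg z$. The argument $\phi'$ ranges over a compact subset of $(0,\phi)$, where $\phi$ satisfies \eqref{angle0}--\eqref{angle2}, so the quantity
\begin{equation*}
\alpha(\phi')=3-\frac{\xi+3}{1+2\cos(2\phi')}
\end{equation*}
is continuous and strictly larger than $4$ on this set, hence uniformly bounded below by some constant $\alpha_{\min}=\alpha_{\min}(\xi)>4$. Applying Proposition \ref{lemz=0} along each ray $\arg z=\phi'$ then gives
\begin{equation*}
\Im\theta(z)\;\geq\; c_{1}|\sin\phi'|\sqrt{\alpha(\phi')}\;\geq\; c_{1}\sqrt{\alpha_{\min}}\,|\sin\phi'|.
\end{equation*}

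Second, the key step is to trade $|\sin\phi'|$ for $v=\Im z$. Since $\Omega_{0^{+}1}$ is contained in the disk $\{|z|\leq d\}$ by the construction in \eqref{angle2}, we have $l\leq d$, and $v=l\sin\phi'$ implies $|\sin\phi'|=v/l\geq v/d$. Substituting back,
\begin{equation*}
\Im\theta(z)\;\geq\;\frac{c_{1}\sqrt{\alpha_{\min}}}{d}\,v\;=:\;c\,v,\qquad c=c(\xi)>0,
\end{equation*}
which is the desired bound.

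Finally, the three remaining cases ($z\in\Omega_{0^{-}1}$, and $z\in\Omega_{0^{\pm}2}$) follow by the same reasoning combined with the symmetries $\theta(-\bar{z})=-\overline{\theta(z)}$ and $\theta(\bar{z})=\overline{\theta(z)}$ of the phase function, which swap the sign of both $\Im\theta$ and $v$ consistently. The only technical point to check is the uniform positive lower bound $\alpha(\phi')\geq\alpha_{\min}$ over the relevant compact range of $\phi'$, which is where one must ensure that the angle $\phi$ chosen in \eqref{angle0}--\eqref{angle2} is small enough so that $1+2\cos(2\phi')$ stays bounded away from zero; otherwise the corollary is an essentially immediate packaging of Proposition \ref{lemz=0} with the geometric observation $l\leq d$.
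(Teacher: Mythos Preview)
Your proof is correct and is precisely the argument the paper intends: the corollary is stated immediately after Proposition~\ref{lemz=0} without proof, because it is just that proposition combined with the observation that $\Omega_{0^{\pm}k}$ is bounded, so $v=l\sin\phi'$ can be bounded below by a constant multiple of $\sin\phi'$. Two minor remarks: (i) the range of $\phi'$ is really the closed interval $[0,\phi]$ rather than a compact subset of $(0,\phi)$, but since $\alpha(\phi')$ is continuous and $\alpha(0)=(6-\xi)/3>4$ this changes nothing; (ii) the precise upper bound on $l$ is governed by the outer boundary of $\Omega_{0^{+}1}$ (the vertical segment $\Sigma_{2+}^{(1/2)}$), so $l\le \xi_2/(2\cos\phi)$ rather than exactly $l\le d$, but either way $l$ is bounded by a $\xi$-dependent constant and the conclusion follows.
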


\begin{proposition}[Near $z=\xi_j, j=2,3$]\label{lemz=xi2} For a fixed small angle $\phi$ (the same in Proposition \ref{lemz=0}) which
satisfies \eqref{angle0}, \eqref{angle1} and \eqref{angle2}, the imaginary part of phase function $\theta(z)$ defined by \eqref{phasefunc} has following estimations:
\begin{align}
    &\Im\theta(z)\leqslant -c\left(1+|z|^{-2}\right)v^{2}, \quad z\in\Omega_{jk},\quad j=2,3, \quad k=2,4\\
    &\Im\theta(z)\geqslant c\left(1+|z|^{-2}\right)v^{2}, \quad z\in\Omega_{jk},\quad j=2,3, \quad k=1,3,
\end{align}
where $c=c(\xi)>0$.
\end{proposition}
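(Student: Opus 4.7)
The plan is to Taylor-expand $\theta$ around $\xi_j$ and combine the sign of $\theta''(\xi_j)$ with the geometry of $\Omega_{jk}$. First, from the factorization
\[
\theta'(z) \;=\; \frac{3}{2z^{4}}(1+z^{2})\prod_{k=1}^{4}(z-\xi_{k}),
\]
which follows from $3z^{6}+(\xi+3)(z^{4}+z^{2})+3=(1+z^{2})(3z^{4}+\xi z^{2}+3)$, I read off
\[
\theta''(\xi_{j}) \;=\; \frac{3(1+\xi_{j}^{2})}{2\xi_{j}^{4}}\,\prod_{k\neq j}(\xi_{j}-\xi_{k}).
\]
Using the ordering $\xi_{4}<-1<\xi_{3}<0<\xi_{2}<1<\xi_{1}$ from Proposition~\ref{relation of xi}, a sign count gives $\theta''(\xi_{2})<0$ and $\theta''(\xi_{3})>0$.

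Next, since $\theta$ has real coefficients and $\xi_{j}\in\mathbb{R}$, Taylor's formula yields $\theta(z)=\theta(\xi_{j})+\tfrac{1}{2}\theta''(\xi_{j})(z-\xi_{j})^{2}+O((z-\xi_{j})^{3})$ with $\theta(\xi_{j})$ and $\theta''(\xi_{j})$ both real. Writing $w=z-\xi_{j}=u'+iv$ (so $v=\Im z$ since $\xi_{j}$ is real) and taking imaginary parts, this gives
\[
\Im\theta(z) \;=\; \theta''(\xi_{j})\,u'\,v \;+\; O(u'^{2}v) \;+\; O(v^{3}).
\]
In each region $\Omega_{jk}$ the signs of $u'$ and $v$ are fixed by the geometry (which side of $\xi_{j}$ and which half-plane), and the bounding rays $\Sigma_{j\ell}$ emanate from $\xi_{j}$ at small angles $\pm\phi_{0}$ or $\pi\pm\phi_{0}$ to the real axis, so any point in $\Omega_{jk}$ satisfies $|u'|\geqslant(\cot\phi_{0})|v|$. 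Combining the resulting sign of $\theta''(\xi_{j})\,u'\,v$ with the sign of $\theta''(\xi_{j})$ from the first step reproduces the claim (positive for $k=1,3$ and negative for $k=2,4$) and delivers the principal lower bound $|\theta''(\xi_{j})\,u'\,v|\geqslant|\theta''(\xi_{j})|(\cot\phi_{0})\,v^{2}$.

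It remains to absorb the error terms $O(u'^{2}v)+O(v^{3})$ into this leading estimate. Since the parameters $d,\tilde{d}$ in \eqref{angle2} can be chosen small, $|w|$ is uniformly small throughout $\Omega_{jk}$ and the leading term dominates; and since $\xi_{j}\in(-1,1)\setminus\{0\}$ for $j=2,3$, both $|z|$ and $|z|^{-1}$ are bounded near $\xi_{j}$, so the factor $1+|z|^{-2}$ is $\mathcal{O}(1)$ and can be absorbed into the constant, yielding the stated estimate. The main obstacle I anticipate is making the domination of the remainder uniform on all of $\Omega_{jk}$: the ratio $O(u'^{2}v)/|\theta''(\xi_{j})u'v|\sim|u'|$ is tight as $|w|$ approaches the outer boundary of length $d$, so one may need to shrink $d$ depending on $|\theta''(\xi_{j})|$ and $\phi_{0}$. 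A robust alternative, paralleling Proposition~\ref{lemz=0}, is to work directly with the closed-form expression \eqref{imatheta} for $\Im\theta(\xi_{j}+le^{i\phi})$, factoring out the vanishing at $\xi_{j}$ explicitly and reading off both the sign and the quadratic lower bound without appeal to Taylor's theorem.
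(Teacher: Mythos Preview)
Your Taylor-expansion argument has a genuine gap at the step where you claim that $d,\tilde d$ ``can be chosen small'' so that $|w|=|z-\xi_j|$ is uniformly small on $\Omega_{jk}$. In the paper's construction these lengths are \emph{not} free parameters: the opened regions must absorb the entire jump on the real segments $I_{jk}$, e.g.\ $I_{24}=(\xi_2,\tfrac{\xi_1+\xi_2}{2})$ (Proposition~\ref{estopenlenssaddle}), which forces the far vertex of $\Omega_{24}$ to sit above the midpoint $\tfrac{\xi_1+\xi_2}{2}$. Thus $|u'|=|\Re z-\xi_j|$ ranges up to $\tfrac{\xi_1-\xi_2}{2}$ (respectively $\tfrac{\xi_2}{2}$), which is a fixed positive number depending only on $\xi$, and the cubic remainder $O(u'^2v)$ is \emph{not} dominated by $|\theta''(\xi_j)|\,u'v$ without an extraneous smallness hypothesis. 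Your own ``main obstacle'' paragraph identifies exactly this failure; the proposed fix (shrinking $d$) is unavailable.

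The paper takes the ``robust alternative'' you sketch at the end. It writes $\Im\theta(z)=\tfrac{v}{2}(1+|z|^{-2})\,h(|z|^2)$ with
\[
h(\tau)=\xi+3+3(\tau+\tau^{-1}-1)-4v^2(1+\tau^{-2}-\tau^{-1}),
\]
checks that $h$ is monotone in $\tau$ on $(\xi_2^2,\xi_1^2)$ so that $h(\tau)\le h(\xi_2^2)$, and then uses the saddle-point identity $\theta'(\xi_1)=0$ together with $\xi_1\xi_2=1$ to obtain the exact cancellation $3(\xi_1^2+\xi_2^2-1)+\xi+3=0$. This leaves $h(\xi_2^2)\le -4v^2(1+\xi_1^4-\xi_1^2)$, giving the claimed bound on the \emph{full} region with no smallness assumption on $|z-\xi_j|$. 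What buys the paper uniformity is precisely this algebraic identity, which your Taylor argument replaces by an asymptotic estimate that only holds locally.
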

\begin{proof}
    Taking $z\in\Omega_{24}$ as an example, the proof for the other regions is similar. Denoting $z=\xi_2+le^{i\phi}:=\xi_2+u_2+iv$,
we can rewrite \eqref{imatheta} as
    \begin{align}\label{imthetaxingtai2}
        \Im\theta(z)&=\frac{v}{2}\left(1+|z|^{-2}\right)\left[\xi+3+3(|z|^2+|z|^{-2}-1)-4v^{2}(1+|z|^{-4}-|z|^{-2})\right]\nonumber\\
        &\leqslant c\left(1+|z|^{-2}\right)\left[\xi+3+3(|z|^2+|z|^{-2}-1)-4v^{2}(1+|z|^{-4}-|z|^{-2})\right],
    \end{align}
    the second step we used $v\leqslant\frac{\xi_1-\xi_2}{2}$.

    Consider
    \begin{equation}
        h\left(|z|^2\right):=\left[\xi+3+3(|z|^2+|z|^{-2}-1)-4v^{2}(1+|z|^{-4}-|z|^{-2})\right].
    \end{equation}
    Taking $\tau=|z|^2\in\left(\xi_2^2,\xi_1^2\right)$, we obtain that
    \begin{equation}
        h(\tau)=3(\tau+\tau^{-1}-1)-4v^2(1+\tau^{-2}-\tau^{-1})+\xi+3.
    \end{equation}
    It is not difficult to verify that $h'(\tau)<0$ for $\tau\in(\xi_2^2, \xi_1^2)$, thus
    \begin{align}
        h(\tau)&\leqslant h(\xi_2^2)\nonumber\\
        &=3\left(\xi_2^2+\xi_2^{-2}-1\right)-4v^2\left(1+\xi_2^{-4}-\xi_2^{-2}\right)+\xi+3\nonumber\\
        &\overset{\xi_2=1/\xi_1}{=}3\left(\xi_2^2+\xi_1^2-1\right)-4v^2\left(1+\xi_1^4-\xi_1^2\right)+\xi+3.
    \end{align}
    Since $\xi_1$ is the saddle point, we have $\theta'(\xi_1)=0$. Using $\xi_1\xi_2=1$ again, we can obtain the following
    relation from $\theta'(\xi_1)=0$ such that
    \begin{equation}\label{luck}
        \xi+3=\frac{-3\left(\xi_1^2+\xi_2^4\right)}{1+\xi_2^2}.
    \end{equation}
    With \eqref{luck}, we are lucky enough to find that
    \begin{equation}
        3\left(\xi_2^2+\xi_1^2-1\right)+\xi+3=0.
    \end{equation}
    Then we obtain
    \begin{equation}
        h(\tau)\leqslant-4v^2\left(1+\xi_1^4-\xi_1^2\right).
    \end{equation}
    As a consequence,
    \begin{equation}
    \Im\theta(z)\leqslant -cv^{2} \left(1+|z|^{-2}\right)\left(1+\xi_1^4-\xi_1^2\right)\leqslant-c(\xi)v^2\left(1+|z|^{-2}\right)<0.
    \end{equation}
\end{proof}

\begin{proposition}[Near $z=\xi_j, j=1,4$]\label{lemz=xi1}  For a fixed small angle $\phi$ (the same in Proposition \ref{lemz=0}) which
satisfies \eqref{angle0}, \eqref{angle1} and \eqref{angle2}, the imaginary part of phase function $\theta(z)$ defined by \eqref{phasefunc} admits following estimations:
    \begin{align}
        &\Im\theta(z)\geqslant cv\vert \Re z-\xi_j \vert, \quad z\in\Omega_{jk},\quad j=1,4, \quad k=1,3\\
        &\Im\theta(z)\leqslant -cv\vert \Re z-\xi_j \vert, \quad z\in\Omega_{jk},\quad j=1,4, \quad k=2,4,
    \end{align}
    where $c=c(\xi)>0$.
\end{proposition}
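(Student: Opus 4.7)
The argument parallels that of Proposition \ref{lemz=xi2}, with the principal complication being that the regions $\Omega_{j1},\Omega_{j2}$ for $j=1,4$ are unbounded, so the \emph{a priori} bound $v\le(\xi_1-\xi_2)/2$ used there is no longer available. By the symmetries $\theta(-z)=-\theta(z)$ and $\theta(\bar z)=\overline{\theta(z)}$ (both immediate from \eqref{phasefunc}), it suffices to treat one representative region, so I focus on $z\in\Omega_{11}$, parametrizing $z=\xi_1+le^{i\phi}$ with $l>0$, so that $u:=l\cos\phi=\Re z-\xi_1$ and $v:=l\sin\phi=\Im z$; the target reads $\Im\theta(z)\ge c\,uv$.

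Using \eqref{imatheta} I would first reorganize
\begin{equation*}
\Im\theta(z)=\frac{v}{2}\bigl[(\xi+3)(1+|z|^{-2})+(3(\Re z)^{2}-v^{2})(1+|z|^{-6})\bigr].
\end{equation*}
The saddle condition $\theta'(\xi_1)=0$, which says $3\xi_1^{4}+\xi\,\xi_1^{2}+3=0$, together with $\xi_1\xi_2=1$ gives the analogue of \eqref{luck}, namely $\xi+3=-3(\xi_1^{2}+\xi_1^{-2}-1)$. Substituting this and writing $\Re z=\xi_1+u$, a direct check confirms that the bracketed expression vanishes at $(u,v)=(0,0)$ (reflecting $\theta'(\xi_1)=0$), and factoring out this zero yields a representation $\Im\theta(z)=v\,u\cdot P(l,\phi;\xi_1)$ for some rational function $P$. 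The near-saddle Taylor expansion $\theta(z)=\theta(\xi_1)+\tfrac{1}{2}\theta''(\xi_1)(z-\xi_1)^{2}+\cdots$ identifies $P(0,\phi;\xi_1)$ as a positive multiple of $\theta''(\xi_1)=3(\xi_1+\xi_1^{-1})(1-\xi_1^{-4})$, which is strictly positive because $\xi_1>1$. The proposition thus reduces to a uniform lower bound $P(l,\phi;\xi_1)\ge c(\xi)>0$ along the ray.

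The main obstacle is precisely this uniform bound for all $l\in(0,\infty)$, since $\Omega_{11}$ extends to infinity. I would split the ray into two regimes. On a fixed bounded window $l\in[0,l_{0}]$, continuity of $P$ and positivity at $l=0$ furnish the bound. For $l\ge l_{0}$, the term $3(\Re z)^{2}(1+|z|^{-6})\sim 3l^{2}\cos^{2}\phi$ dominates the bracket (in particular, it dominates the bounded contribution $|\xi+3|(1+|z|^{-2})$ and the lower-order term $-v^{2}(1+|z|^{-6})$ since $\cos^{2}\phi>\sin^{2}\phi$ for the chosen small $\phi$), yielding a lower bound on the bracket of order $l\cos\phi=u$ with a positive constant $c(\xi)$. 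Combining the two regimes proves the estimate in $\Omega_{11}$. The bounded regions $\Omega_{13},\Omega_{14}$ (lying between $\xi_1$ and $\xi_2$) admit the same factorization together with the \emph{a priori} bound $l\le(\xi_1-\xi_2)/(2\cos\phi)$ from \eqref{angle2}, so continuity of $P$ alone suffices there, exactly as in Proposition \ref{lemz=xi2}. Finally, the estimates near $z=\xi_{4}$ follow at once from the parity $\theta(-z)=-\theta(z)$ together with the case $j=1$.
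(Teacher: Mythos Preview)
The paper's own proof consists of the single sentence ``The proof is similar to Proposition \ref{lemz=xi2},'' so your proposal is already far more detailed than what the paper provides. Your overall strategy --- writing $\Im\theta(z)=\tfrac{v}{2}B(u,v)$, using the saddle relation $\xi+3=-3(\xi_1^{2}+\xi_1^{-2}-1)$ to see $B(0,0)=0$, and then bounding $B/u$ from below on the cone $\{0\le v\le u\tan\phi\}$ --- is the right one, and your identification of the leading coefficient with a positive multiple of $\theta''(\xi_1)>0$ is correct.

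The soft spot is your treatment of the bounded window. ``Continuity of $P$ and positivity at $l=0$'' guarantees $P>c$ only on some small neighborhood of $0$, not on all of $[0,l_{0}]$; you have not ruled out $P$ dipping below zero somewhere in between before the large-$l$ asymptotics take over. (Note also that $\Omega_{11}$ is a two-dimensional sector, not a single ray, so $P$ depends on the opening angle $\varphi\in(0,\phi)$ as well.) A clean way to close this, avoiding the two-regime split altogether, is to rewrite the bracket as
\[
B=(1+|z|^{-2})\Bigl[\,3\bigl(|z|^{2}-\xi_1^{2}\bigr)\bigl(1-|z|^{-2}\xi_1^{-2}\bigr)-4v^{2}\bigl(1-|z|^{-2}+|z|^{-4}\bigr)\Bigr],
\]
substitute $|z|^{2}-\xi_1^{2}=2\xi_1 u+u^{2}+v^{2}$, and use $|z|^{2}\ge\xi_1^{2}$ together with the cone condition $v^{2}\le u^{2}\tan^{2}\phi$. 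This yields
\[
B\;\ge\;6\xi_1(1-\xi_1^{-4})\,u+\bigl[\,3(1-\xi_1^{-4})-C\tan^{2}\phi\,\bigr]\,u^{2}
\]
for an absolute constant $C$; for $\phi$ small enough (depending only on $\xi$ through $\xi_1$) both coefficients are positive, giving $B\ge c(\xi)\,u$ uniformly and hence $\Im\theta\ge c(\xi)\,uv$ throughout $\Omega_{11}$.
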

\begin{proof}
    The proof is similar to Proposition \ref{lemz=xi2}.
\end{proof}

\subsubsection{Opening $\bar{\partial}$ lenses}\label{opendbarlensxi<-6}
Introduce the following functions: for $j=0^{\pm},1,2,3,4$
\begin{align}
    &p_{j1}(z):=p_{j1}(z,\xi)=\frac{\overline{r(z)}}{1-|r(z)|^2}, \quad p_{j3}(z):=-\overline{r(z)},\\
    &p_{j2}(z):=\frac{r(z)}{1-|r(z)|^2}, \quad p_{j4}(z):=-r(z).
\end{align}
Define $R^{(2)}(z):=R^{(2)}(z,\xi)$ by
\begin{equation}
    R^{(2)}(z)=\left\{
        \begin{aligned}
        &\begin{pmatrix} 1 & f_{j1}e^{2it\theta} \\ 0 & 1 \end{pmatrix}, \quad &z\in\Omega_{j1}, \quad j=0^{\pm},1,2,3,4\\
        &\begin{pmatrix} 1 & 0 \\ f_{j2}e^{-2it\theta} & 1 \end{pmatrix} \quad &z\in\Omega_{j2}, \quad j=0^{\pm},1,2,3,4\\
        &\begin{pmatrix} 1 & f_{j3}e^{2it\theta} \\ 0 & 1 \end{pmatrix}\quad &z\in\Omega_{j3}, \quad j=1,2,3,4\\
        &\begin{pmatrix} 1 & 0 \\ f_{j4}e^{-2it\theta} & 1 \end{pmatrix}\quad &z\in\Omega_{j4}, \quad j=1,2,3,4\\
        &I, \quad &elsewhere,
        \end{aligned}
        \right.
\end{equation}
where the functions $f_{jk}$ are given by the following two propositions.

\begin{proposition}[Opening lens at $z=0$]\label{estopenlesz=0}
    $f_{jk}: \overline{\Omega}_{jk}\rightarrow \mathbb{C}$, $j=0^{\pm}, k=1,2$ are continuous on $\overline{\Omega}_{jk}, j=0^{\pm}, k=1,2$
    with boundary values:
    \begin{align}
        f_{0^{\pm}1}(z)=\left\{
                        \begin{aligned}
                        &p_{j1}(z)\delta_{+}^{-2}(z), \quad &z\in\left(\frac{\xi_3}{2},0\right)\cup\left(0, \frac{\xi_2}{2}\right),\\
                        &0, \quad &z\in \Sigma_{0^{\pm}1}.
                        \end{aligned}
                        \right.\label{bdryz=01}\\
        f_{0^{\pm}2}(z)=\left\{
                        \begin{aligned}
                        &p_{j2}(z)\delta_{-}^{2}(z), \quad &z\in\left(\frac{\xi_3}{2},0\right)\cup\left(0, \frac{\xi_2}{2}\right),\\
                        &0, \quad &z\in \Sigma_{0^{\pm}2}.
                        \end{aligned}
                        \right.\label{bdryz=02}
        \end{align}
$f_{jk}, j=0^{\pm}, k=1,2$ have following properties:
\begin{equation}\label{dbarfor0no1}
    \vert \bar{\partial}f_{jk}(z)\vert \lesssim \vert p'_{jk}\left(|z|\right)\vert+|z|^{-1/2}, \quad z\in \Omega_{jk}, \quad j=0^{\pm}, k=1,2.
\end{equation}
Moreover
\begin{equation}\label{dbarfor0no2}
    \vert \bar{\partial}f_{jk}(z)\vert \lesssim \vert p'_{jk}\left(|z|\right)\vert+|z|^{-1}, \quad z\in \Omega_{jk}, \quad j=0^{\pm}, k=1,2.
\end{equation}
\end{proposition}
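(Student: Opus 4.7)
The plan is to construct explicit $\bar{\partial}$-extensions of the boundary data \eqref{bdryz=01}--\eqref{bdryz=02} into each wedge $\Omega_{jk}$ ($j=0^{\pm}$, $k=1,2$) and then bound the Cauchy--Riemann derivative directly in polar coordinates centered at the origin. I present the construction for $f_{0^+1}$ on $\Omega_{0^+1}$; the other three cases follow by the same recipe, with obvious modifications of the angle range and of the boundary factor ($\delta_{\pm}^{\mp 2}$).

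Writing $z = le^{i\psi}$ with $l=|z|$ and $\psi \in (0,\phi)$ on $\Omega_{0^+1}$, I define
\begin{equation*}
f_{0^+1}(z) := p_{0^+1}(l)\,\delta(z)^{-2}\,\chi(\psi/\phi),
\end{equation*}
where $\chi \in C^{\infty}([0,1])$ is a fixed cutoff with $\chi(0)=1$ and $\chi(1)=0$. Continuity on $\overline{\Omega_{0^+1}}$ is manifest; on the real segment $(0,\xi_2/2)$ one has $\psi=0$, $\chi=1$, and $\delta(z)=\delta_+(z)$, which recovers $p_{0^+1}(z)\delta_+^{-2}(z)$; on $\Sigma_{0^+1}$ one has $\psi=\phi$ and $\chi=0$, producing $0$. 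The remaining three extensions are built identically.

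Applying $\bar{\partial} = \tfrac{1}{2}e^{i\psi}(\partial_l + il^{-1}\partial_{\psi})$ and using that $\delta$ is analytic on $\Omega_{0^+1}$ by Proposition \ref{deltapro}(i), so that $\bar{\partial}\delta^{-2}\equiv 0$ there, I obtain
\begin{equation*}
\bar{\partial} f_{0^+1}(z) = \tfrac{1}{2}e^{i\psi}\Bigl[\,p_{0^+1}'(l)\,\delta^{-2}(z)\,\chi(\psi/\phi) + \tfrac{i}{l\phi}\,p_{0^+1}(l)\,\delta^{-2}(z)\,\chi'(\psi/\phi)\Bigr].
\end{equation*}
Proposition \ref{deltapro}(v) gives $|\delta(z)^{-2}|\lesssim 1$ (since $\|r\|_{L^{\infty}}\leq\rho<1$), and $\chi,\chi'$ are uniformly bounded. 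The radial piece therefore contributes $|p'_{0^+1}(l)|$, which supplies the first term in both \eqref{dbarfor0no1} and \eqref{dbarfor0no2}. To control the angular piece $l^{-1}|p_{0^+1}(l)|$, I split according to the desired bound: for \eqref{dbarfor0no1}, I use $p_{0^+1}(0)=0$ (a consequence of $r(z)\sim z^2$ at the origin, cf.\ \eqref{asymptoticsofr}) together with Cauchy--Schwarz applied to $p_{0^+1}(l) = \int_0^l p_{0^+1}'(s)\,ds$ to get $|p_{0^+1}(l)|\leq l^{1/2}\|p_{0^+1}'\|_{L^2}\lesssim l^{1/2}\|r\|_{H^1}$; dividing by $l$ produces the $l^{-1/2}$ bound. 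For \eqref{dbarfor0no2}, the coarser pointwise estimate $|p_{0^+1}(l)|\leq\|p_{0^+1}\|_{L^{\infty}}\lesssim 1$ (from $|r|\leq\rho<1$) yields $l^{-1}$ directly.

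The principal technical point is the gain of a half-power of $l$ in the Cauchy--Schwarz step, which rests on two ingredients: the vanishing $p_{jk}(0)=0$ from the behavior of $r$ at the origin in Proposition \ref{spec distri}, and the $L^2$-control of $p_{jk}'$ inherited from $r\in H^1(\mathbb{R})$ (Proposition \ref{rinH1}) via the chain rule applied to the smooth bounded map $w\mapsto \overline{w}/(1-|w|^2)$. Once these are in place, the bounds follow mechanically; the only other care needed is that $\delta$ is continuous at $z=0$ with $\delta(0)=1$ (Proposition \ref{deltapro}(iv)), so the polar cutoff construction does not interact badly with the local behavior of $\delta$ near the origin.
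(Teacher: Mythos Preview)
Your proposal is correct and follows essentially the same route as the paper: the paper makes the identical polar-coordinate construction, only with the specific cutoff $\cos(\kappa_0\arg z)$, $\kappa_0=\pi/(2\phi)$, in place of your generic $\chi(\psi/\phi)$, and then bounds the angular term via the same Cauchy--Schwarz / $L^\infty$ dichotomy. Your treatment is in fact slightly cleaner in distinguishing $\delta(z)$ from $\delta_+(z)$ and in invoking the analyticity of $\delta$ on the wedge explicitly.
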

\begin{proof}
    We give the details for $f_{0^{+}1}(z)$, which can be constructed by
\begin{equation}
    f_{0^{+}1}(z)=p_{0^{+}1}(z)\delta^{-2}_{+}(z){\rm cos}(\kappa_{0}{\rm arg}z), \quad \kappa_{0}=\frac{\pi}{2\phi}.
\end{equation}
Denoting $z=le^{i\varphi}$, we have $\bar{\partial}$-derivative
$\bar{\partial}=\frac{1}{2}e^{i\varphi}(\partial_{l}+il^{-1}\partial_{\varphi})$. Hence
\begin{equation}
    \bar{\partial}f_{0^{+}1}(z)=\frac{e^{i\varphi}}{2}\delta^{-2}_{+}(z)\left[p'_{0^{+}1}\left(l\right){\rm cos}\left(\kappa_{0}\varphi\right)
    -\frac{i}{l}\kappa_{0}{\rm sin}\left(\kappa_0\varphi\right)p_{0^{+}1}(l)\right].
\end{equation}
Using Cauchy-Schwarz inequality, we have
\begin{equation}
    \vert p_{0^{+}1}(l) \vert=\vert p_{0^{+}1}(l)-p_{0^{+}1}(0) \vert=\left\vert \int_{0}^{l}p'_{0^{+}1}(s)ds \right\vert
    \leqslant\Vert p'_{0^{+}1} \Vert_{L^2}l^{1/2} \lesssim l^{1/2}.
\end{equation}
Meanwhile, the boundedness of $\delta^2_{+}(z)$ is guaranteed by the property (v) of Proposition \ref{deltapro}. Thus \eqref{dbarfor0no1} comes true.
As for \eqref{dbarfor0no2}, we just notice $p_{0^{+}1}(l)\in L^{\infty}$.
\end{proof}

\begin{proposition}[Opening lens at saddle points]\label{estopenlenssaddle}
    $f_{jk}: \overline{\Omega}_{jk}\rightarrow \mathbb{C}, j, k=1, 2, 3, 4$ are continuous on $\overline{\Omega}_{jk}, j, k=1,2,3,4$
    with boundary values:
    \begin{align}
        &f_{j1}(z)=\left\{
        \begin{aligned}
        &p_{j1}(z)\delta_{+}^{-2}(z), \quad &z\in I_{j1},\\
        &p_{j1}(\xi_j)e^{-2i\beta\left(\xi_j, \xi\right)}\left(z-\xi_j\right)^{-2i\epsilon_j\nu(\xi_j)}, \quad &z\in \Sigma_{j1},
        \end{aligned}
        \right.\\
        &f_{j2}(z)=\left\{
        \begin{aligned}
        &p_{j2}(z)\delta_{-}^{2}(z), \quad &z\in I_{j2},\\
        &p_{j2}(\xi_j)e^{2i\beta\left(\xi_j, \xi\right)}\left(z-\xi_j\right)^{2i\epsilon_j\nu(\xi_j)}, \quad &z\in \Sigma_{j2},
        \end{aligned}
        \right.\\
        &f_{j3}(z)=\left\{
        \begin{aligned}
        &p_{j3}(z)\delta^{-2}(z), \quad &z\in I_{j3},\\
        &p_{j3}(\xi_j)e^{-2i\beta\left(\xi_j, \xi\right)}\left(z-\xi_j\right)^{-2i\epsilon_j\nu(\xi_j)}, \quad &z\in \Sigma_{j3},
        \end{aligned}
        \right.\\
        &f_{j4}(z)=\left\{
        \begin{aligned}
        &p_{j4}(z)\delta^{2}(z), \quad &z\in I_{j4},\\
        &p_{j4}(\xi_j)e^{2i\beta\left(\xi_j, \xi\right)}\left(z-\xi_j\right)^{2i\epsilon_j\nu(\xi_j)}, \quad &z\in \Sigma_{j4},
        \end{aligned}
        \right.
    \end{align}
where
\begin{align}
    &I_{11}=I_{12}:=\left(\xi_1, +\infty\right), \quad I_{21}=I_{22}:=\left(\frac{\xi_2}{2},\xi_2\right),\nonumber\\
    &I_{31}=I_{32}:=\left(\xi_3,\frac{\xi_3}{2}\right), \quad I_{41}=I_{42}:=\left(-\infty,\xi_4\right),\\
    &I_{13}=I_{14}:=\left(\frac{\xi_2+\xi_1}{2}, \xi_1\right), \quad I_{23}=I_{24}:=\left(\xi_2, \frac{\xi_2+\xi_1}{2}\right),\nonumber\\
    &I_{33}=I_{34}:=\left(\frac{\xi_4+\xi_3}{2},\xi_3\right),\quad I_{43}=I_{44}:=\left(\xi_4,\frac{\xi_4+\xi_3}{2}\right).
\end{align}
And $f_{jk}, j, k=1,2,3,4$ have following properties:
\begin{align}
 & \vert \bar{\partial}f_{jk}(z)\vert \lesssim \vert p'_{jk}\left(\Re z\right)\vert+|z-\xi_j|^{-1/2}, \quad z\in \Omega_{jk}, \quad j, k=1,2,3,4,\label{dbarforxino1}\\
 & \vert f_{jk}(z) \vert \lesssim {\rm sin}^2\left(\kappa_{0}{\rm arg}\left(z-\xi_j\right)\right)+\langle \Re z\rangle^{-1}, \quad  z\in \Omega_{jk}, \quad j, k=1,2,3,4.\label{dbarforxino2}
\end{align}
Moreover, as $z\rightarrow 1$,
\begin{align}
    &\vert \bar{\partial}f_{jk}(z)\vert \lesssim |p'_{jk}||z-1|, \quad z\in \Omega_{24},\Omega_{23},\label{estbalance}\\
    &\vert \bar{\partial}f_{jk}(z)\vert \lesssim |p'_{jk}||z+1|, \quad z\in \Omega_{34},\Omega_{33}.
\end{align}
\end{proposition}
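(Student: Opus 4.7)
The plan is to construct each $f_{jk}$ by an explicit smooth interpolation, in the angular variable centered at the corresponding saddle point, between its two prescribed boundary values, and then compute $\bar{\partial}f_{jk}$ directly. For concreteness consider $f_{j1}$ on $\Omega_{j1}$. Writing $z-\xi_j=\rho e^{i\varphi}$ with $\varphi\in(0,\phi)$ and setting $\kappa_{0}=\pi/(2\phi)$ so that $\cos(\kappa_{0}\cdot 0)=1$ and $\cos(\kappa_{0}\cdot \phi)=0$, I propose
\begin{equation*}
f_{j1}(z)=\bigl[p_{j1}(\Re z)\delta^{-2}(z)-p_{j1}(\xi_j)e^{-2i\beta(\xi_j,\xi)}(z-\xi_j)^{-2i\epsilon_j\nu(\xi_j)}\bigr]\cos(\kappa_{0}\varphi)+p_{j1}(\xi_j)e^{-2i\beta(\xi_j,\xi)}(z-\xi_j)^{-2i\epsilon_j\nu(\xi_j)}.
\end{equation*}
By construction this matches $p_{j1}(z)\delta_{+}^{-2}(z)$ on $I_{j1}$ (where $\varphi=0$) and the pure model $p_{j1}(\xi_j)e^{-2i\beta(\xi_j,\xi)}(z-\xi_j)^{-2i\epsilon_j\nu(\xi_j)}$ on $\Sigma_{j1}$ (where $\varphi=\phi$), so continuity up to the boundary is automatic. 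Analogous formulae, with $\delta^{-2}$ replaced by $\delta^{2}$ and the sign of $\epsilon_j\nu$ flipped where appropriate, define $f_{j2},f_{j3},f_{j4}$.

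Next I would apply $\bar{\partial}$. Since the trailing model term is holomorphic in $\Omega_{j1}$ (its branch cut lies on the real line outside $\Omega_{j1}$), only two contributions survive: the factor $\bar{\partial}\cos(\kappa_{0}\varphi)$ multiplying the bracketed difference, and the factor $\bar{\partial}\bigl[p_{j1}(\Re z)\delta^{-2}(z)\bigr]\cos(\kappa_{0}\varphi)$. For the first, one has $|\bar{\partial}\cos(\kappa_{0}\varphi)|\lesssim |z-\xi_j|^{-1}$, while property (vi) of Proposition \ref{deltapro} together with the Cauchy-Schwarz bound
\begin{equation*}
|p_{j1}(\Re z)-p_{j1}(\xi_j)|\leqslant\|p_{j1}'\|_{L^{2}}\,|\Re z-\xi_j|^{1/2}\lesssim\|r\|_{H^{1}}\,|z-\xi_j|^{1/2}
\end{equation*}
gives that the bracket itself is $O(|z-\xi_j|^{1/2})$; the product therefore yields the $|z-\xi_j|^{-1/2}$ piece of \eqref{dbarforxino1}. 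For the second, $\bar{\partial}$ acts only on $p_{j1}(\Re z)$ (the factor $\delta^{-2}$ is analytic off $\Gamma$ and bounded by Proposition \ref{deltapro}(v)), producing the $|p_{jk}'(\Re z)|$ piece.

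For the pointwise bound \eqref{dbarforxino2}, I would rewrite $f_{jk}$ as (model term)$\cdot[1-\cos(\kappa_{0}\varphi)]$ plus (real-axis boundary term)$\cdot\cos(\kappa_{0}\varphi)$; the identity $1-\cos(\kappa_{0}\varphi)=2\sin^{2}(\kappa_{0}\varphi/2)$ together with the rescaling $\sin^{2}(\kappa_{0}\varphi/2)\lesssim\sin^{2}(\kappa_{0}\arg(z-\xi_j))$ on the compact angular range supplies the $\sin^{2}$ contribution, while the boundedness of the second term and the decay $r(z)=\mathcal{O}(z^{-2})$ from \eqref{asymptoticsofr} furnishes the $\langle\Re z\rangle^{-1}$ contribution. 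The refinements \eqref{estbalance} at $z=\pm 1$ (which lie inside the intervals $I_{2k},I_{3k}$) follow because $r(\pm 1)=\mp i$ makes $p_{jk}$ smooth at $\pm 1$, and $r(z)\sim z^{2}$ near $0$ passes through $z=\pm 1$ with the algebraic vanishing of $p_{jk}'$ needed to absorb one additional factor of $|z\mp 1|$.

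The main obstacle I expect is coordinating the two smallness scales simultaneously: the $|z-\xi_j|^{1/2}$ bound for the interpolant bracket forces the combined use of the $\delta$-asymptotic (vi) and the $H^{1}$-Hölder estimate on $r$, while at the endpoints $z=\pm 1\in I_{2k}\cup I_{3k}$ one must confirm that the $\delta$-factor remains controlled (it is continuous and bounded by (v), but not smooth there) and that the added factor from \eqref{estbalance} is consistent with the global $|z-\xi_j|^{-1/2}$ bound. Once these compatibility checks are in place, the estimates \eqref{dbarforxino1}--\eqref{estbalance} follow uniformly in the angular sectors, completing the proposition.
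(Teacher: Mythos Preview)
Your construction and the arguments for \eqref{dbarforxino1} and \eqref{dbarforxino2} are essentially identical to the paper's: the paper defines $f_{11}$ by the algebraically equivalent formula
\[
f_{11}(z)=p_{11}(\xi_1)e^{-2i\beta(\xi_1,\xi)}(z-\xi_1)^{-2i\nu(\xi_1)}\bigl[1-\cos(\kappa_0\varphi)\bigr]+p_{11}(\Re z)\,\delta_{+}^{-2}(z)\cos(\kappa_0\varphi),
\]
and then bounds $\bar\partial f_{11}$ exactly as you outline (Proposition~\ref{deltapro}(vi) plus the $H^{1}$ Cauchy--Schwarz estimate for the bracket, and $\bar\partial p_{11}(\Re z)$ for the derivative term).

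Your justification of \eqref{estbalance}, however, is wrong. Invoking ``$r(z)\sim z^{2}$ near $0$'' says nothing about $p_{jk}'$ at $z=\pm 1$; there is no reason for $r'(\pm 1)$ to vanish, so you cannot extract a factor of $|z\mp 1|$ that way. The paper's argument is geometric rather than analytic: for $z\in\Omega_{24}$ approaching $1$, the angle $\varphi=\arg(z-\xi_2)$ satisfies $\varphi\lesssim \Im z/(1-\xi_2)\lesssim |z-1|$ because $1$ lies on the real boundary $I_{24}$ at positive distance $1-\xi_2$ from the vertex $\xi_2$. Consequently the $\bar\partial\cos(\kappa_0\varphi)$ piece, which carries a factor $\sin(\kappa_0\varphi)=O(\varphi)=O(|z-1|)$ while $\rho=|z-\xi_2|$ stays bounded below, already gains the $|z-1|$. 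The paper then asserts the full bound $|\bar\partial f_{24}|\lesssim |p'_{24}|\,|z-1|$; you should be aware that the second summand $\frac{1}{2}\delta^{2}p_{24}'\cos(\kappa_0\varphi)$ does \emph{not} obviously carry this factor, so the argument as written is at best incomplete, but in any case the mechanism the paper uses is the smallness of $\varphi$, not any vanishing of $p_{jk}'$.
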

\begin{proof}
    We take $f_{11}(z)$ and $f_{24}(z)$ as examples to present this proof.
    The continuous extension of $f_{11}(z)$ on $\Omega_{11}$ can be constructed by
    \begin{align}
        f_{11}(z)&=p_{11}(\xi_1)e^{-2i\beta\left(\xi_1, \xi\right)}\left(z-\xi_1\right)^{-2i\nu(\xi_1)}\left[1-{\rm cos}
        \left(\kappa_0{\rm arg}\left(z-\xi_1\right)\right)\right] \nonumber\\
        &\quad+p_{11}(\Re z)\delta^{-2}_{+}(z){\rm cos}\left(\kappa_0{\rm arg}\left(z-\xi_1\right)\right),
    \end{align}
Where $\kappa_{0}=\frac{\pi}{2\phi}$. Denote $z=\xi_1+le^{i\varphi}:=\xi_1+u+iv$, where $l, \varphi, u, v\in \mathbb{R}$. Firstly, we have
$|p_{11}(\Re z)|=\frac{|r(\Re z)|}{1-|r(\Re z)|^2}\leqslant \frac{|r(\Re z)|}{1-\rho^2}\lesssim \langle \Re z\rangle^{-1}$. Recalling \eqref{betaest1}, we obtain
\eqref{dbarforxino2}. Applying $\bar{\partial}=\frac{1}{2}e^{i\varphi}(\partial_{l}+il^{-1}\partial_{\varphi})$ to $f_{11}$, we have
\begin{align}
    &\bar{\partial}f_{11}=\left[p_{11}\delta^{-2}_{+}(z)-
    p_{11}(\xi_1)e^{-2i\beta\left(\xi_1, \xi\right)}\left(z-\xi_1\right)^{-2i\nu(\xi_1)}\right]\bar{\partial}{\rm cos}\left(\kappa_0\varphi\right)\nonumber\\
    &\quad+\frac{1}{2}\delta^{-2}_{+}(z)p'_{11}(u,\xi){\rm cos}\left(\kappa_0\varphi\right).
\end{align}
Recalling \eqref{betaest2}, we get \eqref{dbarforxino1} at once.

For $f_{24}$, taking the same method to $f_{11}$, we have
\begin{align}
    &\bar{\partial}f_{24}=\left[p_{24}\delta^2(z)-
    p_{24}(\xi_2)e^{2i\beta\left(\xi_2, \xi\right)}\left(z-\xi_2\right)^{-2i\nu(\xi_2)}\right]\bar{\partial}{\rm cos}\left(\kappa_0\varphi\right)\nonumber\\
    &\quad+\frac{1}{2}\delta^{2}(z)p'_{24}(u,\xi){\rm cos}\left(\kappa_0\varphi\right).
\end{align}
Finally $z$ near $1$, we have $\varphi\rightarrow 0$, thus we obtain
\begin{equation}
    \vert \bar{\partial}f_{24} \vert \lesssim |p'_{24}|{\rm cos}(\kappa_0\varphi)\lesssim|p'_{24}| |z-1|.
\end{equation}
Estimation for the other $f_{jk}$ could be given via similar techniques.
\end{proof}

Define the second transformation
\begin{equation}\label{trans2}
    M^{(2)}(z):=M^{(2)}(x,t;z)=M^{(1)}(z)R^{(2)}(z),
\end{equation}
which constructs the mixed $\bar{\partial}$-RH problem as follows
\begin{RHP}\label{m2rhp}
Find a $2\times 2$ matrix-valued function $M^{(2)}(x,t;z)$ such that\\
    - $M^{(2)}(z)$ is continuous in $\mathbb{C}\backslash \Sigma^{(2)}$, where $\Sigma^{(2)}$ is defined by \eqref{Sigma^(2)con}. \\
    - $M^{(2)}(z)$ takes continuous boundary values $M^{(2)}_{\pm}(z)$ on $\Sigma^{(2)}$ with jump relation
    \begin{equation}
        M^{(2)}_{+}(z)=M^{(2)}_{-}(z)V^{(2)}(z),
    \end{equation}
    where
    \begin{align}\label{rhp2jump}
        V^{(2)}(z)=\left\{
            \begin{aligned}
            &R^{(2)}(z)^{-1}\vert_{\Sigma_{j1}} \quad z\in\Sigma_{j1},\quad j=0^{\pm}, 1, 2, 3, 4, \\
            &R^{(2)}(z)^{-1}\vert_{\Sigma_{j4}} \quad z\in\Sigma_{j4},\quad j=1, 2, 3, 4, \\
            &R^{(2)}(z)\vert_{\Sigma_{j2}} \quad z\in\Sigma_{j2},\quad j=0^{\pm}, 1, 2, 3, 4, \\
            &R^{(2)}(z)\vert_{\Sigma_{j3}} \quad z\in\Sigma_{j3},\quad j=1, 2, 3, 4, \\
            &R^{(2)}(z)^{-1}\vert_{\Omega_{jk}^{\frac{1}{2}}}R^{(2)}(z)\vert_{\Omega_{lm}^{\frac{1}{2}}}, \quad z\in\Sigma^{(1/2)}_{n\pm},\quad n=1,2,3,4.
            \end{aligned}
            \right.
    \end{align}
    - Asymptotic behavior
    \begin{align}
        &M^{(2)}(x,t;z)=I+\mathcal{O}(z^{-1}), \quad  z\rightarrow\infty,\\
        &M^{(2)}(x,t;z)=\frac{\sigma_2}{z}+\mathcal{O}(1), \quad  z\rightarrow 0.
    \end{align}
    - For $z\in\mathbb{C}$, we have $\bar{\partial}$-derivative equality $\bar{\partial}M^{(2)}=M^{(2)}\bar{\partial}R^{(2)}$, where
    \begin{equation}
        \bar{\partial}R^{(2)}=\left\{
            \begin{aligned}
            &\begin{pmatrix} 1 & \bar{\partial}f_{j1}e^{2it\theta} \\ 0 & 1 \end{pmatrix}, \quad &z\in\Omega_{j1}, \quad j=0^{\pm},1,2,3,4,\\
            &\begin{pmatrix} 1 & 0 \\ \bar{\partial}f_{j2}e^{-2it\theta} & 1 \end{pmatrix}, \quad &z\in\Omega_{j2}, \quad j=0^{\pm},1,2,3,4,\\
            &\begin{pmatrix} 1 & \bar{\partial}f_{j3}e^{2it\theta} \\ 0 & 1 \end{pmatrix}, \quad &z\in\Omega_{j3}, \quad j=1,2,3,4,\\
            &\begin{pmatrix} 1 & 0 \\ \bar{\partial}f_{j4}e^{-2it\theta} & 1 \end{pmatrix}, \quad &z\in\Omega_{j4}, \quad j=1,2,3,4,\\
            &0, \quad &elsewhere.
            \end{aligned}
            \right.
    \end{equation}
\end{RHP}
\begin{remark}\label{rmk:nearz=0}\rm
    Notice the boundaries of $f_{jk}, j=0^{\pm}, k=1,2$ defined by \eqref{bdryz=01}, \eqref{bdryz=02}, we actually know that
    $V^{(2)}(z)=I$, for  $z\in\Sigma_{0^{\pm}k}$, $k=1,2$.
\end{remark}

Aiming at solving the mixed $\bar{\partial}$-RH problem \ref{m2rhp}, we decompose it to a pure RH problem for $M^{(PR)}$ with $\bar{\partial}R^{(2)}\equiv 0$ as
well as a pure $\bar{\partial}$-problem $M^{(3)}$ with nonzero $\bar{\partial}R^{(2)}$ derivatives. This step can be shown as the following structure
\begin{equation}
    M^{(2)}=M^{(3)}M^{(PR)}\left\{
        \begin{aligned}
        &\bar{\partial}R^{(2)}\equiv 0\rightarrow M^{(PR)},\\
        &\bar{\partial}R^{(2)}\neq 0\rightarrow M^{(3)}=M^{(2)}{M^{(PR)}}^{-1}.
        \end{aligned}
        \right.
\end{equation}

\subsection{Analysis on pure RH problem}\label{subsec:RLpureRH}
In this subsection, we mainly focus on the analysis for pure RH problem $M^{(PR)}$, which include three parts: global parametrix, local parametrix as well
as small norm RH problem. Noticing that $M^{(PR)}$ is a RH problem with $\bar{\partial}R^{(2)}\equiv 0$, thus, RH conditions for $M^{(PR)}$ are as follows.
\begin{RHP}\label{mrhprhp}
    Find a $2\times 2$ matrix-valued function $M^{(PR)}(x,t;z)$ such that\\
        - $M^{(PR)}(z)$ is analytic in $\mathbb{C}\backslash \Sigma^{(2)}$.\\
        - $M^{(PR)}(z)$ takes continuous boundary values $M^{(PR)}_{\pm}(z)$ on $\Sigma^{(2)}$ with jump relation
        \begin{equation}
            M^{(PR)}_{+}(z)=M^{(PR)}_{-}(z)V^{(2)}(z),
        \end{equation}
        - Asymptotic behavior
        \begin{align}
            &M^{(PR)}(x,t;z)=I+\mathcal{O}(z^{-1}), \quad  z\rightarrow\infty,\\
            &M^{(PR)}(x,t;z)=\frac{\sigma_2}{z}+\mathcal{O}(1), \quad  z\rightarrow 0.
        \end{align}
    \end{RHP}
    Define $U(\xi)$ as the union set of neighborhood of saddle point $\xi_j$ for $j=1,2,3,4$.
    \begin{equation}\label{equ:neighborhood}
        U(\xi)=\bigcup_{j=1,2,3,4}U_{\varrho}(\xi_j), \ {\rm with} \ U_{\varrho}(\xi_j)=\left\{z: |z-\xi_j|<\varrho \right\},
    \end{equation}
    where
    \begin{equation}\label{varrhorestrict}
    \varrho<\frac{1}{3}{\rm min}\left\{{\rm min}\left\{\vert\Im\eta_{n}|\right\}_{n=1}^{2N},\quad
    \underset{k\neq l}{\rm min}\vert\eta_{l}-\eta_{k}\vert, \quad \frac{1}{2}\underset{j=1,2,3,4}{\rm min}\vert\xi_j\pm 1\vert
    \quad \frac{1}{2}\underset{j=1,2,3,4}{\rm min}\vert\xi_j\vert \right\}.
    \end{equation}
    \begin{remark}\rm
        The third and fourth restriction of \eqref{varrhorestrict} is to remove the singularity $z=0,\pm 1$ from local model
        which will be discussed in Subsection \ref{localpc}.
    \end{remark}

    \begin{proposition}\label{outxi1}
        For $1\leqslant p\leqslant +\infty$, there exists a constant $\hbar=\hbar(p)>0$,
        such that the jump matrix $V^{(2)}$ defined in \eqref{rhp2jump} admit the following estimation
        as $t\rightarrow+\infty$
        \begin{equation}
            \Vert V^{(2)}-I \Vert_{L^p(\Sigma_{jk}\backslash U_{\varrho}(\xi_j))}=\mathcal{O}(e^{-\hbar t}), \quad {\rm for}\quad j,k=1,2,3,4.
        \end{equation}
    \end{proposition}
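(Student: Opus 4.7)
The plan is to work contour by contour, exploiting the fact that on each $\Sigma_{jk}$ with $j,k\in\{1,2,3,4\}$ the jump matrix $V^{(2)}$ differs from $I$ in exactly one off-diagonal entry of the form $f_{jk}(z)e^{\pm 2it\theta(z)}$, and then combine a uniform bound on $f_{jk}$ along the rays with the quadratic lower bounds on $\mp\Im\theta$ produced by Propositions \ref{lemz=xi1}--\ref{lemz=xi2}.

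First I would record that the boundary formulas in Proposition \ref{estopenlenssaddle} give, on each $\Sigma_{jk}$,
\begin{equation*}
|f_{jk}(z)|=|p_{jk}(\xi_j)|\,\bigl|e^{\mp 2i\beta(\xi_j,\xi)}\bigr|\,\bigl|(z-\xi_j)^{\mp 2i\epsilon_j\nu(\xi_j)}\bigr|.
\end{equation*}
Since the argument of $z-\xi_j$ is constant along each ray, the last factor has modulus $e^{\pm 2\epsilon_j\nu(\xi_j)\arg(z-\xi_j)}$, a bounded constant depending only on the opening angle $\phi$ and on $\nu(\xi_j)$; together with the $L^{\infty}$ control of $\beta$ from \eqref{betaest1} this yields a uniform bound $|f_{jk}(z)|\le C(\xi)$ along the entire ray. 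Hence only the oscillatory factor $e^{\pm 2it\theta}$ is relevant for decay.

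Next I parametrize $\Sigma_{jk}$ by $z=\xi_j+le^{i\phi_{jk}}$ and split into two cases. For the unbounded rays $j\in\{1,4\}$, $k\in\{1,2\}$, Proposition \ref{lemz=xi1} gives $|\Im\theta(z)|\ge c\,v\,|\Re z-\xi_j|=c\sin\phi\cos\phi\cdot l^2$, with the sign chosen so that $\mathrm{Re}(\pm 2it\theta)<0$ on the relevant piece. For the bounded rays $j\in\{2,3\}$, $k\in\{1,2\}$, Proposition \ref{lemz=xi2} gives $|\Im\theta(z)|\ge c(1+|z|^{-2})v^2\ge c\sin^2\phi\cdot l^2$, and an analogous bound holds on $\Sigma_{j3},\Sigma_{j4}$ for $j\in\{1,2,3,4\}$ by the same propositions applied in the opposite half-plane. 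Excising the disk $U_\varrho(\xi_j)$ forces $l\ge\varrho$, so on $\Sigma_{jk}\setminus U_\varrho(\xi_j)$ one has $l^2\ge\varrho\cdot l$, and therefore
\begin{equation*}
|V^{(2)}(z)-I|\le C(\xi)\,e^{-c(\xi)\varrho\, t\, l}.
\end{equation*}

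The $L^{\infty}$ bound follows by taking $l=\varrho$, giving $\|V^{(2)}-I\|_{L^\infty(\Sigma_{jk}\setminus U_\varrho(\xi_j))}\lesssim e^{-c(\xi)\varrho^2 t}$. For $1\le p<\infty$, integrating against the arc-length and using $\int_\varrho^{R}e^{-pc\varrho t l}\,dl\le (pc\varrho t)^{-1}e^{-pc\varrho^2 t}$ (with $R$ possibly $+\infty$ on the unbounded rays, where $f_{jk}$ is still bounded) gives $\|V^{(2)}-I\|_{L^p}\lesssim t^{-1/p}e^{-c(\xi)\varrho^2 t}$, which is $\mathcal{O}(e^{-\hbar t})$ for any $\hbar<c(\xi)\varrho^2$. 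The main obstacle I anticipate is purely bookkeeping: ensuring the sign of $\mp\Im\theta$ is correct on each of the sixteen $\Sigma_{jk}$ (since the $\pm$ in the exponential must match the region where $|e^{\pm 2it\theta}|\le 1$, as indicated by the $\pm$ signs in Figure \ref{phase}), and verifying that the restriction $|\xi|=\mathcal{O}(1)$ together with \eqref{angle2} keeps the constants $c(\xi)$ bounded away from zero uniformly, so that $\hbar$ can be taken independent of $t$.
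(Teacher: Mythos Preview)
Your proposal is correct and follows essentially the same approach as the paper's proof: both use the explicit boundary values of $f_{jk}$ on $\Sigma_{jk}$ to bound the non-oscillatory factor uniformly, then invoke the quadratic phase estimates of Propositions~\ref{lemz=xi2}--\ref{lemz=xi1} together with the cutoff $l\ge\varrho$ to reduce matters to a one-variable exponential integral. The paper is terser (it only spells out the case $\Sigma_{24}$ and says the rest is similar), while you are more explicit about the sixteen cases and about the $L^\infty$ versus $L^p$ distinction, but the argument is the same.
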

    \begin{proof}
        We take $z\in\Sigma_{24}\backslash U_{\varrho}(\xi_2)$ as an example, the other cases can be proved in a
        similar way. For $z\in\Sigma_{24}\backslash U_{\varrho}(\xi_2)$, $1\leqslant p<+\infty$, by using \eqref{rhp2jump} and \eqref{dbarforxino2},
        we have
        \begin{align}
            \Vert V^{(2)}-I \Vert_{L^p(\Sigma_{24}\backslash U_{\varrho}(\xi_2))}&=
            \Vert p_{24}(\xi_2)e^{2i\beta(\xi_2,\xi)}(z-\xi_2)^{-2i\nu(\xi_2)}e^{-2it\theta} \Vert_{L^p(\Sigma_{24}\backslash U_{\varrho}(\xi_2))}\\
             &\lesssim\Vert e^{-2it\theta}\Vert_{L^p(\Sigma_{24}\backslash U_{\varrho}(\xi_2))}.
        \end{align}
        for $z\in\Sigma_{24}\backslash U_{\varrho}(\xi_2)$, we still denote $z=\xi_2+le^{i\varphi}=\xi_2+u+iv$, $l>\varrho$.
        With the help of Proposition \ref{lemz=xi2}, we have
        \begin{align}
            \Vert e^{-2it\theta}\Vert^{p}_{L^p(\Sigma_{24}\backslash U_{\varrho}(\xi_2))}
            &\lesssim\int_{\Sigma_{24}\backslash U_{\varrho}(\xi_2)}e^{-2tpc\left(1+|z|^{-2}\right)v^2}dz\nonumber\\
            (1+|z|^{-2}\geqslant 1)&\lesssim\int_{\varrho}^{\infty}e^{-pctl}dl\nonumber\\
            &\lesssim t^{-1}e^{-pct\varrho},
        \end{align}
        where the value of $c=c(\xi)$ above changes from line to line.
    \end{proof}

    \begin{proposition}\label{outxi2}
    For $1\leqslant p< +\infty$, there exists a constant $\hbar'=\hbar'(p)>0$,
    such that the jump matrix $V^{(2)}$ defined in \eqref{rhp2jump} admit the following estimate
    as $t\rightarrow+\infty$
    \begin{equation}
        \Vert V^{(2)}-I \Vert_{L^p(\Sigma_{j\pm}^{(1/2)})
        }=\mathcal{O}(e^{-\hbar' t}), \quad {\rm for} \quad j=1,2,3,4.
    \end{equation}
    \end{proposition}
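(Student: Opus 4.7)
\textbf{Proof plan for Proposition \ref{outxi2}.} The strategy mirrors that of Proposition \ref{outxi1}: identify $V^{(2)}-I$ on each vertical segment, bound the prefactor uniformly, and extract exponential decay from the oscillatory factor $e^{\pm 2it\theta}$. First I would fix a segment, say $\Sigma_{1+}^{(1/2)}$, noting that it is the shared boundary of two adjacent regions $\Omega_{14}$ and $\Omega_{24}$ on which $R^{(2)}$ has the common lower-triangular form with the same exponential factor $e^{-2it\theta}$. Multiplying out the two factors as in \eqref{rhp2jump} gives
\begin{equation*}
V^{(2)}(z)-I=\bigl(f_{14}(z)-f_{24}(z)\bigr)\,e^{-2it\theta(z)}\,E_{21}, \qquad z\in\Sigma_{1+}^{(1/2)},
\end{equation*}
and analogous triangular expressions hold on the remaining vertical segments (with signs, indices, and off-diagonal slot adjusted according to the bordering regions and the sign of $\Im\theta$ there).

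Second, I would bound the prefactor. By Proposition \ref{estopenlenssaddle} together with the boundedness of $\delta(z)$ from Proposition \ref{deltapro}, each extension $f_{jk}$ is uniformly bounded on its closure $\overline{\Omega}_{jk}$, so $|f_{jk}(z)-f_{lm}(z)|\lesssim 1$ on $\Sigma_{j\pm}^{(1/2)}$; this reduces the problem to estimating $\|e^{\pm 2it\theta}\|_{L^p(\Sigma_{j\pm}^{(1/2)})}$.

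Third, I would control the oscillatory factor via the $\Im\theta$-estimates. On each vertical segment the real part is frozen at $\Re z=\xi_j\pm \tilde d\cos\phi$, so $|\Re z-\xi_j|=\tilde d\cos\phi>0$ is a fixed strictly positive constant. Applying Proposition \ref{lemz=xi1} for $j=1,4$ yields
\begin{equation*}
\pm\Im\theta(z)\gtrsim v\,\tilde d\cos\phi,
\end{equation*}
with the sign chosen so that the exponential in $V^{(2)}-I$ is decaying in the relevant region, and Proposition \ref{lemz=xi2} gives the corresponding (stronger) bound $|\Im\theta(z)|\gtrsim(1+|z|^{-2})v^{2}$ for $j=2,3$. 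Together with $|f_{jk}-f_{lm}|\lesssim 1$ this produces a pointwise estimate of the form $|V^{(2)}(z)-I|\lesssim e^{-c\, t\, g(v)}$ along the segment.

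The main obstacle is handling the segment uniformly, in particular near the endpoint on the real axis where $v\to 0$ and the pointwise decay rate degenerates. This is the analogue of the $U_\varrho(\xi_j)$ cutoff used in Proposition \ref{outxi1}, and it is resolved by the construction of the opening contours: the conditions \eqref{angle0}--\eqref{angle2} together with the parametrix disks $U_\varrho(\xi_j)$ from \eqref{equ:neighborhood} force $\Sigma_{j\pm}^{(1/2)}$ to lie at a positive distance $v_{\min}>0$ from the real axis (the contour being the portion outside the local parametrix region used in Subsection \ref{subsec:RLpureRH}). Thus $|\Im\theta(z)|\geq c\, v_{\min}$ uniformly on $\Sigma_{j\pm}^{(1/2)}$, which gives the pointwise bound $|V^{(2)}(z)-I|\lesssim e^{-2c\, v_{\min}\, t}$. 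Since the vertical segment has finite length, integrating an $L^{p}$ power yields
\begin{equation*}
\|V^{(2)}-I\|_{L^{p}(\Sigma_{j\pm}^{(1/2)})}\;\lesssim\; |\Sigma_{j\pm}^{(1/2)}|^{1/p}\,e^{-2c\, v_{\min}\, t}\;=\;\mathcal{O}(e^{-\hbar' t}),
\end{equation*}
with $\hbar'=\hbar'(p)>0$, completing the claim.
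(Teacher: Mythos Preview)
Your first two steps agree with the paper: on $\Sigma_{1+}^{(1/2)}$ one has $V^{(2)}-I=(f_{24}-f_{14})e^{-2it\theta}\,E_{21}$, and the prefactor is bounded uniformly via \eqref{dbarforxino2}. The gap is in your third step. The vertical segments $\Sigma_{j\pm}^{(1/2)}$ do \emph{not} lie at positive distance from the real axis: by the construction in Subsection~\ref{subsec:RLopenlens} (see Figure~\ref{fig:regionomega}) each one runs from the apex of the lens triangle straight down to a midpoint on $\mathbb{R}$ (for instance $\tfrac{\xi_1+\xi_2}{2}$ when $j=1$, or $\tfrac{\xi_2}{2}$ when $j=2$). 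These midpoints are at a fixed positive distance from every saddle $\xi_k$, so the segments never meet the parametrix disks $U_\varrho(\xi_k)$ of \eqref{equ:neighborhood} and nothing is excised. The endpoint $v=0$ is genuinely on the contour, and there $|e^{\pm 2it\theta}|=1$; hence your uniform pointwise estimate $|V^{(2)}-I|\lesssim e^{-2cv_{\min}t}$ is not available, and the argument collapses.

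The paper's proof differs precisely here: it does not attempt a pointwise bound but integrates over the full segment including $v=0$. With $|\Re z-\xi_1|=\tfrac{\xi_1-\xi_2}{2}$ fixed, Proposition~\ref{lemz=xi1} gives $|e^{-2it\theta}|\le e^{-c\,t\,v}$, and then
\[
\bigl\|e^{-2it\theta}\bigr\|_{L^p(\Sigma_{1+}^{(1/2)})}^{p}\;\lesssim\;\int_0^{v_{\max}}e^{-pctv}\,dv\;\lesssim\;t^{-1},
\]
so $\|V^{(2)}-I\|_{L^p}\lesssim t^{-1/p}$. This is exactly why the hypothesis reads $1\le p<\infty$ here, in contrast to Proposition~\ref{outxi1} where $p=\infty$ is allowed because the disk $U_\varrho(\xi_j)$ genuinely removes the real endpoint of $\Sigma_{jk}$. (The additional factor $e^{-ct}$ displayed in the paper's last line does not follow from boundedness of $f_{24}-f_{14}$ alone; only the $t^{-1/p}$ decay is used downstream in Subsection~\ref{smallerro}.)
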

    \begin{proof}
        We only give the details for $z\in\Sigma_{1+}^{(1/2)}$.
        \begin{align}
            \Vert V^{(2)}-I \Vert_{L^p(\Sigma_{1+}^{(1/2)})}
            &=\Vert(f_{24}-f_{14})e^{-2it\theta}\Vert_{L^p(\Sigma_{1+}^{(1/2)})}\nonumber\\
            &\overset{\eqref{dbarforxino2}}{\lesssim}\Vert e^{-2it\theta}\Vert_{L^p(\Sigma_{1+}^{(1/2)})}\\
            &\lesssim t^{-\frac{1}{p}}e^{-ct}.
        \end{align}
    \end{proof}

\subsubsection{Global parametrix: $M^{(\infty)}$}\label{guzijie}
The leading order of $M^{(PR)}$ is approximated by a global parametrix (denoted by $M^{(\infty)}$) with exponentially decaying on the jump of $M^{(PR)}(z)$ (see Proposition \ref{outxi1}
and Proposition \ref{outxi2}). Thus we consider the following RH problem
\begin{RHP}\label{msolrhp}
Find a $2\times 2$ matrix-valued function $M^{(\infty)}(x,t;z)$ which satisfies\\
- $M^{(\infty)}(z)$ is analytical in $\mathbb{C}\backslash\{0\}$.\\
- Asymptotic behavior
    \begin{align}
        &M^{(\infty)}(x,t;z)=I+\mathcal{O}(z^{-1}), \quad  z\rightarrow\infty,\\
        &M^{(\infty)}(x,t;z)=\frac{\sigma_2}{z}+\mathcal{O}(1), \quad  z\rightarrow 0.
    \end{align}
\end{RHP}
Then the following result is standard.
\begin{proposition}\label{prop:global}
    The unique solution of RH problem \ref{msolrhp} is given by
    \begin{equation}
        M^{(\infty)}(z)=I+\frac{\sigma_2}{z}.
    \end{equation}
\end{proposition}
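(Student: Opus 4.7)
The plan is to establish existence by direct verification and uniqueness by a standard Liouville-type argument, since RH problem \ref{msolrhp} has no jump contour and only asymptotic normalizations at $z=\infty$ and $z=0$.

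For existence, I would simply check that $M^{(\infty)}(z) := I + \sigma_2/z$ fulfills every bullet of RH problem \ref{msolrhp}. Since $\sigma_2/z$ is a rational function with the sole singularity at $z=0$, the function $M^{(\infty)}$ is analytic on $\mathbb{C}\setminus\{0\}$. As $z\to\infty$ one has $M^{(\infty)}(z) = I + z^{-1}\sigma_2 = I + \mathcal{O}(z^{-1})$; as $z\to 0$ one has $M^{(\infty)}(z) = \sigma_2/z + I = \sigma_2/z + \mathcal{O}(1)$. Both normalizations match. (It is also straightforward to note in passing that the symmetries $M^{(\infty)}(z) = \sigma_1\overline{M^{(\infty)}(\bar z)}\sigma_1$ and $M^{(\infty)}(z) = \mp z^{-1}M^{(\infty)}(z^{-1})\sigma_2$ inherited from RH problem \ref{originrhp} hold, using $\sigma_1\sigma_2\sigma_1 = -\sigma_2$ and $\sigma_2^2 = I$, which is a useful sanity check.)

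For uniqueness, let $\widetilde M(z)$ be any other solution, and consider the difference
\begin{equation*}
D(z) := M^{(\infty)}(z) - \widetilde M(z).
\end{equation*}
Since both $M^{(\infty)}$ and $\widetilde M$ are analytic on $\mathbb{C}\setminus\{0\}$, so is $D$. Near $z=0$ the singular parts $\sigma_2/z$ of the two matrices cancel and $D(z) = \mathcal{O}(1)$, so by Riemann's removable singularity theorem $D$ extends to an entire function. Near $z=\infty$ the leading $I$ cancels and $D(z) = \mathcal{O}(z^{-1}) \to 0$. By Liouville's theorem $D\equiv 0$, hence $\widetilde M = M^{(\infty)}$.

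There is essentially no obstacle here; the only thing to watch is that the singularity at the origin is of exactly the prescribed order $\sigma_2/z$, so that $D$ is truly bounded near $0$ and not merely meromorphic. Once that cancellation is recorded, the argument reduces to entire + vanishing at infinity $=$ zero, and the explicit formula $M^{(\infty)}(z) = I + \sigma_2/z$ is the unique solution used throughout the subsequent local-parametrix and small-norm analysis.
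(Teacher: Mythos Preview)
Your proof is correct and is precisely the standard argument the paper alludes to; the paper itself gives no details beyond calling the result standard, and your existence-by-verification plus Liouville uniqueness is the intended reasoning.
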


\subsubsection{Local parametrix near saddle points: $M^{(LC)}(z)$}\label{localpc}
The sub-leading contribution stems form the local behavior near critical points $\xi_j$, $j=1,2,3,4$.
It turns out that the local parametrix (denoted by $M^{(LC, \xi_j)}$, $j=1,2,3,4$ below) can be constructed in terms of the
solution of the well-known Webb (parabolic cylinder) equation.
Denote $U_{\varrho}(\xi_j)$ the open disk of radius $\varrho$ defined by \eqref{equ:neighborhood} around $\xi_j$, $j=1,2,3,4$ respectively. And
define the contours $\Sigma^{(LC)}:=\left(\cup_{j,k=1,2,3,4}\Sigma_{jk}\right)\cap U(\xi)$ (see Figure \ref{localxi}).

\begin{figure}[htbp]
    \begin{center}
\tikzset{every picture/.style={line width=0.75pt}} 
\begin{tikzpicture}[x=0.75pt,y=0.75pt,yscale=-1,xscale=1]
\draw  [dash pattern={on 0.84pt off 2.51pt}]  (337.02,290) -- (334.57,8.32) ;
\draw [shift={(334.55,6.32)}, rotate = 89.5] [color={rgb, 255:red, 0; green, 0; blue, 0 }  ][line width=0.75]    (10.93,-3.29) .. controls (6.95,-1.4) and (3.31,-0.3) .. (0,0) .. controls (3.31,0.3) and (6.95,1.4) .. (10.93,3.29)   ;
\draw [color={rgb, 255:red, 0; green, 0; blue, 0 }  ,draw opacity=1 ]   (107.65,156.49) -- (143.02,124.87) ;
\draw [shift={(129.81,136.68)}, rotate = 138.2] [color={rgb, 255:red, 0; green, 0; blue, 0 }  ,draw opacity=1 ][line width=0.75]    (10.93,-3.29) .. controls (6.95,-1.4) and (3.31,-0.3) .. (0,0) .. controls (3.31,0.3) and (6.95,1.4) .. (10.93,3.29)   ;
\draw [color={rgb, 255:red, 0; green, 0; blue, 0 }  ,draw opacity=1 ]   (111.06,159.91) -- (146.43,189.29) ;
\draw [shift={(133.36,178.43)}, rotate = 219.72] [color={rgb, 255:red, 0; green, 0; blue, 0 }  ,draw opacity=1 ][line width=0.75]    (10.93,-3.29) .. controls (6.95,-1.4) and (3.31,-0.3) .. (0,0) .. controls (3.31,0.3) and (6.95,1.4) .. (10.93,3.29)   ;
\draw [color={rgb, 255:red, 0; green, 0; blue, 0 }  ,draw opacity=1 ]   (71.43,128.29) -- (111.06,159.91) ;
\draw [shift={(95.94,147.84)}, rotate = 218.58] [color={rgb, 255:red, 0; green, 0; blue, 0 }  ,draw opacity=1 ][line width=0.75]    (10.93,-3.29) .. controls (6.95,-1.4) and (3.31,-0.3) .. (0,0) .. controls (3.31,0.3) and (6.95,1.4) .. (10.93,3.29)   ;
\draw [color={rgb, 255:red, 0; green, 0; blue, 0 }  ,draw opacity=1 ]   (71.02,189.87) -- (107.65,156.49) ;
\draw [shift={(93.77,169.14)}, rotate = 137.66] [color={rgb, 255:red, 0; green, 0; blue, 0 }  ,draw opacity=1 ][line width=0.75]    (10.93,-3.29) .. controls (6.95,-1.4) and (3.31,-0.3) .. (0,0) .. controls (3.31,0.3) and (6.95,1.4) .. (10.93,3.29)   ;
\draw  [dash pattern={on 0.84pt off 2.51pt}] (60.43,156.49) .. controls (60.43,130.41) and (81.57,109.27) .. (107.65,109.27) .. controls (133.73,109.27) and (154.87,130.41) .. (154.87,156.49) .. controls (154.87,182.58) and (133.73,203.72) .. (107.65,203.72) .. controls (81.57,203.72) and (60.43,182.58) .. (60.43,156.49) -- cycle ;
\draw [color={rgb, 255:red, 0; green, 0; blue, 0 }  ,draw opacity=1 ]   (257.65,157.86) -- (293.02,126.24) ;
\draw [shift={(279.81,138.05)}, rotate = 138.2] [color={rgb, 255:red, 0; green, 0; blue, 0 }  ,draw opacity=1 ][line width=0.75]    (10.93,-3.29) .. controls (6.95,-1.4) and (3.31,-0.3) .. (0,0) .. controls (3.31,0.3) and (6.95,1.4) .. (10.93,3.29)   ;
\draw [color={rgb, 255:red, 0; green, 0; blue, 0 }  ,draw opacity=1 ]   (261.06,161.27) -- (296.43,190.65) ;
\draw [shift={(283.36,179.8)}, rotate = 219.72] [color={rgb, 255:red, 0; green, 0; blue, 0 }  ,draw opacity=1 ][line width=0.75]    (10.93,-3.29) .. controls (6.95,-1.4) and (3.31,-0.3) .. (0,0) .. controls (3.31,0.3) and (6.95,1.4) .. (10.93,3.29)   ;
\draw [color={rgb, 255:red, 0; green, 0; blue, 0 }  ,draw opacity=1 ]   (221.43,129.65) -- (261.06,161.27) ;
\draw [shift={(245.94,149.2)}, rotate = 218.58] [color={rgb, 255:red, 0; green, 0; blue, 0 }  ,draw opacity=1 ][line width=0.75]    (10.93,-3.29) .. controls (6.95,-1.4) and (3.31,-0.3) .. (0,0) .. controls (3.31,0.3) and (6.95,1.4) .. (10.93,3.29)   ;
\draw [color={rgb, 255:red, 0; green, 0; blue, 0 }  ,draw opacity=1 ]   (221.02,191.24) -- (257.65,157.86) ;
\draw [shift={(243.77,170.51)}, rotate = 137.66] [color={rgb, 255:red, 0; green, 0; blue, 0 }  ,draw opacity=1 ][line width=0.75]    (10.93,-3.29) .. controls (6.95,-1.4) and (3.31,-0.3) .. (0,0) .. controls (3.31,0.3) and (6.95,1.4) .. (10.93,3.29)   ;
\draw  [dash pattern={on 0.84pt off 2.51pt}] (210.43,157.86) .. controls (210.43,131.78) and (231.57,110.64) .. (257.65,110.64) .. controls (283.73,110.64) and (304.87,131.78) .. (304.87,157.86) .. controls (304.87,183.94) and (283.73,205.08) .. (257.65,205.08) .. controls (231.57,205.08) and (210.43,183.94) .. (210.43,157.86) -- cycle ;
\draw [color={rgb, 255:red, 0; green, 0; blue, 0 }  ,draw opacity=1 ]   (409.65,154.86) -- (445.02,123.24) ;
\draw [shift={(431.81,135.05)}, rotate = 138.2] [color={rgb, 255:red, 0; green, 0; blue, 0 }  ,draw opacity=1 ][line width=0.75]    (10.93,-3.29) .. controls (6.95,-1.4) and (3.31,-0.3) .. (0,0) .. controls (3.31,0.3) and (6.95,1.4) .. (10.93,3.29)   ;
\draw [color={rgb, 255:red, 0; green, 0; blue, 0 }  ,draw opacity=1 ]   (413.06,158.27) -- (448.43,187.65) ;
\draw [shift={(435.36,176.8)}, rotate = 219.72] [color={rgb, 255:red, 0; green, 0; blue, 0 }  ,draw opacity=1 ][line width=0.75]    (10.93,-3.29) .. controls (6.95,-1.4) and (3.31,-0.3) .. (0,0) .. controls (3.31,0.3) and (6.95,1.4) .. (10.93,3.29)   ;
\draw [color={rgb, 255:red, 0; green, 0; blue, 0 }  ,draw opacity=1 ]   (373.43,126.65) -- (413.06,158.27) ;
\draw [shift={(397.94,146.2)}, rotate = 218.58] [color={rgb, 255:red, 0; green, 0; blue, 0 }  ,draw opacity=1 ][line width=0.75]    (10.93,-3.29) .. controls (6.95,-1.4) and (3.31,-0.3) .. (0,0) .. controls (3.31,0.3) and (6.95,1.4) .. (10.93,3.29)   ;
\draw [color={rgb, 255:red, 0; green, 0; blue, 0 }  ,draw opacity=1 ]   (373.02,188.24) -- (409.65,154.86) ;
\draw [shift={(395.77,167.51)}, rotate = 137.66] [color={rgb, 255:red, 0; green, 0; blue, 0 }  ,draw opacity=1 ][line width=0.75]    (10.93,-3.29) .. controls (6.95,-1.4) and (3.31,-0.3) .. (0,0) .. controls (3.31,0.3) and (6.95,1.4) .. (10.93,3.29)   ;
\draw  [dash pattern={on 0.84pt off 2.51pt}] (362.43,154.86) .. controls (362.43,128.78) and (383.57,107.64) .. (409.65,107.64) .. controls (435.73,107.64) and (456.87,128.78) .. (456.87,154.86) .. controls (456.87,180.94) and (435.73,202.08) .. (409.65,202.08) .. controls (383.57,202.08) and (362.43,180.94) .. (362.43,154.86) -- cycle ;
\draw [color={rgb, 255:red, 0; green, 0; blue, 0 }  ,draw opacity=1 ]   (559.65,154.86) -- (595.02,123.24) ;
\draw [shift={(581.81,135.05)}, rotate = 138.2] [color={rgb, 255:red, 0; green, 0; blue, 0 }  ,draw opacity=1 ][line width=0.75]    (10.93,-3.29) .. controls (6.95,-1.4) and (3.31,-0.3) .. (0,0) .. controls (3.31,0.3) and (6.95,1.4) .. (10.93,3.29)   ;
\draw [color={rgb, 255:red, 0; green, 0; blue, 0 }  ,draw opacity=1 ]   (563.06,158.27) -- (598.43,187.65) ;
\draw [shift={(585.36,176.8)}, rotate = 219.72] [color={rgb, 255:red, 0; green, 0; blue, 0 }  ,draw opacity=1 ][line width=0.75]    (10.93,-3.29) .. controls (6.95,-1.4) and (3.31,-0.3) .. (0,0) .. controls (3.31,0.3) and (6.95,1.4) .. (10.93,3.29)   ;
\draw [color={rgb, 255:red, 0; green, 0; blue, 0 }  ,draw opacity=1 ]   (523.43,126.65) -- (563.06,158.27) ;
\draw [shift={(547.94,146.2)}, rotate = 218.58] [color={rgb, 255:red, 0; green, 0; blue, 0 }  ,draw opacity=1 ][line width=0.75]    (10.93,-3.29) .. controls (6.95,-1.4) and (3.31,-0.3) .. (0,0) .. controls (3.31,0.3) and (6.95,1.4) .. (10.93,3.29)   ;
\draw [color={rgb, 255:red, 0; green, 0; blue, 0 }  ,draw opacity=1 ]   (523.02,188.24) -- (559.65,154.86) ;
\draw [shift={(545.77,167.51)}, rotate = 137.66] [color={rgb, 255:red, 0; green, 0; blue, 0 }  ,draw opacity=1 ][line width=0.75]    (10.93,-3.29) .. controls (6.95,-1.4) and (3.31,-0.3) .. (0,0) .. controls (3.31,0.3) and (6.95,1.4) .. (10.93,3.29)   ;
\draw  [dash pattern={on 0.84pt off 2.51pt}] (512.43,154.86) .. controls (512.43,128.78) and (533.57,107.64) .. (559.65,107.64) .. controls (585.73,107.64) and (606.87,128.78) .. (606.87,154.86) .. controls (606.87,180.94) and (585.73,202.08) .. (559.65,202.08) .. controls (533.57,202.08) and (512.43,180.94) .. (512.43,154.86) -- cycle ;
\draw  [dash pattern={on 0.84pt off 2.51pt}]  (36.43,156.29) -- (642.43,156.29) ;
\draw [shift={(644.43,156.29)}, rotate = 180] [color={rgb, 255:red, 0; green, 0; blue, 0 }  ][line width=0.75]    (10.93,-3.29) .. controls (6.95,-1.4) and (3.31,-0.3) .. (0,0) .. controls (3.31,0.3) and (6.95,1.4) .. (10.93,3.29)   ;
\draw (345.38,7.59) node [anchor=north west][inner sep=0.75pt]  [font=\scriptsize]  {$Im\ z$};
\draw (627.89,168.09) node [anchor=north west][inner sep=0.75pt]  [font=\scriptsize]  {$Re\ z$};
\draw (101.71,165.06) node [anchor=north west][inner sep=0.75pt]  [font=\scriptsize]  {$\xi _{4}$};
\draw (326.79,141.18) node [anchor=north west][inner sep=0.75pt]  [font=\scriptsize]  {$0$};
\draw (45,103.9) node [anchor=north west][inner sep=0.75pt]  [font=\tiny]  {$\Sigma _{41}^{( LC)}$};
\draw (142.5,187.04) node [anchor=north west][inner sep=0.75pt]  [font=\tiny]  {$\Sigma _{43}^{( LC)}$};
\draw (251.71,166.42) node [anchor=north west][inner sep=0.75pt]  [font=\scriptsize]  {$\xi _{3}$};
\draw (403.71,163.42) node [anchor=north west][inner sep=0.75pt]  [font=\scriptsize]  {$\xi _{2}$};
\draw (553.71,163.42) node [anchor=north west][inner sep=0.75pt]  [font=\scriptsize]  {$\xi _{1}$};
\draw (43,192.9) node [anchor=north west][inner sep=0.75pt]  [font=\tiny]  {$\Sigma _{42}^{( LC)}$};
\draw (142.5,100.04) node [anchor=north west][inner sep=0.75pt]  [font=\tiny]  {$\Sigma _{44}^{( LC)}$};
\draw (596.5,100.04) node [anchor=north west][inner sep=0.75pt]  [font=\tiny]  {$\Sigma _{11}^{( LC)}$};
\draw (602.5,186.04) node [anchor=north west][inner sep=0.75pt]  [font=\tiny]  {$\Sigma _{12}^{( LC)}$};
\draw (498.5,102.04) node [anchor=north west][inner sep=0.75pt]  [font=\tiny]  {$\Sigma _{14}^{( LC)}$};
\draw (492.5,189.04) node [anchor=north west][inner sep=0.75pt]  [font=\tiny]  {$\Sigma _{13}^{( LC)}$};
\draw (438.5,93.04) node [anchor=north west][inner sep=0.75pt]  [font=\tiny]  {$\Sigma _{24}^{( LC)}$};
\draw (436.5,192.04) node [anchor=north west][inner sep=0.75pt]  [font=\tiny]  {$\Sigma _{23}^{( LC)}$};
\draw (350.5,94.04) node [anchor=north west][inner sep=0.75pt]  [font=\tiny]  {$\Sigma _{21}^{( LC)}$};
\draw (349.5,190.04) node [anchor=north west][inner sep=0.75pt]  [font=\tiny]  {$\Sigma _{22}^{( LC)}$};
\draw (286.5,97.04) node [anchor=north west][inner sep=0.75pt]  [font=\tiny]  {$\Sigma _{31}^{( LC)}$};
\draw (293.5,187.04) node [anchor=north west][inner sep=0.75pt]  [font=\tiny]  {$\Sigma _{32}^{( LC)}$};
\draw (201.5,99.04) node [anchor=north west][inner sep=0.75pt]  [font=\tiny]  {$\Sigma _{34}^{( LC)}$};
\draw (196.5,195.04) node [anchor=north west][inner sep=0.75pt]  [font=\tiny]  {$\Sigma _{33}^{( LC)}$};
\end{tikzpicture}
        \caption{Jump contour $\Sigma^{(LC)}$ of $M^{(LC, \ \xi_j)}(z)$, $j=1,2,3,4$.}\label{localxi}
        \end{center}
    \end{figure}
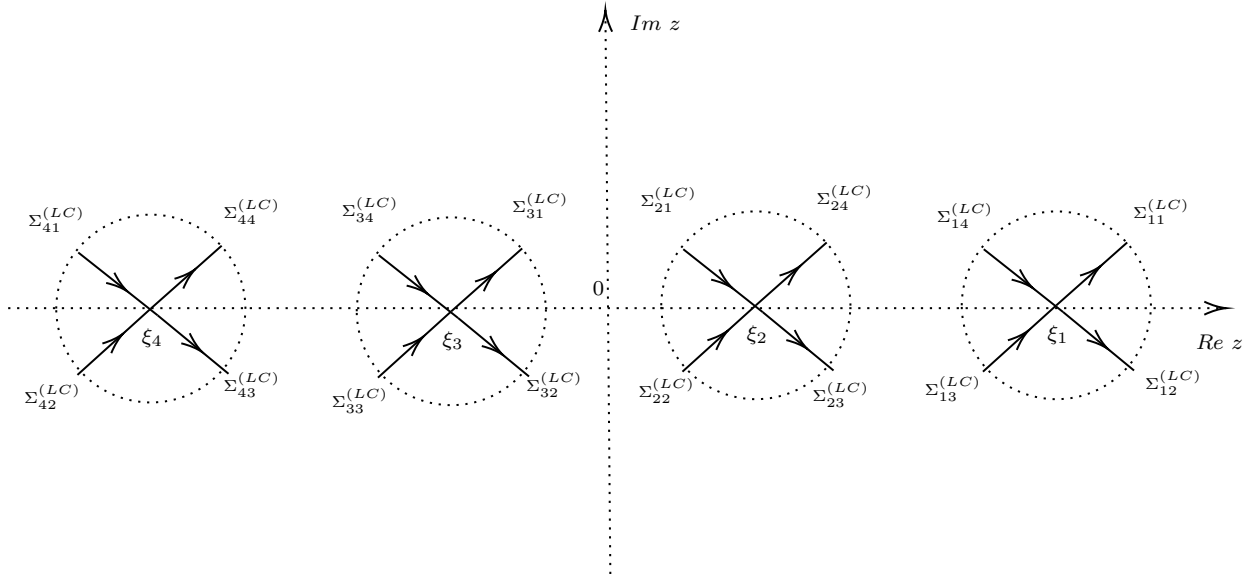

Now we turn to the following localized RH problem.
\begin{RHP}
    Find a $2\times 2$ matrix-valued function $M^{(LC,\xi_j)}(x,t;z)$ such that\\
    - $M^{(LC,\xi_j)}(z)$ is analytical in $\mathbb{C}\backslash \Sigma^{(LC,\xi_j)}$, where  $\Sigma^{(LC,\xi_j)}:=\Sigma^{(LC)}_{jk}$, $k=1,2,3,4$.\\
    - $M^{(LC,\xi_j)}(z)$ takes continuous boundary values $M^{(LC,\xi_j)}_{\pm}(z)$ on $\Sigma^{(LC,\xi_j)}$ with jump relation
        \begin{equation}
            M^{(LC,\xi_j)}_{+}(z)=M^{(LC,\xi_j)}_{-}(z)V^{(LC,\xi_j)}(z), \quad z\in \Sigma^{(LC,\xi_j)},
        \end{equation}
        where
        \begin{equation}
            V^{(LC,\xi_j)}(z)=\left\{
                \begin{aligned}
                &\begin{pmatrix} 1 & -\frac{\overline{r(\xi_j)}}{1-|r(\xi_j)|^2}e^{-2i\beta(\xi_j,\xi)}(z-\xi_j)^{-2i\epsilon_j\nu(\xi_j)}e^{2it\theta} \\ 0 & 1 \end{pmatrix}, \quad &z\in\Sigma^{(LC)}_{j1}, \\
                &\begin{pmatrix} 1 & 0 \\ \frac{r(\xi_j)}{1-|r(\xi_j)|^2}e^{2i\beta(\xi_j,\xi)}(z-\xi_j)^{2i\epsilon_j\nu(\xi_j)}e^{-2it\theta} & 1 \end{pmatrix}, \quad &z\in\Sigma^{(LC)}_{j2}, \\
                &\begin{pmatrix} 1 & -\overline{r(\xi_j)}e^{-2i\beta(\xi_j,\xi)}(z-\xi_j)^{-2i\epsilon_j\nu(\xi_j)}e^{2it\theta} \\ 0 & 1 \end{pmatrix}, \quad &z\in\Sigma^{(LC)}_{j3}, \\
                &\begin{pmatrix} 1 & 0 \\ r(\xi_j)e^{2i\beta(\xi_j,\xi)}(z-\xi_j)^{2i\epsilon_j\nu(\xi_j)}e^{-2it\theta} & 1 \end{pmatrix}, \quad &z\in\Sigma^{(LC)}_{j4}. \\
                \end{aligned}
                \right.
        \end{equation}
    - $M^{(LC, \xi_j)}(x,t;z)=I+\mathcal{O}(z^{-1}), \quad  z\rightarrow\infty$.
\end{RHP}

\begin{figure}[htbp]
	\centering
		\begin{tikzpicture}[node distance=2cm]
		\draw[->](0,0)--(2.5,1.8)node[right]{\tiny$\Sigma^{(LC)}_{11}$};
		\draw(0,0)--(-2.5,1.8)node[left]{\tiny$\Sigma^{(LC)}_{14}$};
		\draw(0,0)--(-2.5,-1.8)node[left]{\tiny$\Sigma^{(LC)}_{13}$};
		\draw[->](0,0)--(2.5,-1.8)node[right]{\tiny$\Sigma^{(LC)}_{12}$};
		\draw[dashed](-3.8,0)--(3.8,0)node[right]{\scriptsize $\Re z$};
		\draw[->](-2.5,-1.8)--(-1.25,-0.9);
		\draw[->](-2.5,1.8)--(-1.25,0.9);
		\coordinate (A) at (1,0.5);
		\coordinate (B) at (1,-0.5);
		\coordinate (G) at (-1,0.5);
		\coordinate (H) at (-1,-0.5);
		\coordinate (I) at (0,0);
		\fill (A) circle (0pt) node[right] {\tiny$\left(\begin{array}{cc}
		1 & -\frac{\overline{r(\xi_1)}}{1-|r(\xi_1)|^2}e^{-2i\beta(\xi_1,\xi)}(z-\xi_1)^{-2i\nu(\xi_1)}e^{2it\theta}\\
		0 & 1
		\end{array}\right)$};
	\fill (B) circle (0pt) node[right] {\tiny$\left(\begin{array}{cc}
		1 & 0\\
		\frac{r(\xi_1)}{1-|r(\xi_1)|^2}e^{2i\beta(\xi_1,\xi)}(z-\xi_1)^{2i\nu(\xi_1)}e^{-2it\theta} & 1
		\end{array}\right)$};
	\fill (G) circle (0pt) node[left] {\tiny$\left(\begin{array}{cc}
		1 & 0\\
		r(\xi_1)e^{2i\beta(\xi_1,\xi)}(z-\xi_1)^{2i\nu(\xi_1)}e^{-2it\theta} & 1
		\end{array}\right)$};
	\fill (H) circle (0pt) node[left] {\tiny$\left(\begin{array}{cc}
		1 & -\overline{r(\xi_1)}e^{-2i\beta(\xi_1,\xi)}(z-\xi_1)^{-2i\nu(\xi_1)}e^{2it\theta}\\
		0 & 1
		\end{array}\right)$};
		\fill (I) circle (1pt) node[below] {$\xi_1$};
		\end{tikzpicture}
	\caption{\footnotesize The contour $\Sigma^{(LC,\xi_1)}$ and the jump matrix on it.}
\end{figure}

\begin{figure}[htbp]
	\centering
		\begin{tikzpicture}[node distance=2cm]
		\draw[->](0,0)--(2.5,1.8)node[right]{\tiny$\Sigma^{(LC)}_{24}$};
		\draw(0,0)--(-2.5,1.8)node[left]{\tiny$\Sigma^{(LC)}_{21}$};
		\draw(0,0)--(-2.5,-1.8)node[left]{\tiny$\Sigma^{(LC)}_{22}$};
		\draw[->](0,0)--(2.5,-1.8)node[right]{\tiny$\Sigma^{(LC)}_{23}$};
		\draw[dashed](-3.8,0)--(3.8,0)node[right]{\scriptsize $\Re z$};
		\draw[->](-2.5,-1.8)--(-1.25,-0.9);
		\draw[->](-2.5,1.8)--(-1.25,0.9);
		\coordinate (A) at (1,0.5);
		\coordinate (B) at (1,-0.5);
		\coordinate (G) at (-1,0.5);
		\coordinate (H) at (-1,-0.5);
		\coordinate (I) at (0,0);
		\fill (A) circle (0pt) node[right] {\tiny$\left(\begin{array}{cc}
		1 & 0\\
		r(\xi_2)e^{2i\beta(\xi_2,\xi)}(z-\xi_2)^{-2i\nu(\xi_2)}e^{-2it\theta} & 1
		\end{array}\right)$};
	    \fill (B) circle (0pt) node[right] {\tiny$\left(\begin{array}{cc}
		1 & -\overline{r(\xi_2)}e^{-2i\beta(\xi_2,\xi)}(z-\xi_2)^{2i\nu(\xi_2)}e^{2it\theta}\\
		0 & 1
		\end{array}\right)$};
	    \fill (G) circle (0pt) node[left] {\tiny$\left(\begin{array}{cc}
		1 & -\frac{\overline{r(\xi_2)}}{1-|r(\xi_2)|^2}e^{-2i\beta(\xi_2,\xi)}(z-\xi_2)^{2i\nu(\xi_2)}e^{2it\theta}\\
		0 & 1
		\end{array}\right)$};
	    \fill (H) circle (0pt) node[left] {\tiny$\left(\begin{array}{cc}
		1 & 0\\
		\frac{r(\xi_2)}{1-|r(\xi_2)|^2}e^{2i\beta(\xi_2,\xi)}(z-\xi_2)^{-2i\nu(\xi_2)}e^{-2it\theta} & 1
		\end{array}\right)$};
		\fill (I) circle (1pt) node[below] {$\xi_2$};
		\end{tikzpicture}
	\caption{\footnotesize The contour $\Sigma^{(LC,\xi_2)}$ and the jump matrix on it.}
\end{figure}

For $z$ near $\xi_j$, $j=1,2,3,4$, we have
\begin{equation}\label{equ:phasereduction}
    \theta(z)=\theta(\xi_j)+\frac{\theta''(\xi_j)}{2}(z-\xi_j)^2+\mathcal{O}(\vert z-\xi_j\vert^3), \quad z\rightarrow\xi_j, \ j=1,2,3,4.
\end{equation}

Thus, for $z\in U_{\varrho}(\xi_j)$, we define the rescaled variable $\zeta$ by
\begin{equation}\label{scale}
    \zeta(z)=\left(2t\epsilon_j\theta''\left(\xi_j\right)\right)^{\frac{1}{2}}\left(z-\xi_j\right), \ j=1,2,3,4,
\end{equation}
which is to match the standard model presented in Appendix \ref{pcmodel}.

And the scaling operator $N_{\xi_j}$ admits the following mapping
\begin{align}
    N_{\xi_j}:&U_{\varrho}(\xi_j)\longrightarrow U_{0},\ j=1,2,3,4,\\
    &z \longmapsto \zeta.
\end{align}
where $U_0$ is a neighborhood of $\zeta=0$.

Choose the free variable $r_{\xi_j}$ appeared in the Appendix \ref{pcmodel} by
\begin{equation}
    r_{\xi_j}=r(\xi_j)e^{2i\beta(\xi_j,\xi)-2it\theta(\xi_j)}{\rm exp}\left[-i\epsilon_j\nu(\xi_j){\rm log}(2t\epsilon_j\theta''\left(\xi_j\right))\right],
\end{equation}
with the equality $|r(\xi_j)|^2=|r_{\xi_j}|^2$, $j=1,2,3,4$.

In the above expression, the complex powers are defined by choosing the branch of the logarithm with $0<{\rm arg}\zeta<2\pi$ near
$\xi_1$, $\xi_3$, and the branch of the logarithm with $-\pi<{\rm arg}\zeta<\pi$ near $\xi_2$, $\xi_4$. Through this scaling of variable, the jump $V^{{LC,\xi_j}}(z)$ can be approximated by the
jump of a parabolic cylinder model which is shown in Appendix \ref{pcmodel}.

\begin{remark}\label{rmk:phasereduction}
    \rm  In the expansion \eqref{equ:phasereduction}, the higher order term as $z\rightarrow\xi_j$ could be ignored under the condition $\vert x/t \vert=\mathcal{O}(1)$.
    Without loss of generality, we take the neighborhood of $\xi_1$ as an example. we expand
\begin{equation}
    \theta(z)=\theta(\xi_1)+\frac{\theta''(\xi_1)}{2}(z-\xi_1)^2+\theta_c (z-\xi_1)^3,
\end{equation}
where $\theta_c=\frac{\theta'''(\kappa\xi_1+(1-\kappa) z)}{3!}$, $\kappa\in(0,1)$ is the coefficient of remainder.

Owe to the scaling \eqref{scale}, we have the following transformation
\begin{equation}
    N: g\longrightarrow (Ng)(\zeta):=g\left((2t\theta''(\xi_1))^{-\frac{1}{2}}\zeta+\xi_1\right),
\end{equation}
which acts on $e^{2it\theta(z)}$ to obtain
\begin{align}\label{thetascale}
    e^{2it\theta(z)}=e^{2it(N\theta)(\zeta)}\overset{\eqref{scale}}{=}e^{2it\theta(\xi_1)}\cdot
    e^{\frac{i}{2}\zeta^2}\cdot e^{2it\theta_c \cdot(2t\theta''(\xi_1))^{-\frac{3}{2}}\zeta^3}.
\end{align}
The first and second term of R.H.S of \eqref{thetascale} are used to match parabolic cylinder model.
What we care about is the third term.
Since $\zeta\in U_{0}$, the neighborhood of zero, we can set $\zeta=u+iv$, $|u|<\varepsilon$, $|v|<\varepsilon$. Thus
\begin{align}
    \vert e^{2it\theta_c (2t\theta''(\xi_1))^{-\frac{3}{2}}\zeta^3} \vert&=\vert e^{2it\theta_c \cdot(2t\theta''(\xi_1))^{-\frac{3}{2}}(u+iv)^3} \vert\nonumber\\
    &={\rm exp}\left((2t\theta''(\xi_1))^{-\frac{3}{2}}\Re \left(2it\left(\Re\left(\theta_c\right)+i \Im\left(\theta_c\right)\right) \cdot (u+iv)^3\right)\right)\nonumber\\
    &={\rm exp}\left[-(2t)^{-\frac{1}{2}}(\theta''(\xi_1))^{-\frac{3}{2}}\left(\Re(\theta_c)\left(3u^2v-v^3\right)+\Im(\theta_c)\left(u^3-3uv^2\right)\right)\right]\nonumber\\
    &\hspace{20em}\rightarrow 1 \quad {\rm as} \quad t\rightarrow+\infty,
\end{align}
by which the effects of the higher power could be ignored. The premise of this result is that $\Re(\theta_c)\left(3u^2v-v^3\right)+\Im(\theta_c)\left(u^3-3uv^2\right)$ is finite,
which follows from finite $\xi$.
\end{remark}

As a consequence, a standard local parametrix for $M^{(LC,\xi_j)}$, $j=1,2,3,4$ is constructed by
\begin{equation}
    M^{(LC,\xi_j)}(x,t;z)=M^{(\infty)} \cdot M^{(PC, \xi_j)}\left(\xi, \zeta(z)=\left(2t\epsilon_j\theta''\left(\xi_j\right)\right)^{\frac{1}{2}}\left(z-\xi_j\right)\right).
\end{equation}
where
\begin{equation}\label{mmodj}
    M^{(PC,\xi_j)}\left(\xi, \zeta(z)\right)=I+\frac{\left(2t\epsilon_j\theta''(\xi_j)\right)^{-\frac{1}{2}}}{z-\xi_j}\left(\begin{array}{cc}
        0 & \epsilon_j i\beta^{(\xi_j)}_{12}\\
        -\epsilon_j i\beta^{(\xi_j)}_{21} & 0
        \end{array}\right)
        +\mathcal{O}(\zeta^{-2}),
\end{equation}
where $\beta^{(\xi_j)}_{12}$ and $\beta^{(\xi_j)}_{21}$ are defined by \eqref{betaxi1-1}-\eqref{betaxi1-3} for $j=1,3$, while are defined by \eqref{betaxi2-1}-\eqref{betaxi2-3} for $j=2,4$.
\begin{remark}\rm
    Here we directly calculate $M^{(LC,\xi_j)}(z)$, $j=1,3$ and  $M^{(LC,\xi_j)}(z)$, $j=2,4$, because the original circular symmetry reduction  $M(z)=\mp z^{-1}M(z^{-1})\sigma_2$ is
    destroyed in these local models.
\end{remark}

Now we consider a new RH problem $M^{(LC)}(x,t;z)$, which include the contribution from $M^{(LC,\xi_j)}$, $j=1,2,3,4$.
\begin{RHP} Find a $2\times 2$ matrix-valued  function $M^{(LC)}(z)$ such that\\
        - $M^{(LC)}(z)$ is analytical off $\Sigma^{(LC)}$.\\
        - $M^{(LC)}(z)$ takes continuous boundary values $M^{(LC)}_{\pm}(z)$ on $\Sigma^{(LC)}$ with jump relation
        \begin{equation}
            M^{(LC)}_{+}(z)=M^{(LC)}_{-}(z)V^{(LC)}(z), \quad z\in\Sigma^{(LC)},
        \end{equation}
        where $V^{(LC)}(z)=V^{(2)}(z)$.\\
        - $ M^{(LC)}(x,t;z)=I+\mathcal{O}(z^{-1}), \quad  z\rightarrow\infty$.
\end{RHP}

$V^{(LC)}(z)$ admits a factorization
\begin{equation}
    V^{(LC)}(z)=\left(I-w_{jk}^{-}\right)^{-1}\left(I+w_{jk}^{+}\right),
\end{equation}
where
\begin{equation}\label{wjk}
    w_{jk}^{-}=0, \quad w_{jk}^{+}=V^{(LC)}-I,
\end{equation}
and the superscript $\pm$ indicate the analyticity in the positive and negative neighborhood of the contour respectively.

Recall the Cauchy projection operator $C_{\pm}$ on $\Sigma^{(LC)}_{jk}$, $j, k=1,2,3,4$
\begin{equation}
    C_{\pm}f(z)=\lim_{s\rightarrow z,\ z\in\Sigma^{(LC)}_{jk,\pm}}\frac{1}{2\pi i}\int_{\Sigma^{(LC)}_{jk}}\frac{f(s)}{s-z}ds.
\end{equation}
Define the following operator on $\Sigma^{(LC)}_{jk}$, $k=1,2,3,4$ as follows
\begin{equation}
    C_{w_{jk}}(f):=C_{-}(fw_{jk}^{+}).
\end{equation}
Then we review some notations as follows
\begin{align}
    &w_{j}=\sum_{k=1}^{4}w_{jk}, \quad \Sigma^{(LC, \xi_j)}=\bigcup_{k=1,2,3,4}\Sigma^{(LC)}_{jk},\\
    &w=\sum_{j=1}^{4}w_{j}=\sum_{j,k=1}^{4}w_{jk},\quad \Sigma^{(LC)}=\bigcup_{j=1,2,3,4}\Sigma^{(LC, \xi_j)}=\bigcup_{j,k=1,2,3,4}\Sigma_{jk}^{(LC)}.
\end{align}
It is obvious to find that $C_w=\sum_{j=1}^{4}C_{w_j}=\sum_{j,k=1}^{4}C_{w_{jk}}$. A direct calculation shows that
$\Vert w_{jk} \Vert_{L^{2}(\Sigma^{(LC)}_{jk})}=\mathcal{O}(t^{-1/2})$, which implies that $1-C_{w}$, $1-C_{w_j}$ and  $1-C_{w_{jk}}$ exist as $t\rightarrow+\infty$.

Via standard Beals-Coifman theory \cite{BCdecom}, we know that $M^{(LC)}$
can be uniquely shown by
\begin{equation}
    M^{(LC)}=I+C(\mu w),
\end{equation}
where $\mu\in I+L^{2}(\Sigma^{(LC)})$ is the solution of the singular integral equation $\mu=I+C_{w}(\mu)$. $M^{(LC)}(z)$ could be expressed in terms of the following integral.
\begin{equation}
    M^{(LC)}=I+\frac{1}{2\pi i}\int_{\Sigma^{(LC)}}\frac{(1-C_w)^{-1}Iw}{s-z}ds.
\end{equation}

\begin{proposition}\label{cwjcwk}
As $t\rightarrow +\infty$, for $j\neq k$
\begin{equation}
\Vert C_{w_j}C_{w_k} \Vert_{L^{2}(\Sigma^{(LC)})}=\mathcal{O}(t^{-1}), \quad \Vert C_{w_j}C_{w_k} \Vert_{L^{\infty}(\Sigma^{(LC)})\rightarrow L^{2}(\Sigma^{(LC)})}=\mathcal{O}(t^{-1}).
\end{equation}
\end{proposition}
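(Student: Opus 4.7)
The plan is to exploit the geometric fact that, for $j\neq k$, our choice of $\varrho$ in \eqref{varrhorestrict} forces the local contours $\Sigma^{(LC,\xi_j)}$ and $\Sigma^{(LC,\xi_k)}$ to be disjoint with positive separation
\begin{equation*}
d_{jk}:=\mathrm{dist}\bigl(\Sigma^{(LC,\xi_j)},\Sigma^{(LC,\xi_k)}\bigr)\geqslant |\xi_j-\xi_k|-2\varrho>0,
\end{equation*}
a number depending only on $\xi$ and not on $t$. Since $w_j^+$ is supported on $\Sigma^{(LC,\xi_j)}$ while $w_k^+$ is supported on $\Sigma^{(LC,\xi_k)}$, whenever a Cauchy kernel bridges the two supports it remains nonsingular and uniformly bounded by $(2\pi d_{jk})^{-1}$. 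This decoupling of supports is the only mechanism I shall need.

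Next I would unwind the composition as $C_{w_j}C_{w_k}(f)=C_{-}\bigl(g\, w_j^+\bigr)$ with $g(s):=C_{-}(fw_k^+)(s)$, and estimate $g$ pointwise on the support of $w_j^+$. For the $L^{2}$--operator bound, Cauchy--Schwarz applied to the (now nonsingular) inner integral yields
\begin{equation*}
\sup_{s\in\Sigma^{(LC,\xi_j)}}|g(s)|\;\lesssim\; \|f\|_{L^{2}(\Sigma^{(LC,\xi_k)})}\,\|w_k^+\|_{L^{2}(\Sigma^{(LC,\xi_k)})}.
\end{equation*}
Combined with the $L^{2}$-boundedness of $C_{-}$ on $\Sigma^{(LC)}$ and the a priori bound $\|w_{j}^+\|_{L^{2}}=\mathcal{O}(t^{-1/2})$ recorded just above the statement, this gives
\begin{equation*}
\|C_{w_j}C_{w_k}(f)\|_{L^{2}}\;\lesssim\; \|g\, w_j^+\|_{L^{2}(\Sigma^{(LC,\xi_j)})}\;\lesssim\; \|f\|_{L^{2}}\,\|w_k^+\|_{L^{2}}\,\|w_j^+\|_{L^{2}}\;=\;\mathcal{O}(t^{-1})\|f\|_{L^{2}},
\end{equation*}
settling the first claim.

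For the $L^{\infty}\to L^{2}$ bound I would replace Cauchy--Schwarz in the pointwise estimate of $g$ by H\"older's inequality, giving $|g(s)|\lesssim \|f\|_{L^{\infty}}\|w_k^+\|_{L^{1}(\Sigma^{(LC,\xi_k)})}$; since $\Sigma^{(LC,\xi_k)}$ has fixed finite length of order $\varrho$ independent of $t$, a further application of Cauchy--Schwarz upgrades $\|w_k^+\|_{L^{1}}\lesssim \|w_k^+\|_{L^{2}}=\mathcal{O}(t^{-1/2})$. Multiplying by $w_j^+$ and taking $L^{2}$-norm contributes a second factor $\mathcal{O}(t^{-1/2})$, yielding the desired $\mathcal{O}(t^{-1})$.

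I do not anticipate a substantive obstacle: both estimates reduce to the disjointness of the local contours and the $L^{2}$-smallness of the individual jumps, and the argument is a direct instance of the cross-term decoupling device standard in the Beals--Coifman and Deift--Zhou framework (compare, e.g., \cite{Cuc}). The only point requiring mild care is bookkeeping the dependence of $d_{jk}$ on $\xi$, which is legitimate under the standing assumption $|\xi|=\mathcal{O}(1)$ of Remark~\ref{whyweneedM}.
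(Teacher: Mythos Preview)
Your proposal is correct and is precisely the disjoint-support argument underlying Varzugin's observation \cite{Var} that the paper invokes: both proofs rest on the separation of $\Sigma^{(LC,\xi_j)}$ and $\Sigma^{(LC,\xi_k)}$ together with $\|w_j\|_{L^2}=\mathcal{O}(t^{-1/2})$, and you have simply written out the mechanism that the paper leaves to the citation. Note that the two algebraic identities the paper displays in its proof are not actually the device that yields the cross-term bound here; they are the resolvent factorizations used downstream in Proposition~\ref{prop:sepcontri}, so do not be puzzled that your argument makes no use of them.
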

\begin{proof}
    Thanks to the observation of Varzugin \cite{Var}, we have
    \begin{align}
        &1-\sum_{j\neq k}C_{w_j}C_{w_k}\left(1-C_{w_k}\right)^{-1}=\left(1-C_{w}\right)\left(1+\sum_{j=1}^{4}C_{w_j}\left(1-C_{w_j}\right)^{-1}\right),\\
        &1-\sum_{j\neq k}\left(1-C_{w_k}\right)^{-1}C_{w_j}C_{w_k}=\left(1+\sum_{j=1}^{4}C_{w_j}\left(1-C_{w_j}\right)^{-1}\right)\left(1-C_{w}\right).
    \end{align}
    The result follows from $\Vert w_{jk} \Vert_{L^{2}(\Sigma^{(LC)}_{jk})}=\mathcal{O}(t^{-1/2})$.
\end{proof}
The following proposition reveals that the contribution for $M^{(LC)}(z)$ could be separated by each $M^{(LC, \xi_j)}(z)$, $j=1,2,3,4$.
\begin{proposition} \label{prop:sepcontri}As $t\rightarrow +\infty$,
    \begin{equation}
        \int_{\Sigma^{(LC)}}\frac{\left(1-C_{w}\right)^{-1}Iw}{s-z}=\sum_{j=1}^{4}\int_{\Sigma^{(LC,\xi_j)}}\frac{\left(1-C_{w_{j}}\right)^{-1}Iw_{j}}{s-z}+\mathcal{O}(t^{-1}).
    \end{equation}
\end{proposition}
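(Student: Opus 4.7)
The plan is to reduce the claim, via the block structure coming from the disjoint disks $U_\varrho(\xi_j)$, to an operator-norm smallness estimate that is already supplied by Proposition \ref{cwjcwk}. Set $\mu:=(1-C_{w})^{-1}I$ and $\mu_j:=(1-C_{w_{j}})^{-1}I$ for $j=1,2,3,4$. Since $w=\sum_{j}w_{j}$ with $w_{j}$ supported in $\Sigma^{(LC,\xi_j)}$, the left-hand side decomposes as
\[
\int_{\Sigma^{(LC)}}\frac{\mu\,w}{s-z}\,ds=\sum_{j=1}^{4}\int_{\Sigma^{(LC,\xi_j)}}\frac{\mu\,w_{j}}{s-z}\,ds,
\]
so it is enough to prove
\[
\sum_{j=1}^{4}\int_{\Sigma^{(LC,\xi_j)}}\frac{(\mu-\mu_{j})\,w_{j}}{s-z}\,ds=\mathcal{O}(t^{-1}).
\]

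Next, introduce the auxiliary function $\nu:=I+\sum_{j}(\mu_{j}-I)=I+\sum_{j}C_{w_{j}}\mu_{j}$. A direct expansion of $(1-C_{w})\nu$ using $C_{w}=\sum_{k}C_{w_{k}}$ and $C_{w_{j}}\mu_{j}=\mu_{j}-I$ collapses the diagonal contributions and yields the identity
\[
(1-C_{w})(\mu-\nu)=\sum_{j\neq k}C_{w_{k}}C_{w_{j}}\mu_{j},
\]
which is exactly the Varzugin-type identity underlying Proposition \ref{cwjcwk}. Combining the operator bound $\|C_{w_{k}}C_{w_{j}}\|_{L^{2}\to L^{2}}=\mathcal{O}(t^{-1})$ with $\|\mu_{j}\|_{L^{\infty}+L^{2}}=\mathcal{O}(1)$ and the uniform boundedness of $(1-C_{w})^{-1}$ on $L^{2}(\Sigma^{(LC)})$, I conclude $\|\mu-\nu\|_{L^{2}(\Sigma^{(LC)})}=\mathcal{O}(t^{-1})$.

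Now I split, on each arc $\Sigma^{(LC,\xi_j)}$,
\[
\mu-\mu_{j}=(\mu-\nu)+\sum_{k\neq j}(\mu_{k}-I),
\]
and estimate the two contributions separately. For the $(\mu-\nu)$ piece, Cauchy–Schwarz together with the standard scaling $\|w_{j}\|_{L^{2}}=\mathcal{O}(t^{-1/4})$ coming from the rescaled parabolic cylinder jumps gives
\[
\Bigl|\int_{\Sigma^{(LC,\xi_j)}}\frac{(\mu-\nu)w_{j}}{s-z}\,ds\Bigr|\lesssim \|\mu-\nu\|_{L^{2}}\,\|w_{j}\|_{L^{2}}=\mathcal{O}(t^{-5/4}).
\]
For the off-diagonal term with $k\neq j$, the key observation is that on $\Sigma^{(LC,\xi_j)}$ the difference $\mu_{k}-I$ has the non-singular representation
\[
(\mu_{k}-I)(s)=\frac{1}{2\pi i}\int_{\Sigma^{(LC,\xi_k)}}\frac{\mu_{k}(s')w_{k}(s')}{s'-s}\,ds',\qquad s\in\Sigma^{(LC,\xi_j)},
\]
whose kernel is bounded because $\mathrm{dist}(\Sigma^{(LC,\xi_j)},\Sigma^{(LC,\xi_k)})\geqslant |\xi_{j}-\xi_{k}|-2\varrho>0$ by \eqref{varrhorestrict}. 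Splitting $\mu_{k}=I+(\mu_{k}-I)$ and using $\|w_{k}\|_{L^{1}}=\mathcal{O}(t^{-1/2})$ and $\|\mu_{k}-I\|_{L^{2}}\lesssim \|w_{k}\|_{L^{2}}=\mathcal{O}(t^{-1/4})$ yields $\|\mu_{k}-I\|_{L^{\infty}(\Sigma^{(LC,\xi_j)})}=\mathcal{O}(t^{-1/2})$. Another application of $\|w_{j}\|_{L^{1}}=\mathcal{O}(t^{-1/2})$ then gives
\[
\Bigl|\int_{\Sigma^{(LC,\xi_j)}}\frac{(\mu_{k}-I)w_{j}}{s-z}\,ds\Bigr|\lesssim \|\mu_{k}-I\|_{L^{\infty}(\Sigma^{(LC,\xi_j)})}\,\|w_{j}\|_{L^{1}}=\mathcal{O}(t^{-1}),
\]
which is the bottleneck decay rate and gives the claimed $\mathcal{O}(t^{-1})$.

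The main obstacle is the second step, namely establishing the $L^{2}$-resolvent bound for $\mu-\nu$: this requires both the Varzugin identity and the uniform invertibility of $1-C_{w}$ and $1-C_{w_{j}}$, the latter being guaranteed by the explicit solvability of the local parabolic cylinder models in Appendix \ref{pcmodel}. Once this bound is in hand, the remaining estimates are standard Cauchy-integral manipulations using only the disjointness of the disks $U_\varrho(\xi_j)$ and the universal scalings $\|w_{j}\|_{L^{p}}\lesssim t^{-1/(2p)}$.
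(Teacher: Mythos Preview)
Your proof is correct and follows essentially the same Varzugin-type decomposition as the paper: writing $\mu=\nu+(\mu-\nu)$ with $\nu=I+\sum_j(\mu_j-I)$ and controlling $\mu-\nu$ through the cross-term operator bounds of Proposition~\ref{cwjcwk}. In fact you are more explicit than the paper's sketch, since you separately treat the off-diagonal contributions $\int_{\Sigma^{(LC,\xi_j)}}(\mu_k-I)w_j/(s-z)\,ds$ for $k\neq j$ via the positive separation of the disks $U_\varrho(\xi_j)$, a step the paper's proof leaves implicit after bounding only the $QPRI\,w$ piece.
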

\begin{proof}
    Decompose the resolvent $(1-C_{w})^{-1}I$ as
\begin{equation}
    (1-C_{w})^{-1}I=I+\sum_{j=1}^{4}C_{w_j}(1-C_{w_j})^{-1}I+QPR I,
\end{equation}
where
\begin{align}
    &Q:=1+\sum_{j=1}^{4}C_{w_j}(1-C_{w_j})^{-1},\\
    &P:=\left(1-\sum_{j\neq k}C_{w_j}C_{w_k}\left(1-C_{w_k}\right)^{-1}\right)^{-1},\\
    &R:=\sum_{j\neq k}C_{w_{j}}C_{w_{k}}(1-C_{w_k})^{-1}.
\end{align}
Using Cauchy-Schwarz inequality and Proposition \ref{prop:sepcontri}, we have
\begin{equation}
\left\vert\int QPRIw \right\vert\leqslant \Vert Q\Vert_{L^{2}\left(\Sigma^{(LC, \xi_j)}\right)}
\Vert P\Vert_{L^{2}\left(\Sigma^{\left(LC, \xi_j\right)}\right)}\Vert R\Vert_{L^{2}\left(\Sigma^{\left(LC, \xi_j\right)}\right)}\Vert w \Vert_{L^{2}}\lesssim t^{-1}.
\end{equation}
\end{proof}

Combining \eqref{mmodj}, the following for $M^{(LC)}(z)$ proposition follows.
\begin{proposition}\label{mmodsol}As $t\rightarrow +\infty$, we have
    \begin{equation}
        M^{(LC)}(z)=M^{(\infty)}(z)\cdot M^{(PC)}(z),
    \end{equation}
    where
    \begin{equation}\label{compummod}
        M^{(PC)}(z)=I+t^{-\frac{1}{2}}\sum_{j=1}^{4}\frac{i\epsilon_jA^{mat}_j}{\left(2\epsilon_j\theta''(\xi_j)\right)^{\frac{1}{2}}\left(z-\xi_j\right)}+\mathcal{O}(t^{-1}),
    \end{equation}
    with
    \begin{equation}
        A^{mat}_j=\left(\begin{array}{cc}
            0 & \beta^{(\xi_j)}_{12}\\
            -\beta^{(\xi_j)}_{21} & 0
            \end{array}\right)
    \end{equation}
\end{proposition}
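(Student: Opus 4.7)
My plan is to combine the Beals--Coifman representation for $M^{(LC)}(z)$, the saddle-point separation of Proposition \ref{prop:sepcontri}, and the large-$\zeta$ expansion of the parabolic cylinder parametrix \eqref{mmodj}. The first step is to apply small-norm RH theory, using $\Vert w\Vert_{L^2(\Sigma^{(LC)})}=\mathcal{O}(t^{-1/2})$, to write
\begin{equation*}
M^{(LC)}(z)-I = \frac{1}{2\pi i}\int_{\Sigma^{(LC)}}\frac{[(1-C_w)^{-1}I](s)\,w(s)}{s-z}\,ds,
\end{equation*}
and then to invoke Proposition \ref{prop:sepcontri} to decompose this integral as a sum of four local integrals over $\Sigma^{(LC,\xi_j)}$. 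Each of these is identified, by reapplying the Beals--Coifman formula to the local RH problem supported on $\Sigma^{(LC,\xi_j)}$ alone, with $M^{(LC,\xi_j)}(z)-I$, so that
\begin{equation*}
M^{(LC)}(z)-I = \sum_{j=1}^{4}\bigl(M^{(LC,\xi_j)}(z)-I\bigr) + \mathcal{O}(t^{-1}).
\end{equation*}

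The second step is to substitute, in each summand, the local parametrix construction $M^{(LC,\xi_j)}(z)=M^{(\infty)}(z)\,M^{(PC,\xi_j)}(\xi,\zeta(z))$ together with the large-$\zeta$ expansion \eqref{mmodj}, which is valid for $z$ bounded away from $\xi_j$ since then $|\zeta|\gtrsim\sqrt{t}$. Factoring the common $M^{(\infty)}(z)$ out of each saddle-point contribution and reading off the coefficient $A_j=i\epsilon_j A_j^{mat}$, the sub-leading $t^{-1/2}$ corrections combine into $M^{(\infty)}(z)\bigl[M^{(PC)}(z)-I\bigr]$ with $M^{(PC)}(z)$ exactly as in \eqref{compummod}; restoring the identity part then yields the product $M^{(\infty)}(z)\cdot M^{(PC)}(z)$ modulo an $\mathcal{O}(t^{-1})$ error, which is the claimed factorization.

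The main delicacy is controlling the remainders uniformly. Each $\mathcal{O}(\zeta^{-2})$ term in \eqref{mmodj} produces an error of size $\mathcal{O}(t^{-1}|z-\xi_j|^{-2})$, which must be verified to remain $\mathcal{O}(t^{-1})$ once summed over $j$ for $z$ staying outside the four disks $U_\varrho(\xi_j)$. Likewise the separation error of Proposition \ref{prop:sepcontri}, whose origin lies in the cross-term estimate $\Vert C_{w_j}C_{w_k}\Vert=\mathcal{O}(t^{-1})$ from Proposition \ref{cwjcwk}, must be combined with these local remainders without loss of order. The algebraic recombination of the four saddle-point contributions into the single factorized form $M^{(\infty)}(z)\cdot M^{(PC)}(z)$ is the key bookkeeping step, and it rests essentially on the explicit simple form $M^{(\infty)}(z)=I+\sigma_2/z$ together with the localization of each $M^{(PC,\xi_j)}(z)-I$ near its respective saddle point.
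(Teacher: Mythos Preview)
Your overall strategy mirrors the paper's, but the identification step contains a genuine gap. You claim that each separated Beals--Coifman integral over $\Sigma^{(LC,\xi_j)}$ equals $M^{(LC,\xi_j)}(z)-I$, and then substitute the paper's construction $M^{(LC,\xi_j)}(z)=M^{(\infty)}(z)\,M^{(PC,\xi_j)}(\zeta(z))$. But the Beals--Coifman solution of the local RH problem on $\Sigma^{(LC,\xi_j)}$, normalized to $I$ at infinity, is analytic at $z=0$, whereas $M^{(\infty)}(z)\,M^{(PC,\xi_j)}(\zeta(z))$ inherits the simple pole $\sigma_2/z$ from $M^{(\infty)}$. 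These cannot agree; the Beals--Coifman local solution is (up to phase-reduction and contour-truncation errors) $M^{(PC,\xi_j)}(\zeta(z))$ alone, with no $M^{(\infty)}$ prefactor. Carrying your argument through would therefore yield $M^{(LC)}(z)=M^{(PC)}(z)+\mathcal{O}(t^{-1})$, not $M^{(\infty)}(z)\,M^{(PC)}(z)$. Concretely, your sum $\sum_{j=1}^4\bigl(M^{(\infty)}M^{(PC,\xi_j)}-I\bigr)$ produces $4\bigl(M^{(\infty)}-I\bigr)+M^{(\infty)}\sum_j\bigl(M^{(PC,\xi_j)}-I\bigr)$, so the ``factoring out $M^{(\infty)}$'' step over-counts $M^{(\infty)}-I$ by a factor of four and does not close.

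The correct reading is that the paper is effectively \emph{defining} the local parametrix piecewise by $M^{(LC)}(z):=M^{(\infty)}(z)\,M^{(PC,\xi_j)}(\zeta_j(z))$ on each $U_\varrho(\xi_j)$, and Proposition~\ref{mmodsol} simply records the uniform asymptotic formula \eqref{compummod} obtained by inserting the expansion \eqref{mmodj}; the role of Proposition~\ref{prop:sepcontri} is only to certify that contributions from $\xi_k$ with $k\neq j$ are $\mathcal{O}(t^{-1})$ on $\partial U_\varrho(\xi_j)$. In the downstream residue computation of Proposition~\ref{E1est} this distinction is harmless, since the extra $\sigma_2/z$ is analytic inside each $\partial U_\varrho(\xi_j)$ and integrates to zero. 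But as a pointwise identity derived from the Beals--Coifman representation, your argument does not establish the stated factorization.
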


\subsubsection{Error: A small norm RH problem}\label{smallerro}
Define the error matrix $M^{(err)}$ by
\begin{equation}\label{equ:Merr}
    M^{(err)}(z)=\left\{
        \begin{aligned}
        &M^{(PR)}(z)M^{(\infty)}(z)^{-1}, & z\in \mathbb{C}\backslash U(\xi),\\
        &M^{(PR)}(z)M^{(LC)}(z)^{-1}, & z\in U(\xi), \\
        \end{aligned}
        \right.
\end{equation}
RH conditions for $M^{(err)}$ are as follows.
\begin{RHP}\label{errorrhp}
    Find a $2\times 2$ matrix-valued function  $M^{(err)}(z)$ such that\\
    - $M^{(err)}$ is analytical in $\mathbb{C}\backslash \Sigma^{(err)}$, where
    \begin{equation}
        \Sigma^{(err)}:=\partial U(\xi) \cup \left(\Sigma^{(2)}\backslash U(\xi)\right);
    \end{equation}
    - $M^{(err)}$ takes continuous boundary values $M^{(err)}_{\pm}(z)$ on $\Sigma^{(err)}$ and
    \begin{equation}
        M^{(err)}_{+}(z)=M^{(err)}_{-}(z)V^{(err)}(z),
    \end{equation}
    where
    \begin{equation}
    V^{(err)}(z)=\left\{
        \begin{aligned}
        &M^{(\infty)}(z)V^{(2)}(z){M^{(\infty)}(z)}^{-1}, \quad z\in \Sigma^{(2)}\backslash U(\xi),\\
        &M^{(\infty)}(z)M^{(PC)}(z){M^{(\infty)}(z)}^{-1}, \quad z\in \partial U(\xi).
        \end{aligned}
        \right.
    \end{equation}
    - $M^{(err)}=I+\mathcal{O}(z^{-1}), \quad z\rightarrow \infty$.
\end{RHP}
\begin{figure}[htbp]
    \begin{center}

        \tikzset{every picture/.style={line width=0.75pt}} 

        \begin{tikzpicture}[x=0.75pt,y=0.75pt,yscale=-1,xscale=1]

        \draw  [line width=0.75]  (142.43,139.74) .. controls (142.43,121.49) and (156.52,106.69) .. (173.91,106.69) .. controls (191.3,106.69) and (205.39,121.49) .. (205.39,139.74) .. controls (205.39,157.99) and (191.3,172.79) .. (173.91,172.79) .. controls (156.52,172.79) and (142.43,157.99) .. (142.43,139.74) -- cycle ;
        \draw   (242.43,140.7) .. controls (242.43,122.44) and (256.52,107.65) .. (273.91,107.65) .. controls (291.3,107.65) and (305.39,122.44) .. (305.39,140.7) .. controls (305.39,158.95) and (291.3,173.74) .. (273.91,173.74) .. controls (256.52,173.74) and (242.43,158.95) .. (242.43,140.7) -- cycle ;
        \draw   (343.76,138.6) .. controls (343.76,120.34) and (357.86,105.55) .. (375.24,105.55) .. controls (392.63,105.55) and (406.73,120.34) .. (406.73,138.6) .. controls (406.73,156.85) and (392.63,171.64) .. (375.24,171.64) .. controls (357.86,171.64) and (343.76,156.85) .. (343.76,138.6) -- cycle ;
        \draw [color={rgb, 255:red, 0; green, 0; blue, 0 }  ,draw opacity=1 ]   (501.52,156.38) -- (538.43,171.79) ;
        \draw [shift={(525.51,166.39)}, rotate = 202.66] [color={rgb, 255:red, 0; green, 0; blue, 0 }  ,draw opacity=1 ][line width=0.75]    (10.93,-3.29) .. controls (6.95,-1.4) and (3.31,-0.3) .. (0,0) .. controls (3.31,0.3) and (6.95,1.4) .. (10.93,3.29)   ;
        \draw   (443.76,138.6) .. controls (443.76,120.34) and (457.86,105.55) .. (475.24,105.55) .. controls (492.63,105.55) and (506.73,120.34) .. (506.73,138.6) .. controls (506.73,156.85) and (492.63,171.64) .. (475.24,171.64) .. controls (457.86,171.64) and (443.76,156.85) .. (443.76,138.6) -- cycle ;
        \draw  [dash pattern={on 0.84pt off 2.51pt}]  (126.43,139.59) -- (529.76,139.59) ;
        \draw [shift={(531.76,139.59)}, rotate = 180] [color={rgb, 255:red, 0; green, 0; blue, 0 }  ][line width=0.75]    (10.93,-3.29) .. controls (6.95,-1.4) and (3.31,-0.3) .. (0,0) .. controls (3.31,0.3) and (6.95,1.4) .. (10.93,3.29)   ;
        \draw [color={rgb, 255:red, 0; green, 0; blue, 0 }  ,draw opacity=1 ]   (504,121.39) -- (535.76,106.7) ;
        \draw [shift={(525.32,111.53)}, rotate = 155.19] [color={rgb, 255:red, 0; green, 0; blue, 0 }  ,draw opacity=1 ][line width=0.75]    (10.93,-3.29) .. controls (6.95,-1.4) and (3.31,-0.3) .. (0,0) .. controls (3.31,0.3) and (6.95,1.4) .. (10.93,3.29)   ;
        \draw [color={rgb, 255:red, 0; green, 0; blue, 0 }  ,draw opacity=1 ]   (402.47,121.39) -- (423.1,106.7) ;
        \draw [shift={(417.67,110.56)}, rotate = 144.54] [color={rgb, 255:red, 0; green, 0; blue, 0 }  ,draw opacity=1 ][line width=0.75]    (10.93,-3.29) .. controls (6.95,-1.4) and (3.31,-0.3) .. (0,0) .. controls (3.31,0.3) and (6.95,1.4) .. (10.93,3.29)   ;
        \draw [color={rgb, 255:red, 0; green, 0; blue, 0 }  ,draw opacity=1 ]   (423.1,106.7) -- (448.43,120) ;
        \draw [shift={(441.07,116.14)}, rotate = 207.69] [color={rgb, 255:red, 0; green, 0; blue, 0 }  ,draw opacity=1 ][line width=0.75]    (10.93,-3.29) .. controls (6.95,-1.4) and (3.31,-0.3) .. (0,0) .. controls (3.31,0.3) and (6.95,1.4) .. (10.93,3.29)   ;
        \draw [color={rgb, 255:red, 0; green, 0; blue, 0 }  ,draw opacity=1 ]   (402.47,156.38) -- (427.76,165.49) ;
        \draw [shift={(420.76,162.97)}, rotate = 199.8] [color={rgb, 255:red, 0; green, 0; blue, 0 }  ,draw opacity=1 ][line width=0.75]    (10.93,-3.29) .. controls (6.95,-1.4) and (3.31,-0.3) .. (0,0) .. controls (3.31,0.3) and (6.95,1.4) .. (10.93,3.29)   ;
        \draw [color={rgb, 255:red, 0; green, 0; blue, 0 }  ,draw opacity=1 ]   (427.76,165.49) -- (447.1,153.59) ;
        \draw [shift={(442.54,156.39)}, rotate = 148.39] [color={rgb, 255:red, 0; green, 0; blue, 0 }  ,draw opacity=1 ][line width=0.75]    (10.93,-3.29) .. controls (6.95,-1.4) and (3.31,-0.3) .. (0,0) .. controls (3.31,0.3) and (6.95,1.4) .. (10.93,3.29)   ;
        \draw [color={rgb, 255:red, 0; green, 0; blue, 0 }  ,draw opacity=1 ]   (298.47,121.39) -- (324.43,103.9) ;
        \draw [shift={(316.43,109.29)}, rotate = 146.03] [color={rgb, 255:red, 0; green, 0; blue, 0 }  ,draw opacity=1 ][line width=0.75]    (10.93,-3.29) .. controls (6.95,-1.4) and (3.31,-0.3) .. (0,0) .. controls (3.31,0.3) and (6.95,1.4) .. (10.93,3.29)   ;
        \draw [color={rgb, 255:red, 0; green, 0; blue, 0 }  ,draw opacity=1 ]   (324.43,103.9) -- (348.43,118.6) ;
        \draw [shift={(341.55,114.38)}, rotate = 211.48] [color={rgb, 255:red, 0; green, 0; blue, 0 }  ,draw opacity=1 ][line width=0.75]    (10.93,-3.29) .. controls (6.95,-1.4) and (3.31,-0.3) .. (0,0) .. controls (3.31,0.3) and (6.95,1.4) .. (10.93,3.29)   ;
        \draw [color={rgb, 255:red, 0; green, 0; blue, 0 }  ,draw opacity=1 ]   (326.43,171.79) -- (347.76,155.69) ;
        \draw [shift={(341.88,160.12)}, rotate = 142.96] [color={rgb, 255:red, 0; green, 0; blue, 0 }  ,draw opacity=1 ][line width=0.75]    (10.93,-3.29) .. controls (6.95,-1.4) and (3.31,-0.3) .. (0,0) .. controls (3.31,0.3) and (6.95,1.4) .. (10.93,3.29)   ;
        \draw [color={rgb, 255:red, 0; green, 0; blue, 0 }  ,draw opacity=1 ]   (302.43,157.09) -- (326.43,171.79) ;
        \draw [shift={(319.55,167.57)}, rotate = 211.48] [color={rgb, 255:red, 0; green, 0; blue, 0 }  ,draw opacity=1 ][line width=0.75]    (10.93,-3.29) .. controls (6.95,-1.4) and (3.31,-0.3) .. (0,0) .. controls (3.31,0.3) and (6.95,1.4) .. (10.93,3.29)   ;
        \draw [color={rgb, 255:red, 0; green, 0; blue, 0 }  ,draw opacity=1 ]   (199.14,122.09) -- (223.1,104.6) ;
        \draw [shift={(215.96,109.81)}, rotate = 143.87] [color={rgb, 255:red, 0; green, 0; blue, 0 }  ,draw opacity=1 ][line width=0.75]    (10.93,-3.29) .. controls (6.95,-1.4) and (3.31,-0.3) .. (0,0) .. controls (3.31,0.3) and (6.95,1.4) .. (10.93,3.29)   ;
        \draw [color={rgb, 255:red, 0; green, 0; blue, 0 }  ,draw opacity=1 ]   (223.1,104.6) -- (247.76,120.7) ;
        \draw [shift={(240.45,115.93)}, rotate = 213.13] [color={rgb, 255:red, 0; green, 0; blue, 0 }  ,draw opacity=1 ][line width=0.75]    (10.93,-3.29) .. controls (6.95,-1.4) and (3.31,-0.3) .. (0,0) .. controls (3.31,0.3) and (6.95,1.4) .. (10.93,3.29)   ;
        \draw [color={rgb, 255:red, 0; green, 0; blue, 0 }  ,draw opacity=1 ]   (225.1,174.59) -- (246.43,159.19) ;
        \draw [shift={(240.63,163.38)}, rotate = 144.18] [color={rgb, 255:red, 0; green, 0; blue, 0 }  ,draw opacity=1 ][line width=0.75]    (10.93,-3.29) .. controls (6.95,-1.4) and (3.31,-0.3) .. (0,0) .. controls (3.31,0.3) and (6.95,1.4) .. (10.93,3.29)   ;
        \draw [color={rgb, 255:red, 0; green, 0; blue, 0 }  ,draw opacity=1 ]   (200.43,158.49) -- (225.1,174.59) ;
        \draw [shift={(217.79,169.82)}, rotate = 213.13] [color={rgb, 255:red, 0; green, 0; blue, 0 }  ,draw opacity=1 ][line width=0.75]    (10.93,-3.29) .. controls (6.95,-1.4) and (3.31,-0.3) .. (0,0) .. controls (3.31,0.3) and (6.95,1.4) .. (10.93,3.29)   ;
        \draw [color={rgb, 255:red, 0; green, 0; blue, 0 }  ,draw opacity=1 ]   (118.43,107.4) -- (144.9,126.3) ;
        \draw [shift={(136.55,120.33)}, rotate = 215.51] [color={rgb, 255:red, 0; green, 0; blue, 0 }  ,draw opacity=1 ][line width=0.75]    (10.93,-3.29) .. controls (6.95,-1.4) and (3.31,-0.3) .. (0,0) .. controls (3.31,0.3) and (6.95,1.4) .. (10.93,3.29)   ;
        \draw [color={rgb, 255:red, 0; green, 0; blue, 0 }  ,draw opacity=1 ]   (121.76,175.29) -- (146.9,159.19) ;
        \draw [shift={(139.39,164)}, rotate = 147.37] [color={rgb, 255:red, 0; green, 0; blue, 0 }  ,draw opacity=1 ][line width=0.75]    (10.93,-3.29) .. controls (6.95,-1.4) and (3.31,-0.3) .. (0,0) .. controls (3.31,0.3) and (6.95,1.4) .. (10.93,3.29)   ;
        \draw   (172.74,99.74) .. controls (175.47,103.81) and (178.53,106.69) .. (181.93,108.39) .. controls (178.2,107.87) and (174.14,108.52) .. (169.74,110.36) ;
        \draw   (275.41,101.14) .. controls (278.14,105.21) and (281.2,108.09) .. (284.6,109.79) .. controls (280.87,109.26) and (276.81,109.92) .. (272.4,111.76) ;
        \draw   (372.08,98.35) .. controls (374.8,102.4) and (377.87,105.29) .. (381.27,106.99) .. controls (377.54,106.47) and (373.47,107.12) .. (369.07,108.96) ;
        \draw   (472.74,98.35) .. controls (475.47,102.41) and (478.53,105.29) .. (481.93,106.99) .. controls (478.2,106.47) and (474.14,107.12) .. (469.74,108.96) ;
        \draw (517.07,146.47) node [anchor=north west][inner sep=0.75pt]  [font=\scriptsize]  {$Re\ z$};
        \draw (167.45,143.6) node [anchor=north west][inner sep=0.75pt]  [font=\scriptsize]  {$\xi _{4}$};
        \draw (267.45,144.56) node [anchor=north west][inner sep=0.75pt]  [font=\scriptsize]  {$\xi _{3}$};
        \draw (368.78,142.46) node [anchor=north west][inner sep=0.75pt]  [font=\scriptsize]  {$\xi _{2}$};
        \draw (468.78,142.46) node [anchor=north west][inner sep=0.75pt]  [font=\scriptsize]  {$\xi _{1}$};
        \end{tikzpicture}
        \caption{Jump contour of $M^{(err)}(z)$.}\label{fig:jump of M^{err}}
        \end{center}
    \end{figure}
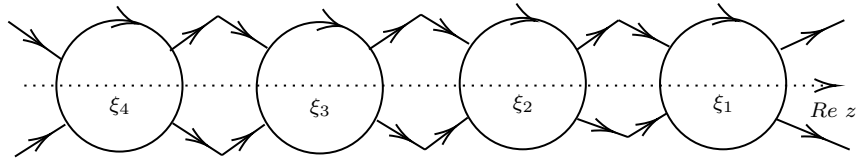

Taking into account Proposition \ref{outxi1} and Proposition \ref{outxi2}, we can know that $V^{(err)}(z)$ exponentially decay to $I$ for
$z\in\Sigma^{(2)}\backslash U(\xi)$ and $z\in \Sigma^{(1/2)}_{j\pm}$, $j=1,2,3,4$.
For $z\in \partial U(\xi)$, $M^{(\infty)}(z)$ is bounded, we obtain that
\begin{align}\label{verrest}
    \vert V^{(err)}-I \vert&=\vert M^{(\infty)}(z)M^{(PC)}(z){M^{(\infty)}(z)}^{-1}-I \vert  \nonumber \\
    &= \vert M^{(\infty)}(z)(M^{(PC)}(z)-I){M^{(\infty)}(z)}^{-1} \vert \nonumber\\
    &\overset{\eqref{compummod}}{=}\mathcal{O}(t^{-\frac{1}{2}}).
\end{align}
According to Beals-Coifman theory, the solution for $M^{(err)}$ can be given by
\begin{equation}\label{BCsoluformerr}
    M^{(err)}=I+\frac{1}{2\pi i}\int_{\Sigma^{(err)}}\frac{(I+\mu(s))(V^{(err)}(s)-I)}{s-z}ds,
\end{equation}
where $\mu\in L^{2}(\Sigma^{(err)})$ is the unique solution of $(1-C_{V^{(err)}})\mu=C_{V^{(err)}}I$. And $C_{V^{(err)}}$: $L^{2}(\Sigma^{(err)})\rightarrow L^{2}(\Sigma^{(err)})$ is
the Cauchy projection operator on $\Sigma^{(err)}$:
\begin{equation}
    C_{V^{(err)}} (f)(z)=C_{-}{f(V^{(err)}-I)}=\lim_{s\rightarrow z,\ z\in\Sigma^{(err)}}\int_{\Sigma^{(err)}}\frac{f(s)(V^{(err)}(s)-I)}{s-z}ds.
\end{equation}
Existence and uniqueness of $\mu$ follows from the boundedness of the Cauchy projection operator $C_{-}$, which implies
\begin{equation}
    \Vert C_{V^{(err)}}\Vert \leqslant \Vert C_{-} \Vert_{L^{2}(\Sigma^{(err)})\rightarrow L^{2}(\Sigma^{(err)})}\Vert V^{err}-I \Vert_{L^{2}(\Sigma^{(err)})}=\mathcal{O}(t^{-\frac{1}{2}}).
\end{equation}
Moreover,
\begin{equation}\label{erromuest}
    \Vert \mu \Vert_{L^{2}\Sigma^{(err)}}\lesssim \frac{\Vert C_{V^{(err)}}\Vert}{1-\Vert C_{V^{(err)}}\Vert}\lesssim t^{-\frac{1}{2}}.
\end{equation}

Now we present the following proposition, which is helpful for the last asymptotics.
\begin{proposition}\label{E1est}
As $t\rightarrow\infty$, we have
\begin{equation}\label{equ:M(err)_1}
M^{(err)}_1=t^{-1/2}\sum_{j=1}^{4}i\epsilon_j\left(2\epsilon_j\theta''(\xi_j) \right)^{-\frac{1}{2}}\left(1-\xi_j^{-2}\right)^{-1}
\begin{pmatrix}
    -\frac{i}{\xi_j}\left(\beta^{(\xi_j)}_{12}-\beta^{(\xi_j)}_{21}\right) & \beta^{(\xi_j)}_{12}-\frac{1}{\xi_j^2}\beta^{(\xi_j)}_{21}\\
    \frac{1}{\xi_j^2}\beta^{(\xi_j)}_{12}-\beta^{(\xi_j)}_{21} & \frac{i}{\xi_j}\left(\beta^{(\xi_j)}_{12}-\beta^{(\xi_j)}_{21}\right)
\end{pmatrix}
+\mathcal{O}{(t^{-1})}.
\end{equation}
where $M^{(err)}_1$ comes from the asymptotic expansion of $M^{(err)}$ as $z\rightarrow\infty$
\begin{align}
M^{(err)}=I+z^{-1}M^{(err)}_1+\mathcal{O}(z^{-2}).
\end{align}
Moreover, the $(1,2)$-entry of $M^{(err)}_1$, denoted by $\left(M^{(err)}_1\right)_{12}$, is given by
\begin{equation}\label{equ:M^(err)_1}
    \left(M^{(err)}_1\right)_{12}=t^{-1/2}\sum_{j=1}^{4}i\epsilon_j\left(2\epsilon_j\theta''(\xi_j) \right)^{-\frac{1}{2}}\left(1-\xi_j^{-2}\right)^{-1}
    \cdot \left(\beta^{(\xi_j)}_{12}-\frac{1}{\xi_j^2}\beta^{(\xi_j)}_{21}\right)+\mathcal{O}(t^{-1}).
\end{equation}
\end{proposition}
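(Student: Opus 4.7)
The plan is to extract the $z^{-1}$-coefficient directly from the Beals--Coifman representation \eqref{BCsoluformerr}. Expanding $(s-z)^{-1}=-z^{-1}-sz^{-2}-\cdots$ uniformly in $s$ on the bounded parts of $\Sigma^{(err)}$ and using that the unbounded pieces carry exponentially small data, one reads off
\begin{equation*}
M^{(err)}_1 \;=\; -\frac{1}{2\pi i}\int_{\Sigma^{(err)}}\bigl(I+\mu(s)\bigr)\bigl(V^{(err)}(s)-I\bigr)\,ds.
\end{equation*}

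First I would discard the negligible contributions. On $\Sigma^{(err)}\setminus\partial U(\xi)$, Propositions \ref{outxi1} and \ref{outxi2} give $\|V^{(err)}-I\|_{L^1\cap L^2}=\mathcal{O}(e^{-\hbar t})$, which combined with $\|\mu\|_{L^2}=\mathcal{O}(t^{-1/2})$ from \eqref{erromuest} makes this part exponentially small. On $\partial U(\xi)$ the bound \eqref{verrest} gives $\|V^{(err)}-I\|_{L^2(\partial U(\xi))}=\mathcal{O}(t^{-1/2})$, so Cauchy--Schwarz yields
\begin{equation*}
\left|\int_{\partial U(\xi)}\mu(s)\bigl(V^{(err)}(s)-I\bigr)\,ds\right|\;\le\;\|\mu\|_{L^2}\|V^{(err)}-I\|_{L^2}\;\lesssim\; t^{-1}.
\end{equation*}
This reduces the task to
\begin{equation*}
M^{(err)}_1 \;=\; -\frac{1}{2\pi i}\sum_{j=1}^{4}\oint_{\partial U_{\varrho}(\xi_j)}\bigl(V^{(err)}(z)-I\bigr)\,dz\;+\;\mathcal{O}(t^{-1}).
\end{equation*}

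Next I would substitute the explicit local data. On $\partial U_{\varrho}(\xi_j)$, the identity $V^{(err)}=M^{(\infty)}M^{(PC,\xi_j)}{M^{(\infty)}}^{-1}$ together with \eqref{mmodj} gives uniformly
\begin{equation*}
V^{(err)}(z)-I \;=\; \frac{i\epsilon_j\bigl(2t\epsilon_j\theta''(\xi_j)\bigr)^{-1/2}}{z-\xi_j}\,M^{(\infty)}(z)\,A^{mat}_j\,{M^{(\infty)}(z)}^{-1}\;+\;\mathcal{O}(t^{-1}).
\end{equation*}
The constraint $\varrho<\tfrac{1}{2}\min_j|\xi_j|$ in \eqref{varrhorestrict} ensures $0\notin U_{\varrho}(\xi_j)$, so $M^{(\infty)}(z)=I+\sigma_2/z$ is analytic inside each disk. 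With $\partial U_{\varrho}(\xi_j)$ oriented clockwise, as forced by \eqref{equ:Merr} (with $M^{(err)}$ equal to $M^{(PR)}{M^{(\infty)}}^{-1}$ outside and $M^{(PR)}{M^{(LC)}}^{-1}$ inside), the residue theorem collapses the integral to
\begin{equation*}
M^{(err)}_1 \;=\; t^{-1/2}\sum_{j=1}^{4} i\epsilon_j\bigl(2\epsilon_j\theta''(\xi_j)\bigr)^{-1/2}\,M^{(\infty)}(\xi_j)\,A^{mat}_j\,{M^{(\infty)}(\xi_j)}^{-1}\;+\;\mathcal{O}(t^{-1}).
\end{equation*}

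What remains is a bare-hands matrix computation. Using $M^{(\infty)}(\xi_j)=I+\sigma_2/\xi_j$, $\det M^{(\infty)}(\xi_j)=1-\xi_j^{-2}$, and the stated form of $A^{mat}_j$, a direct conjugation reproduces the $2\times 2$ matrix displayed in \eqref{equ:M(err)_1}, and the formula \eqref{equ:M^(err)_1} is simply its $(1,2)$-entry. The principal subtlety I expect is the bookkeeping that makes both the $\mu$-term and the $\mathcal{O}(\zeta^{-2})$ remainder in \eqref{mmodj} land at the same order $t^{-1}$: for the former the gain is the product of two $L^2$-bounds of size $t^{-1/2}$, while for the latter, since $\partial U_{\varrho}(\xi_j)$ has fixed radius independent of $t$, the gain comes entirely from the explicit $t^{-1}$ prefactor hidden in the rescaled-variable remainder, a point that needs to be made explicit when passing from the $\zeta$-expansion of $M^{(PC,\xi_j)}$ to an estimate on the fixed $z$-circle.
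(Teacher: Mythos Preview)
Your proposal is correct and follows essentially the same approach as the paper: the paper likewise splits the integral for $M^{(err)}_1$ into the $\partial U(\xi)$-piece with identity (your main term), the $\Sigma^{(err)}\setminus\partial U(\xi)$-piece, and the $\mu$-piece, bounds the latter two by $\mathcal{O}(t^{-1})$ via Propositions~\ref{outxi1}--\ref{outxi2} and the product $\|\mu\|_{L^2}\|V^{(err)}-I\|_{L^2}$, then evaluates the main term by residues and the conjugation $M^{(\infty)}(\xi_j)A^{mat}_j{M^{(\infty)}(\xi_j)}^{-1}$. Your remarks on the clockwise orientation of $\partial U_\varrho(\xi_j)$ and on the $\mathcal{O}(\zeta^{-2})$ remainder becoming $\mathcal{O}(t^{-1})$ on the fixed-radius circle are apt and are points the paper leaves implicit.
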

\begin{proof}Recalling \eqref{BCsoluformerr}, we know that
    \begin{equation}
        M^{(err)}_{1}=-\frac{1}{2\pi i}\int_{\Sigma^{(err)}}\left(I+\mu(s)\right)(V^{(err)}(s)-I)ds:=I_1+I_2+I_3.
    \end{equation}
    where
    \begin{align}
        &I_{1}=-\frac{1}{2\pi i}\oint_{\partial U(\xi)}\left(V^{err}(s)-I\right)ds,\\
        &I_{2}=-\frac{1}{2\pi i}\int_{\Sigma^{(err)}\backslash U(\xi)}\left(V^{err}(s)-I\right)ds,\\
        &I_{3}=-\frac{1}{2\pi i}\int_{\Sigma^{(err)}}\mu(s)(V^{err}(s)-I)ds.
    \end{align}
Using Proposition \ref{outxi1} and Proposition \ref{outxi2}, we obtain $\vert I_2 \vert=\mathcal{O}(t^{-1})$.

Using \eqref{erromuest} and \eqref{verrest}, we obtain
\begin{equation}
    \vert  I_3 \vert \lesssim \Vert\mu \Vert_{L^{2}} \Vert V^{err}-I \Vert_{L^{2}}\lesssim t^{-1}.
\end{equation}
Finally, we deal with $I_1$
\begin{align}
    I_{1}&=-\frac{1}{2\pi i}\oint_{\partial U(\xi)}M^{(\infty)}(s)\left(M^{(PC)}(s)-I\right)M^{(\infty)}(s)^{-1}ds\nonumber\\
    &\overset{\eqref{compummod}}{=}-\frac{1}{2\pi i}\sum_{j=1}^{4}\oint_{\partial U_{\varrho}(\xi_j)}\frac{i\epsilon_j}{(2t\epsilon_j\theta''(\xi_j))^{\frac{1}{2}}(z-\xi_j)}M^{(\infty)}(s)A^{mat}_{j}(s){M^{(\infty)}(s)}^{-1}ds+\mathcal{O}(t^{-1})\nonumber\\
    {\rm (residue\hspace{0.2em} theorem)}&=t^{-1/2}\sum_{j=1}^{4}i\epsilon_j\left(2\epsilon_j\theta''(\xi_j) \right)^{-\frac{1}{2}}M^{(\infty)}(\xi_j)A^{mat}_{j}{M^{(\infty)}(\xi_j)}^{-1}+\mathcal{O}(t^{-1})\nonumber\\
    &=t^{-1/2}\sum_{j=1}^{4}i\epsilon_j\left(2\epsilon_j\theta''(\xi_j) \right)^{-\frac{1}{2}}\left(1-\xi_j^{-2}\right)^{-1}\left(I+\frac{\sigma_2}{\xi_j}\right)A_j^{mat}\left(I-\frac{\sigma_2}{\xi_j}\right)+\mathcal{O}(t^{-1})\nonumber\\
    &=t^{-1/2}\sum_{j=1}^{4}i\epsilon_j\left(2\epsilon_j\theta''(\xi_j) \right)^{-\frac{1}{2}}\left(1-\xi_j^{-2}\right)^{-1}
    \begin{pmatrix}
        -\frac{i}{\xi_j}\left(\beta^{(\xi_j)}_{12}-\beta^{(\xi_j)}_{21}\right) & \beta^{(\xi_j)}_{12}-\frac{1}{\xi_j^2}\beta^{(\xi_j)}_{21}\\
        \frac{1}{\xi_j^2}\beta^{(\xi_j)}_{12}-\beta^{(\xi_j)}_{21} & \frac{i}{\xi_j}\left(\beta^{(\xi_j)}_{12}-\beta^{(\xi_j)}_{21}\right)
    \end{pmatrix}+\mathcal{O}(t^{-1})
\end{align}
Summarizing $I_1$, $I_2$ and $I_3$, we obtain \eqref{equ:M(err)_1}. $(M^{(err)}_1)_{12}$ follows immediately.
\end{proof}

\subsection{Analysis on pure $\bar{\partial}$ problem}\label{subsec: RLpuredbar}
Define
\begin{equation}\label{trans4}
    M^{(3)}(z)=M^{(2)}(z){M^{(PR)}(z)}^{-1}.
\end{equation}
Then $M^{(3)}$ satisfies the following $\bar{\partial}$ problem.
\begin{Dbar}\label{dbarproblem}
    Find a $2\times 2$ matrix-valued function $M^{(3)}(z)$ such that\\
        - $M^{(3)}(z)$ is continuous in $\mathbb{C}$ and analytic in $\mathbb{C}\backslash\overline{\Omega}$.\\
        - $M^{(3)}(z)=I+\mathcal{O}(z^{-1}), \quad z\rightarrow\infty$.\\
        - For $z\in \mathbb{C}$, $\bar{\partial}M^{(3)}(z)=M^{(3)}(z)W^{(3)}(z)$ with
        $W^{(3)}=M^{(PR)}(z)\bar{\partial}R^{(2)}(z){M^{(PR)}(z)}^{-1}$.
\end{Dbar}
\begin{proof}
    It's enough to prove the following claims.

    $M^{(3)}$ has no jumps. Indeed, since $M^{(2)}$ and $M^{(PR)}$ take the same jump matrix, we have
    \begin{align}
        {M^{(3)}_{-}(z)}^{-1}M^{(3)}_{+}(z)&=M^{(PR)}_{-}(M^{(2)}_{-})^{-1}M^{(2)}_{+}(M_{+}^{(PR)})^{-1} \nonumber\\
        &=M^{(PR)}_{-}V^{(2)}(M_{+}^{(PR)})^{-1}=I.
    \end{align}

    $M^{(3)}$ has no singularity at $z=0$. Near $z=0$, we have
    \begin{equation}
        (M^{(PR)})^{-1}=\frac{\sigma_2(M^{(PR)})^{\rm T}\sigma_2}{1-z^{-2}}.
    \end{equation}
    Thus
    \begin{equation}
        \lim_{z\rightarrow 0}M^{(3)}=\lim_{z\rightarrow 0}\frac{M^{(2)}\sigma_2(M^{(PR)})^{\rm T}\sigma_2}{1-z^{-2}}=\mathcal{O}(1).
    \end{equation}

    $M^{(3)}$ has no singularities at $z=\pm 1$. Indeed, as $z\rightarrow\pm 1$,
    \begin{equation}
        V(z)=\begin{pmatrix} c & \mp ic \\ \pm i\bar{c} & \bar{c} \end{pmatrix}, \quad
        {M^{(PR)}(z)}^{-1}=\frac{\pm 1}{2(z\mp 1)}\sigma_1\begin{pmatrix} \gamma & \pm i\bar{\gamma} \\ \mp i\gamma & \bar{\gamma} \end{pmatrix}\sigma_1+\mathcal{O}(1),
    \end{equation}
    for some constants $c$ and $\gamma$. Thus we have $\lim_{z\rightarrow\pm 1}M^{(3)}(z)=\mathcal{O}(1)$.
\end{proof}

The solution of $\bar{\partial}$-Problem \ref{dbarproblem} can be solved by the following integral equation
\begin{equation}\label{intm3}
    M^{(3)}(z)=I-\frac{1}{\pi}\iint_{\mathbb{C}}\frac{M^{(3)}(s)W^{(3)}(s)}{s-z}dA(s),
\end{equation}
where $A(s)$ is the Lebesgue measure on $\mathbb{C}$. Denote $S$ be the Cauchy-Green integral operator
\begin{equation}\label{C-Gop}
    S[f](z)=-\frac{1}{\pi}\iint\frac{f(s)W^{(3)}(s)}{s-z}dA(s),
\end{equation}
then \eqref{intm3} can be written as the following operator-valued equation
\begin{equation}
    (1-S)M^{(3)}(z)=I.
\end{equation}
To prove the existence of the operator at large time, we present the following proposition.
\begin{proposition}\label{estS} Consider the operator $S$ defined by \eqref{C-Gop},
    then we have $S:L^{\infty}(\mathbb{C})\rightarrow L^{\infty}(\mathbb{C})\cap C^{0}(\mathbb{C})$ and
    \begin{equation}
        \Vert S \Vert_{L^{\infty}(\mathbb{C})\rightarrow L^{\infty}(\mathbb{C})}\lesssim t^{-\frac{1}{4}}.
    \end{equation}
\end{proposition}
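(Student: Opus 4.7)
The plan is to reduce the operator norm estimate to a supremum bound on the integral
\begin{equation*}
\sup_{z\in\mathbb{C}}\iint_{\mathbb{C}}\frac{|W^{(3)}(s)|}{|s-z|}\,dA(s),
\end{equation*}
and then to prove that this supremum is $\mathcal{O}(t^{-1/4})$ by splitting the integration into the finitely many regions $\Omega_{jk}$ that support $\bar{\partial}R^{(2)}$. The $L^\infty$-to-$C^0$ mapping property will follow from dominated convergence once the $L^1$ integrability of $|W^{(3)}(s)|/|s-z|$ is established uniformly in $z$.

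First, I would observe that $M^{(PR)}$ is uniformly bounded on $\mathbb{C}\setminus U(\xi)$ by Proposition \ref{prop:global} together with the small-norm analysis of Subsection \ref{smallerro} (so that $M^{(PR)}=M^{(err)}M^{(\infty)}$ is bounded there), and bounded on each $U_\varrho(\xi_j)$ by the construction of the parabolic-cylinder model. Thus $|W^{(3)}(s)|\lesssim |\bar{\partial}R^{(2)}(s)|$ uniformly in $s$. By the support structure of $\bar{\partial}R^{(2)}$ in RH problem \ref{m2rhp}, it then suffices to bound
\begin{equation*}
I_{jk}(z):=\iint_{\Omega_{jk}}\frac{|\bar{\partial}f_{jk}(s)|\,|e^{\mp 2it\theta(s)}|}{|s-z|}\,dA(s)
\end{equation*}
uniformly in $z$, for each $\Omega_{jk}$.

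Next, in a representative region near a saddle point, say $\Omega_{11}$, I would parameterize $s=\xi_1+u+iv$ and insert the two estimates from Proposition \ref{estopenlenssaddle}, namely $|\bar{\partial}f_{11}|\lesssim |p'_{11}(\xi_1+u)|+|s-\xi_1|^{-1/2}$, together with the decay $|e^{-2it\theta(s)}|\leqslant e^{-ctv|u|}$ supplied by Proposition \ref{lemz=xi1}. Splitting the integrand into these two pieces and applying the Cauchy–Schwarz inequality in the $u$-variable, using the standard bound $\|(s-z)^{-1}\|_{L^2(\mathbb{R}_u)}\lesssim |\Im(s-z)|^{-1/2}$, the $p'_{11}$ term is controlled by $\|r\|_{H^1}\int_0^\infty v^{-1/2}e^{-c't v}\,dv\lesssim t^{-1/4}$, while the $|s-\xi_1|^{-1/2}$ term reduces, after a second Cauchy–Schwarz step, to an elementary integral that is also $\mathcal{O}(t^{-1/4})$. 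The same scheme applies at the other saddle points using Proposition \ref{lemz=xi2}, with the faster exponential gain $e^{-ctv^2(1+|s|^{-2})}$ at $\xi_2,\xi_3$ actually producing a better-than-$t^{-1/4}$ bound. For the regions near $z=0$, I would apply Corollary \ref{corz=0} together with the estimate $|\bar{\partial}f_{0^\pm k}|\lesssim |p'_{0^\pm k}(|s|)|+|s|^{-1/2}$ from Proposition \ref{estopenlesz=0} and argue analogously. For the ``inner'' regions $\Omega_{j3},\Omega_{j4}$ with bounded support, the small factor $|s\mp 1|$ in the estimates \eqref{estbalance} absorbs any singularities of $M^{(PR)}$ at $\pm 1$, and the contribution is again $\mathcal{O}(t^{-1/2})$ or better.

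The main obstacle I expect is the piece of $I_{jk}$ coming from the $|s-\xi_j|^{-1/2}$ part of $|\bar{\partial}f_{jk}|$, since this is precisely the term that produces the worst-case rate $t^{-1/4}$. The careful choice of Hölder exponents here is dictated by the fact that $|s-\xi_j|^{-1/2}$ is not square integrable over a full neighborhood of $\xi_j$; one must instead pair it with $|s-z|^{-1}$ in a way that exchanges integrability against the $v$-variable for a fractional power of $t$. Once this piece is dealt with, the continuity claim $S[L^\infty]\subset C^0(\mathbb{C})$ is standard: the uniform bound on $\iint |W^{(3)}|/|s-z|\,dA$ and the pointwise continuity of $z\mapsto 1/(s-z)$ off $\{s=z\}$ allow dominated convergence to be applied to $S[f](z_n)\to S[f](z)$ for any convergent sequence $z_n\to z$.
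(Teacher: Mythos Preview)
Your overall strategy---reduce to bounding $\sup_z\iint |W^{(3)}(s)|/|s-z|\,dA(s)$ and then estimate region by region using the $\bar\partial$-bounds and the phase inequalities---matches the paper. But the very first reduction contains a genuine gap.

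You assert that $M^{(PR)}$ is uniformly bounded and conclude $|W^{(3)}(s)|\lesssim|\bar\partial R^{(2)}(s)|$. Neither statement is correct. First, $M^{(PR)}=M^{(err)}M^{(\infty)}$ with $M^{(\infty)}(z)=I+\sigma_2/z$, which blows up at $z=0$; the regions $\Omega_{0^\pm k}$ touch the origin. Second, even where $M^{(PR)}$ is bounded, its inverse need not be: since $\det M^{(PR)}(z)=1-z^{-2}$, the inverse $(M^{(PR)})^{-1}$ is singular at $z=\pm1$, and the points $\pm1$ lie inside the regions $\Omega_{24},\Omega_{23}$ and $\Omega_{34},\Omega_{33}$ respectively. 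So the conjugation $W^{(3)}=M^{(PR)}\bar\partial R^{(2)}(M^{(PR)})^{-1}$ does \emph{not} satisfy a uniform bound by $|\bar\partial R^{(2)}|$. The paper instead writes
\[
|W^{(3)}(s)|\;\leqslant\;\frac{|M^{(PR)}(s)|^{2}}{|1-s^{-2}|}\,|\bar\partial R^{(2)}(s)|\;\lesssim\;\frac{\langle s\rangle^{2}}{|s^{2}-1|}\,|\bar\partial R^{(2)}(s)|,
\]
and then shows this prefactor is $\mathcal{O}(1)$ on $\Omega_{0^\pm k}$ and $\Omega_{11}$ but equals $\langle s\rangle/|s-1|$ on $\Omega_{24}$. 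The treatment of $\Omega_{24}$ therefore requires a partition of unity in $|s|$ around $1$ together with the special estimate \eqref{estbalance}, which is only valid \emph{as $z\to1$} and only cancels the $|s-1|^{-1}$ factor locally. Your last sentence hints at this mechanism, but it is inconsistent with (and does not repair) the earlier blanket claim; you need to carry the weight $\langle s\rangle^2/|s^2-1|$ through every region from the start.

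A smaller issue: in your $\Omega_{11}$ sketch the Cauchy--Schwarz step yields $|v-\operatorname{Im}z|^{-1/2}$, not $v^{-1/2}$, and after restricting to $u\gtrsim v$ the surviving exponential is $e^{-ctv^2}$, not $e^{-ctv}$. The combination $\int_0^\infty |v-y|^{-1/2}e^{-ctv^2}\,dv$ is what actually gives the sharp rate $t^{-1/4}$; your written integral $\int_0^\infty v^{-1/2}e^{-c'tv}\,dv$ evaluates to $t^{-1/2}$, which is the wrong power.
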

\begin{proof}
    For any $f\in L^{\infty}$, we have
    \begin{equation}
    \Vert Sf\Vert_{L^{\infty}}\leqslant\Vert f \Vert_{L^{\infty}}\frac{1}{\pi}\iint_{\mathbb{C}}\frac{|W^{(3)}(s)|}{|s-z|}dA(s).
    \end{equation}
Recalling the definition $W^{(3)}=M^{(PR)}(z)\bar{\partial}R^{(2)}(z)(M^{(PR)}(z))^{-1}$ and $\bar{\partial}R^{(2)}$, we know that
$W^{(3)}(z)\equiv 0$ for $z \in \mathbb{C}\backslash\overline{\Omega}$. Besides, we only take into account that matrix-valued functions have
support in sector $\Omega_{jk}$. Based on these conditions, what we need to do is to control
the boundedness of the integral $\iint_{\mathbb{C}}\frac{|W^{(3)}(s)|}{|s-z|}dA(s)$ for $z\in \Omega_{jk}$, $j=0^{\pm},1,2,3,4$,
$k=1,2,3,4$. We present details for $z\in\Omega_{0^{+}1}$, $z\in\Omega_{24}$ and $z\in\Omega_{11}$, the proofs for the rest regions
are similar.

Since det$M^{(PR)}(z)=1-z^{-2}$ and ${M^{(PR)}(z)}^{-1}=(1-z^{-2})^{-1}\sigma_2 (M^{(PR)})^{\rm T}\sigma_2$, we have
\begin{equation}
    \vert W^{(3)}(s)\vert \leqslant \vert M^{(PR)}(s)\vert^{2}\vert 1-s^{-2} \vert^{-1} \vert\bar{\partial} R^{(2)}(s)\vert.
\end{equation}
Next we estimate $M^{(PR)}$ as follows
\begin{equation}
    \vert M^{(PR)}(s) \vert\lesssim 1+|s|^{-1}\lesssim c\sqrt{(1+|s|^{-1})^2}\lesssim \sqrt{1+|s|^{-2}}=|s|^{-1}\sqrt{1+|s|^2}=|s|^{-1}\langle s\rangle.
\end{equation}
Since $z=1\in \Omega_{24}$, we have
\begin{equation}\label{<s>est}
\frac{|s|^{-2}\langle s\rangle^2}{\vert 1-s^{-2} \vert}=\frac{\langle s\rangle^2}{\vert 1-s^2\vert}
=\left\{
        \begin{aligned}
        &\mathcal{O}(1), \quad z\in\Omega_{0^{+}1}, \Omega_{11},\\
        &\frac{\langle s\rangle}{\vert s-1\vert}, \quad z\in\Omega_{24}.
        \end{aligned}
        \right.
\end{equation}
For $z\in \Omega_{0^{+}1}$ and $\Omega_{11}$, the estimations are similar to \cite{Dieng}. However, for $z\in\Omega_{24}$, the singularities at $z=\pm 1$
should be treated in a more delicate way. To some extent, how to deal with the singularity points plays a core role in our analysis.

Introduce an inequality which plays an vital role in our analysis. Making $s=z_{0}+le^{i\phi}=z_{0}+u+iv$, $z=x+iy$, $u,v,x,y>0$, we have
\begin{align}\label{vital role inequ}
\left\Vert \frac{1}{s-z} \right\Vert _{L^{q}(v,\infty)}&\lesssim\left(\int_{\mathbb{R}_{+}}\left[1+\left(\frac{u+z_0-x}{v-y}\right)^2\right]^{-\frac{q}{2}}(v-y)^{-q}du\right)^{\frac{1}{q}}\nonumber\\
&=|v-y|^{\frac{1}{q}-1}\left(\int_{\mathbb{R}_{+}}\left[1+\left(\frac{u+z_0-x}{v-y}\right)^2\right]^{-\frac{q}{2}}d\left(\frac{u+z_{0}-x}{v-y}\right)\right)^{1/q}\nonumber\\
&\overset{q>1}{\lesssim} \vert v-y \vert^{1/q-1}.
\end{align}
For $z\in \Omega_{0^{+}1}$, we make $z=x+iy$, $s=0+u+iv$. Thanks to \eqref{<s>est}, we have
\begin{equation}
    \frac{1}{\pi}\iint_{\Omega_{0^{+}1}}\frac{|W^{(3)}(s)|}{|s-z|}dA(s)\lesssim\frac{1}{\pi}\iint_{\Omega_{0^{+}1}}
    \frac{|\bar{\partial}f_{0^{+}1}e^{2it\theta}|}{|s-z|}dA(s)=\frac{1}{\pi}\iint_{\Omega_{0^{+}1}}
    \frac{|\bar{\partial}f_{0^{+}1}|e^{-2t\Im \theta}}{|s-z|}dA(s).
\end{equation}
Recalling Proposition \ref{estopenlesz=0}, we can divide the integral into two parts
\begin{equation}
    \frac{1}{\pi}\iint_{\Omega_{0^{+}1}}\frac{|\bar{\partial}f_{0^{+}1}|e^{-2t\Im \theta}}{|s-z|}dA(s)\lesssim I_{1}+I_{2},
\end{equation}
where
\begin{equation}
    I_1=\iint_{\Omega_{0^{+}1}}\frac{|p'_{0^{+}1}(s)|e^{-2t\Im\theta}}{|s-z|}dA(s), \quad
    I_2=\iint_{\Omega_{0^{+}1}}\frac{|s|^{-\frac{1}{2}}e^{-2t\Im\theta}}{|s-z|}dA(s).
\end{equation}
Notice that $\Omega_{0^{+}1}$ is a bounded area, $0<x,u<\xi_2/2$ and $0<y,v<\frac{\xi_2{\rm tan}\phi}{2}$($<\frac{\xi_2}{2}$).
Thus
\begin{align}
I_1& \overset{Cor\ref{corz=0}}{\leqslant} \int_{\mathbb{R}_{+}}\int_{\mathbb{R}_{+}}\frac{|p'_{0^{+}1}|}{|s-z|}e^{-ctv}dudv\nonumber\\
&\lesssim \int_{\mathbb{R}_{+}}\left\Vert \frac{1}{s-z} \right\Vert _{L^{2}(\mathbb{R}_{+})}\Vert p'_{0^{+}1} \Vert_{L^{2}(\mathbb{R}_{+})}e^{-ctv}dv\nonumber\\
&\lesssim \int_{\mathbb{R}_{+}}\vert v-y \vert^{-1/2}e^{-ctv}dv\nonumber\\
&\lesssim t^{-\frac{1}{2}}.
\end{align}
Next, we introduce the following inequality for $p>2$
\begin{align}\label{est-1/2}
    \left\Vert |s|^{-\frac{1}{2}} \right\Vert _{L^{p}(v,+\infty)}&=\left(\int_{v}^{+\infty}\left(\sqrt{u^2+v^2}\right)^{-p/2}du\right)^{1/p}\nonumber\\
    &\overset{l^2=u^2+v^2}{=}\left(\int_{v}^{+\infty}l^{-p/2}\cdot l\cdot u^{-1}dl\right)^{\frac{1}{p}}\nonumber\\
    &\lesssim v^{-\frac{1}{2}+\frac{1}{p}}.
\end{align}
By H\"older inequality with $1/p+1/q=1$,
\begin{align}\label{toolI2}
I_2&\leqslant \int_{\mathbb{R}_{+}} \left\Vert \frac{1}{s-z} \right\Vert _{L^{q}(\mathbb{R}_{+})} \Vert|s|^{-1/2} \Vert_{L^{p}(\mathbb{R}_{+})}e^{-ctv}dv\nonumber\\
&\lesssim \int_{\mathbb{R}_{+}}|v-y|^{1/q-1}v^{-\frac{1}{2}+\frac{1}{p}}e^{-ctv}dv\nonumber\\
&\lesssim t^{-\frac{1}{2}}.
\end{align}
For $z\in \Omega_{0^{\pm}k}$, $k=1,2$,
we can conclude that $\Vert S \Vert_{L^{\infty}(\mathbb{C})\rightarrow L^{\infty}(\mathbb{C})}\lesssim t^{-\frac{1}{2}}$.

For $z\in \Omega_{11}$, we make $z=x+iy$, $s=\xi_1+u+iv$.
Thanks to \eqref{<s>est}, we have
\begin{equation}
    \frac{1}{\pi}\iint_{\Omega_{11}}\frac{|W^{(3)}(s)|}{|s-z|}dA(s)\lesssim
    \frac{1}{\pi}\iint_{\Omega_{11}}\frac{|\bar{\partial}f_{11}|e^{-2t\Im \theta}}{|s-z|}dA(s).
\end{equation}
Recalling Proposition \ref{estopenlenssaddle}, we still divide the integral into two parts
\begin{equation}
    I_3=\iint_{\Omega_{11}}\frac{|p'_{11}(s)|e^{-2t\Im\theta}}{|s-z|}dA(s), \quad
    I_4=\iint_{\Omega_{11}}\frac{|s-\xi_1|^{-\frac{1}{2}}e^{-2t\Im\theta}}{|s-z|}dA(s).
\end{equation}
For $I_3$, we use Proposition \ref{lemz=xi1}
\begin{align}
    I_3&=\int_{0}^{\infty}\int_{\frac{v}{{\rm tan}\phi}}^{\infty}\frac{|p'_{11}|e^{-ctv(\Re s-\xi_1)}}{|s-z|}dudv\nonumber\\
    &\lesssim \int_{0}^{\infty}\int_{v}^{\infty}\frac{|p'_{11}|e^{-ctvu}}{|s-z|}dudv\nonumber\\
    &\lesssim \int_{0}^{\infty}e^{-ctv^2}\Vert p'_{11}\Vert_{L^{2}(v,+\infty)} \left\Vert\frac{1}{s-z}\right\Vert _{L^{2}(v,+\infty)}dv\nonumber\\
    &\overset{\eqref{vital role inequ}}{\lesssim}\int_{0}^{+\infty}e^{-4tv^2}|v-y|^{-1/2}dv\lesssim t^{-1/4}.
\end{align}
For $I_4$, we have
\begin{align}
    I_4&\lesssim \int_{0}^{\infty}e^{-ctv^2}dv \int_{v}^{+\infty}\frac{\vert s-\xi_2\vert^{-\frac{1}{2}}}{\vert s-z\vert}du\nonumber\\
    &\leqslant \int_{0}^{+\infty}e^{-ctv^2}\left\Vert \frac{1}{s-z} \right\Vert _{L^{q}(\mathbb{R}_{+})}\left\Vert |s-\xi_1|^{-\frac{1}{2}} \right\Vert _{L^{p}(\mathbb{R}_{+})}dv\nonumber\\
    &=\left(\int_{0}^{y}+\int_{y}^{+\infty}\right)v^{-\frac{1}{2}+\frac{1}{p}}|v-y|^{1/q-1}e^{-ctv^2}dv.
\end{align}
For the first integral, we have
\begin{align}
    \int_{0}^{y}v^{-\frac{1}{2}+\frac{1}{p}}|v-y|^{1/q-1}e^{-ctv^2}dv&=\int_{0}^{1}\sqrt{y}e^{-cty^2w^2}w^{1/p-1/2}\vert 1-w \vert^{1/q-1}\nonumber\\
    &\lesssim t^{-\frac{1}{4}}.
\end{align}
For the second integral, taking $v=y+w$, we have
\begin{align}
\int_{y}^{+\infty}e^{-ctv^2}v^{\frac{1}{p}-\frac{1}{2}}\vert v-y \vert^{1/q-1}dv&=\int_{0}^{+\infty}e^{-t(y+w)^2}(y+w)^{1/p-1/2}w^{1/q-1}dw\nonumber\\
&\leqslant \int_{0}^{+\infty}e^{-ctw^2}w^{1/p-1/2}w^{1/q-1}dw\nonumber\\
&=\int_{0}^{+\infty}e^{-tw^2}w^{-\frac{1}{2}}dw\lesssim t^{-\frac{1}{4}}.
\end{align}
For $z\in\Omega_{jk}$, $j=1,4$, $k=1,2,3,4$, we can conclude that
$\Vert S \Vert_{L^{\infty}(\mathbb{C})\rightarrow L^{\infty}(\mathbb{C})}\lesssim t^{-\frac{1}{4}}$.

For $z\in \Omega_{24}$, we make $z=x+iy$, $s=\xi_2+u+iv$. Since $\Omega_{24}$ is a bounded domain,
we find that $0<u<\frac{\xi_1-\xi_2}{2}$, $0<v<\frac{(\xi_1-\xi_2){\rm tan}\phi}{2}$($<\frac{\xi_1-\xi_2}{2}$).
Owe to \eqref{<s>est}, we have
\begin{equation}
    \frac{1}{\pi}\iint_{\Omega_{24}}\frac{|W^{(3)}(s)|}{|s-z|}dA(s)
    \lesssim\frac{1}{\pi}\iint_{\Omega_{24}}\frac{\langle s\rangle|\bar{\partial}f_{24}e^{-2it\theta}|}{|s-z||s-1|}dA(s)
    =\frac{1}{\pi}\iint_{\Omega_{24}}\frac{\langle s\rangle|\bar{\partial}f_{24}|e^{2t\Im \theta}}{|s-z||s-1|}dA(s).
\end{equation}
Furthermore,
\begin{equation}
    \frac{1}{\pi}\iint_{\Omega_{24}}\frac{\langle s\rangle|\bar{\partial}f_{24}|e^{2t\Im \theta}}{|s-z||s-1|}dA(s)\lesssim I_{5}+I_{6},
\end{equation}
where
\begin{align}
    &I_5=\frac{1}{\pi}\iint_{\Omega_{24}}\frac{\langle s\rangle|\bar{\partial}f_{24}|e^{2t\Im \theta}\chi_{[\xi_2,1)}(|s|)}{|s-z||s-1|}dA(s), \\
    &I_6=\frac{1}{\pi}\iint_{\Omega_{24}}\frac{\langle s\rangle|\bar{\partial}f_{24}|e^{2t\Im \theta}\chi_{\left[1,\frac{\xi_1-\xi_2}{\sqrt{2}}\right)}(|s|)}{|s-z||s-1|}dA(s).
\end{align}
where $\chi_{[\xi_2,1)}(|s|)+\chi_{\left[1,\frac{\xi_1-\xi_2}{\sqrt{2}}\right)}(|s|)$ is the partition of unity.

We consider $I_5$ firstly. Since $|s|\in [\xi_2,1)$, we have $\frac{\langle s \rangle}{|s-1|}=\mathcal{O}(1)$. Recalling Proposition \ref{estopenlenssaddle}, we have
\begin{equation}
    I_5\leqslant I_{5}^{(1)}+I_{5}^{(2)},
\end{equation}
where
\begin{equation}
    I_5^{(1)}=\iint_{\Omega_{24}}\frac{|p'_{24}(s)|e^{2t\Im\theta}}{|s-z|}dA(s), \quad
    I_5^{(2)}=\iint_{\Omega_{24}}\frac{|s-\xi_2|^{-\frac{1}{2}}e^{2t\Im\theta}}{|s-z|}dA(s).
\end{equation}
Proposition \ref{lemz=xi2} implies that
\begin{align}
    I_5^{(1)}&\leqslant \int_{0}^{+\infty}\int_{\frac{v}{{\rm tan}\phi}}^{+\infty}\frac{|p_{24}'(s)|}{|s-z|}e^{-ct(1+|s|^{-2})v^2}dudv\nonumber\\
    &\lesssim \int_{\mathbb{R}_{+}}\left\Vert \frac{1}{s-z} \right\Vert_{L^{2}(\mathbb{R}_{+})}\Vert p'_{24} \Vert_{L^{2}(\mathbb{R}_{+})}e^{-ctv^2}dv\nonumber\\
    &\lesssim \left(\int_{0}^{y}+\int_{y}^{+\infty}\right)|v-y|^{-\frac{1}{2}}e^{-ctv^2}dv.
\end{align}
Then
\begin{align}
    \int_{0}^{y}(y-v)^{-\frac{1}{2}}e^{-ctv^2}dv
    &\lesssim \int_{0}^{y}(y-v)^{-\frac{1}{2}}v^{-\frac{1}{2}}dv\cdot t^{-\frac{1}{4}}\nonumber\\
    &\lesssim t^{-\frac{1}{4}},
\end{align}
where we use $e^{-z}\lesssim z^{-1/4}$. Setting $w=v-y\vert$, we obtain
\begin{align}
    \int_{y}^{+\infty}(v-y)^{-\frac{1}{2}}e^{-ctv^2}dv&=\int_{0}^{+\infty}w^{-\frac{1}{2}}e^{-ct(w+y)^2}dw\lesssim e^{-cty^2}.
\end{align}
Thus $I_5^{(1)}\lesssim t^{-1/4}$. We turn to estimate $I_5^{(2)}$. For $p>2$, the similar analysis to \eqref{toolI2} implies that
\begin{align}
    I_5^{(2)}&\leqslant\int_{0}^{+\infty}\left\Vert \frac{1}{s-z} \right\Vert _{L^{q}(\mathbb{R}_{+})}\left\Vert |s-\xi_2|^{-\frac{1}{2}} \right\Vert _{L^{p}(\mathbb{R}_{+})}e^{-ctv^2}dv\nonumber\\
    &\lesssim\int_{0}^{+\infty}v^{-\frac{1}{2}+\frac{1}{p}}|v-y|^{1/q-1}e^{-ctv^2}dv\nonumber\\
    &=\left(\int_{0}^{y}+\int_{y}^{+\infty}\right)v^{-\frac{1}{2}+\frac{1}{p}}|v-y|^{1/q-1}e^{-ctv^2}dv.
\end{align}
The analysis for $I_5^{(1)}$ can be applied to bound the two integrals above.
\begin{align}
    \int_{0}^{y}(y-v)^{\frac{1}{q}-1}v^{-\frac{1}{2}+\frac{1}{p}}e^{-ctv^2}dv
    &\lesssim t^{-\frac{1}{4}}\int_{0}^{y}(y-v)^{-\frac{1}{2}}dv\nonumber\\
    &\lesssim t^{-\frac{1}{4}},
\end{align}
and
\begin{align}
    \int_{y}^{+\infty}v^{-\frac{1}{2}+\frac{1}{p}}|v-y|^{1/q-1}e^{-ctv^2}dv\leqslant
    \int_{0}^{+\infty}w^{-\frac{1}{2}}e^{-ct(w+y)^2}dw \lesssim e^{-cty^2}.
\end{align}

Next we consider $I_6$. Since $|s|\in\left[1,\frac{\xi_1-\xi_2}{\sqrt{2}}\right)$, we have $\langle s\rangle\leqslant c(\xi)$ for $\xi\in\mathcal{R}_L$.
The singularity at $z=1$ could be balanced by \eqref{estbalance},
\begin{align}
    I_6&\leqslant c(\xi)\frac{1}{\pi}\iint_{\Omega_{24}}\frac{|p'_{24}|e^{2t\Im \theta}}{|s-z|}dA(s)\nonumber\\
    &\lesssim \int_{0}^{+\infty}\int_{\frac{v}{{\rm tan}\phi}}^{+\infty}\frac{|p'_{24}|}{|s-z|}e^{-ct(1+|s|^{-2})v^2}dudv\nonumber\\
    &\lesssim \int_{\mathbb{R}_{+}}\left\Vert \frac{1}{s-z} \right\Vert _{L^{2}(\mathbb{R}_{+})}\Vert p'_{24} \Vert_{L^{2}(\mathbb{R}_{+})}e^{-ctv^2}dv\nonumber\\
    &\lesssim \left(\int_{0}^{y}+\int_{y}^{+\infty}\right)|v-y|^{-\frac{1}{2}}e^{-ctv^2}dv.\nonumber\\
    ({\rm similar \hspace{0.2em} to}\hspace{0.2em} I_5^{(1)})&\lesssim t^{-1/4}
\end{align}
From estimatations for $I_5$ and $I_6$, we conclude that $\Vert S \Vert_{L^{\infty}(\mathbb{C})\rightarrow L^{\infty}(\mathbb{C})}\lesssim t^{-\frac{1}{4}}$
for $z\in\Omega_{jk}$, $j=2,3$, $k=1,2,3,4$.
Based on the three cases we discuss, $\Vert S \Vert_{L^{\infty}(\mathbb{C})\rightarrow L^{\infty}(\mathbb{C})}\lesssim t^{-\frac{1}{4}}$ as $t\rightarrow\infty$.
\end{proof}

Following from Proposition \ref{estS},  $(1-S)^{-1}$ exists as $t\rightarrow\infty$.
Finally, we turn to evaluate $M^{(3)}$ as $t\rightarrow\infty$. Make the asymptotic expansion as follows.
\begin{equation}
    M^{(3)}=I+z^{-1}M^{(3)}_{1}(x,t)+\mathcal{O}(z^{-2}), \quad {\rm as} \quad z\rightarrow \infty,
\end{equation}
where
\begin{equation}\label{m31}
    M^{(3)}_{1}(x,t)=\frac{1}{\pi}\iint_{\mathbb{C}}M^{(3)}(s)W^{(3)}(s)dA(s).
\end{equation}
To recover the solution of defocusing mKdV \eqref{dmkdv}, we shall discuss the asymptotic behavior of $M^{(3)}_{1}(x,t)$.
\begin{proposition} \label{estm31}As $t\rightarrow \infty$ for $\xi\in\mathcal{R}_L$,
    \begin{equation}
    \vert M^{(3)}_{1}(x,t) \vert \lesssim t^{-\frac{3}{4}}.
    \end{equation}
\end{proposition}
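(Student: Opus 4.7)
The strategy is to use the integral representation
\eqref{m31}, controlling $\|M^{(3)}\|_{L^\infty}$ via Proposition \ref{estS} (which gives $(1-S)^{-1} = I + \mathcal{O}(t^{-1/4})$ as an operator on $L^\infty$, hence $\|M^{(3)}\|_\infty \lesssim 1$), and bounding $\|M^{(PR)}\|_\infty \lesssim \langle |s|^{-1}\rangle$ using Proposition \ref{prop:global} together with the boundedness of the local factor $M^{(PC)}$ outside a small neighborhood of $0$. Combining these, it suffices to prove that
\begin{equation*}
\iint_{\mathbb{C}} |\bar{\partial}R^{(2)}(s)|\, |M^{(PR)}(s)|^2 |1-s^{-2}|^{-1}\, dA(s) \lesssim t^{-3/4}.
\end{equation*}
The integrand vanishes outside the open-lens regions $\Omega_{jk}$, so I would split the integral as a sum over these regions and estimate each piece separately, exactly as in the proof of Proposition \ref{estS}, but \emph{without} the Cauchy kernel $|s-z|^{-1}$ which was the source of the suboptimal $t^{-1/4}$ decay there.

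The representative model computation, near the saddle $\xi_1$ in $\Omega_{11}$, writes $s = \xi_1 + u + iv$ and uses Proposition \ref{lemz=xi1} ($\Im\theta \gtrsim uv$) together with Proposition \ref{estopenlenssaddle} ($|\bar\partial f_{11}| \lesssim |p'_{11}(\Re s)| + |s-\xi_1|^{-1/2}$). For the $p'_{11}$ term, one applies Cauchy--Schwarz in $u$:
\begin{equation*}
\int_v^\infty |p'_{11}(u)|\, e^{-ctvu}\, du \;\leqslant\; \|p'_{11}\|_{L^2}\, (ctv)^{-1/2} e^{-ctv^2},
\end{equation*}
and then $\int_0^\infty (tv)^{-1/2} e^{-ctv^2} dv \sim t^{-3/4}$. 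For the singular term $|s-\xi_1|^{-1/2}$ one uses the H\"older pairing $(p,q)$ with $p>4$, mimicking \eqref{toolI2} and \eqref{est-1/2}, to obtain the same power $t^{-3/4}$. The argument for the other unbounded saddles $\xi_4$ is identical. For the bounded saddles $\xi_2, \xi_3$ the stronger bound $\Im\theta\gtrsim (1+|s|^{-2})v^2$ from Proposition \ref{lemz=xi2} yields at least $t^{-3/4}$ (in fact faster decay). Near $z=0$, Proposition \ref{lemz=0} and Corollary \ref{corz=0} give $\Im\theta\gtrsim v$, and Proposition \ref{estopenlesz=0} provides $|\bar\partial f_{0^\pm k}| \lesssim |p'_{0^\pm k}|+|s|^{-1/2}$; since the region is bounded and the phase decay is linear in $v$, this contribution is $\mathcal{O}(e^{-ct})$, negligible.

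The principal technical obstacle will be the regions $\Omega_{2k}, \Omega_{3k}$ that contain the singular points $z=\pm 1$, where $M^{(PR)}$ blows up (the factor $|1-s^{-2}|^{-1}$ in $W^{(3)}$). As in Proposition \ref{estS}, this singularity must be absorbed using the refined vanishing estimate \eqref{estbalance}, which asserts $|\bar\partial f_{24}|\lesssim |p'_{24}|\,|s-1|$ near $z=1$ (and the analogue near $z=-1$). I would introduce a partition of unity separating the parts of $\Omega_{24}$ where $|s|<1$ and $|s|\geqslant 1$: on the first piece, $\langle s\rangle/|s-1|=\mathcal{O}(1)$ and the computation reduces to the clean saddle-point estimate above; on the second piece, the zero of $\bar\partial f_{24}$ cancels $|s-1|^{-1}$, reducing again to a clean saddle-point estimate. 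In both subpieces, the phase decay $\Im\theta\gtrsim(1+|s|^{-2})v^2$ combined with Cauchy--Schwarz on $|p'_{24}|$ and a H\"older pairing on $|s-\xi_2|^{-1/2}$ yields $\mathcal{O}(t^{-3/4})$. Summing all regional contributions gives the claimed bound.
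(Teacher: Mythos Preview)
Your overall strategy matches the paper's proof: both reduce to region-by-region bounds on $\iint_{\Omega_{jk}}|\bar\partial f_{jk}|\,e^{\mp 2t\Im\theta}\,dA$, treat the singularities at $z=\pm 1$ via a partition of unity together with \eqref{estbalance}, and handle $\Omega_{11}$ exactly as you describe. Two claims in your plan need correction.

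First, the assertion of $\mathcal{O}(e^{-ct})$ on $\Omega_{0^\pm k}$ is wrong: these regions touch the real axis, so $v\to 0$ there and $e^{-ctv}$ gives no uniform decay. The contribution is only polynomial --- the paper computes $\mathcal{O}(t^{-3/2})$ --- which is still negligible compared to $t^{-3/4}$, so the conclusion survives.

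Second, and this is a genuine gap, your claim that the quadratic bound $|\Im\theta|\gtrsim (1+|s|^{-2})v^{2}$ from Proposition~\ref{lemz=xi2} ``yields at least $t^{-3/4}$ (in fact faster decay)'' on the $\xi_2,\xi_3$ regions is false. An exponential $e^{-ctv^{2}}$ carries no $u$-dependence; Cauchy--Schwarz (or H\"older) in $u$ then returns only the bounded length of the $u$-interval, and the remaining integral $\int_0^{C}e^{-ctv^{2}}\,dv$ is $\mathcal{O}(t^{-1/2})$, which is \emph{slower} than the target. What you actually need near $\xi_2,\xi_3$ is the bilinear bound $|\Im\theta|\gtrsim |u||v|$, exactly as Proposition~\ref{lemz=xi1} provides for $\xi_1,\xi_4$; this holds by the quadratic saddle expansion \eqref{equ:phasereduction} together with $|u|\geqslant |v|/\tan\phi$ on these lenses. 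With $e^{-ct|u||v|}$ the $u$-integral of $|p'_{24}|$ or $|s-\xi_2|^{-1/2}$ supplies the missing factor $(tv)^{-1/2}$ (resp.\ $(tv)^{-1/q}$), after which the $v$-integral delivers $t^{-3/4}$ by the same computation you wrote for $\Omega_{11}$. The paper's own argument at this step cites only Proposition~\ref{lemz=xi2} and then inserts a factor $v^{1/2}$ that does not in fact emerge from the stated $v^{2}$ bound, so this passage deserves care regardless.
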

\begin{proof} Noticing the boundedness of $M^{(PR)}$ and $M^{(3)}$, we have
    \begin{align}
    \vert M^{(3)}_{1}\vert &\leqslant \iint_{\Omega_{jk}}\left\vert M^{(3)}M^{(PR)}\bar{\partial}R^{(2)}\left(M^{(PR)}\right)^{-1} \right\vert dA(s)\nonumber\\
    &\lesssim \iint_{\Omega_{jk}} \frac{\langle s\rangle}{|s-1|}\left\vert \bar{\partial}f_{jk}e^{\pm 2it\theta} \right\vert dA(s)\nonumber\\
    &=\iint_{\Omega_{jk}} \frac{\langle s\rangle}{|s-1|}\left\vert \bar{\partial}f_{jk}\right\vert e^{\mp 2t\Im\theta} dA(s)
    \end{align}
Similar to the proof of Proposition \ref{estS}, we only take into account that matrix-valued functions have
support in the sector $\Omega_{jk}$. What we need to do is to evaluate the
integral $\iint_{\Omega_{jk}} \frac{\langle s\rangle}{|s-1|}\left\vert \bar{\partial}f_{jk}\right\vert e^{\mp 2t\Im\theta} dA(s)$ for $z\in \Omega_{jk}$, $j=0^{\pm},1,2,3,4$,
$k=1,2,3,4$. We exhibit details for $z\in\Omega_{0^{+}1}$, $z\in\Omega_{11}$ and $z\in\Omega_{24}$.
We point out that the analysis for $z\in\Omega_{24}$ is a bit different from $z\in\Omega_{0^{+}1}, \Omega_{11}$, because we should deal with
the singularity at $z=1$ as what we do in the proof of Proposition \ref{estS}.

For $z\in \Omega_{0^{+}1}$, we make $z=x+iy$, $s=u+iv$ which satisfy $0<x,u<\xi_2/2$, $0<y,v<\frac{\xi_2{\rm tan}\phi}{2}$($<\frac{\xi_2}{2}$).
Owe to \eqref{<s>est}, $\langle s\rangle/|s-1|=\mathcal{O}(1)$ for $z\in\Omega_{0^{+}1}$. And we can divide the integral into two parts
\begin{align}
    \iint_{\Omega_{0^{+}1}} \left\vert \bar{\partial}f_{0^{+}1}(s)\right\vert e^{-2t\Im\theta} dA(s)\lesssim I_1+I_2,
\end{align}
where
\begin{align}
    &I_1=\iint_{\Omega_{0^{+}1}} |p'_{0^{+}1}(s)|e^{-2t\Im\theta}dA(s), \\
    &I_2=\iint_{\Omega_{0^{+}1}} |s|^{-\frac{1}{2}}e^{-2t\Im\theta}dA(s).
\end{align}
Since $\Vert p'_{0^{+}1}\Vert_{L^{2}}$ is bounded, we bound $I_1$ by Cauchy-Schwarz inequality
\begin{align}
    |I_1|&\leqslant\iint_{\Omega_{0^{+}1}} \vert p'_{0^{+}1}\vert e^{-2ctv}dA(s)\nonumber\\
    & \leqslant\int_{0}^{\frac{\xi_2{\rm tan} \phi}{2}}\Vert p'_{0^{+}1} \Vert_{L^{2}(v,\xi_2/2)}
    \left(\int_{v}^{\xi_2/2}e^{-4ctv}du\right)^{1/2}dv\nonumber\\
    &\lesssim\int_{0}^{+\infty}v^{\frac{1}{2}}e^{-2ctv}dv\nonumber\\
    &\overset{w=v^{\frac{1}{2}}}{=}2\int_{0}^{+\infty}w^{2}e^{-2ctw^2}dw\lesssim t^{-\frac{3}{2}}.
\end{align}
For $I_2$, we use H\"older inequality and \eqref{est-1/2} to obtain
\begin{align}
    |I_2|& \leqslant\int_{0}^{\frac{\xi_2{\rm tan} \phi}{2}}\Vert |s|^{-\frac{1}{2}} \Vert_{L^{p}(v,\xi_2/2)}
    \left(\int_{v}^{\xi_2/2}e^{-2ctqv}du\right)^{1/q}dv\nonumber\\
    &\lesssim\int_{0}^{+\infty}v^{-\frac{1}{2}+\frac{1}{p}}v^{\frac{1}{q}}e^{-2ctv}dv\nonumber\\
    &\overset{\frac{1}{p}+\frac{1}{q}=1}{=}\int_{0}^{+\infty}v^{\frac{1}{2}}e^{-2ctv}dv\nonumber\\
    &\lesssim t^{-\frac{3}{2}}.
\end{align}
For $z\in \Omega_{0^{\pm}k}$, $k=1,2$, we conclude that $|M^{(3)}_{1}|\lesssim t^{-\frac{3}{2}}$.

For $z\in \Omega_{11}$, $\langle s\rangle/|s-1|=\mathcal{O}(1)$. And we make $z=x+iy$, $s=\xi_1+u+iv$.
\begin{align}
    \iint_{\Omega_{11}} \left\vert \bar{\partial}f_{11}(s)\right\vert e^{-2t\Im\theta} dA(s)\lesssim I_3+I_4,
\end{align}
where
\begin{align}
    &I_3=\iint_{\Omega_{11}} |p'_{11}(s)|e^{-2t\Im\theta}dA(s), \\
    &I_4=\iint_{\Omega_{11}} |s-\xi_1|^{-\frac{1}{2}}e^{-2t\Im\theta}dA(s).
\end{align}
With the help of Proposition \ref{lemz=xi1}, we can bound $I_3, I_4$. Using Cauchy-Schwarz inequality,
\begin{align}
    |I_3|&\leqslant\iint_{\Omega_{11}} \vert p'_{11}(s) \vert e^{-2tv(\Re s-\xi_1)}dA(s)\nonumber\\
    &=\iint_{\Omega_{11}} \vert p'_{11}(s)\vert e^{-2tvu}dA(s)\nonumber\\
    &\overset{r \in H^{1}}{\leqslant} \int_{0}^{+\infty}\Vert p'_{11}(s) \Vert_{L^{2}(v+\xi_1,\infty)}
    \left(\int_{v}^{+\infty}e^{-4tuv}du\right)^{\frac{1}{2}}dv\nonumber\\
    &\lesssim t^{-\frac{1}{2}}\int_{0}^{+\infty}v^{-\frac{1}{2}}e^{-2tv^2}\lesssim t^{-\frac{3}{4}}.
\end{align}
As for $I_4$, we take the advantage of H\"older inequality and \eqref{est-1/2} again
\begin{align}
    |I_4|&\leqslant\int_{0}^{+\infty}v^{-\frac{1}{2}+\frac{1}{p}}\left(\int_{v}^{+\infty}e^{-2tquv}du\right)^{\frac{1}{q}}dv\nonumber\\
     &\lesssim\int_{0}^{+\infty}v^{-\frac{1}{2}+\frac{1}{p}}(qtv)^{-\frac{1}{q}}e^{-2tv^2}dv\nonumber\\
     &\overset{\frac{1}{p}+\frac{1}{q}=1}{\lesssim} t^{-\frac{1}{q}}\int_{0}^{+\infty}v^{2/p-3/2}e^{-2tv^2}dv\nonumber\\
     &=t^{-\frac{3}{4}}\int_{0}^{+\infty}w^{\frac{2}{p}-\frac{3}{2}}e^{-2w^2}dw\nonumber\\
     &\lesssim t^{-\frac{3}{4}},
\end{align}
where we use the substitution $w = t^{1/2}v$.
For $z\in \Omega_{jk}$, $j=1,4$, $k=1,2,3,4$, we can conclude that $|M^{(3)}_{1}|\lesssim t^{-\frac{3}{4}}$.

For $z\in \Omega_{24}$, we make $z=x+iy$, $s=\xi_2+u+iv$ which satisfy $0<u<\frac{\xi_1-\xi_2}{2}$,
$0<v<\frac{\xi_1-\xi_2}{2}{\rm tan}\phi$($<\frac{\xi_1-\xi_2}{2}$).
\begin{align}
    \iint_{\Omega_{24}} \left\vert \bar{\partial}f_{24}(s)\right\vert e^{2t\Im\theta} dA(s)\lesssim I_5+I_6,
\end{align}
where
\begin{align}
    &I_5=\frac{1}{\pi}\iint_{\Omega_{24}}\frac{\langle s\rangle|\bar{\partial}f_{24}|e^{2t\Im \theta}\chi_{[\xi_2,1)}(|s|)}{|s-1|}dA(s), \\
    &I_6=\frac{1}{\pi}\iint_{\Omega_{24}}\frac{\langle s\rangle|\bar{\partial}f_{24}|e^{2t\Im \theta}\chi_{\left[1,\frac{\xi_1-\xi_2}{\sqrt{2}}\right)}(|s|)}{|s-1|}dA(s),
\end{align}
and $\chi_{[\xi_2,1)}(|s|)+\chi_{\left[1,\frac{\xi_1-\xi_2}{\sqrt{2}}\right)}(|s|)$ is the partition of unity.

Notice $\langle s\rangle/|s-1|=\mathcal{O}(1)$ for $|s|\in [\xi_2,1)$. Combining Proposition \ref{estopenlenssaddle}, we divide
$I_5$ into two parts
\begin{equation}
    I_5\lesssim I_5^{(1)}+I_5^{(2)},
\end{equation}
where
\begin{align}
    &I_5^{(1)}=\iint_{\Omega_{24}} |p'_{24}(s)|e^{2t\Im\theta}dA(s), \\
    &I_5^{(2)}=\iint_{\Omega_{24}} |s-\xi_2|^{-\frac{1}{2}}e^{2t\Im\theta}dA(s).
\end{align}
With the help of Proposition \ref{lemz=xi2}, we can bound $I_5^{(1)}, I_5^{(2)}$.
We bound $I_5^{(1)}$ by Cauchy-Schwarz inequality
\begin{align}
    |I_5^{(1)}|&\leqslant\iint_{\Omega_{24}} \vert p'_{24}(s)\vert e^{-2c(1+|z|^{-2})tv^2}dA(s)\nonumber\\
    &\leqslant \iint_{\Omega_{24}} \vert p'_{24}(s)\vert e^{-2ctv^2}dA(s) \nonumber\\
    &\leqslant\int_{0}^{\frac{\xi_1-\xi_2}{2}{\rm tan}\phi}\Vert p'_{24}(s) \Vert_{L^{2}(v+\xi_2,\frac{\xi_1-\xi_2}{2})}
    \left(\int_{v}^{\frac{\xi_1-\xi_2}{2}}e^{-4ctv^2}du\right)^{1/2}dv\nonumber\\
    &\lesssim\int_{0}^{+\infty}v^{\frac{1}{2}}e^{-2ctv^2}dv\nonumber\\
    &\overset{w=t^{\frac{1}{4}}v^{\frac{1}{2}}}{=}t^{-\frac{3}{4}}\int_{0}^{+\infty}w^2e^{-2cw^4}dw\nonumber\\
    &\lesssim t^{-\frac{3}{4}}.
\end{align}
For $I_5^{(2)}$, the H\"older inequality and \eqref{est-1/2} are used to obtain
\begin{align}
    |I_5^{(2)}|& \leqslant\int_{0}^{\frac{\xi_1-\xi_2}{2}{\rm tan}\phi}\Vert |s-\xi_2|^{-\frac{1}{2}} \Vert_{L^{p}(v+\xi_2,\frac{\xi_1-\xi_2}{2})}
    \left(\int_{v}^{\frac{\xi_1-\xi_2}{2}}e^{-2ctqv^2}du\right)^{1/q}dv\nonumber\\
    &\lesssim\int_{0}^{+\infty}v^{-\frac{1}{2}+\frac{1}{p}}v^{\frac{1}{q}}e^{-2ctv^2}dv\nonumber\\
    &\overset{\frac{1}{p}+\frac{1}{q}=1}{=}\int_{0}^{+\infty}v^{\frac{1}{2}}e^{-2ctv^2}dv\nonumber\\
    &\lesssim t^{-\frac{3}{4}}.
\end{align}
We finally deal with $I_6$. Thanks to \eqref{estbalance}, the singularity at $z=1$ can be balanced. Additionally,
for $|s|\in\left[1,\frac{\xi_1-\xi_2}{\sqrt2}\right)$, $\langle s\rangle\leqslant c(\xi)$. As a consequence,
\begin{align}
    |I_6|&\leqslant c(\xi)\iint_{\Omega_{24}}|p'_{24}| e^{2t\Im \theta}dA(s)\nonumber\\
    &\lesssim\int_{0}^{+\infty}\int_{\frac{v}{{\rm tan}\phi}}^{+\infty}|p'_{24}|e^{-ct(1+|s|^{-2})v^2}dudv\nonumber\\
    &\leqslant\int_{0}^{+\infty}\Vert p'_{24}\Vert_{L^{2}(\mathbb{R}_{+})}\left(\int_{v}^{+\infty}e^{-2ctv^2}du\right)^{\frac{1}{2}}dv\nonumber\\
    &\lesssim\int_0^{+\infty}v^{\frac{1}{2}}e^{-ctv^2}dv\lesssim t^{-3/4}.
\end{align}
Summarizing the estimations for $I_5$ and $I_6$, we conclude that $|M^{(3)}_{1}|\lesssim t^{-\frac{3}{4}}$
for $z\in \Omega_{jk}$, $j=2,3$, $k=1,2,3,4$.

In a conclusion,
$|M^{(3)}_{1}|\lesssim t^{-\frac{3}{2}}+t^{-\frac{3}{4}}+t^{-\frac{3}{4}}\lesssim t^{-\frac{3}{4}}$ as $t\rightarrow\infty$.
\end{proof}

\subsection{Proof of Theorem \ref{mainthm}(a)}\label{subsec:pfofa}
Reviewing the transformation \eqref{trans1}, \eqref{trans2}, \eqref{equ:Merr} and \eqref{trans4}:
\begin{equation}
    M^{(1)}=MG\delta^{\sigma_3}, \quad M^{(2)}=M^{(1)}R^{(2)}, \quad M^{(3)}=M^{(2)}(M^{(PR)})^{-1}, \quad  M^{(PR)}=M^{(err)}M^{(\infty)},
\end{equation}
we have
\begin{equation}
    M(z)=M^{(3)}(z)M^{(err)}(z)M^{(\infty)}(z){R^{(2)}(z)}^{-1}{\delta(z)}^{-\sigma_3}{G(z)}^{-\sigma_3}, \quad z\in \mathbb{C}\backslash U(\xi).
\end{equation}
Take $z\rightarrow \infty$ out $\overline{\Omega}$ ($R^{(2)}=I$, $G(z)=I$), we obtain
\begin{equation}
    M(z)=\left(I+z^{-1}M^{(3)}_1+\cdots\right)\left(I+z^{-1}M^{(err)}_1+\cdots\right)\left(I+z^{-1}M^{(\infty)}_1+\cdots\right)\left(I-z^{-1}\delta_1\sigma_3+\cdots\right),
\end{equation}
thus
\begin{equation}
    M_1=M^{(\infty)}_1+M^{(err)}_1+M^{(3)}_1-\delta_1\sigma_3.
\end{equation}
Using the formulae \eqref{recover}, we have
\begin{equation}
    q(x,t)=-i\left(M^{(\infty)}_1\right)_{12}-i\left(M^{(err)}_1\right)_{12}+\mathcal{O}\left(t^{-\frac{3}{4}}\right).
\end{equation}
Combining Proposition \ref{prop:global} and Proposition \ref{E1est}, Theorem \ref{mainthm}(a) follows.

\section{Asymptotics for $\xi\in\mathcal{R}_R$: right field}\label{sec:pfof1c}
\subsection{First transformation: $M\rightarrow M^{(1)}$}
By the Figure \ref{figtheta}(c), the jump factorization
\begin{equation*}
    V(z)=\begin{pmatrix} 1 & 0 \\ \frac{r(z)e^{-2it\theta}}{1-|r(z)|^2} & 1 \end{pmatrix}\left(1-|r(z)|^2\right)^{\sigma_3}\begin{pmatrix} 1 & -\frac{\overline{r(z)}e^{2it\theta}}{1-|r(z)|^2}\\ 0 & 1\end{pmatrix}, \quad z\in\Sigma,
\end{equation*}
plays a key role in our analysis. Therefore, we choose
\begin{equation}\label{equ:newdelta}
    \delta(z):=\delta(z,\xi)={\rm exp}\left[-\frac{1}{2\pi i}\int_{\Sigma}{\rm log}\left(1-|r(s)|^2\right)\frac{1}{s-z}ds\right].
\end{equation}
\begin{remark}\rm
    The difference between the $\delta(z)$ defined by \eqref{delta} and the $\delta(z)$ defined by \eqref{equ:newdelta} is the
    integral interval. The interval of the former is $\Gamma:=(-\infty, \xi_4)\cup(\xi_3,0)\cup(0,\xi_2)\cup(\xi_1, +\infty)$,
    however, the latter is $\Sigma$.
\end{remark}
Define
\begin{equation}
    M^{(1)}(z)=M(z)G(z)\delta(z)^{\sigma_3},
\end{equation}
with
\begin{align}
    G(z)=\left\{
    \begin{aligned}
    &\begin{pmatrix} 1 & -\frac{z-\eta_n}{c_ne^{-2it\theta(\eta_n)}} \\  0 & 1\end{pmatrix}, & z\in D(\eta_n,h),\\
    &\begin{pmatrix} 1 &  0 \\ -\frac{z-\bar{\eta}_n}{\bar{c}_ne^{2it\theta(\bar{\eta}_n)}}  & 1\end{pmatrix}, & z\in D(\bar{\eta}_n,h),\\
    &I  &elsewhere.
    \end{aligned}
    \right.
\end{align}
The jump matrix $V^{(1)}(z)$ is as follows
\begin{align}\label{rhp1jump}
    V^{(1)}(z)=\left\{
        \begin{aligned}
        &\begin{pmatrix} 1 & 0 \\ \frac{r(z)\delta_{-}^{2}(z)}{1-|r(z)|^2}e^{-2it\theta} & 1 \end{pmatrix}\begin{pmatrix} 1 & -\frac{\overline{r(z)}\delta_{+}^{-2}(z)}{1-|r(z)|^2}e^{2it\theta}\\ 0 & 1\end{pmatrix}, & z\in\Sigma,\\
        &\begin{pmatrix} 1 & -\frac{z-\eta_n}{c_n\delta^2e^{-2it\theta(\eta_n)}} \\ 0 & 1 \end{pmatrix}, & z\in \partial D(\eta_n,h) \ {\rm oriented \ counterclockwise},\\
        &\begin{pmatrix} 1 & 0 \\ \frac{z-\bar{\eta}_n}{\bar{c}_n\delta^{-2}(z)e^{2it\theta(\bar{\eta}_n)}} & 1 \end{pmatrix}, & z\in \partial D(\bar{\eta}_n,h) \ {\rm oriented \ clockwise}.\\
        \end{aligned}
        \right.
    \end{align}
The asymptotic behavior of $M^{(1)}$ is the same as the former section.

\subsection{Opening $\bar{\partial}$ lenses: $M^{(1)}\rightarrow M^{(2)}$}
Find a small sufficiently angle $\phi: \phi<\theta_0$ and define a new region $\Omega=\cup_{j=1,2,3,4}\Omega_{j}$, where
\begin{align}
    &\Omega_{1}=\left\{z\in\mathbb{C}: 0<{\rm arg}z<\phi\right\}, &\Omega_{2}=\left\{z\in\mathbb{C}: \pi-\phi<{\rm arg}z<\phi\right\},\\
    &\Omega_{3}=\left\{z\in\mathbb{C}: -\pi<{\rm arg}z<-\pi+\phi\right\}, &\Omega_{4}=\left\{z\in\mathbb{C}: -\phi<{\rm arg}z<0\right\}.
\end{align}
Some paths are denoted by
\begin{align}
    &\Sigma_1=e^{i\phi}\mathbb{R}_{+}, &\Sigma_2=e^{i(\pi-\phi)}\mathbb{R}_{+},\\
    &\Sigma_3=e^{-i(\pi-\phi)}\mathbb{R}_{+}, &\Sigma_4=e^{-i\phi}\mathbb{R}_{+},
\end{align}
with the left-to-right oriented boundaries of $\Omega$, see Figure \ref{figR2}.
\begin{figure}[htbp]
	\centering
		\begin{tikzpicture}[node distance=2cm]
		\draw[yellow!30, fill=yellow!20] (0,0)--(3,-0.5)--(3,0)--(0,0)--(-3,-0.5)--(-3,0)--(0,0);
		\draw(0,0)--(3,0.5)node[above]{\scriptsize $\Sigma_1$};
		\draw(0,0)--(-3,0.5)node[left]{\scriptsize $\Sigma_2$};
		\draw(0,0)--(-3,-0.5)node[left]{\scriptsize $\Sigma_3$};
		\draw(0,0)--(3,-0.5)node[right]{\scriptsize $\Sigma_4$};
		\draw[->](-4,0)--(4,0)node[right]{\small $\Re z$};
		\draw[->](0,-3)--(0,3)node[above]{\small $\Im z$};
		\draw[-latex](-3,-0.5)--(-1.5,-0.25);
		\draw[-latex](-3,0.5)--(-1.5,0.25);
		\draw[-latex](0,0)--(1.5,0.25);
		\draw[-latex](0,0)--(1.5,-0.25);
		\coordinate (C) at (-0.2,2.2);
		\coordinate (D) at (2.2,0.2);
		\fill (D) circle (0pt) node[right] {\scriptsize $\Omega_1$};
		\coordinate (J) at (-2.2,-0.2);
		\fill (J) circle (0pt) node[left] {\scriptsize $\Omega_3$};
		\coordinate (k) at (-2.2,0.2);
		\fill (k) circle (0pt) node[left] {\scriptsize $\Omega_2$};
		\coordinate (k) at (2.2,-0.2);
		\fill (k) circle (0pt) node[right] {\scriptsize $\Omega_4$};
		\coordinate (I) at (0.2,0);
		\fill (I) circle (0pt) node[below] {$0$};
		\draw[thick,dotted,red] (2,0) arc (0:360:2);
		\coordinate (A) at (2,3);
		\coordinate (B) at (2,-3);
		\coordinate (C) at (-0.5546996232,0.8320505887);
		\coordinate (D) at (-0.5546996232,-0.8320505887);
		\coordinate (E) at (0.5546996232,0.8320505887);
		\coordinate (F) at (0.5546996232,-0.8320505887);
		\coordinate (G) at (-2,3);
		\coordinate (H) at (-2,-3);
		\coordinate (I) at (2,0);
		\coordinate (J) at (1.7320508075688774,1);
		\coordinate (K) at (1.7320508075688774,-1);
		\coordinate (L) at (-1.7320508075688774,1);
		\coordinate (M) at (-1.7320508075688774,-1);
        \coordinate (N) at (0,2);
        \coordinate (O) at (0,-2);
		\fill (I) circle (1pt) node[above] {$1$};
		\fill (J) circle (1pt) node[right] {\footnotesize $z_n$};
		\fill (K) circle (1pt) node[right] {\footnotesize $\bar{z}_n$};
		\fill (L) circle (1pt) node[left] {\footnotesize $-\bar{z}_n$};
		\fill (M) circle (1pt) node[left] {\footnotesize $-z_n$};
        \fill (N) circle (1pt) node[right] {$i$};
        \fill (O) circle (1pt) node[left] {$-i$};
        \draw[->] (0.18,2.05) arc(0:360:0.2);
        \draw[->] (1.98,0.98) arc(0:360:0.2);
        \draw[->] (-1.58,0.98) arc(0:360:0.2);
        \draw[->] (0.2,-2.05) arc(360:0:0.2);
        \draw[->] (-1.58,-0.98) arc(360:0:0.2);
        \draw[->] (1.98,-0.98) arc(360:0:0.2);
		\end{tikzpicture}
	\caption{\footnotesize For $\xi\in\mathcal{R}_R$, there are no phase points on the jump contour. The white regions imply that
    $e^{2it\theta}\rightarrow 0$, however, the yellow regions imply that $e^{-2it\theta}\rightarrow 0$.}
	\label{figR2}
\end{figure}
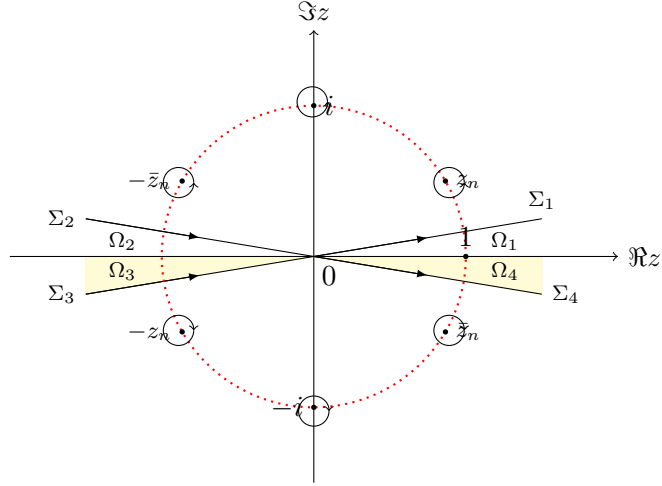
\begin{proposition} For $\xi\in\mathcal{R}_R$, $z=le^{i\phi}$, and $F(l)=l+l^{-1}$, the phase function $\theta(z)$ defined by \eqref{phasefunc} satisfies
    \begin{align}
        &\Im\theta(z)\geqslant \frac{1}{2}F(l)\vert{\rm sin} \phi \vert[\xi+F^2(l)], \quad z\in\Omega_{j}, \quad j=1,2,\\
        &\Im\theta(z)\leqslant -\frac{1}{2}F(l)\vert{\rm sin} \phi \vert[\xi+F^2(l)], \quad z\in\Omega_{j}, \quad j=3,4.
    \end{align}
\end{proposition}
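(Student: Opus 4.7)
The plan is to prove the inequality by explicit computation of $\Im\theta(z)$ in polar coordinates, reducing it to a simple lower bound in terms of $F(l)$ and $\cos(2\phi)$, and then exploiting $F(l)\geq 2$ together with the smallness of $\phi$.

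First I would substitute $z=le^{i\phi}$ into the expression \eqref{imatheta} for $\Im\theta(z)$. Using $\Im z = l\sin\phi$, $\Im\bar z = -l\sin\phi$, $\Im z^3 = l^3\sin(3\phi)$, $\Im\bar z^3=-l^3\sin(3\phi)$, the four terms collapse to
\begin{equation*}
\Im\theta(z)=\frac{(\xi+3)\sin\phi}{2}\,F(l)+\frac{\sin(3\phi)}{2}\bigl(l^3+l^{-3}\bigr).
\end{equation*}
Next I would rewrite $l^3+l^{-3}=F(l)^3-3F(l)$ and use $\sin(3\phi)/\sin\phi=1+2\cos(2\phi)$ to factor out $F(l)\sin\phi/2$, arriving at
\begin{equation*}
\Im\theta(z)=\frac{F(l)\sin\phi}{2}\Bigl[\xi+F^2(l)\bigl(1+2\cos(2\phi)\bigr)-6\cos(2\phi)\Bigr].
\end{equation*}

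For $z\in\Omega_1$, the angle $\phi\in(0,\phi_0)$ with $\phi_0<\pi/4$, so $\sin\phi>0$ and $\cos(2\phi)>0$. Rewrite the bracket as
\begin{equation*}
\xi+F^2(l)+2\cos(2\phi)\bigl(F^2(l)-3\bigr),
\end{equation*}
and observe that $F(l)=l+l^{-1}\geq 2$, hence $F^2(l)\geq 4>3$. Thus the last summand is nonnegative, and the bracket is at least $\xi+F^2(l)$, which yields the desired bound $\Im\theta(z)\geq \tfrac{1}{2}F(l)|\sin\phi|[\xi+F^2(l)]$.

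For $z\in\Omega_2$ I would set $\arg z=\pi-\phi$ with $\phi\in(0,\phi_0)$; a short check of $\sin(3(\pi-\phi))=\sin(3\phi)$ and $\cos(2(\pi-\phi))=\cos(2\phi)$ shows the same identity holds with $\sin\phi$ in the prefactor (which remains positive since $\sin(\arg z)>0$ in the upper half plane), so the argument is identical. For $\Omega_3,\Omega_4$ in the lower half plane, $\sin(\arg z)<0$, so the overall prefactor flips sign and I obtain the matching upper bound with $|\sin\phi|$ absorbing the sign. The only real content of the proof is the inequality $(F^2(l)-3)\cos(2\phi)\geq 0$, so there is no serious obstacle; the main care is simply in tracking signs across the four sectors and verifying the chosen opening angle $\phi<\theta_0$ is small enough so that $\cos(2\phi)$ stays positive throughout.
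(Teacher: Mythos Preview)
Your proof is correct and follows essentially the same route as the paper: both compute $\Im\theta(z)=\tfrac{1}{2}F(l)\sin\phi\bigl[(2F^2(l)-6)\cos(2\phi)+\xi+F^2(l)\bigr]$ (the paper citing the identity \eqref{imthetaxingtai1} already derived earlier), then drop the nonnegative term $(2F^2(l)-6)\cos(2\phi)$ using $F(l)\geq 2$ and $\cos(2\phi)>0$. The paper only writes out the $\Omega_1$ case and declares the others similar, so your explicit symmetry remarks for $\Omega_2,\Omega_3,\Omega_4$ are a slight elaboration but not a different argument.
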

\begin{proof}
    We give the details for $z\in\Omega_{1}$, the proof for the other regions is similar.
    Recalling \eqref{imthetaxingtai1}, we have
    \begin{align}
        \Im\theta(z)&=\frac{1}{2}F(l){\rm sin}\phi\left[\left(2F^2(l)-6\right){\rm cos}\left(2\phi\right)+\xi+F^2(l)\right]\nonumber\\
        &\overset{F(l)\geqslant 2}{\geqslant}\frac{1}{2}F(l){\rm sin}\phi[\xi+F^2(l)]>0.
    \end{align}
\end{proof}
We choose $R^{(2)}(z)$ as
\begin{equation}
    R^{(2)}(z)=\left\{
        \begin{aligned}
        &\begin{pmatrix} 1 & f_{j}e^{2it\theta} \\ 0 & 1 \end{pmatrix}, \quad &z\in\Omega_{j}, \quad j=1,2,\\
        &\begin{pmatrix} 1 & 0 \\ f_{j}e^{-2it\theta} & 1 \end{pmatrix}\quad &z\in\Omega_{j}, \quad j=3,4,\\
        &I, \quad &elsewhere,
        \end{aligned}
        \right.
\end{equation}
where $f_{j}$ is given by the following proposition.
\begin{proposition}[Opening lens at $z=0$ for $\xi\in\mathcal{R}_R$]\label{estopenlesz=0xi>6}
        $f_{j}: \overline{\Omega}_{j}\rightarrow \mathbb{C}$, $j=1,2,3,4$ are continuous on $\overline{\Omega}_{j}$
        with boundary values:
        \begin{align}
            f_{j}(z)=\left\{
                            \begin{aligned}
                            &\frac{\overline{r(z)}}{1-|r(z)|^2}\delta^{-2}_{+}(z), \quad &z\in\mathbb{R},\\
                            &0, \quad &z\in \Sigma_{j}, \quad j=1,2
                            \end{aligned}
                            \right. \label{bdryz=01xi>6}\\
            f_{j}(z)=\left\{
                            \begin{aligned}
                            &\frac{r(z)}{1-|r(z)|^2}\delta^{2}_{-}(z), \quad &z\in\mathbb{R},\\
                            &0, \quad &z\in \Sigma_{j},\quad j=3,4.
                            \end{aligned}
                            \right.\label{bdryz=02xi>6}
            \end{align}
    And $f_{j}, j=1,2,3,4$ have following estimation:
    \begin{equation}\label{dbarfor0no1xi>6}
        \vert \bar{\partial}f_{j}(z)\vert \leqslant c|z|^{-\frac{1}{2}}+c|r'(|z|)|+c\varphi(|z|), \quad j=1,2,3,4,
    \end{equation}
    where $\varphi\in C_{0}^{\infty}(\mathbb{R},[0,1])$ is a cutoff function with small support near 1.

    Moreover
    \begin{align}\label{dbarfor0no2xi>6}
        &\vert \bar{\partial}f_{j}(z)\vert \leqslant c|z-1|, \quad z\in \Omega_{j}, \quad j=1,4,\\
        &\vert \bar{\partial}f_{j}(z)\vert \leqslant c|z+1|, \quad z\in \Omega_{j}, \quad j=2,3.
    \end{align}
\end{proposition}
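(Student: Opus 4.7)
\medskip

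\noindent\textbf{Proof proposal.} The plan is to mimic the construction of Proposition~\ref{estopenlesz=0} and Proposition~\ref{estopenlenssaddle}: build each $f_j$ as a product of (i)~an interpolation factor that realizes the prescribed boundary data on $\mathbb{R}$ and the zero datum on $\Sigma_j$, (ii)~the appropriate $\delta^{\mp 2}$ factor, and (iii)~a correction near $z=\pm 1$ that tames the singularity of $p(z):=\overline{r(z)}/(1-|r(z)|^2)$ there, since $|r(\pm 1)|=1$. Concretely, setting $\kappa_{0}=\pi/(2\phi)$ and writing $z=le^{i\varphi}$, I would take, for example on $\Omega_{1}$,
\begin{equation*}
f_{1}(z)\;=\;\Bigl[\,p(l)\bigl(1-\chi(l)\bigr)+p\bigl(\operatorname{sgn}(\Re z)\bigr)(z-1)\chi(l)\Bigr]\,\delta^{-2}(z)\cos\!\bigl(\kappa_{0}\varphi\bigr),
\end{equation*}
where $\chi$ is a smooth bump equal to $1$ on a neighborhood of $l=1$ and $0$ outside a slightly larger neighborhood; $\bar\varphi(|z|)$ in the bound will be built from $\chi$. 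The second summand is what realizes the balance estimate $|\bar\partial f_{1}|\lesssim|z-1|$ by explicitly carrying a vanishing factor $(z-1)$ into the region where $p$ is singular. The analogous construction is used for $f_{2},f_{3},f_{4}$, with $(z+1)$ replacing $(z-1)$ for $j=2,3$.

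Next, I would apply $\bar\partial=\tfrac12 e^{i\varphi}(\partial_{l}+il^{-1}\partial_{\varphi})$ term-by-term. Since $\delta(z)$ is analytic off $\mathbb{R}$, it contributes no $\bar\partial$; the derivative of $\cos(\kappa_{0}\varphi)$ yields the angular piece, while $\partial_{l}$ acting on $p(l)$ produces $|r'(l)|$ up to the harmless $\delta^{-2}$ and $(1-|r|^{2})^{-2}$ factors (the latter bounded away from $\pm 1$ by Proposition~\ref{deltapro}(v) and Proposition~\ref{spec distri}). In the region away from $\pm 1$, the angular part is controlled by the size of $p(l)$ itself; using $r(z)\sim z^{2}$ at $0$ (Proposition~\ref{spec distri}) together with Cauchy--Schwarz $|p(l)|\lesssim \int_{0}^{l}|p'(s)|\,ds\lesssim \|r\|_{H^{1}}\,l^{1/2}$ (after dividing by $l$ as dictated by $l^{-1}\partial_{\varphi}$), one recovers the $|z|^{-1/2}$ term. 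The $\varphi(|z|)$ term in the bound comes precisely from differentiating $\chi$ and from the pointwise $L^{\infty}$-piece of $p(l)(1-\chi(l))$ that remains on the support of $\chi$.

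For the balance estimates \eqref{dbarfor0no2xi>6}, observe that on the support of $\chi$ the only surviving term of $f_{1}$ (or $f_{4}$) is $p(\pm 1)(z-1)\chi(l)\delta^{-2}(z)\cos(\kappa_{0}\varphi)$. Applying $\bar\partial$ gives either $(z-1)\cdot\partial(\chi\cos)$ or $\bar\partial(z-1)\chi\cos = \chi\cos$ times a bounded prefactor; but $\cos(\kappa_{0}\varphi)\to 1$ as $\varphi\to 0^{+}$ (i.e., as $z\to 1$ from $\Omega_{1}$) and the only contribution is through the vanishing $(z-1)$, so $|\bar\partial f_{1}(z)|\lesssim|z-1|$ follows. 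The case $j=4$ is symmetric, and $j=2,3$ follow by reflecting the construction across the imaginary axis.

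The main obstacle is the coupling between the two competing requirements near $z=\pm 1$: the extension must be continuous on $\overline{\Omega}_{j}$ with the prescribed (singular) boundary datum $p(z)\delta_{+}^{-2}(z)$, yet its $\bar\partial$ must be integrable enough for the $L^{\infty}$-estimate of the Cauchy--Green operator in Proposition~\ref{estS} and for the decay estimate of $M^{(3)}_{1}$ analogous to Proposition~\ref{estm31}. The cutoff $\chi$ lets one ``freeze'' $p$ at the value $p(\pm 1)$ (which is finite because $r(\pm 1)=\mp i$ and the pole of $p$ is only apparent once one checks that $\delta_{+}^{-2}$ absorbs the $1-|r|^{2}\to 0$ factor via the jump relation (ii) of Proposition~\ref{deltapro} with $\Gamma$ replaced by the full line); designing the interpolation so that the leftover ``error'' picks up the required $(z\mp 1)$ factor is the delicate bookkeeping step, and it is what forces the explicit form of the second summand above.
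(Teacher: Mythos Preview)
Your construction has a genuine gap at the very point it is meant to handle. You write the second summand as $p(\operatorname{sgn}(\Re z))(z-1)\chi(l)\,\delta^{-2}(z)\cos(\kappa_{0}\varphi)$ and later claim ``$p(\pm 1)$ (which is finite because $r(\pm 1)=\mp i$\dots)''. But $p(z)=\overline{r(z)}/(1-|r(z)|^{2})$ is \emph{not} finite at $\pm 1$: precisely because $|r(\pm 1)|=1$, the denominator vanishes and $p$ blows up. The boundedness you allude to belongs to the \emph{product} $p(z)\delta_{+}^{-2}(z)$, not to $p$ alone; once you split $p$ from $\delta^{-2}$ and try to freeze $p$ at $\pm 1$, the coefficient is infinite and the formula is meaningless. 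A related casualty is the boundary condition: on $\mathbb{R}$ near $z=1$ (where $\chi\equiv 1$) your $f_{1}$ reduces to $p(1)(z-1)\delta_{+}^{-2}(z)$, which---even interpreted formally---does not equal the prescribed datum $p(z)\delta_{+}^{-2}(z)$.

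The paper resolves this by never separating $p$ from $\delta_{+}^{-2}$. Using $1-|r|^{2}=|a|^{-2}$ and $r=b/a$, one has the algebraic identity
\[
\frac{\overline{r(z)}}{1-|r(z)|^{2}}\,\delta_{+}^{-2}(z)
\;=\;\overline{b(z)}\,a(z)\,\delta_{+}^{-2}(z)
\;=\;\frac{\overline{J_{b}(z)}}{J_{a}(z)}\left(\frac{a(z)}{\delta_{+}(z)}\right)^{2},
\]
where $J_{a}=\det(\Phi_{+,1},\Phi_{-,2})$ and $J_{b}=\det(\Phi_{-,1},\Phi_{+,1})$ are the Wronskians from \eqref{azbz}. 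Both $J_{a}$ and $J_{b}$ are regular and (generically) nonzero at $z=\pm 1$, and $a/\delta_{+}$ extends analytically and nonvanishingly there. This reformulation is what makes the right-hand side manifestly bounded near $\pm 1$; the paper then localizes with a cutoff $\chi_{1}$ near $1$, extends $h(|z|)=\chi_{1}(|z|)\overline{J_{b}(|z|)}/J_{a}(|z|)$ radially, keeps $g(z)=(a(z)/\delta(z))^{2}$ holomorphic, and adds a correction term proportional to $h'(|z|)\sin(\kappa\arg z)$ to secure the $\mathcal{O}(|z-1|)$ estimate for $\bar\partial f_{1}$ as $z\to 1$. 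Your inserted factor $(z-1)$ tries to mimic the last step but cannot do so without first passing through the $J_{a},J_{b}$ rewriting; otherwise there is no finite object to multiply by $(z-1)$, and the boundary datum is lost.
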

\begin{proof}
The proof is an analogue of \cite[Proposition 5.5]{Zhang&Xu mkdv} or \cite[Lemma 6.5]{Cuc}.
A sketch proof for $f_1$ is exhibited as follows. By $r(z)\rightarrow\mp i$ as $z\rightarrow\pm 1$, we know that $|r(z)|\rightarrow 1$ as $z\rightarrow\pm 1$. This implies
that $f_1(z)$ is singular at $z=1$. However, the singular behavior is exactly balanced by the factor $\delta^2(z)$. With the
help of \eqref{rz}-\eqref{relazbz}, we have
\begin{equation}\label{singularopen}
    \frac{\overline{r(z)}}{1-|r(z)|^2}\delta^{-2}_{+}(z)=\frac{\overline{b(z)}}{a(z)}\left(\frac{a(z)}{\delta_{+}(z)}\right)^2
    =\frac{\overline{J_b(z)}}{J_a(z)}\left(\frac{a(z)}{\delta_{+}(z)}\right)^2,
\end{equation}
where $J_a(z)=$det$(\varPhi_{+,1}, \varPhi_{-,2})$, $J_b(z)=$det$(\varPhi_{-,1}, \varPhi_{+,1})$. It's not difficult to know that
the denominator of each factor in the r.h.s of \eqref{singularopen} is nonzero and analytic in $\Omega_1$, with a well defined
nonzero limit on $\partial\Omega_1$. Notice also that in $\Omega_1$ away from the point $z=1$ the factors in the l.h.s of \eqref{singularopen}
are well behaved.

We introduce the cutoff functions $\chi_0, \chi_1 \in C_{0}^{\infty}(\mathbb{R},[0,1])$ with small support near $z=0$ and $z=1$
respectively, such that for any sufficiently small $s$, $\chi_0(s)=\chi_{1}(s+1)=1$. Additionally, we impose the condition
$\chi_1(s)=\chi_1(s^{-1})$ to preserve symmetry. Then we can rewrite the function $f_{1}(z)$ in $\mathbb{R}_{+}$ as
$f_1(z)=f_1^{(1)}(z)+f_1^{(2)}(z)$, where
\begin{equation}\label{dividef1}
    f_1^{(1)}(z)=\left(1-\chi_1(z)\right)\frac{\overline{r(z)}}{1-|r(z)|^2}\delta^{-2}_{+}(z),\quad
    f_1^{(2)}(z)=\chi_1(z)\frac{\overline{J_b(z)}}{J_a(z)}\left(\frac{a(z)}{\delta_{+}(z)}\right)^2.
\end{equation}
The aim of \eqref{dividef1} is to balance the effect raised by the singularity $z=1$ due to $|r(1)|=1$. Fix a small
$\kappa_0>0$, we extend $f_1^{(1)}(z)$ and $f_1^{(2)}(z)$ in $\Omega_{1}$ by
\begin{align}
    &f_1^{(1)}(z)=(1-\chi_1(|z|))\frac{\overline{r(|z|)}}{1-|r(|z|)|^2}\delta^{-2}(z){\rm cos}\left(\kappa {\rm arg}z\right).\\
    &f_1^{(2)}(z)=h(|z|)g(z){\rm cos}\left(\kappa {\rm arg}z\right)+\frac{i|z|}{\kappa}\chi_0\left(\frac{{\rm arg}z}{\kappa_0}\right)
    h'(|z|)g(z){\rm sin}\left(\kappa {\rm arg}z\right),
\end{align}
where
\begin{equation}
    \kappa:=\frac{\pi}{2\theta_0}, \quad h(z):=\chi_1(z)\frac{\overline{J_b(z)}}{J_a(z)},\quad
    g(z):=\left(\frac{a(z)}{\delta(z)}\right)^2.
\end{equation}
Notice that the definition of $f_1$ preserves the symmetry $f_1(s)=-\overline{f_1({\bar{s}}^{-1})}$.

Firstly we bound the $\bar{\partial}f_{1}^{(1)}$.
\begin{equation}\label{f1(1)}
    \bar{\partial}f_1^{(1)}(z)=-\frac{\bar{\partial}\chi_1(|z|)}{\delta^2(z)}\frac{\overline{r(|z|)}{\rm cos}(\kappa{\rm arg}z)}{1-|r(|z|)|^2}
    +\frac{1-\chi_1(|z|)}{\delta^2(z)}\bar{\partial}\left(\frac{\overline{r(|z|)}{\rm cos}(\kappa{\rm arg}z)}{1-|r(|z|)|^2}\right).
\end{equation}
We know that $1-|r(z)|^2>c>0$ as $z\in{\rm supp}(1-\chi_1(|z|))$ and $\delta^{-2}(z)$ is bounded as $z\in\Omega_1\cap{\rm supp}(1-\chi_1(|z|))$.
Taking $z=le^{i\gamma}:=u+iv$, we still have the equality $\bar{\partial}=\frac{e^{i\gamma}}{2}(\partial_{l}+il^{-1}\partial_{\gamma})$ and
apply it to the first term of \eqref{f1(1)}
\begin{equation}
    \left\vert -\frac{\bar{\partial}\chi_1(|z|)}{\delta^2(z)}\frac{\overline{r(|z|)}{\rm cos}(\kappa{\rm arg}z)}{1-|r(|z|)|^2}\right\vert
    =\left\vert \frac{e^{i\gamma}\chi'_1\bar{r}{\rm cos}(\kappa\gamma)}{2\delta^2(z)(1-|r(|z|)|^2)} \right\vert
    \lesssim \varphi(|z|)
\end{equation}
for a appropriate $\varphi\in C_{0}^{\infty}(\mathbb{R},[0,1])$ with a small support near $1$ and with $\varphi=1$ on supp$\chi_1$.
As $r(0)=0$ and $r(z)\in H^{1}(\mathbb{R})$ it follows that $|r(|z|)|\leqslant |z|^{1/2}\Vert r' \Vert_{L^{2}(\mathbb{R})}$, we have
\begin{equation}
\left\vert \frac{1-\chi_1(|z|)}{\delta^2(z)}\bar{\partial}\left(\frac{\overline{r(|z|)}{\rm cos}(\kappa{\rm arg}z)}{1-|r(|z|)|^2}\right) \right\vert
\lesssim |r'(z)|+\frac{|r(z)|}{|z|}\lesssim |r'(z)|+|z|^{-\frac{1}{2}}.
\end{equation}
So we obtain that
\begin{equation}
    \vert f_{1}^{(1)}(z)\vert\lesssim \varphi(|z|)+|r'(z)|+|z|^{-\frac{1}{2}}.
\end{equation}
Next we bound $\bar{\partial}f_1^{(2)}$,
\begin{align}
    \bar{\partial}f_1^{(2)}&=\frac{1}{2}e^{i\gamma}g(z)\left[h'{\rm cos}(\kappa\gamma)\left(1-\chi_0\left(\frac{\gamma}{\kappa_0}\right)\right)
    -\frac{i\kappa h(l)}{l}{\rm sin}(\kappa\gamma)\nonumber\right.\\
    &\left. +\frac{i}{\kappa}(lh'(l))'{\rm sin}(\kappa\gamma)\chi_{0}\left(\frac{\gamma}{\kappa_0}\right)+
    \frac{i}{\kappa\kappa_0}h'(l){\rm sin}(\kappa\gamma)\chi'_{0}\left(\frac{\gamma}{\kappa_0}\right)\right]
\end{align}
in which $g(z)$ is bounded, $q\in L^{1,2}(\mathbb{R})$ and $q'\in W^{1,1}(\mathbb{R})$. So we claim that $\bar{\partial}f_1^{(2)}(z)\lesssim \varphi(|z|)$
for a $\varphi\in C_{0}^{\infty}(\mathbb{R},[0,1])$ with a small support near $1$, thus yielding \eqref{dbarfor0no1xi>6}.

Finally, as $z\rightarrow 1$, we have
\begin{equation}
    \vert\bar{\partial}f_{1}^{(2)}(z)\vert=\mathcal{O}(\gamma)
\end{equation}
from which \eqref{dbarfor0no2xi>6} follows immediately.
\end{proof}
We now use $R^{(2)}$ to define transformation $M^{(2)}=M^{(1)}R^{(2)}$, which help us set up the following mixed
$\bar{\partial}$-RH problem for $\xi\in\mathcal{R}_R$
\begin{RHP}\label{m2rhpxi>6}
    Find a $2\times 2$ matrix-valued function $M^{(2)}(z)$ such that\\
    - $M^{(2)}(z)$ is continuous in $\mathbb{C}\backslash \Sigma^{(2)}$, where $\Sigma^{(2)}=\Sigma^{pole}\cup(\cup_{j=1,2,3,4}\Sigma_j)$ (see Figure \ref{figR2}).\\
    - $M^{(2)}(z)$ takes continuous boundary values $M^{(2)}_{\pm}(z)$ on $\Sigma^{(2)}$ with jump relation
    \begin{equation}
        M^{(2)}_{+}(z)=M^{(2)}_{-}(z)V^{(2)}(z),
    \end{equation}
    where $V^{(2)}=I$ for $z\in\Sigma_j, j=1,2,3,4$.\\
    - Asymptotic behavior
    \begin{align}
        &M^{(2)}(x,t;z)=I+\mathcal{O}(z^{-1}), \quad  z\rightarrow\infty,\\
        &M^{(2)}(x,t;z)=\frac{\sigma_2}{z}+\mathcal{O}(1), \quad  z\rightarrow 0.
    \end{align}
    - For $z\in\mathbb{C}$, we have $\bar{\partial}$-derivative equality
    \begin{equation}
        \bar{\partial}M^{(2)}=M^{(2)}\bar{\partial}R^{(2)},
    \end{equation}
    where
    \begin{equation}
        \bar{\partial}R^{(2)}=\left\{
            \begin{aligned}
            &\begin{pmatrix} 1 & \bar{\partial}f_{j}e^{2it\theta} \\ 0 & 1 \end{pmatrix}, \quad &z\in\Omega_{j}, \quad j=1,2\\
            &\begin{pmatrix} 1 & 0 \\ \bar{\partial}f_{j}e^{-2it\theta} & 1 \end{pmatrix}\quad &z\in\Omega_{j}, \quad j=3,4,\\
            &0, \quad &elsewhere.
            \end{aligned}
            \right.
    \end{equation}
\end{RHP}

\subsection{Analysis on pure $\bar{\partial}$ problem}
The error order mainly comes from the $\bar{\partial}$-problem for $\xi\in\mathcal{R}_R$, which is different from the previous Section \ref{sec:proofof1a}. We focus our insights on the
estimates for the Cauchy-Green operator $S$ defined by \eqref{C-Gop} and $M^{(3)}_1(x,t)$ defined by \eqref{m31}.
Then we have the following two propositions.
\begin{proposition}\label{estSxi>6}
    Consider the operator $S$ defined in \eqref{C-Gop}, then we have
    $S:L^{\infty}(\mathbb{C})\rightarrow L^{\infty}(\mathbb{C})\cap C^{0}(\mathbb{C})$ and
    \begin{equation}
        \Vert S \Vert_{L^{\infty}(\mathbb{C})\rightarrow L^{\infty}(\mathbb{C})}\lesssim t^{-\frac{1}{2}}.
    \end{equation}
\end{proposition}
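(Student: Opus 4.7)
The plan is to follow the scheme of Proposition \ref{estS}, exploiting the improved phase decay available in $\mathcal{R}_R$ to upgrade the bound from $t^{-1/4}$ to $t^{-1/2}$. Starting from
\begin{equation*}
\Vert Sf\Vert_{L^\infty} \leq \Vert f\Vert_{L^\infty} \cdot \frac{1}{\pi}\iint_{\mathbb{C}} \frac{|W^{(3)}(s)|}{|s-z|}\, dA(s),
\end{equation*}
I would first reduce $|W^{(3)}(s)| \lesssim |s|^{-1}\langle s\rangle |\bar\partial R^{(2)}(s)|/|1-s^{-2}|$ via $\det M^{(PR)} = 1-z^{-2}$ and $|M^{(PR)}(s)| \lesssim |s|^{-1}\langle s\rangle$, exactly as in Proposition \ref{estS}. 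Since $\bar\partial R^{(2)}$ is supported in $\Omega = \cup_{j=1}^{4}\Omega_j$, only four integrals need to be controlled, and by the symmetries I would treat $\Omega_1$ in detail.

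The crucial new feature compared with $\mathcal{R}_L$ is that in $\mathcal{R}_R$ there are no saddle points on $\Sigma$, so the phase gives \emph{linear} decay $\Im\theta(z) \geq c |\sin\phi|\, v$ on $\Omega_1\cup\Omega_2$, instead of the quadratic decay one gets near the real saddle points in $\mathcal{R}_L$. Writing $z = x+iy$, $s = u+iv$ and applying the three-part bound \eqref{dbarfor0no1xi>6} for $\bar\partial f_1$, one splits the contribution into pieces controlled by $|r'(|s|)|$, $|s|^{-1/2}$, and $\varphi(|s|)$. For the first two pieces, Cauchy--Schwarz/H\"older in the $u$-variable together with the standard estimates \eqref{vital role inequ} and \eqref{est-1/2} reduce the problem to
\begin{equation*}
\int_0^\infty |v-y|^{-1/2} e^{-ctv}\,dv \quad \text{and} \quad \int_0^\infty v^{-1/2+1/p}|v-y|^{1/q-1}e^{-ctv}\,dv,
\end{equation*}
both of which are $\mathcal{O}(t^{-1/2})$. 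The improvement over $\mathcal{R}_L$, where the analogous integrals carry $e^{-ctv^2}$ and produce $\mathcal{O}(t^{-1/4})$, is precisely the linear-versus-quadratic phase gain.

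The main obstacle will be the $\varphi$-piece near $z = \pm 1$, where both the prefactor $\langle s\rangle/|1-s^{-2}|$ and the reflection coefficient $r$ are singular. I would handle this by introducing a partition of unity $\chi_{[0,1)}(|s|)+\chi_{[1,\infty)}(|s|)$ localizing to a neighborhood of $s = 1$ (and similarly near $s = -1$ for $\Omega_2$), and then invoking the balancing estimate \eqref{dbarfor0no2xi>6}, namely $|\bar\partial f_1(s)| \lesssim |s-1|$ near $s = 1$, to cancel the pole coming from $1-s^{-2} = (s-1)(s+1)/s^2$. Once this cancellation is performed, the remaining integral is of the same type as in the previous paragraph, again yielding $\mathcal{O}(t^{-1/2})$. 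Assembling the estimates over $\Omega_1,\dots,\Omega_4$ produces the claimed bound, and the mapping property $S: L^\infty(\mathbb{C})\to L^\infty(\mathbb{C})\cap C^0(\mathbb{C})$ follows from dominated convergence applied to \eqref{C-Gop}, as is standard.
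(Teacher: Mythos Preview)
Your proposal is correct and follows the same approach as the paper, whose own proof consists of the single line ``The proof is an analogue of Proposition \ref{estS}.'' Your identification of the key mechanism---the linear phase bound $\Im\theta \gtrsim v$ throughout $\Omega_j$ in $\mathcal{R}_R$ (from the proposition preceding \eqref{dbarfor0no1xi>6}) replacing the quadratic bound near saddle points in $\mathcal{R}_L$, together with the balancing \eqref{dbarfor0no2xi>6} near $z=\pm 1$---is exactly the content of that analogy, matching the $\Omega_{0^\pm k}$ and $\Omega_{24}$ portions of Proposition~\ref{estS}.
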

\begin{proof}
    The proof is an analogue of Proposition \ref{estS}.
\end{proof}
\begin{proposition}\label{estm31xi>6}As $t\rightarrow\infty$ for $\xi\in\mathcal{R}_R$,
    \begin{equation}
    \vert M^{(3)}_{1}(x,t) \vert \lesssim t^{-1}, \quad {\rm as} \quad t\rightarrow\infty.
    \end{equation}
\end{proposition}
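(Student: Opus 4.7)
My plan is to adapt the strategy used in the proof of Proposition~\ref{estm31} to the simpler setting of $\xi\in\mathcal{R}_R$, where the absence of stationary phase points on the jump lenses $\Sigma_j$ replaces the $t^{-3/4}$ rate by the expected $t^{-1}$. Starting from the integral representation \eqref{m31}, I would first bound $|W^{(3)}|$ in the same way as in Proposition~\ref{estS}, using $\det M^{(PR)}=1-z^{-2}$ and the explicit formula $(M^{(PR)})^{-1}=(1-z^{-2})^{-1}\sigma_2(M^{(PR)})^{\rm T}\sigma_2$, which gives
\begin{equation*}
|M^{(3)}_1(x,t)|\lesssim \sum_{j=1}^{4}\iint_{\Omega_j}\frac{\langle s\rangle^2}{|s^2-1|}\,|\bar{\partial}f_j(s)|\,e^{\mp 2t\,\Im\theta(s)}\,dA(s).
\end{equation*}
By the four-fold symmetry of the problem it suffices to handle $\Omega_1$; the other sectors are analogous.

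The key phase estimate from the previous subsection gives $\Im\theta(z)\geq\tfrac{1}{2}F(l)\sin\varphi[\xi+F^2(l)]$ for $z=le^{i\varphi}\in\Omega_1$. Since $F(l)\geq 2$ and $\xi>6$, this produces a uniform lower bound $\Im\theta(z)\geq c_\xi\sin\varphi$. Switching to polar coordinates $z=le^{i\varphi}$, $dA=l\,dl\,d\varphi$, the angular integration yields the key gain
\begin{equation*}
\int_0^{\phi}e^{-2tc_\xi\sin\varphi}\,d\varphi\lesssim t^{-1},
\end{equation*}
which is the source of the $t^{-1}$ rate. The remaining radial integral $\int_0^{\infty}\frac{\langle l\rangle^2}{|l^2-1|}|\bar{\partial}f_1(l,\varphi)|\,l\,dl$ must then be shown to be bounded in $t$ and uniform in $\varphi$, using the estimates on $|\bar{\partial}f_j|$ from Proposition~\ref{estopenlesz=0xi>6} together with the decay $r(z)=\mathcal{O}(z^{-2})$ at infinity to secure integrability for large $l$.

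The main technical obstacle is the treatment of the singular factor $1/|s^2-1|$ at $s=\pm 1$, exactly as in the $\mathcal{R}_L$ proof. This is overcome by splitting $\Omega_1$ into a small neighborhood of $z=1$ and its complement, and invoking the linear vanishing $|\bar{\partial}f_1(z)|\lesssim|z-1|$ near $z=1$ supplied by Proposition~\ref{estopenlesz=0xi>6}. This exact cancellation leaves a bounded integrand near $z=1$, after which the angular integration again supplies the factor $t^{-1}$. Away from $\pm 1$, the factor $\langle s\rangle^2/|s^2-1|$ is uniformly bounded, so the usual Cauchy--Schwarz estimates combined with $\|r\|_{H^{1,1}}<\infty$ finish the analysis. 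Combining the two regions in each of the four sectors yields the claimed bound $|M^{(3)}_1|\lesssim t^{-1}$.
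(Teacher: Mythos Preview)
Your polar-coordinate strategy is different from the paper's and is conceptually clean, but there is a genuine gap in the large-$l$ part of the radial integral. You replace the phase estimate by the \emph{uniform} lower bound $\Im\theta\geq c_\xi\sin\varphi$ and then need the pure radial integral $\int_0^\infty\frac{\langle l\rangle^2}{|l^2-1|}\,|\bar\partial f_1|\,l\,dl$ to be finite independently of $t$. The $|z|^{-1/2}$ piece of $|\bar\partial f_1|$ can indeed be upgraded to $\mathcal{O}(l^{-3})$ for large $l$ using $r(z)=\mathcal{O}(z^{-2})$, but the $|r'(|z|)|$ piece cannot: it contributes $\int_2^\infty l\,|r'(l)|\,dl$, and neither $r\in H^{1,1}(\mathbb{R})$ nor the pointwise decay of $r$ controls this (you would need something like $r'\in L^1(\langle l\rangle\,dl)$, which is not available here). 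The fix is simple but essential: do not discard the $F(l)$-growth in the phase bound. Keeping $\Im\theta\gtrsim F(l)^3\sin\varphi\gtrsim l^3\sin\varphi$ for $l\geq 1$, the angular integral produces $(t l^3)^{-1}$ instead of $t^{-1}$, and the remaining radial piece becomes $t^{-1}\int_2^\infty l^{-2}|r'(l)|\,dl$, which is now finite by Cauchy--Schwarz and $r'\in L^2$.

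For comparison, the paper does not factorise in polar coordinates at all. It partitions $\Omega_1$ according to $|s|\in[0,1)$, $[1,2)$, $[2,\infty)$, works in rectangular coordinates $s=u+iv$, and on $\{|s|\geq 2\}$ exploits that $u$ is bounded below so that the exponential already carries a $t$-dependent factor in the $u$-integration; this is exactly where the growth of $\Im\theta$ in $|s|$ enters for them. For the inner shell $\{|s|<1\}$ the paper uses the inversion $w=\bar s^{-1}$ and the symmetry $r(s)=-\overline{r(\bar s^{-1})}$ (built into the extension of $f_1$) to reduce to the outer region, rather than estimating near $s=0$ directly. Your splitting into ``near $z=1$'' and ``away from $z=1$'' is fine for handling the $|s^2-1|^{-1}$ singularity, but you still need the full phase growth at infinity to close the estimate.
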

\begin{proof}
We present the details for $z\in\Omega_1$. By the standard procedure, as Proposition \ref{estm31}, we have
\begin{equation}
    \vert M^{(3)}_1(x,t)  \vert\lesssim I_1+I_2+I_3,
\end{equation}
where
\begin{align}
    &I_1=\iint_{\Omega_1}\frac{\langle s\rangle \vert\bar{\partial}f_1\vert e^{-2t\Im\theta}\chi_{[0,1)}(|s|)}{s-1}dA(s),\\
    &I_2=\iint_{\Omega_1}\frac{\langle s\rangle \vert\bar{\partial}f_1\vert e^{-2t\Im\theta}\chi_{[1,2)}(|s|)}{s-1}dA(s),\\
    &I_3=\iint_{\Omega_1}\frac{\langle s\rangle \vert\bar{\partial}f_1\vert e^{-2t\Im\theta}\chi_{[2,+\infty)}(|s|)}{s-1}dA(s).
\end{align}
For the term with $\chi_{[2,+\infty)}(|s|)$ the factor $\langle s\rangle|s-1|^{-1}=\mathcal{O}(1)$, and fixing a $p>2$, $q\in(1,2)$
we obtain the superabound for $I_3$
\begin{align}
    I_3&\lesssim\iint\left[|r'(|s|)|+\varphi(|s|)+|s|^{-\frac{1}{2}}\right]e^{-2t\Im\theta}\chi_{[1,+\infty)}(|s|)dA(s)\nonumber\\
    &\lesssim\int_{\mathbb{R}_{+}}\Vert e^{-ctuv} \Vert_{L^{2}({\rm max}\{v, 1/\sqrt{2}\}, \infty)}+\Vert e^{-ctuv} \Vert_{L^{2}({\rm max}\{v, 1/\sqrt{2}\}, \infty)}
    \Vert |s|^{-1/2} \Vert_{L^q(v, \infty)}dv\nonumber\\
    &\lesssim\int_{\mathbb{R}_{+}} e^{-ctv}\left((tv)^{-1/2}+t^{-1/p}v^{-1/p+1/q-1/2}\right)dv\lesssim t^{-1}.
\end{align}
For $s\in [0,2)$, $\langle s\rangle\leqslant 5$, so it could be omitted from the remaining estimates. For the $\chi_{[1,2]}(|s|)$,
we use \eqref{dbarfor0no1xi>6} to obtain that $I_2\lesssim t^{-1}$ at once. For the $\chi_{[0,1)}(|s|)$, the changes of
variables $w={\bar{z}}^{-1}$ and $r(s)=-\overline{r({\bar{s}}^{-1})}$ imply that
\begin{equation}
    I_1=\iint_{\Omega_1}\vert\bar{\partial}f_1\vert e^{-2t\Im\theta(w)}|w-1|^{-1}\chi_{[1,\infty]}(|w|)|w|^{-1}dA(s)\lesssim t^{-1}.
\end{equation}
Finally, we get the desired estimate.
\end{proof}

\subsection{Proof of Theorem \ref{mainthm}(c)}
Similar to the Subsection \ref{subsec:pfofa},
\begin{equation}
    M(z)=M^{(3)}(z)M^{(\infty)}(z){R^{(2)}(z)}^{-1}{\delta(z)}^{-\sigma_3},
\end{equation}
for $z$ outside $\overline{\Omega}$.

Furthermore, we obtain
\begin{equation}
    M_1=M^{(\infty)}_1+M^{(3)}_1-\delta_1\sigma_3,
\end{equation}
which yields Theorem \ref{mainthm}(c) by using the formulae \eqref{recover}.

\section*{Acknowledgements}
Taiyang Xu and Engui Fan thankfully acknowledge the support from the National Science Foundation of China (Grant No.11671095, No.51879045).

\appendix \label{phasereductionapp}
\renewcommand\thefigure{\Alph{section}\arabic{figure}}
\setcounter{figure}{0}
\renewcommand{\theequation}{\thesection.\arabic{equation}}
\setcounter{equation}{0}
\section{Parabolic cylinder model near $\xi_j$, $j=1,2,3,4$}\label{pcmodel}
This appendix is based on the methods developed by  A. Its'  fundamental work \cite{ItsDAN}.
\subsection{Local model near $\xi_j$, $j=1,3$}\label{pc13}
We take $\xi_1$ as an example to present this standard model.
\begin{RHP}\label{RHPpc13}
Find a matrix-valued function $M^{(PC,\xi_1)}(\zeta):=M^{(PC,\xi_1)}(\zeta;\xi)$ such that\\
- $M^{(PC,\xi_1)}(\zeta;\xi)$ is analytical  in $\mathbb{C}\backslash \Sigma^{pc} $ with $\Sigma^{pc}=\left\{\mathbb{R}e^{i\phi}\right\}\cup
\left\{\mathbb{R}e^{i(\pi-\phi) } \right\}$ shown in Figure \ref{sigpc13}.\\
- $M^{(PC,\xi_1)}$ has continuous boundary values $M^{(PC,\xi_1)}_{\pm}$ on $\Sigma^{pc}$ and
\begin{equation}
M^{(PC,\xi_1)}_+(\zeta)=M^{(PC,\xi_1)}_{-}(\zeta)V^{pc}(\zeta),\quad \zeta \in \Sigma^{pc},
\end{equation}
where
\begin{align}
V^{pc}(\zeta)=\left\{\begin{array}{ll}
\zeta^{-i\nu \hat{\sigma}_3}e^{\frac{i\zeta^2}{4}\hat{\sigma}_3}\left(\begin{array}{cc}
1 & -\frac{\bar{r}_{\xi_1}}{1-|r_{\xi_1}|^2}\\
0 & 1
\end{array}\right),  & \zeta\in\mathbb{R}_{+}e^{\phi i},\\[10pt]
\zeta^{-i\nu \hat{\sigma}_3}e^{\frac{i\zeta^2}{4}\hat{\sigma}_3}\left(\begin{array}{cc}
1 & 0\\
\frac{r_{\xi_1}}{1-|r_{\xi_1}|^2} & 1
\end{array}\right),   & \zeta\in \mathbb{R}_{+}e^{(2\pi-\phi) i},\\[10pt]
\zeta^{-i\nu \hat{\sigma}_3}e^{\frac{i\zeta^2}{4}\hat{\sigma}_3}\left(\begin{array}{cc}
1& -\bar{r}_{\xi_1}\\
0&1
\end{array}\right),   & \zeta\in \mathbb{R}_{+}e^{(\pi+\phi) i},\\[10pt]
\zeta^{-i\nu \hat{\sigma}_3}e^{\frac{i\zeta^2}{4}\hat{\sigma}_3}\left(\begin{array}{cc}
1 & 0\\
r_{\xi_1} & 1
\end{array}\right),   & \zeta\in \mathbb{R}_{+}e^{ (\pi-\phi)i},
\end{array}\right.
\end{align}
with $\nu=\nu(\xi_1)$.\\
- Asymptotic behavior:
\begin{align}
    M^{(PC,\xi_1)}(\zeta) =& I+M^{(PC,\xi_1)}_1\zeta^{-1}+\mathcal{O}(\zeta^{-2}),\hspace{0.5cm}\zeta \rightarrow \infty.
\end{align}
\end{RHP}

The RHP \ref{RHPpc13} has an explicit solution, which can be expressed in terms of Webber equation
$(\frac{\partial^2}{\partial z^2}+(\frac{1}{2}-\frac{z^2}{2}+a))D_{a}(z)=0$.
\begin{figure}[htbp]
	\centering
		\begin{tikzpicture}[node distance=2cm]
		\draw[->](0,0)--(2,1.5)node[above]{$\mathbb{R}_{+}e^{\phi i}$};
		\draw(0,0)--(-2,1.5)node[above]{$\mathbb{R}_{+}e^{(\pi-\phi)i}$};
		\draw(0,0)--(-2,-1.5)node[below]{$\mathbb{R}_{+}e^{(\pi+\phi)i}$};
		\draw[->](0,0)--(2,-1.5)node[below]{$\mathbb{R}_{+}e^{(2\pi-\phi) i}$};
		\draw[dashed,red](0,0)--(3,0)node[right]{\textcolor{black}{${\rm arg}\zeta\in (0,2\pi)$}};
		\draw[->](-2,-1.5)--(-1,-0.75);
		\draw[->](-2,1.5)--(-1,0.75);
		\coordinate (A) at (1,0.5);
		\coordinate (B) at (1,-0.5);
		\coordinate (G) at (-1,0.5);
		\coordinate (H) at (-1,-0.5);
		\coordinate (I) at (0,0);
		\fill (I) circle (1pt) node[below] {$0$};
		\end{tikzpicture}
	\caption{\footnotesize The contour $\Sigma^{pc}$ for the case of $\xi_j$, $j=1,3$.}\label{sigpc13}
\end{figure}
Taking the transformation
\begin{equation}\label{transpc13}
    M^{(PC,\xi_1)}=\mathcal{\psi}(\zeta)\mathcal{P}\zeta^{i\nu\sigma_3}e^{-\frac{i}{4}\zeta^2\sigma_3},
\end{equation}
where
\begin{align}
    \mathcal{P}(\xi)=\left\{\begin{array}{ll}
    \left(\begin{array}{cc}
    1 & \frac{\bar{r}_{\xi_1}}{1-|r_{\xi_1}|^2}\\
    0 & 1
    \end{array}\right),  & \arg\zeta\in(0,\phi) ,\\[10pt]
    \left(\begin{array}{cc}
    1 & 0\\
    \frac{r_{\xi_1}}{1-|r_{\xi_1}|^2} & 1
    \end{array}\right),   & \arg\zeta\in(2\pi-\phi,2\pi) ,\\[10pt]
    \left(\begin{array}{cc}
    1 & 0\\
    -r_{\xi_1} & 1
    \end{array}\right),   & \arg\zeta\in(\pi-\phi,\pi),\\[10pt]
    \left(\begin{array}{cc}
    1& -\bar{r}_{\xi_1}\\
    0&1
    \end{array}\right),   & \arg\zeta\in (\pi,\pi+\phi) ,\\[10pt]
    I,   & else.
    \end{array}\right.
\end{align}
The function $\mathcal{\psi}$ satisfies the following RH conditions.
\begin{RHP}\label{webberequ'}
    Find a $2\times 2$ matrix-valued function $\mathcal{\psi}(\zeta)$ such that\\
        - $\mathcal{\psi}$ is analytical in $\mathbb{C}\backslash\mathbb{R}$; \\
        - Due to the branch cut along $\mathbb{R}_{+}$, $\mathcal{\psi}(\zeta)$ takes continuous boundary values $\mathcal{\psi}_{\pm}$ on $\mathbb{R}$ and
        \begin{equation}\label{psijump13}
            \mathcal{\psi}_{+}(\zeta)=\mathcal{\psi}_{-}(\zeta)V^{\mathcal{\psi}}, \quad \zeta\in\mathbb{R},
        \end{equation}
        where
        \begin{equation}
            V^{\mathcal{\psi}}(\xi)=\left(\begin{array}{cc}
            1-|r_{\xi_1}|^2 & -\bar{r}_{\xi_1}\\
            r_{\xi_1} & 1
            \end{array}\right).
        \end{equation}
        - Asymptotic behavior:
        \begin{equation}\label{pc13psiasy}
            \mathcal{\psi}=\zeta^{-i\nu\sigma_3}e^{\frac{i}{4}\zeta^2\sigma_3}\left(I+M^{(PC,\xi_1)}_1\zeta^{-1}+\mathcal{O}(\zeta^{-2})\right), \quad {\rm as} \quad \zeta\rightarrow\infty.
        \end{equation}
\end{RHP}

Differentiating \eqref{psijump13} with respect to $\zeta$,
and combining $\frac{i\zeta}{2}\sigma_3\psi_{+}=\frac{i\zeta}{2}\sigma_3\psi_{-}V^{\psi}$, we obtain
\begin{equation}
    \left(\frac{d\psi}{d\zeta}-\frac{i\zeta}{2}\sigma_3\psi\right)_{+}=\left(\frac{d\psi}{d\zeta}-\frac{i\zeta}{2}\sigma_3\psi\right)_{-}V^{\psi}.
\end{equation}
Notice that det$V^{\psi}=1$, thus we have det$\psi_{+}$=det$\psi_{-}$. Moreover, we know that det$\psi$ is holomorphic
in $\mathbb{C}$ by Painlev\'e analytic continuation theorem. It follows that $\psi^{-1}$ exists and is bounded. The matrix function
$\left(\frac{d\psi}{d\zeta}-\frac{i\zeta}{2}\sigma_3\psi\right)\psi^{-1}$ has no jump along the real axis and is an entire function
with respect to $\zeta$. Combining \eqref{transpc13}, we can directly calculate that
\begin{equation}\label{dpsiequ}
    \left(\frac{d\psi}{d\zeta}-\frac{i\zeta}{2}\sigma_3\psi\right)\psi^{-1}=
    \left[\frac{dM^{(PC,\xi_1)}}{d\zeta}-M^{(PC,\xi_1)}\frac{i\nu}{\zeta}\sigma_3\right]\left(M^{(PC,\xi_1)}\right)^{-1}+\frac{i\zeta}{2}\left[M^{(PC,\xi_1)},\sigma_3\right]\left(M^{(PC,\xi_1)}\right)^{-1}.
\end{equation}
The first term in the R.H.S of \eqref{dpsiequ} tends to zero as $\zeta\rightarrow\infty$. We use $M^{(PC,\xi_1)}(\zeta)=I+M^{(PC,\xi_1)}_{1}\zeta^{-1}+\mathcal{O}(\zeta^{-2})$
as well as Liouville theorem to obtain that there exists a constant matrix $\beta^{mat}_1$ such that
\begin{equation}
    \left(\begin{array}{cc}
        0 & \beta^{(\xi_1)}_{12}\\
        \beta^{(\xi_1)}_{21} & 0
        \end{array}\right)=
\beta^{mat}_{1}=\frac{i}{2}\left[M^{(PC,\xi_1)}_{1},\sigma_3\right]=\left(\begin{array}{cc}
    0 & -i[M^{(PC,\xi_1)}_{1}]_{12}\\
    i[M^{(PC,\xi_1)}_{1}]_{21} & 0
    \end{array}\right),
\end{equation}
which implies that $[M^{(PC,\xi_1)}_{1}]_{12}=i\beta^{(\xi_1)}_{12}$, $[M^{(PC,\xi_1)}_{1}]_{21}=-i\beta^{(\xi_1)}_{21}$.
Using Liouville theorem again, we have
\begin{equation}
    \left(\frac{d\psi}{d\zeta}-\frac{i\zeta}{2}\sigma_3\psi\right)=\beta^{mat}_1\psi.
\end{equation}
Rewrite the above equality to the following ODE systems
\begin{align}
    \frac{d\psi_{11}}{d\zeta}-\frac{i\zeta}{2}\psi_{11}=\beta^{(\xi_1)}_{12}\psi_{21},\label{pc13ODE1}\\
    \frac{d\psi_{21}}{d\zeta}+\frac{i\zeta}{2}\psi_{21}=\beta^{(\xi_1)}_{21}\psi_{11},\label{pc13ODE2}
\end{align}
as well as
\begin{align}
    \frac{d\psi_{12}}{d\zeta}-\frac{i\zeta}{2}\psi_{12}=\beta^{(\xi_1)}_{12}\psi_{22},\label{pc13ODE3}\\
    \frac{d\psi_{22}}{d\zeta}+\frac{i\zeta}{2}\psi_{22}=\beta^{(\xi_1)}_{21}\psi_{12}.\label{pc13ODE4}
\end{align}
From \eqref{pc13ODE1} to \eqref{pc13ODE4}, we solve that
\begin{align}
    \frac{d^2\psi_{11}}{d\zeta^2}+\left(-\frac{i}{2}+\frac{\zeta^2}{4}-\beta^{(\xi_1)}_{12}\beta^{(\xi_1)}_{21}\right)\psi_{11}=0,\label{pc13psi11}\quad
    \frac{d^2\psi_{21}}{d\zeta^2}+\left(\frac{i}{2}+\frac{\zeta^2}{4}-\beta^{(\xi_1)}_{12}\beta^{(\xi_1)}_{21}\right)\psi_{21}=0,\\
    \frac{d^2\psi_{12}}{d\zeta^2}+\left(-\frac{i}{2}+\frac{\zeta^2}{4}-\beta^{(\xi_1)}_{12}\beta^{(\xi_1)}_{21}\right)\psi_{12}=0,\quad
    \frac{d^2\psi_{22}}{d\zeta^2}+\left(\frac{i}{2}+\frac{\zeta^2}{4}-\beta^{(\xi_1)}_{12}\beta^{(\xi_1)}_{21}\right)\psi_{22}=0.
\end{align}
The Webber equation is
\begin{equation}\label{orpcequ}
    y''+\left(\frac{1}{2}-\frac{z^2}{4}+a\right)y=0.
\end{equation}
The parabolic cylinder functions $D_a(z)$, $D_a(-z)$, $D_{-a-1}(iz)$, $D_{-a-1}(-iz)$ all satisfy \eqref{orpcequ} and are entire $\forall a$.
The large $z$ behavior of $D_a(z)$ can be uniquely given by the following formulaes.
\begin{equation*}
D_{a}(z)=\left\{
             \begin{aligned}
             &z^a e^{-\frac{z^2}{4}}\left(1+\mathcal{O}(z^{-2})\right),  \quad \vert{\rm arg}z\vert<\frac{3\pi}{4},\\
             &z^a e^{-\frac{z^2}{4}}\left(1+\mathcal{O}(z^{-2})\right)-\frac{\sqrt{2\pi}}{\Gamma(-a)}e^{ia\pi}z^{-a-1}e^{z^2/4}\left(1+\mathcal{O}(z^{-2})\right),  \quad \frac{\pi}{4}<{\rm arg}z<\frac{5\pi}{4},\\
             &z^a e^{-\frac{z^2}{4}}\left(1+\mathcal{O}(z^{-2})\right)-\frac{\sqrt{2\pi}}{\Gamma(-a)}e^{-ia\pi}z^{-a-1}e^{z^2/4}\left(1+\mathcal{O}(z^{-2})\right), \quad -\frac{5\pi}{4}<{\rm arg}z<-\frac{\pi}{4}.\\
             \end{aligned}
             \right.
\end{equation*}
We set $\nu=\beta^{(\xi_1)}_{12}\beta^{(\xi_1)}_{21}$. For $\psi_{11}$, $\Im \zeta>0$, we introduce a new variable $\eta=\zeta e^{-\frac{i\pi}{4}}$, and
the first equation of \eqref{pc13psi11} becomes
\begin{equation}
        \frac{d^2\psi_{11}}{d\eta^2}+\left(\frac{1}{2}-\frac{\eta^2}{4}-i\nu\right)\psi_{11}=0.
\end{equation}
For $\zeta\in\mathbb{C}_{+}$, $0<{\rm Arg}\zeta<\pi$, $-\frac{\pi}{4}<{\rm Arg}\eta<\frac{3\pi}{4}$.
We have $\psi_{11}=e^{\frac{\pi}{4}\nu(\xi_1)}D_{-i\nu(\xi_1)}(e^{-\frac{\pi}{4} i}\zeta)\sim\zeta^{-iv}e^{\frac{i}{4}\zeta^2}$, which corresponds
to the $(1,1)$-entry of \eqref{pc13psiasy}.

To limit the length of paper, we present the other results for $\psi$ below without delicate calculation. The unique solution to RH problem \ref{webberequ'} is\\
when $\zeta\in\mathbb{C}_{+}$,
\begin{align}
\mathcal{\psi}(\zeta)=	\left(\begin{array}{cc}
e^{\frac{\pi}{4}\nu(\xi_1)}D_{-i\nu(\xi_1)}(e^{-\frac{\pi}{4} i}\zeta) & \frac{i\nu(\xi_1)}{\beta^{(\xi_1)}_{21}}e^{-\frac{3\pi}{4}(\nu(\xi_1)+i)}D_{i\nu(\xi_1)-1}(e^{-\frac{3\pi i}{4} }\zeta)\\
-\frac{i\nu(\xi_1)}{\beta^{(\xi_1)}_{12}}e^{\frac{\pi}{4}(\nu(\xi_1)-i)}D_{-i\nu(\xi_1)-1}(e^{-\frac{\pi i}{4} }\zeta) & e^{-\frac{3\pi}{4}\nu(\xi_1)}D_{i\nu(\xi_1)}(e^{-\frac{3\pi}{4} i}\zeta)
\end{array}\right),
\end{align}
when $\zeta\in\mathbb{C}_{-}$,
\begin{align}
\mathcal{\psi}(\zeta)=	\left(\begin{array}{cc}
e^{\frac{5\pi}{4}\nu(\xi_1)}D_{-i\nu(\xi_1)}(e^{-\frac{5\pi}{4} i}\zeta) & \frac{i\nu(\xi_1)}{\beta^{(\xi_1)}_{21}}e^{-\frac{7\pi}{4}(\nu(\xi_1)+i)}D_{i\nu(\xi_1)-1}(e^{\frac{-7\pi i}{4} }\zeta)\\
-\frac{i\nu(\xi_1)}{\beta^{(\xi_1)}_{12}}e^{\frac{5\pi}{4}(\nu(\xi_1)-i)}D_{-i\nu(\xi_1)-1}(e^{-\frac{5\pi i}{4} }\zeta) & e^{-\frac{7}{4}\pi\nu(\xi_1)}D_{i\nu(\xi_1)}(e^{-\frac{7}{4}\pi i}\zeta)
\end{array}\right),
\end{align}
Which is similar to \cite[Appendix C.3]{LiuDeriNLS}.

From \eqref{psijump13}, we know that $(\psi_{-})^{-1}\psi_{+}=V^{\psi}$ and
\begin{align}
r_{\xi_1}&=\psi_{-,11}\psi_{+,21}-\psi_{-,21}\psi_{+,11}\nonumber\\
&=e^{\frac{5\pi}{4}\nu(\xi_1)}D_{-i\nu(\xi_1)}(e^{-\frac{5\pi}{4} i}\zeta)\cdot \frac{e^{\frac{\pi\nu(\xi_1)}{4}}}{\beta^{(\xi_1)}_{12}}\left[\partial_{\zeta}(D_{-i\nu(\xi_1)}(e^{-\frac{\pi i}{4}}\zeta))-\frac{i\zeta}{2}D_{-i\nu(\xi_1)}(e^{-\frac{\pi i}{4}}\zeta)\right]\nonumber\\
&\hspace{1em}-e^{\frac{\pi}{4}\nu(\xi_1)}D_{-i\nu(\xi_1)}(e^{-\frac{\pi}{4} i}\zeta)\cdot \frac{e^{\frac{5\pi\nu(\xi_1)}{4}}}{\beta^{(\xi_1)}_{12}}\left[\partial_{\zeta}(D_{-i\nu(\xi_1)}(e^{-\frac{5\pi i}{4}}\zeta))-\frac{i\zeta}{2}D_{-i\nu(\xi_1)}(e^{-\frac{5\pi i}{4}}\zeta)\right]\nonumber\\
&=\frac{e^{\frac{3\pi}{2}\nu(\xi_1)}}{\beta^{(\xi_1)}_{12}}{\rm Wr}\left(D_{-i\nu(\xi_1)}(e^{-\frac{5\pi}{4} i}\zeta), D_{-i\nu(\xi_1)}(e^{-\frac{\pi}{4} i}\zeta)\right)\nonumber\\
&=\frac{e^{\frac{3\pi}{2}\nu(\xi_1)}}{\beta^{(\xi_1)}_{12}}\cdot\frac{\sqrt{2\pi}e^{-\frac{5\pi}{4}i}}{\Gamma(i\nu(\xi_1))}.
\end{align}
The second ``$=$'' we use the equality $D'_{a}(z)+\frac{z}{2}D_{a}(z)=aD_{a-1}(z)$. As for the last ``$=$'', we use the Wronskian identity
Wr$(D_{a}(z),D_{a}(-z))=\frac{\sqrt{2\pi}}{\Gamma(-a)}$.

And
\begin{align}
    &\beta^{(\xi_1)}_{12}=\frac{\sqrt{2\pi}e^{-\frac{5i\pi}{4}}e^{\frac{3\pi \nu(\xi_1)}{2}}}{r_{\xi_1}\Gamma(i\nu(\xi_1))}, \label{betaxi1-1}\\
    &\beta^{(\xi_1)}_{12}\beta^{(\xi_1)}_{21}=\nu(\xi_1),\label{betaxi1-2}\\
    &{\rm arg}\beta^{(\xi_1)}_{21}=-\frac{5\pi}{4}-{\rm arg}r_{\xi_1}-{\rm arg}\Gamma(i\nu(\xi_1))\nonumber\\
    &\hspace*{3em}=-\frac{5\pi}{4}-2\beta(\xi_1,\xi)+2t\theta(\xi_1)+\nu(\xi_1){\rm log}(2t\theta''(\xi_1))-{\rm arg}\Gamma(i\nu(\xi_1)).\label{betaxi1-3}
\end{align}
Finally we have
\begin{equation}\label{mpcxi1}
    M^{(PC,\xi_1)}=I+\frac{1}{\zeta}
        \left(\begin{array}{cc}
        0 & i\beta^{(\xi_1)}_{12}\\
        -i\beta^{(\xi_1)}_{21} & 0
        \end{array}\right)+\mathcal{O}(\zeta^{-2}).
\end{equation}
The results of Appendix \ref{pc13} can also be applied to the local model near $\xi_3$.

\subsection{Local model near $\xi_j$, $j=2,4$}\label{pc24}
\begin{RHP}\label{RHPpc24}
    Find a matrix-valued function $M^{(PC,\xi_2)}(\zeta):=M^{(PC,\xi_2)}(\zeta;\xi)$ such that\\
    - $M^{(PC,\xi_2)}(\zeta;\xi)$ is analytical  in $\mathbb{C}\backslash \Sigma^{pc} $ with $\Sigma^{pc}=\left\{\mathbb{R}e^{i\phi}\right\}\cup
    \left\{\mathbb{R}e^{i(\pi-\phi) } \right\}$ shown in Figure.\ref{sigpc24}.\\
    - $M^{(PC,\xi_2)}$ has continuous boundary values $M^{(PC,\xi_2)}_{\pm}$ on $\Sigma^{pc}$ and
    \begin{equation}
    M^{(PC,\xi_2)}_+(\zeta)=M^{(PC,\xi_2)}_{-}(\zeta)V^{pc}(\zeta),\quad \zeta \in \Sigma^{pc},
    \end{equation}
    where
    \begin{align}
    V^{pc}(\zeta)=\left\{\begin{array}{ll}
    \zeta^{i\nu \hat{\sigma}_3}e^{-\frac{i\zeta^2}{4}\hat{\sigma}_3}\left(\begin{array}{cc}
    1 & 0\\
    r_{\xi_2} & 1
    \end{array}\right),  & \zeta\in\mathbb{R}_{+}e^{\phi i},\\[10pt]
    \zeta^{i\nu \hat{\sigma}_3}e^{-\frac{i\zeta^2}{4}\hat{\sigma}_3}\left(\begin{array}{cc}
    1 & -\bar{r}_{\xi_2}\\
    0 & 1
    \end{array}\right),   & \zeta\in \mathbb{R}_{+}e^{-\phi i},\\[10pt]
    \zeta^{i\nu \hat{\sigma}_3}e^{-\frac{i\zeta^2}{4}\hat{\sigma}_3}\left(\begin{array}{cc}
    1& 0\\
    \frac{r_{\xi_2}}{1-|r_{\xi_2}|^2}&1
    \end{array}\right),   & \zeta\in \mathbb{R}_{+}e^{(\phi-\pi) i},\\[10pt]
    \zeta^{i\nu \hat{\sigma}_3}e^{-\frac{i\zeta^2}{4}\hat{\sigma}_3}\left(\begin{array}{cc}
    1 & -\frac{\bar{r}_{\xi_2}}{1-|r_{\xi_2}|^2}\\
    0 & 1
    \end{array}\right),   & \zeta\in \mathbb{R}_{+}e^{ (-\phi+\pi)i}.
    \end{array}\right.
    \end{align}
    with $\nu=\nu(\xi_2)$. \\
    - Asymptotic behavior:
    \begin{align}
        M^{(PC,\xi_2)}(\zeta) =& I+M^{(PC,\xi_2)}_1\zeta^{-1}+\mathcal{O}(\zeta^{-2}),\hspace{0.5cm}\zeta \rightarrow \infty.
    \end{align}
    \end{RHP}
    The RH problem \ref{RHPpc24} has an explicit solution, which can be expressed in terms of Webber equation
    $(\frac{\partial^2}{\partial z^2}+(\frac{1}{2}-\frac{z^2}{2}+a))D_{a}(z)=0$.
    \begin{figure}
        \centering
            \begin{tikzpicture}[node distance=2cm]
            \draw[->](0,0)--(2,1.5)node[above]{$\mathbb{R}_{+}e^{\phi i}$};
            \draw(0,0)--(-2,1.5)node[above]{$\mathbb{R}_{+}e^{(-\phi+\pi)i}$};
            \draw(0,0)--(-2,-1.5)node[below]{$\mathbb{R}_{+}e^{(\phi-\pi)i}$};
            \draw[->](0,0)--(2,-1.5)node[below]{$\mathbb{R}_{+}e^{-\phi i}$};
            \draw[dashed,red](0,0)--(-3,0)node[left]{\textcolor{black}{${\rm arg}\zeta\in (-\pi,\pi)$}};
            \draw[->](-2,-1.5)--(-1,-0.75);
            \draw[->](-2,1.5)--(-1,0.75);
            \coordinate (A) at (1,0.5);
            \coordinate (B) at (1,-0.5);
            \coordinate (G) at (-1,0.5);
            \coordinate (H) at (-1,-0.5);
            \coordinate (I) at (0,0);
            \fill (I) circle (1pt) node[below] {$0$};
            \end{tikzpicture}
        \caption{\footnotesize The contour $\Sigma^{pc}$ for the case of $\xi_j$, $j=2,4$.}
    \end{figure}\label{sigpc24}
    Taking the transformation
    \begin{equation}\label{transpc24}
        M^{(PC,\xi_2)}=\mathcal{\psi}(\zeta)\mathcal{P}\zeta^{-i\nu\sigma_3}e^{\frac{i}{4}\zeta^2\sigma_3},
    \end{equation}
    where
    \begin{align}
        \mathcal{P}(\xi)=\left\{\begin{array}{ll}
        \left(\begin{array}{cc}
        1 & 0\\
        -r_{\xi_2} & 1
        \end{array}\right),  & \arg\zeta\in(0,\phi) ,\\[10pt]
        \left(\begin{array}{cc}
        1 & -\bar{r}_{\xi_2}\\
        0 & 1
        \end{array}\right),   & \arg\zeta\in(-\phi,0) ,\\[10pt]
        \left(\begin{array}{cc}
        1 & 0\\
        \frac{r_{\xi_2}}{1-|r_{\xi_2}|^2} & 1
        \end{array}\right),   & \arg\zeta\in (\phi-\pi,-\pi),\\[10pt]
        \left(\begin{array}{cc}
        1& \frac{\bar{r}_{\xi_2}}{1-|r_{\xi_2}|^2}\\
        0&1
        \end{array}\right),   & \arg\zeta\in(-\phi+\pi,\pi) ,\\[10pt]
        I,   & else.
        \end{array}\right.
    \end{align}
    The function $\mathcal{\psi}$ satisfies the following RH conditions.
    \begin{RHP}\label{webberequ}
        Find a $2\times 2$ matrix-valued function $\mathcal{\psi}(\zeta)$ such that\\
           - $\mathcal{\psi}$ is analytical in $\mathbb{C}\backslash\mathbb{R}$.\\
           - Due to the branch cut along $\mathbb{R}_{-}$, $\mathcal{\psi}(\zeta)$ takes continuous boundary values $\mathcal{\psi}_{\pm}$ on $\mathbb{R}$ and
            \begin{equation}\label{psijump24}
                \mathcal{\psi}_{+}(\zeta)=\mathcal{\psi}_{-}(\zeta)V^{\mathcal{\psi}}, \quad \zeta\in\mathbb{R},
            \end{equation}
            where
            \begin{equation}
                V^{\mathcal{\psi}}(\xi)=\left(\begin{array}{cc}
                1-|r_{\xi_2}|^2 & -\bar{r}_{\xi_2}\\
                r_{\xi_2} & 1
                \end{array}\right).
            \end{equation}
            - Asymptotic behavior:
            \begin{equation}\label{pc24psiasy}
                \mathcal{\psi}=\zeta^{i\nu\sigma_3}e^{-\frac{i}{4}\zeta^2\sigma_3}\left(I+M^{(PC,\xi_2)}_1\zeta^{-1}+\mathcal{O}(\zeta^{-2})\right), \quad {\rm as} \quad \zeta\rightarrow\infty.
            \end{equation}
    \end{RHP}

    Differentiating \eqref{psijump24} with respect to $\zeta$,
    and combining $\frac{i\zeta}{2}\sigma_3\psi_{+}=\frac{i\zeta}{2}\sigma_3\psi_{-}V^{\psi}$, we obtain
    \begin{equation}
        \left(\frac{d\psi}{d\zeta}+\frac{i\zeta}{2}\sigma_3\psi\right)_{+}=\left(\frac{d\psi}{d\zeta}+\frac{i\zeta}{2}\sigma_3\psi\right)_{-}V^{\psi}.
    \end{equation}
    Since the same reasons presented in Appendix \ref{pc13}, the matrix function
    $\left(\frac{d\psi}{d\zeta}+\frac{i\zeta}{2}\sigma_3\psi\right)\psi^{-1}$ has no jump along the real axis and is an entire function
    with respect to $\zeta$. Combining \eqref{transpc24}, we can directly calculate that
    \begin{equation}\label{dpsiequ24}
        \left(\frac{d\psi}{d\zeta}+\frac{i\zeta}{2}\sigma_3\psi\right)\psi^{-1}=
        \left[\frac{dM^{(PC,\xi_2)}}{d\zeta}+M^{(PC,\xi_2)}\frac{i\nu}{\zeta}\sigma_3\right]\left(M^{(PC,\xi_2)}\right)^{-1}+\frac{i\zeta}{2}\left[\sigma_3,M^{(PC,\xi_2)}\right]\left(M^{(PC,\xi_2)}\right)^{-1}.
    \end{equation}
    The first term in the R.H.S of \eqref{dpsiequ24} tends to zero as $\zeta\rightarrow\infty$. We use $M^{(PC,\xi_2)}(\zeta)=I+M^{(PC,\xi_2)}_{1}\zeta^{-1}+\mathcal{O}(\zeta^{-2})$
    as well as Liouville theorem to obtain that there exists a constant matrix $\beta^{mat}_2$ such that
    \begin{equation}
        \left(\begin{array}{cc}
            0 & \beta^{(\xi_2)}_{12}\\
            \beta^{(\xi_2)}_{21} & 0
            \end{array}\right)=
    \beta^{mat}_{2}=\frac{i}{2}\left[\sigma_3, M^{(PC,\xi_2)}_{1}\right]=\left(\begin{array}{cc}
        0 & i[M^{(PC,\xi_2)}_{1}]_{12}\\
        -i[M^{(PC,\xi_2)}_{1}]_{21} & 0
        \end{array}\right),
    \end{equation}
    which implies that $[M^{(PC,\xi_2)}_{1}]_{12}=-i\beta^{(\xi_1)}_{12}$, $[M^{(PC,\xi_2)}_{1}]_{21}=i\beta^{(\xi_1)}_{21}$.
    Using Liouville theorem again, we have
    \begin{equation}
        \left(\frac{d\psi}{d\zeta}+\frac{i\zeta}{2}\sigma_3\psi\right)=\beta^{mat}_{2}\psi.
    \end{equation}
    We rewrite the above equality to the following ODE system
    \begin{align}
        \frac{d\psi_{11}}{d\zeta}+\frac{i\zeta}{2}\psi_{11}=\beta^{(\xi_2)}_{12}\psi_{21},\label{pc24ODE1}\\
        \frac{d\psi_{21}}{d\zeta}-\frac{i\zeta}{2}\psi_{21}=\beta^{(\xi_2)}_{21}\psi_{11},\label{pc24ODE2}
    \end{align}
    as well as
    \begin{align}
        \frac{d\psi_{12}}{d\zeta}+\frac{i\zeta}{2}\psi_{12}=\beta^{(\xi_2)}_{12}\psi_{22},\label{pc24ODE3}\\
        \frac{d\psi_{22}}{d\zeta}-\frac{i\zeta}{2}\psi_{22}=\beta^{(\xi_2)}_{21}\psi_{12},\label{pc24ODE4}.
    \end{align}
    From \eqref{pc24ODE1} to \eqref{pc24ODE4}, we solve that
    \begin{align}
        \frac{d^2\psi_{11}}{d\zeta^2}+\left(\frac{i}{2}+\frac{\zeta^2}{4}-\beta^{(\xi_2)}_{12}\beta^{(\xi_2)}_{21}\right)\psi_{11}=0,\label{pc24psi11}\quad
        \frac{d^2\psi_{21}}{d\zeta^2}+\left(-\frac{i}{2}+\frac{\zeta^2}{4}-\beta^{(\xi_2)}_{12}\beta^{(\xi_2)}_{21}\right)\psi_{21}=0,\\
        \frac{d^2\psi_{12}}{d\zeta^2}+\left(\frac{i}{2}+\frac{\zeta^2}{4}-\beta^{(\xi_2)}_{12}\beta^{(\xi_2)}_{21}\right)\psi_{12}=0,\quad
        \frac{d^2\psi_{22}}{d\zeta^2}+\left(-\frac{i}{2}+\frac{\zeta^2}{4}-\beta^{(\xi_2)}_{12}\beta^{(\xi_2)}_{21}\right)\psi_{22}=0.
    \end{align}
    We set $\nu=\beta^{(\xi_2)}_{12}\beta^{(\xi_2)}_{21}$. For $\psi_{11}$, $\Im\zeta>0$, we introduce the new variable $\eta=\zeta e^{-\frac{3i\pi}{4}}$, and
    the first equation of \eqref{pc24psi11} becomes
    \begin{equation}
        \frac{d^2\psi_{11}}{d\eta^2}+\left(\frac{1}{2}-\frac{\eta^2}{4}+i\nu\right)\psi_{11}=0.
    \end{equation}
    For $\zeta\in\mathbb{C}_{+}$, $0<{\rm Arg}\zeta<\pi$, $-\frac{3\pi}{4}<{\rm Arg}\eta<\frac{\pi}{4}$.
    We have $\psi_{11}=e^{-\frac{3\pi}{4}\nu(\xi_2)}D_{i\nu(\xi_2)}(e^{-\frac{3\pi}{4} i}\zeta)\sim\zeta^{iv}e^{-\frac{i}{4}\zeta^2}$, which
    corresponds to the $(1,1)$-entry of \eqref{pc24psiasy}. The other results for $\psi$ are presented below.

    The unique solution to RH problem \ref{webberequ} is\\
    when $\zeta\in\mathbb{C}_{+}$,
    \begin{align}
    \mathcal{\psi}(\zeta)=	\left(\begin{array}{cc}
    e^{-\frac{3\pi}{4}\nu(\xi_2)}D_{i\nu(\xi_2)}(e^{-\frac{3\pi}{4} i}\zeta) & -\frac{i\nu(\xi_2)}{\beta^{(\xi_2)}_{21}}e^{\frac{\pi}{4}(\nu(\xi_2)-i)}D_{-i\nu(\xi_2)-1}(e^{-\frac{\pi i}{4} }\zeta)\\
    \frac{i\nu(\xi_2)}{\beta^{(\xi_2)}_{12}}e^{-\frac{3\pi}{4}(\nu(\xi_2)+i)}D_{i\nu(\xi_2)-1}(e^{-\frac{3\pi i}{4} }\zeta) & e^{\frac{\pi}{4}\nu(\xi_2)}D_{-i\nu(\xi_2)}(e^{-\frac{\pi}{4} i}\zeta)
    \end{array}\right),
    \end{align}
    when $\zeta\in\mathbb{C}_{-}$,
    \begin{align}
    \mathcal{\psi}(\zeta)=	\left(\begin{array}{cc}
    e^{\frac{\pi\nu(\xi_2)}{4}}D_{i\nu(\xi_2)}(e^{\frac{\pi}{4} i}\zeta) & -\frac{i\nu(\xi_2)}{\beta^{(\xi_2)}_{21}}e^{-\frac{3\pi}{4}(\nu(\xi_2)-i)}D_{-i\nu(\xi_2)-1}(e^{\frac{3\pi i}{4} }\zeta)\\
    \frac{i\nu(\xi_2)}{\beta^{(\xi_2)}_{12}}e^{\frac{\pi}{4}(\nu(\xi_2)+i)}D_{i\nu(\xi_2)-1}(e^{\frac{\pi i}{4} }\zeta) & e^{-\frac{3\pi}{4}\nu(\xi_2)}D_{-i\nu(\xi_2)}(e^{\frac{3\pi}{4} i}\zeta)
    \end{array}\right),
    \end{align}
    Which is derived in \cite[Section 4]{DZAnn} and verified in \cite[Proposition 5.5]{LiuDeriNLS}.

    From \eqref{psijump24}, we know that $(\psi_{-})^{-1}\psi_{+}=V^{\psi}$ and
    \begin{align}
    r_{\xi_2}&=\psi_{-,11}\psi_{+,21}-\psi_{-,21}\psi_{+,11}\nonumber\\
    &=e^{\frac{\pi}{4}\nu(\xi_2)}D_{i\nu(\xi_2)}(e^{\frac{\pi}{4} i}\zeta)\cdot \frac{e^{-\frac{3\pi\nu(\xi_2)}{4}}}{\beta^{(\xi_2)}_{12}}\left[\partial_{\zeta}(D_{i\nu(\xi_2)}(e^{-\frac{3\pi i}{4}}\zeta))+\frac{i\zeta}{2}D_{i\nu(\xi_2)}(e^{-\frac{3\pi i}{4}}\zeta)\right]\nonumber\\
    &\hspace{1em}-e^{-\frac{3\pi}{4}\nu(\xi_2)}D_{i\nu(\xi_2)}(e^{-\frac{3\pi}{4} i}\zeta)\cdot \frac{e^{\frac{\pi\nu(\xi_2)}{4}}}{\beta^{(\xi_2)}_{12}}\left[\partial_{\zeta}(D_{i\nu(\xi_2)}(e^{\frac{\pi i}{4}}\zeta))+\frac{i\zeta}{2}D_{i\nu(\xi_2)}(e^{\frac{\pi i}{4}}\zeta)\right]\nonumber\\
    &=\frac{e^{-\frac{\pi}{2}\nu(\xi_2)}}{\beta^{(\xi_2)}_{12}}{\rm Wr}\left(D_{i\nu(\xi_2)}(e^{\frac{\pi}{4} i}\zeta), D_{i\nu(\xi_2)}(e^{-\frac{3\pi}{4} i}\zeta)\right)\nonumber\\
    &=\frac{e^{-\frac{\pi}{2}\nu(\xi_2)}}{\beta^{(\xi_2)}_{12}}\cdot\frac{\sqrt{2\pi}e^{\frac{\pi}{4}i}}{\Gamma(-i\nu(\xi_2))}.
    \end{align}

    And
    \begin{align}
        &\beta^{(\xi_2)}_{12}=\frac{\sqrt{2\pi}e^{\frac{i\pi}{4}}e^{-\frac{\pi \nu(\xi_2)}{2}}}{r_{\xi_2}\Gamma(-i\nu(\xi_2))},\label{betaxi2-1}\\
        &\beta^{(\xi_2)}_{12}\beta^{(\xi_2)}_{21}=\nu(\xi_2),\label{betaxi2-2}\\
        &{\rm arg}\beta^{(\xi_2)}_{12}=\frac{\pi}{4}-{\rm arg}r_{\xi_2}-{\rm arg}\Gamma(-i\nu(\xi_2))\nonumber\\
        &\hspace*{3em}=\frac{\pi}{4}-2\beta(\xi_2,\xi)+2t\theta(\xi_2)-\nu(\xi_2){\rm log}(-2t\theta''(\xi_2))-{\rm arg}\Gamma(-i\nu(\xi_2)).\label{betaxi2-3}
    \end{align}
    As a consequence,
    \begin{equation}\label{mpcxi2}
        M^{(PC,\xi_2)}=I+\frac{1}{\zeta}
            \left(\begin{array}{cc}
            0 & -i\beta^{(\xi_2)}_{12}\\
            i\beta^{(\xi_2)}_{21} & 0
            \end{array}\right)+\mathcal{O}(\zeta^{-2}).
    \end{equation}
    The results of Appendix \ref{pc24} also can be applied to the local model near $\xi_4$.

\end{document}